\UseRawInputEncoding
\documentclass[11pt,reqno]{amsart}
\usepackage{amsmath, amssymb, amsthm, esint, verbatim, hyperref, accents}
\usepackage{times}
\usepackage{dsfont}

\usepackage{wrapfig}
\usepackage{tikz}
\usetikzlibrary{decorations.fractals}

\numberwithin{equation}{section}

\hypersetup{
  pdftitle={Solving the Dirichlet problem with prescribed density},
  pdfauthor={Ling Xiao},
  pdfsubject={},
  pdfkeywords={},
  pdfpagelayout=SinglePage,
  pdfpagemode=UseOutlines,
  colorlinks,
  bookmarksopen,
  linkcolor=[rgb]{0,0,0.7},
  urlcolor=[rgb]{0,0,0.4},
  citecolor=[rgb]{0.4,0.1,0},
}

\newtheorem{theorem}{Theorem}[section]

\newtheorem{proposition}[theorem]{Proposition}

\newtheorem{lemma}[theorem]{Lemma}
\newtheorem{corollary}[theorem]{Corollary}
\theoremstyle{definition}
\newtheorem{definition}[theorem]{Definition}
\theoremstyle{remark}
\newtheorem{remark}[theorem]{Remark}
\theoremstyle{remark}

\theoremstyle{remark}

\theoremstyle{remark}

\theoremstyle{remark}

\newcommand{\be}{\begin{equation}}
\newcommand{\ee}{\end{equation}}
\newcommand{\ju}[2]{\begin{array}{#1}#2\end{array}}

\newcommand{\ol}{\overline}

\newcommand{\ubar}[1]{\underaccent{\bar}{#1}}
\newcommand*\Laplace{\mathop{}\!\mathbin\bigtriangleup}
\newcommand{\ssubset}{\subset\joinrel\subset}

\newcommand{\lt}{\left}
\newcommand{\rt}{\right}

\newcommand{\goto}{\rightarrow}
\newcommand{\R}{\mathbb{R}}
\newcommand{\td}{\tilde}

\newcommand{\e}{\epsilon}
\newcommand{\s}{\sigma}
\newcommand{\p}{\partial}
\newcommand{\al}{\alpha}

\newcommand{\fb}{\mathfrak b}
\newcommand{\la}{\lambda}
\newcommand{\ev}{\epsilon_\varphi}
\newcommand{\ba}{\mathbf a}
\newcommand{\bb}{\mathbf b}
\newcommand{\ure}{u^{R, \epsilon}}
\newcommand{\hure}{\hat{u}^{R, \epsilon}}
\newcommand{\bd}{\boldsymbol{\delta}}
\newcommand{\bde}{\boldsymbol{\eta}}

\newcommand{\df}{\dot f}
\newcommand{\ga}{\gamma}

\newcommand{\hx}{\hat x}

\newcommand{\C}{\clubsuit}
\newcommand{\G}{\sigma_{n-1}}

\title{Solving the Dirichlet problem with prescribed density}
\author{Ling Xiao}

\keywords{}

\address{Department of Mathematics, University of Connecticut, Storrs, CT 06269}
\email{ling.2.xiao@uconn.edu}

\begin{document}
\begin{abstract}In this paper we prove the following result. Let $\Omega\subset\mathbb R^n, n\geq 3,$ be a bounded, strictly convex, smooth domain and $\varphi: \p\Omega\goto\mathbb R$ be a smooth function. Then for any $z\in\Omega,$
there exists $c_1=c_1(\Omega, \{z\}, n, \varphi)>0,$ such that if $c\geq c_1$ then the problem: $\s_{n-1}(D^2 u)=0$ in $\bar\Omega\setminus\{z\},$ $u|_{\p\Omega}=\varphi,$
$\lim\limits_{r\goto0}\sup\limits_{B_r(z)}\frac{u(x)-u(z)}{|x-z|^{(n-2)/(n-1)}}=c,$ admits a smooth solution in $\bar\Omega\setminus\{z\}.$ Moreover, we obtain the optimal a priori estimates for this solution. In particular, we show for any  integer $m\geq 0$ there exists $C_m=C_m(m, \Omega, \{z\}, n, \varphi, c)>0$ such that $|D^m u(x)|<C_m|x-z|^{\frac{n-2}{n-1}-m}.$ This work provides the first result demonstrating the existence of smooth solutions to the Dirichlet problem for fully nonlinear elliptic equations with prescribed density in Euclidean space. Previously, only continuous solutions were obtained.
\end{abstract}
\maketitle
\tableofcontents

\section{Introduction}
\label{int}
In this paper, we study the degenerate Dirichlet problem with prescribed density (DPPD) at a given point $z\in \Omega$ where $\Omega\subset\R^n, n\geq 3,$ is a bounded, strictly convex, smooth domain.

To present our result, we first introduce some notation. We shall use $D^2u$ to denote the Hessian of $u,$ and the $k$-th elementary symmetric function
$\s_k(A)$ of a symmetric matrix $A$ is defined by
\[\s_k(A)=\s_k(\la(A))=\sum\limits_{1\leq i_1<\cdots<i_k\leq n}\la_{i_1}\cdots\la_{i_k},\]
where $\la(A)=(\la_1, \cdots, \la_n)$ are the eigenvalues of $A.$
We also denote $\Gamma_k$ as the G{\aa}rding's cone
\[\Gamma_k=\{ \la\in \R^{n-1} | \s_m(\la) > 0, m = 1,\cdots , k\}.\]
Let $z\in \Omega$ be an arbitrary point. Our goal is to find a smooth function $u: \bar\Omega\setminus\{z\}\subset\R^n\goto\R$ such that
\be\label{int1}
F(D^2u):=\frac{\s_{n-1}}{\s_{n-2}}(D^2u)=0\,\,\mbox{in $\Omega\setminus\{z\}$}.
\ee
We shall need following definitions.

\begin{definition}
\label{def-admissible}
For any open set $U\subset\R^n$, a function $v\in C^{2}(U)$ is said to be {\it\textbf{ admissible}} if $\la(D^2v)\in\bar\Gamma_{n-1}$ and the matrix
$(F^{ij}|v)$ is positive definite for all $x\in U.$ Here, $F^{ij}|_v:=\frac{\partial F(D^2v)}{\partial v_{ij}}.$
\end{definition}

\begin{definition}
\label{def-density}
Let $u$ be an admissible solution of equation \eqref{int1}, the {\it\textbf{density of $u$ at $z$}}, denoted by $\Theta(u, z),$ is defined by
\[\Theta(u, z):=\lim\limits_{r\goto0}\sup\limits_{B_r(z)}\frac{u(x)-u(z)}{|x-z|^{\frac{n-2}{n-1}}}.\]
\end{definition}

Our main results are the following.
\begin{theorem}
\label{thm-main}
Let $\Omega\subset\R^n, n\geq 3,$ be a bounded, strictly convex, smooth domain,  $\varphi :\p\Omega\goto\R$ be a smooth function, and $z\in\Omega.$ Then the problem
\be\label{eq-main}
\left\{\begin{aligned}
\frac{\s_{n-1}}{\s_{n-2}}(D^2u)&=0\,\,&\mbox{in $\bar{\Omega}\setminus\{z\},$}\\
u&=\varphi\,\,&\mbox{on $\p\Omega,$}\\
\Theta(u, z)&=c,
\end{aligned}
\right.
\ee
admits a smooth admissible solution $u: \bar\Omega\setminus\{z\}\goto\mathbb R,$ provided $c\geq c_1=c_1(\Omega, \{z\}, n, \varphi)>0.$ Moreover, $u$ satisfies,
for any integer $m\geq 0,$
\be\label{eq-main-est}
|D^m u(x)|<C_m|x-z|^{\frac{n-2}{n-1}-m} \,\, \mbox{for all}\,\, x\in\bar{\Omega}\setminus\{z\},
\ee
where $C_m=C_m(m, \Omega, \{z\}, \varphi, n, c)>0.$
\end{theorem}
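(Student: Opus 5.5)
The plan is to build the solution by approximation from a fundamental-type solution, with smooth solutions on perforated domains and estimates uniform in the perforation radius. The model object is the radial solution of $\s_{n-1}(D^2 u)=0$ with singularity at $z$. For $u=g(r)$ with $r=|x-z|$, the Hessian has eigenvalues $g''$ (once) and $g'/r$ (with multiplicity $n-1$). Hence
\[
\s_{n-1}=(g'/r)^{n-1}+(n-1)g''(g'/r)^{n-2}.
\]
Setting this to zero gives $g''=-g'/((n-1)r)$, so $g'=Ar^{-1/(n-1)}$ and $g=A\frac{n-1}{n-2}r^{(n-2)/(n-1)}$. For $A>0$ this $w_A$ lies in $\partial\Gamma_{n-1}$, has $F^{ij}$ positive definite, and has density $\Theta(w_A,z)=A\frac{n-1}{n-2}$. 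This explains both the exponent $\frac{n-2}{n-1}$ in the theorem and why large $c$ is needed: the singular profile must dominate the boundary data.

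\textbf{Step 1: approximating problems.} For small $\e>0$, let $\Omega_\e=\Omega\setminus\bar B_\e(z)$. On $\Omega_\e$ I would solve the nondegenerate problem
\[
\frac{\s_{n-1}}{\s_{n-2}}(D^2u^\e)=\e^{\theta}h \ \text{(or simply a small constant)},\qquad u^\e=\varphi \text{ on }\p\Omega,\qquad u^\e=w_A(\e)+\text{const} \text{ on }\p B_\e(z).
\]
Existence of smooth admissible solutions follows from the Caffarelli--Nirenberg--Spruck / Trudinger theory for quotient equations, provided suitable subsolutions exist. The inner sphere is not convex from the $\Omega_\e$ side, so the sub- and supersolutions must do the work there. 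As subsolution I take $\underline u=w_A+$ a strictly convex quadratic correction $+$ a constant, fitting $\varphi$ on $\p\Omega$. This uses strict convexity of $\Omega$ and requires $A$ large relative to $\|\varphi\|_{C^2}$. As supersolution I take a harmonic-type function, or $w_{A'}$ shifted. Together these give $C^0$ bounds and boundary gradient bounds on both boundary components, uniform in $\e$ after rescaling.

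\textbf{Step 2: weighted a priori estimates, uniform in $\e$.} The key is scaling. For $x_0$ with $|x_0-z|=2\rho$, rescale
\[
v(y)=\rho^{-\frac{n-2}{n-1}}\big(u(z+\rho y)-u(z)\big).
\]
Then $v$ solves the same homogeneous (or rescaled small right-hand side) equation on an annulus, with $C^0$ bounds from the barriers $w_A\le u\le w_{A'}$ near $z$. It therefore suffices to prove local interior $C^1$ and $C^2$ estimates for admissible solutions on fixed annuli, with constants independent of $\e$. For $C^2$ I would use a Pogorelov-type or Guan-type interior estimate, with the barrier $\underline u$ supplying the needed strict subsolution. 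Once $C^{2}$ is bounded and the equation is uniformly elliptic, Evans--Krylov and Schauder give all $C^m$ bounds. Scaling back yields $|D^mu|\le C_m|x-z|^{\frac{n-2}{n-1}-m}$. Near $\p\Omega$ I use the standard global boundary $C^2$ estimates.

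\textbf{Step 3: passage to the limit and density.} Let $\e\to0$. The uniform weighted estimates give a subsequence converging in $C^\infty_{loc}(\bar\Omega\setminus\{z\})$ to $u$, which solves the degenerate equation. The main obstacle here is admissibility in the limit, i.e.\ positive definiteness of $F^{ij}$. I would get it from a uniform lower bound on $\s_{n-2}(D^2u)$ in weighted form, namely $\s_{n-2}\ge c|x-z|^{-(n-2)\frac{n}{n-1}}$. This should follow from a maximum principle applied to a quantity like $\log\s_{n-2}$ compared with the model $w_A$, using the subsolution. The density is pinned by the squeeze $w_A+\text{bdd}\le u\le w_A+\text{bdd}$ together with the $C^0$ normalization. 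Finally, I choose $A=c\frac{n-2}{n-1}$ and the inner boundary constant so that $\Theta(u,z)=c$; this is possible exactly when $c\ge c_1$, the threshold below which the subsolution cannot be built.

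The hardest step is the interior $C^2$ estimate for the degenerate equation on annuli, uniform as the right-hand side tends to $0$. I would try first a Pogorelov estimate with cutoff $(\underline u-u)$-type weights, exploiting the explicit subsolution from $w_A$.
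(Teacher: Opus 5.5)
\textbf{Main gap: uniform ellipticity.} Your outline matches the paper's: perforated domains, a small positive right-hand side, barriers, weighted bounds by rescaling to unit scale, then Evans--Krylov and Schauder. The gap is that the step carrying all the difficulty is assumed rather than proved. In Step 2 you write ``once $C^2$ is bounded and the equation is uniformly elliptic.'' For $F=\sigma_{n-1}/\sigma_{n-2}$ along solutions of $F=h$ with $h\to 0$, however, a $C^2$ bound gives no ellipticity. The eigenvalues of $(F^{ij})$ are $\dot f^k=\big(\sigma_{n-2}(\lambda|k)-h\,\sigma_{n-3}(\lambda|k)\big)/\sigma_{n-2}(\lambda)$, and at $h=0$ the smallest is comparable to $(\lambda_{n-1}/\lambda_1)^2$.

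Your proposed substitute, a weighted lower bound on $\sigma_{n-2}$, cannot detect this degeneration. Take $\lambda=r^{-\frac{n}{n-1}}(1,\dots,1,0,0)$. It lies in $\bar\Gamma_{n-1}$, with $\sigma_{n-1}=0$ and $\sigma_{n-2}=r^{-\frac{n(n-2)}{n-1}}$, yet $\dot f^k=0$ for $k\le n-2$. So neither the $\varepsilon$-uniform higher-order bounds nor admissibility of the limit follow.

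The missing ingredient is a weighted lower bound on the second-smallest eigenvalue, $\lambda_{n-1}(D^2u)\ge c|x-z|^{-\frac{n}{n-1}}$. The paper obtains it by a constant-rank-type deformation argument:
\begin{itemize}
\item It connects the radial problem on a large ball to the real problem through the uniformly convex domains $(1-t)B_{R_1}+t\Omega^R$.
\item It shows that $\lambda_{n-1}u^{\frac{n}{n-2}-\delta}$ can never first reach its threshold at an interior point. This uses $\lambda_i\sigma_{n-2}(\lambda|i)\approx-\sigma_n(\lambda)/\lambda_i$ (valid because $\sigma_{n-1}\approx 0$) and positivity of an explicit quadratic form $D+s^{\top}s$.
\item The boundary input comes from strict convexity of $\partial\Omega$. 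There the tangential Hessian is $\approx -u_\nu\kappa_\alpha\delta_{\alpha\beta}$, and eigenvalue interlacing gives $n-1$ eigenvalues bounded below.
\end{itemize}
That is where strict convexity is really used, not merely in building a subsolution.

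Likewise, purely local interior $C^2$ bounds on annuli, uniform as $h\to0$, are not available for this non-convex quotient equation. The paper's bound $\lambda_1\le C|x|^{-\frac{n}{n-1}}$ is a global Pogorelov-type argument. It uses the cutoff $2u/u(x_0)-1$ on $\{2u>u(x_0)\}$, a region reaching $\partial\Omega^R$ where the boundary estimates enter. It also relies essentially on the $\lambda_{n-1}$ lower bound.

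\textbf{Second gap: prescribing the density.} Your density step does not work as stated. Since $w_A(\varepsilon)\to0$, the inner data on $\partial B_\varepsilon(z)$ converge to the constant. The limit therefore only remembers $u(z)$ and $\varphi$, and by uniqueness its density is determined by those; $A$ drops out. In the paper the inner slope $\mathbf a$ is fixed, yet the limiting density $c_1$ is just some number in $[\mathbf b,\mathbf a]$.

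Nor can a squeeze $w_A+O(1)\le u\le w_A+O(1)$ pin $\Theta(u,z)$. The density compares $u(x)-u(z)$ with $|x-z|^{\frac{n-2}{n-1}}\to0$, so bounded additive errors are invisible to it only if they vanish at $z$ to that order, which your barriers do not.

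The paper instead proceeds as follows:
\begin{itemize}
\item It normalizes $u(z)=0$ and uses the homogeneity $\Theta(\lambda u,z)=\lambda\,\Theta(u,z)$.
\item It uses the multiplicative comparison $\hat\gamma v\le u\le\tilde\gamma v$ for solutions vanishing at $z$, where $\hat\gamma,\tilde\gamma$ are the infimum and supremum of $u/v$ on $\partial\Omega$.
\item It iterates on the constant $\lambda$ in the boundary data $\lambda+\varphi$, producing a Cauchy sequence whose limit has density exactly $c$.
\end{itemize}
Finally, your explanation of the threshold $c_1$ as the point where the subsolution fails is not substantiated. For instance, $w_A$ plus a strictly convex quadratic is a strict subsolution for every $A>0$.
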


Consequently, we prove
\begin{corollary}
\label{cor-main}
Let $\Omega\subset\R^n, n\geq 3,$ be a bounded, strictly convex, smooth domain, $\varphi :\p\Omega\goto\R$ be a smooth function, and $z\in\Omega.$ Then the problem
\be\label{eq-main-2}
\left\{\begin{aligned}
\s_{n-1}(D^2u)&=0\,\,&\mbox{in $\bar\Omega\setminus\{z\},$}\\
u&=\varphi\,\,&\mbox{on $\p\Omega,$}\\
\Theta(u, z)&=c,
\end{aligned}
\right.
\ee
admits a smooth admissible solution $u: \bar\Omega\setminus\{z\}\goto\mathbb R,$  provided $c\geq c_1=c_1(\Omega, \{z\}, n, \varphi)>0.$  Moreover, $u$ satisfies,
for any integer $m\geq 0,$
\be\label{eq-main2-est}
|D^m u(x)|<C_m|x-z|^{\frac{n-2}{n-1}-m} \,\, \mbox{for all}\,\, x\in\bar{\Omega}\setminus\{z\},
\ee
where $C_m=C_m(m, \Omega, \{z\}, \varphi, n, c)>0.$
\end{corollary}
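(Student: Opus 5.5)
The plan is to deduce Corollary~\ref{cor-main} directly from Theorem~\ref{thm-main}. Take the same $c_1$ as in the theorem and let $u$ be the smooth admissible solution of \eqref{eq-main} given there. The boundary condition, the density condition $\Theta(u,z)=c$, and the estimates \eqref{eq-main2-est} are literally the same as in \eqref{eq-main} and \eqref{eq-main-est}. So the only thing left to check is that $u$ satisfies $\s_{n-1}(D^2u)=0$ in $\bar\Omega\setminus\{z\}$, and that it is admissible in the sense of Definition~\ref{def-admissible} for that equation.

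The key step is to make sense of the quotient: we need $\s_{n-2}(D^2u)>0$ wherever $F(D^2u)=0$ is asserted. I will use admissibility, which gives $\la(D^2u)\in\bar\Gamma_{n-1}$ with $(F^{ij})$ positive definite. If $\s_{n-2}(\la)=0$ at some point, then $F$ would not be differentiable there, or at least $(F^{ij})$ could not be a well-defined positive definite matrix. So the positive definiteness built into the definition forces $\la\in\Gamma_{n-2}$, that is, $\s_{n-2}(D^2u)>0$. Multiplying $\frac{\s_{n-1}}{\s_{n-2}}(D^2u)=0$ by $\s_{n-2}(D^2u)>0$ then gives $\s_{n-1}(D^2u)=0$ pointwise.

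For admissibility with respect to $\s_{n-1}$ itself, I will use the quotient rule at points where $\s_{n-1}=0$:
\[
\frac{\p \s_{n-1}}{\p u_{ij}}=\s_{n-2}\,F^{ij}+\frac{\s_{n-1}}{\s_{n-2}}\,\frac{\p\s_{n-2}}{\p u_{ij}}=\s_{n-2}\,F^{ij}.
\]
Since $\s_{n-2}>0$ and $(F^{ij})$ is positive definite, the linearized operator $\left(\frac{\p\s_{n-1}}{\p u_{ij}}\right)$ is positive definite, so the $\s_{n-1}$ equation is elliptic along $u$.

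The only delicate point is the first one: justifying $\s_{n-2}>0$, which is the sense in which $F$ is defined on $\bar\Gamma_{n-1}$ in the theorem. I expect this to be implicit in how the paper's solution is constructed (for instance as a limit of strictly admissible approximations with $\la\in\Gamma_{n-1}$, together with the estimates ensuring nondegeneracy). If that is not immediate from the definition, I would fall back on the standard fact that on $\bar\Gamma_{n-1}$, $\s_{n-2}(\la)=0$ forces $\la$ to have at least two zero entries and $\s_{n-1}(\la)=0$. Near such $\la$ the derivative of the quotient degenerates along the corresponding eigendirections, which is again incompatible with positive definiteness of $(F^{ij})$.

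Smoothness, the boundary values, $\Theta(u,z)=c$ and the estimates then all transfer unchanged from Theorem~\ref{thm-main}.
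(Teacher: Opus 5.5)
Your proposal is correct and is essentially the deduction the paper intends; the paper states the corollary with ``Consequently'' and gives no separate proof. Since $\sigma_{n-2}(D^2u)>0$, the equation $\frac{\sigma_{n-1}}{\sigma_{n-2}}(D^2u)=0$ is equivalent to $\sigma_{n-1}(D^2u)=0$, its linearization is $\sigma_{n-2}F^{ij}$, and the boundary data, density and estimates transfer verbatim from Theorem~\ref{thm-main}.

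Your vaguer fallback paragraph is not needed. Positivity of $\sigma_{n-2}$ already follows from the quotient being defined at all. More concretely, it follows from the bound $\lambda_{n-1}\ge \bb_2|x|^{-\frac{n}{n-1}}>0$ in \eqref{c2-u}, which is preserved by the positive rescaling of Section~\ref{sec-DPPD}. Together with $\sigma_{n-1}=0$, this gives $\sigma_{n-2}(\lambda)\ge\frac{n}{2(n-1)}\sigma_{n-2}(\lambda|n)>0$, exactly as in Claim B of Section~\ref{sec-step3}.
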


\subsection{Related work}
Degenerate equations with prescribed singularities have received extensive study through the years. In \cite{Serrin65}, Serrin studied
the local behavior of the solutions of certain second-order, quasi-linear equations $\text{div}\mathcal A(x,u,u_x)=0$ in the perforated domain $\Omega\setminus\{0\}.$
Later, Kichenassamy-Veron \cite{KV86} continued the work of Serrin and studied the $p$-Laplace equation $\text{div}(|\nabla u|^{p-2}\nabla u)=0$ in $\Omega\setminus\{0\}.$
Weak solutions of various fully nonlinear equations with prescribed singularities have also been studied. For example, the $k$-Hessian equation has been investigated by Trudinger-Wang \cite{TW97, TW99, TW02} and Labutin \cite{La02}. A very general family of fully nonlinear equations has been considered by Labutin \cite{La01} and Harvey-Lawson \cite{HL16}.

In closing of this subsection, we remark that Lempert \cite{Lem83} studied the degenerate complex Monge-Amp\'ere equation with a single singularity. More specifically, he proved the following theorem.

{\it Let $\Omega\subset\mathbb C^n$ be a strictly convex, analytically bounded domain and $\varphi: \p\Omega\goto\mathbb R$ be a real analytic function. Then for any $W\in\Omega,$
there exists $C_0>0,$ such that if $C>C_0$ then the problem: $u$ is plurisubharmonic in $\Omega,$ $\det(\p\bar\p u)=0$ in $\Omega\setminus\{W\},$ $u(\omega)=C\log|\omega-W|+O(1)$
as $\omega\goto W,$ $u|_{\p\Omega}=\varphi,$ admits a unique real analytic solution in $\bar\Omega\setminus\{W\}.$}

 We note that Lempert's proof is based on the explicit formula for the solution, which is a Poisson integral. To prove the above theory using a priori estimates is a longstanding and challenging problem in the field. So far, the best result was obtained by B\l oki in \cite{Blo03}, where he showed the solution is $C^{1, 1}.$

 In an upcoming work \cite{LX27}, we prove an analog of Lampert's result in $\mathbb{R}^4.$ More specifically, we prove the existence and uniqueness of smooth solutions to $\sigma_2=0$ with a prescribed density at a singularity in $\mathbb{R}^4.$ In fact, we find smooth solutions to $\sigma_{n-2}=0$ with a prescribed density at a singularity in $\mathbb{R}^n$ for $n \geq 4.$ The main techniques used in \cite{LX27} are completely different from those in this paper. We do not think the methods used in this paper work for the degenerate $\sigma_{n-2}$ equation in general; however, with significant refinements, they may work for $n = 4, 5$.
\subsection{Comments on the proof}
There are several obstacles that need to be overcome in order to establish Theorem \ref{thm-main}.

First, the existence of a $C^{1,1}$ solution -- that is, a type of ``classical'' solution -- to \eqref{int1} is unknown. Since equation \eqref{int1} is a degenerate equation, the standard way to prove the existence of a $C^{1,1}$ solution, roughly speaking, goes as follows.
Instead of  considering \eqref{int1} directly, we consider
\be\label{int1-appr}
\frac{\s_{n-1}}{\s_{n-2}}(D^2u)=f_k\,\,\mbox{in $\bar\Omega\setminus B_{1/R}(z)$},
\ee
where $f_k$ is a smooth positive function defined in $\bar\Omega\setminus B_{1/R}(z)$ that satisfies $f_k\goto 0$ as $k\goto \infty.$ Denoting the solution of \eqref{int1-appr} by $u^{k, R},$
we want to show that $u^{k, R}\goto u$ as $k, R\goto\infty$ and $u$ is a solution of \eqref{int1}. Note that in the last step, for $u\in C^{1,1}(\bar\Omega\setminus\{z\})$ one needs an interior $C^2$ estimate for
$u^{k,R}.$ To the author's best knowledge, the interior $C^2$ estimates for Hessian quotient equations have only been studied in the case when the solution is convex (see \cite{Lu25, LT25} and references therein). Moreover, the estimates obtained in \cite{Lu25, LT25} depend on the minimum value of the right hand side. However, in our case, the solution of \eqref{int1-appr} is not convex and the right hand side of our equation goes to $0$, that is, $f_k\goto 0.$ Therefore, it is not applicable.

Second, and also the most difficult part of this paper, is to prove that the solution of \eqref{eq-main} is smooth. To keep things simple, let us assume that we already obtained a solution
$u\in C^{1,1}(\bar\Omega\setminus\{z\})$ of \eqref{eq-main}, then we want to prove that this $u$ is smooth. By standard PDE theory, we only need to show that the functional $F$ is uniformly elliptic with respect to $u,$ that is, there exists positive constants $\la, \Lambda$ such that $0\leq\la |\xi|^2\leq F^{ij}|_u\xi_i\xi_j\leq\Lambda |\xi|^2$ for all $\xi\in\mathbb R^n.$ Clearly, the claim that $F$ is uniformly elliptic with respect to $u$ is highly non-trivial and one may even say it is not natural. For instance, we examine the case in which $n=3,$ then $F(D^2u)=\frac{\s_2}{\s_1}=0.$ It is totally possible that the eigenvalues of the matrix $(u_{ij})$ are $1, 0, 0$ at some point, then the eigenvalues of $(F^{ij})$ at this point would be $0, 1, 1,$ which is not uniformly elliptic.
Now, let us denote the eigenvalues of $(u_{ij})$ by $\la_1, \la_2, \la_3$ and assume $\la_1\geq\la_2\geq\la_3.$ We also denote $a:=\frac{\la_2}{\la_1},$ then a direct calculation yields the eigenvalues of $(F^{ij})$ are $\frac{a^2}{1+a+a^2}, \frac{1}{1+a+a^2},$ and $\frac{(1+a)^2}{1+a+a^2}.$ Therefore, when $n=3,$ to prove that $F$ is uniformly elliptic with respect to $u$ we need to show that $a$ is bounded away from $0.$ Similarly, for general $n$, to prove that $F$ is uniformly elliptic with respect to $u$ we need to show that  $\la_{n-1}/\la_1$ is bounded away from $0,$ where $\la_1\geq\cdots\geq\la_{n-1}\geq\la_n$ are eigenvalues of $(u_{ij}).$ We want to point out that, the assumption that $\Omega$ is strictly convex plays an important role here. This assumption implies that $\la_{n-1}/\la_1$ is bounded away from $0$ on $\p\Omega.$ Therefore, to prove that $F$ is uniformly elliptic, we only need to show that $\la_{n-1}/\la_1$ is bounded away from $0$ in the interior.
We also want to mention that we believe the assumption that $\Omega$ is strictly convex is necessary for obtaining smooth solutions (one may compare it with the main theorem of \cite{Lem83}).

Finally, since we prescribed the density at the singularity, we need to be able to control the density at the singularity. In this paper, we use a bootstrap argument to construct the desired solution. More precisely, we show the existence of an admissible function $u_1\in C^{\infty}(\bar\Omega\setminus \{z\})$ that satisfies
the differential equation $\frac{\s_{n-1}}{\s_{n-2}}(D^2u_1)=0$ in $\Omega\setminus\{z\}$ and the boundary condition $u_1=\varphi$ on $\p\Omega$ in Section \ref{solv-appr}, \ref{improve}, \ref{sec-deformation}, \ref{sec-step3}, \ref{sec-AB}, and \ref{sec-smooth}. We can compute the density of this $u_1$ at $z,$ saying $\Theta(u_1, \{z\})=c_1.$ We then show in Section \ref{sec-DPPD} that for any given $c>c_1,$ we can find a smooth solution of \eqref{eq-main}. The idea is to start with $u_1$ and then construct a sequence of smooth functions $\{u_n\}_{n=1}^\infty$ by adjusting the value of $u_n(z)$. By carefully choosing the value of $u_n(z),$ we prove that $\{u_n\}_{n=1}^\infty$ converges to the desired solution $u.$

\subsection{Outline}The organization of the paper is as follows. In Section \ref{np}, we introduce several fundamental formulas and notation.
Since equation \eqref{eq-main} is degenerate, we formulate the approximate equation \eqref{eq-appr} in Section \ref{apr}.
In Section \ref{solv-appr},  we establish the existence of a unique solution $\ure$ to this approximate equation. Section \ref{improve} is devoted to deriving sharp $C^1$ estimates
and sharp $C^2$ boundary estimates for the solution $\ure$. These estimates are then employed in Sections \ref{sec-deformation}, \ref{sec-step3}, and \ref{sec-AB} to obtain sharp global $C^2$ estimates. Finally, in Sections \ref{sec-smooth} and \ref{sec-DPPD}, we demonstrate that these sharp estimates yield the existence of the desired smooth solution to equation \eqref{eq-main}.

 \section{Notations and preliminaries}
 \label{np}
 We collect some important background results necessary for the subsequent sections.

Let $f: \mathbb R^n\goto\mathbb R$ be a smooth, symmetric function defined on an open, convex, symmetric cone $\Gamma.$ Denote
\[\mathcal{S}_{\Gamma}:=\{A\in \text{Sym}_n: \la(A)\in\Gamma\}.\]
A result of Glaeser \cite{Gla63} implies that there is a smooth, $GL(n)$
invariant function $F: \mathcal S_\Gamma\goto\mathbb R$ such that $f(\la(A))=F(A),$ where $\la(A)=(\la_1, \cdots, \la_n)$ are the eigenvalues of $A.$
We shall denote
\[F^{ij}(A):=\frac{\p F}{\p a_{ij}}\,\,\mbox{ and}\,\, F^{pq, rs}(A):=\frac{\p^2F}{\p a_{pq}\p a_{rs}},\]
for $A=(a_{ij}).$ We shall also use dots to indicate derivatives of $f$:
\[\df^i(\la):=\frac{\p f}{\p\la_i}\,\,\mbox{and}\,\,\ddot{f}^{pq}(\la):=\frac{\p^2f}{\p\la_p\p\la_q}.\]
The following lemma is well known (see \cite{ALM14} for example).
\begin{lemma}
\label{np-lem1}
Let $f\in C^{\infty}(\Gamma)$ for some connected, open, symmetric cone $\Gamma\in\mathbb R^n.$
Define the function $F: \mathcal S_\Gamma\goto\mathbb R$
by $F(A):=f(\la(A))$ as above. Then if $A\in\mathcal S_\Gamma$ is diagonal and $B\in\text{Sym}_n$ we have
\be\label{np-1}
F^{ij}(A)=\df^i(\la(A))\delta_{ij},
\ee
and
\be\label{np-2}
F^{pq, rs}(A)B_{pq}B_{rs}=\ddot{f}^{pq}(\la(A))B_{pp}B_{qq}+\sum_{p\neq q}\frac{\df^p(\la(A))-\df^q(\la(A))}{\la_p(A)-\la_q(A)}(B_{pq})^2.
\ee
Note that \eqref{np-2} holds (as a limit) even if $A$ has eigenvalues of multiplicity greater than one.
\end{lemma}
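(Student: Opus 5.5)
The plan is the standard two-step argument: first reduce to a diagonal matrix using the $O(n)$ invariance of $F$, then differentiate the eigenvalues by perturbation theory. Invariance gives $F(Q^TAQ)=F(A)$ for every orthogonal $Q$. So $F(A)=f(\lambda(A))$ depends only on the spectrum, and it is enough to compute the first and second derivatives of $t\mapsto F(A+tB)$ at $t=0$ when $A=\mathrm{diag}(\lambda_1,\dots,\lambda_n)$. I will first treat the case where the $\lambda_i$ are distinct. Then the eigenvalues $\lambda_i(t)$ of $A+tB$ are smooth near $t=0$, and $F(A+tB)=f(\lambda_1(t),\dots,\lambda_n(t))$.

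For the first-order formula \eqref{np-1}, classical perturbation theory gives $\lambda_p'(0)=B_{pp}$. Differentiating $e_p(t)^T(A+tB)e_p(t)=\lambda_p(t)$, with $e_p(t)$ normalized, uses $e_p'(0)\perp e_p$ and $Ae_p=\lambda_pe_p$. By the chain rule, $\frac{d}{dt}F(A+tB)|_{0}=\sum_p \dot f^p B_{pp}$. Since this holds for all symmetric $B$, we get $F^{ij}(A)=\dot f^i\delta_{ij}$.

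For the second-order formula \eqref{np-2}, I use the second-order perturbation formula
\[
\lambda_p''(0)=2\sum_{q\neq p}\frac{B_{pq}^2}{\lambda_p-\lambda_q}.
\]
It comes from differentiating the eigen-equation twice, with $e_p'(0)=\sum_{q\ne p}\frac{B_{qp}}{\lambda_p-\lambda_q}e_q$. Then
\[
\frac{d^2}{dt^2}F(A+tB)\Big|_0=\ddot f^{pq}B_{pp}B_{qq}+\sum_p\dot f^p\lambda_p''(0).
\]
Symmetrizing the last sum over the pairs $p\ne q$ gives $\sum_{p\ne q}\frac{\dot f^p-\dot f^q}{\lambda_p-\lambda_q}B_{pq}^2$. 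That is exactly \eqref{np-2} after the factor $2$ is accounted for against the convention $\frac{d^2}{dt^2}F(A+tB)=F^{pq,rs}B_{pq}B_{rs}$.

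Now suppose $A$ has repeated eigenvalues. Both sides of \eqref{np-1} are continuous in $A$, since $F$ is smooth by Glaeser. So I approximate $A$ by diagonal matrices with distinct entries and pass to the limit. For \eqref{np-2}, the left side is continuous. On the right, symmetry of $f$ gives $\dot f^p=\dot f^q$ whenever $\lambda_p=\lambda_q$, and the difference quotient converges to $\ddot f^{pp}-\ddot f^{pq}$ along the approximation. This is the "limit" interpretation in the statement.

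The main obstacle is the degenerate case. One cannot differentiate individual eigenvalues there, so the argument must rely on the smoothness of $F$ supplied by Glaeser's theorem together with this continuity argument, rather than on the eigenvalue expansion itself.
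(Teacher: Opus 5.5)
Your proof is correct. The paper gives no proof of this lemma; it quotes it as well known from \cite{ALM14}. What you wrote is the standard self-contained argument: reduce to diagonal $A$ by $O(n)$-invariance, use $\lambda_p'(0)=B_{pp}$ and $\lambda_p''(0)=2\sum_{q\neq p}B_{pq}^2/(\lambda_p-\lambda_q)$ for distinct eigenvalues, and pass to the limit using $F\in C^2$, which the paper takes from Glaeser's theorem. Two small points are worth tightening.

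First, no factor-of-$2$ bookkeeping is needed. The $2$ in $\lambda_p''(0)$ is exactly what the symmetrization over ordered pairs $p\neq q$ absorbs. So with the convention $\frac{d^2}{dt^2}F(A+tB)|_{t=0}=F^{pq,rs}B_{pq}B_{rs}$, you get \eqref{np-2} verbatim. As a check, $f=\lambda_1\lambda_2$ gives $2\det B$ on both sides.

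Second, in the degenerate case, knowing $\dot f^p(\lambda)=\dot f^q(\lambda)$ at the limit point is not by itself enough to make the difference quotient converge to $\ddot f^{pp}-\ddot f^{pq}$. That requires the symmetry of $f$ at every approximating point (or, for linear approximations $\lambda+hd$, the symmetry of the Hessian at $\lambda$). The clean fix is one line. Let $\tau$ be the transposition of $p$ and $q$. Then $\dot f^q(\mu)=\dot f^p(\tau\mu)$ and $\mu-\tau\mu=(\mu_p-\mu_q)(e_p-e_q)$, so
\[
\frac{\dot f^p(\mu)-\dot f^q(\mu)}{\mu_p-\mu_q}=\int_0^1\bigl(\ddot f^{pp}-\ddot f^{pq}\bigr)\bigl(\tau\mu+s(\mu-\tau\mu)\bigr)\,ds .
\]
The right side is continuous in $\mu$ near $\lambda$ and equals $\ddot f^{pp}(\lambda)-\ddot f^{pq}(\lambda)$ at $\mu=\lambda$, since $\tau\lambda=\lambda$. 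With this, the limit interpretation of \eqref{np-2} holds along any approximating sequence of diagonal matrices with distinct entries.
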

In particular, when $F(D^2u)=\s_k(D^2u)$ we have
\begin{lemma}
\label{np-lem2}Let $n\geq2, 1<k\leq n,$ and let $D^2u\in \Gamma_k.$ Suppose $D^2u$ is diagonalized
at $x_0.$ Then at $x_0,$ we have
\[\s_k^{pq}=\s_k^{pp}\delta_{pq}=\s_{k-1}(\la|p)\delta_{pq}\]
and
\[
\s_{k}^{pq, rs}=\left\{\begin{aligned}&\s_k^{pp, rr}=\s_{k-2}(\la|pr)\,\,&p=q, r=s, p\neq r\\
&\s_k^{pq, qp}=\frac{\s_k^{pp}-\s_k^{qq}}{\la_p-\la_q}=-\s_k^{pp,qq}\,\,&p=s, q=r,  p\neq q\\
&0,\,\,&\mbox{Otherwise.}\end{aligned}
\right.
\]
Here, $\la=(\la_1, \cdots, \la_n )$ are the eigenvalues of $D^2u$ and $(\la|i)$ is the vector obtained by deleting the $i$-th component of the vector $\la.$
\end{lemma}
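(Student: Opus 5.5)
The plan is to derive Lemma \ref{np-lem2} by specializing Lemma \ref{np-lem1} to $f(\la)=\s_k(\la)$ on $\Gamma=\Gamma_k$. Since $\s_k$ is a smooth symmetric polynomial, Lemma \ref{np-lem1} applies directly once we rotate coordinates so that $D^2u(x_0)$ is diagonal. We take $A=D^2u(x_0)=\mathrm{diag}(\la_1,\dots,\la_n)$ and compute the derivatives of $\s_k$ as a function of $\la$.

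\textbf{First derivatives.} The variable $\la_p$ appears linearly in $\s_k(\la)$, and
\[
\s_k(\la)=\la_p\s_{k-1}(\la|p)+\s_k(\la|p).
\]
Hence $\df^p=\s_{k-1}(\la|p)$. By \eqref{np-1} this gives $\s_k^{pq}=\s_{k-1}(\la|p)\delta_{pq}$.

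\textbf{Second derivatives.} Differentiating the same identity once more, with respect to $\la_r$ for $r\neq p$, gives $\ddot f^{pr}=\s_{k-2}(\la|pr)$. Each $\la_p$ enters linearly, so $\ddot f^{pp}=0$.

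Next we polarize the quadratic form \eqref{np-2}. Because $F^{pq,rs}$ is the symmetric bilinear form whose quadratic form is \eqref{np-2}, its coefficients can be read off term by term.
\begin{itemize}
\item The diagonal part $\sum_{p\neq r}\ddot f^{pr}B_{pp}B_{rr}$ gives $\s_k^{pp,rr}=\s_{k-2}(\la|pr)$ for $p\neq r$.
\item The off-diagonal part $\sum_{p\neq q}\frac{\df^p-\df^q}{\la_p-\la_q}B_{pq}^2$ gives $\s_k^{pq,qp}=\frac{\s_k^{pp}-\s_k^{qq}}{\la_p-\la_q}$.
\item All other index combinations do not appear in \eqref{np-2}, so the corresponding entries vanish.
\end{itemize}

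For the identity $\s_k^{pq,qp}=-\s_k^{pp,qq}$ we compute directly:
\[
\s_{k-1}(\la|p)-\s_{k-1}(\la|q)=\big(\la_q\s_{k-2}(\la|pq)+\s_{k-1}(\la|pq)\big)-\big(\la_p\s_{k-2}(\la|pq)+\s_{k-1}(\la|pq)\big)=-(\la_p-\la_q)\s_{k-2}(\la|pq).
\]
This shows $\s_k^{pq,qp}=-\s_{k-2}(\la|pq)=-\s_k^{pp,qq}$. The formula is polynomial in $\la$, so it also covers repeated eigenvalues as a limit.

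\textbf{Main obstacle.} The only delicate point is the index bookkeeping when passing from the quadratic form \eqref{np-2} to the individual components $F^{pq,rs}$. Because $B$ is symmetric, the entries $(pq,qp)$ and $(pq,pq)$ are identified. To avoid this issue, I would verify the coefficients by differentiating $\s_k(A)$ directly, writing it as a sum of $k\times k$ principal minors, at a diagonal $A$. The second derivative of a principal minor with respect to $a_{pq}$ and $a_{qp}$ gives $-\prod$ of the remaining diagonal entries, which confirms the sign and the value $-\s_{k-2}(\la|pq)$ stated above.
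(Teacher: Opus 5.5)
Your proposal is correct and follows the paper's route: the paper gives no separate argument and presents this lemma as the specialization of Lemma \ref{np-lem1} to $f=\s_k$. As you noticed yourself, \eqref{np-2} only determines symmetrized combinations such as $\s_k^{pq,pq}+\s_k^{pq,qp}$, so it is really your direct principal-minor computation that proves the componentwise formulas. That computation gives not only $\s_k^{pq,qp}=-\s_{k-2}(\la|pq)$ but also $\s_k^{pq,pq}=0$ and the vanishing of every other entry.
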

We also want to mention that in this paper, we shall use $C(m_1, m_2, \cdots, m_k)$ to denote a constant that only depends on $m_1, m_2, \cdots, m_k.$ The value of
this constant, however, may vary from line to line.

\section{The approximate problem}
\label{apr}
Equation \eqref{eq-main} is a degenerate Dirichlet problem defined on $\bar\Omega\setminus\{z\}$. It is natural to approach it using a sequence of non-degenerate Dirichlet problems defined on perforated domains. To find a good approximation, we start with the Riesz kernel of equation \eqref{int1}.

\subsection{Riesz kernels}
\label{subsection-rk}
It is well known that the classical Riesz kernel is crucial for the study of isolated singularities of degenerate equations. In our case, it is easy to check that
$$\mu(x)=|x|^{\frac{n-2}{n-1}}$$ is the Riesz kernel of equation \eqref{int1} whose Riesz characteristic $p=\frac{n}{n-1}.$ Moreover, a straightforward calculation yields the eigenvalues of $D^2\mu(x)$ are $\la_1=\cdots=\la_{n-1}=\frac{n-2}{n-1}r^{-\frac{n}{n-1}}$ and $\la_n=-\frac{n-2}{(n-1)^2}r^{-\frac{n}{n-1}}.$ Here and throughout this paper we denote $r:=|x|.$ Note also that, the eigenvalues of $(F^{ij}|_{\mu})$ are $F^{11}=\cdots=F^{n-1n-1}=\frac{2}{n(n-1)}$ and $F^{nn}=\frac{2(n-1)}{n}.$ Therefore, $F$ is uniformly elliptic with respect to $\mu$.

Inspired by the Riesz kernel, we consider
\[\phi(x, \eta)=(|x|+\eta)^{\frac{n-2}{n-1}}\,\,\mbox{for any $\eta>0.$}\]
It is easy to check that the eigenvalues of $D^2\phi$ are
$$\la(D^2\phi)=\lt(\frac{n-2}{n-1}r^{-1}(r+\eta)^{-\frac{1}{n-1}}, \cdots, \frac{n-2}{n-1}r^{-1}(r+\eta)^{-\frac{1}{n-1}}, \frac{-(n-2)}{(n-1)^2}(r+\eta)^{-\frac{n}{n-1}} \rt).$$
Therefore,
\be\label{def-f-eta}F(D^2\phi)=\frac{\s_{n-1}}{\s_{n-2}}(D^2\phi)=\frac{\eta}{\lt(\frac{n-1}{n-2}\rt)r(r+\eta)^{\frac{1}{n-1}}\lt[\frac{n}{2}r+(n-1)\eta\rt]}=:f_\eta.\ee

\subsection{The approximate problem}
\label{subsection-appr}
Without loss of generality, in this paper we shall always assume the singular point $z$ to be the origin of $\R^n$ and $B_1(0)\ssubset\Omega.$ Moreover, for technical reasons, instead of solving \eqref{eq-main}, we shall consider the following Dirichlet problem with a prescribed value at the singularity.
\be\label{eq-main1}
\left\{\begin{aligned}
\frac{\s_{n-1}}{\s_{n-2}}(D^2u)&=0\,\,&\mbox{in $\bar\Omega\setminus\{0\},$}\\
u&=1+\varphi\,\,&\mbox{on $\p\Omega,$}\\
u(0)&=0.
\end{aligned}
\right.
\ee
From now on, we shall assume $\varphi\leq 0$ to be a smooth function defined on $\bar\Omega$
that satisfies $\|\varphi\|_{C^2(\bar\Omega)}\leq\e_\varphi.$
Here, $\e_\varphi=\e_\varphi(\Omega, \{0\}, n)>0$ is a sufficiently small positive number that only depends on $\Omega,$ $\{0\},$ and $n.$ Throughout this paper, for our convenience, we shall always assume $\e_\varphi<1/20.$ Intuitively speaking, one can view $\varphi$ as a small perturbation of the constant function $0.$ Of course $\varphi$ can be $0,$ in which case we are looking for the Green's function for the domain $\Omega.$
\begin{remark}
\label{appr-rmk-1}
If $\varphi$ does not satisfy $\varphi\leq 0$ and $\|\varphi\|_{C^2(\bar\Omega)}\leq\e_\varphi,$ we can always consider $\td\varphi:=\frac{\varphi-\max_{\bar\Omega}\varphi}{M}$ for some $M>0$ sufficiently large instead. When $\tilde u$ solves \eqref{eq-main} with boundary data $\td\varphi$ and density $\td c,$ $M\td u+\max_{\bar\Omega}\varphi$ solves \eqref{eq-main} with boundary data $\varphi$ and density $M\td c.$ We also want to point out that the assumption $\varphi\leq 0$ is not necessary. This assumption is made purely for our convenience; it is especially helpful when writing down the argument in Section \ref{sec-DPPD}.
\end{remark}

\begin{remark}
\label{appr-rmk-2}
Let $u$ be a smooth solution of \eqref{eq-main1}, it is not hard to show that the density of $u$ at $0$ is well defined (see \cite{HL18}), denote
$c_1:=\Theta(u, \{0\})>0.$ Then $u-1$ solves equation $\eqref{eq-main}$ with the prescribed density equals $c_1.$ Theorem \ref{thm-main} asserts that for any prescribed density $c\geq c_1,$ one can find a smooth solution of equation \eqref{eq-main}.
\end{remark}

Since \eqref{eq-main1} is a degenerate equation with an isolated singularity at the origin, we shall approximate it with a sequence of non-degenerate equations defined on perforated domains. In particular, we shall consider the following equation.

\be\label{eq-appr}
\left\{\begin{aligned}
F(D^2u)&=f_{\frac{\e}{R}}\,\,&\mbox{in $\overline{\Omega\setminus B_{1/R}},$}\\
u&=1+\varphi\,\,&\mbox{on $\p\Omega,$}\\
u&=\ba R^{-\frac{n-2}{n-1}}\,\,&\mbox{on $\p B_{1/R}.$}
\end{aligned}
\right.
\ee
Here, $f_{\frac{\e}{R}}$ is defined in \eqref{def-f-eta} with $\eta=\frac{\e}{R}$ and $\ba=\ba(\Omega, \{0\}, n)>0$ is a fixed number throughout this paper.
It satisfies $\ba r^{\frac{n-2}{n-1}}>1$ on $\partial\Omega.$ Clearly, the choice of $\ba$ is very flexible and we shall always assume $\ba>1.$ Here and throughout this paper, all bold letters denote fixed numbers that remain constant from line to line.

For our convenience, we shall rescale \eqref{eq-appr} and consider
\be\label{eq-appr-s}
\left\{\begin{aligned}
F(D^2u)&=f_{\e}\,\,&\mbox{in $\overline{\Omega^R\setminus B_1},$}\\
u&=R^\frac{n-2}{n-1}+R^{\frac{n-2}{n-1}}\varphi\lt(\frac{x}{R}\rt)\,\,&\mbox{on $\p\Omega^R,$}\\
u&=\ba \,\,&\mbox{on $\p B_1,$}
\end{aligned}
\right.
\ee
where $f_\e$ is defined in \eqref{def-f-eta} with $\eta=\e$ and $\Omega^R:=R\Omega.$ It is easy to check that, $\ure$ satisfies \eqref{eq-appr} iff its rescaling, denoted by
$\hure(x):=R^{\frac{n-2}{n-1}}\ure\lt(\frac{x}{R}\rt),$ satisfies equation \eqref{eq-appr-s}.

\section{Solvability of \eqref{eq-appr}}
\label{solv-appr}

Recall that the reason we are interested in solving \eqref{eq-appr} is to solve \eqref{eq-main1}. Therefore, we only need to solve \eqref{eq-appr} for sufficiently small $\e$ and sufficiently large $R$. In Section \ref{solv-appr}, \ref{improve}, and \ref{sec-deformation} we will always assume $\e<\eta_0 R^{-8}$ and $R>R_0$ for $\eta_0=\eta_0(\Omega)>0$ sufficiently small and $R_0=R_0(\Omega)>0$ sufficiently large. In particular, for our convenience, we shall choose $\eta_0$ to be way smaller than $\kappa_{\min}(\partial\Omega)$ (i.e., the smallest principal curvature of $\p\Omega$) and $R_0$ to be way larger than the $\text{diam}(\Omega).$ Note that, when $R>0$ is very large, the choice of $\eta_0$ is not important, since $\e<R^{-8}$ would be sufficiently small any way.

In the following, we will follow the idea of \cite{CNS3} and prove the existence of the solution $\ure$ for \eqref{eq-appr}. Keep in mind that the rescaling of $\ure$, which is denoted by $\hure,$ is the solution of \eqref{eq-appr-s}. For our convenience, when we establish estimates on $\p\Omega,$ we consider equation \eqref{eq-appr}; when we establish estimates on $\p B_{1/R},$ we consider equation \eqref{eq-appr-s}. We prove
\begin{theorem}
\label{solve-appr-thm}
Given $\eta_0=\eta_0(\Omega)>0, \ev=\ev(\Omega, \{0\}, n)>0$ sufficiently small, $R_0=R_0(\Omega)>0$ sufficiently large, then for any $0<\e<\eta_0 R^{-8},$ $R>R_0,$ and $\varphi\in C^{\infty}(\bar\Omega)$ that satisfies $\varphi\leq0$ and $\|\varphi\|_{C^2}\leq \ev,$  there is a unique admissible solution
$\ure\in C^{\infty}(\ol{\Omega\setminus B_{1/R}})$
satisfying \eqref{eq-appr}. Moreover, we have
\[\|\ure\|_{C^2}<CR^{3-\frac{1}{n-1}}.\]
Analogously,
for any $0<\e<\eta_0 R^{-8},$ $R>R_0$ there is a unique admissible solution
$\hure\in C^{\infty}(\ol{\Omega^R\setminus B_1})$
satisfying \eqref{eq-appr-s}. Moreover, we have
\[\|\hure\|_{C^2}<CR^{2-\frac{2}{n-1}}.\]
Here, $C=C(\Omega, \{0\}, n)>0$ is some positive constant that is independent of $R$ and $\e.$
\end{theorem}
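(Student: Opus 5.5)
We follow the scheme of Caffarelli--Nirenberg--Spruck \cite{CNS3}: the method of continuity (or degree theory) combined with a priori estimates up to $C^{2}$ for admissible solutions of \eqref{eq-appr}. The right-hand side $f_{\e/R}$ is strictly positive and smooth on $\overline{\Omega\setminus B_{1/R}}$, so the equation is non-degenerate there. Once we have $C^2$ bounds, $F=\s_{n-1}/\s_{n-2}$ is concave and uniformly elliptic on the relevant compact subset of $\Gamma_{n-1}$. Evans--Krylov then gives $C^{2,\al}$, Schauder theory gives $C^\infty$, and uniqueness follows from the comparison principle for elliptic operators on the admissible cone. It therefore suffices to prove the estimates, and we do so in the rescaled problem \eqref{eq-appr-s} near $\p B_1$ and in the original problem \eqref{eq-appr} near $\p\Omega$. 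The two $C^2$ bounds are equivalent under the scaling $\hure(x)=R^{\frac{n-2}{n-1}}\ure(x/R)$: second derivatives scale by $R^{-\frac{n}{n-1}}$, and $R^{3-\frac1{n-1}}\cdot R^{-\frac n{n-1}}=R^{2-\frac{2}{n-1}}$. So we prove one of them and deduce the other.

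\emph{Step 1: barriers and $C^0$.} Take the subsolution $\underline u=\ba(|x|+\e/R)^{\frac{n-2}{n-1}}$ adjusted by a constant. In the $x$-variables of \eqref{eq-appr} it satisfies $F(D^2\underline u)=f_{\e/R}$ exactly. We match it on $\p B_{1/R}$ up to an error of order $\e$ and place it below $1+\varphi$ on $\p\Omega$, using $\ba r^{\frac{n-2}{n-1}}>1$ and $\e<\eta_0R^{-8}$. For the supersolution we use the harmonic-type function taking the boundary values, or simply $\max(1+\varphi)$. Admissible functions are subharmonic, so the maximum principle gives $\underline u\le\ure\le\bar u$. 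Since both barriers agree with $\ure$ on $\p\Omega\cup\p B_{1/R}$, we also get bounds on the normal derivative there.

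\emph{Step 2: gradient and second derivatives.} Subharmonicity reduces the global gradient bound to the boundary. On $\p B_{1/R}$ the radial barrier gives $|Du|\lesssim R^{\frac1{n-1}}$. On $\p\Omega$ we use the barrier together with a strictly convex defining function $\rho$ of $\Omega$, taking $\underline u+A\rho$ as a lower barrier.

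For the boundary $C^2$ estimate, the tangential--tangential derivatives come from the boundary data and the curvature. The tangential--normal derivatives come from the standard CNS barrier $A\rho-B|x-x_0|^2$ applied to $T u$, where $T$ is a tangential rotation field. The normal--normal derivative uses the key CNS ingredient: the tangential block of $D^2u$ must lie uniformly inside $\Gamma_{n-2}$ on the boundary. On $\p\Omega$ this follows from strict convexity together with $\|\varphi\|_{C^2}\le\ev$ and the positive normal derivative. On the sphere $\p B_1$ in \eqref{eq-appr-s} it follows from $u_\nu>0$ and the curvature of the sphere.

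The global $C^2$ bound then follows from the maximum principle applied to $\lambda_{\max}(D^2u)$, or $\log\lambda_{\max}+\frac{A}{2}|Du|^2$, using concavity of $F$. This reduces the bound to the boundary values plus terms involving $\sup|D^2 f_\e|/f_\e$. Every constant must be tracked in powers of $R$. Since the constants are allowed to blow up polynomially in $R$, crude bounds are acceptable, for instance $|Df_\e|/f_\e\lesssim 1$ on $|x|\ge1$ in the rescaled picture.

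\emph{Main obstacle.} We expect the hardest step to be the double normal estimate, specifically obtaining the uniform positive lower bound for the tangential $\s_{n-2}$ on the boundary components with explicit $R$-dependence, because the equation degenerates as $\e\to0$. We would first try the CNS argument at the point minimizing the tangential quantity, using $\e<\eta_0R^{-8}$ to absorb the $f_\e$ contributions.
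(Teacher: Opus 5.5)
Your overall architecture matches the paper's: a continuity method with a priori estimates, barriers for $C^0$ and boundary $C^1$, tangential, mixed and double-normal boundary estimates, and a maximum principle for a second-order quantity. However, the barrier layer, which carries the real content of this theorem, fails as written.

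\textbf{The lower barrier is on the wrong side.} Your lower barrier $\ba(|x|+\e/R)^{\frac{n-2}{n-1}}-c$ lies \emph{above} $1+\varphi$ on $\p\Omega$. The inequality $\ba r^{\frac{n-2}{n-1}}>1$ that you cite is exactly what makes $\ba r^{\frac{n-2}{n-1}}$ the paper's \emph{supersolution}, since it satisfies $F=0<f_{\e/R}$. Because $B_1\Subset\Omega$, a profile $b(|x|+\eta)^{\frac{n-2}{n-1}}$ can lie below $1+\varphi$ on $\p\Omega$ only if $b<1$. Then $1$-homogeneity gives $F=bf_{\e/R}<f_{\e/R}$ when $\eta=\e/R$, so you must also enlarge $\eta$. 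The paper uses $\bb(r+\tfrac{2\e}{\bb R})^{\frac{n-2}{n-1}}$, shifted to equal $\ba R^{-\frac{n-2}{n-1}}$ on $\p B_{1/R}$. This works because $\bb f_{2\e/(\bb R)}>f_{\e/R}$, as $f_\eta$ is essentially linear in $\eta$ for $\eta\ll r$. This barrier is what gives $\p_\nu\ure\ge cR^{\frac1{n-1}}$ on $\p B_{1/R}$, hence a tangential block $\gtrsim R^{\frac n{n-1}}I$ there. The Hopf lemma alone gives no rate.

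\textbf{The upper barrier on $\p\Omega$ is too weak.} The harmonic function with the boundary data of $\ure$ on $\Omega\setminus B_{1/R}$ converges to the harmonic extension of $1+\varphi$ as $R\to\infty$, because the hole has capacity $\sim R^{2-n}$. With $\nu$ the inward normal, it only yields $\p_\nu\ure\le C\ev-cR^{2-n}$, which need not be negative. So $-\p_\nu\ure\,\kappa+\varphi_{\alpha\beta}$ is not shown to be positive definite, since $\ev$ is fixed independently of $R$.

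The missing idea is as follows. Use the radial supersolution to get $\ure<\frac14$ on a fixed sphere $\p B_{r_0}$. Then compare the subharmonic $\ure$, on the fixed annulus $\Omega\setminus B_{r_0}$, with a strictly superharmonic function equal to $1+\varphi$ on $\p\Omega$; the paper uses $-\frac12e^{-N\fb}+\frac12e^{-N}+1+\varphi$. This gives $\p_\nu\ure\le-c$ uniformly in $R$ and $\e$.

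Once both boundary components have pinched tangential blocks, the double-normal estimate is immediate. The expression $\s_{n-1}-f\s_{n-2}$ is affine in $u_{\nu\nu}$, with coefficient $\s_{n-2}(u_{\alpha\beta})-f\,\s_{n-3}(u_{\alpha\beta})$ bounded below, so one simply solves for $u_{\nu\nu}$. The CNS minimum-point argument you single out as the main obstacle is not needed. As you sketch it, it is also unclear that it would give constants independent of $\e$.

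\textbf{Gradient reduction.} The claim ``subharmonicity reduces the global gradient bound to the boundary'' is false: a subharmonic function can have an interior maximum of $|Du|$. What works, with $\mathcal L=F^{ij}\p_i\p_j$, is the following:
\begin{itemize}
\item $\mathcal L u_\xi=\p_\xi f_{\e/R}$;
\item $\mathcal L\ure=f_{\e/R}$, by $1$-homogeneity;
\item $|Df_{\e/R}|\le 3f_{\e/R}/r<3Rf_{\e/R}$.
\end{itemize}
Hence $\mathcal L(D_\xi\ure+4R\ure)>0$, the maximum is attained on the boundary, and $|D\ure|<5R$.

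\textbf{Mixed-derivative barrier at the inner sphere.} A barrier $A\rho-B|x-x_0|^2$ with convex $\rho$ does not exist at $\p B_{1/R}$, where the domain is concave. There the paper uses a strict subsolution $\underline v$ together with concavity, $F^{ij}(D^2u)w_{ij}\ge F(D^2w)$, to build $h=u-\underline v+\theta|\tilde x|^2$.

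\textbf{The exponent must be tracked.} The theorem asserts the specific power $R^{3-\frac1{n-1}}$, so crude bounds are not enough. The chain is:
\begin{itemize}
\item $|D\ure|<5R$ gives $|(\ure)_{\tau\nu}|\lesssim R^2$ on $\p B_{1/R}$;
\item the tangential block there is $\sim R^{\frac n{n-1}}$, so $|(\ure)_{\nu\nu}|\lesssim R^4/R^{\frac n{n-1}}=R^{3-\frac1{n-1}}$;
\item $\mathcal L(\Delta\ure+2(n-1)R^2\ure)>0$ together with $|D^2\ure|\le\Delta\ure$ (since $\s_2>0$) finishes. Your $\la_{\max}+Au$ with $A\sim R^2$ would work equally well.
\end{itemize}
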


\subsection{$C^0$-estimates}
\label{c0-est}
We shall choose a positive constant $\bb=\bb(\Omega, \{0\}, n)>0$ such that $\bb r^{\frac{n-2}{n-1}}<9/10<1-\e_\varphi$ on $\p\Omega$ (used our assumption that $\e_\varphi<1/20$ here). Recall that by our assumption that $B_1(0)\ssubset\Omega,$ we get $\bb<9/10.$ It is also clear that $\ba>\bb.$
As we mentioned before, all bold letters in this paper are fixed constants. So, $\bb$ is a fixed constant throughout this paper.
Now, let $\hat\Psi^{R, \e}:=\bb\lt(r+\frac{2\e}{\bb}\rt)^{\frac{n-2}{n-1}},$ a straightforward calculation gives
\[F(D^2\hat\Psi^{R,\e})=\bb f_{\frac{2\e}{\bb}}=\frac{2\e}{\lt(\frac{n-1}{n-2}\rt)r\lt(r+\frac{2\e}{\bb}\rt)^{\frac{1}{n-1}}\lt[\frac{n}{2}r+(n-1)\frac{2\e}{\bb}\rt]}.\]
Since $\bb<9/10$ is fixed and $\e\ll R^{-1},$ we can see when $R>R_0>0$ with $R_0$ being sufficiently large, we have
\be\label{rangef}
2f_\e>F(D^2\hat\Psi^{R,\e})>f_\e\,\,\mbox{in $\Omega^R\setminus B_1,$}
\ee
 $$\hat\Psi^{R, \e}=\bb\lt(1+\frac{2\e}{\bb}\rt)^{\frac{n-2}{n-1}}<\ba\,\,\mbox{on $\p B_1,$}$$
and
$$\hat\Psi^{R, \e}=\bb\lt(R+\frac{2\e}{\bb}\rt)^{\frac{n-2}{n-1}}<\frac{19}{20}R^{\frac{n-2}{n-1}}<R^{\frac{n-2}{n-1}}+R^{\frac{n-2}{n-1}}\varphi\lt(\frac{x}{R}\rt)\,\,\mbox{on $\p\Omega^R$}.$$
Therefore, $\hat\Psi^{R, \e}$ is a subsolution of \eqref{eq-appr-s}. Consequently, $\Psi^{R, \e}(x)=R^{-\frac{n-2}{n-1}}\hat\Psi^{R, \e}(Rx)=\bb\lt(r+\frac{2\e}{\bb R}\rt)^{\frac{n-2}{n-1}}$
is a subsolution of \eqref{eq-appr}. On the other hand, it is easy to see that the function $$\bar\Psi:=\ba r^{\frac{n-2}{n-1}}$$ is an admissible supersolution to both equations \eqref{eq-appr}
and \eqref{eq-appr-s}. The standard maximum principle implies
$$\bb\lt(r+\frac{2\e}{\bb R}\rt)^{\frac{n-2}{n-1}}<\ure\leq\ba r^{\frac{n-2}{n-1}}\,\,\mbox{on $\ol{\Omega\setminus B_{1/R}}.$}$$
We conclude
\begin{proposition}
\label{c0-prop}
Given $\eta_0=\eta_0(\Omega)>0, \ev=\ev(\Omega, \{0\}, n)>0$ sufficiently small, $R_0=R_0(\Omega)>0$ sufficiently large, for any $0<\e<\eta_0 R^{-8},$ $R>R_0,$ and $\varphi\in C^{\infty}(\bar\Omega)$ that satisfies $\varphi\leq0$ and $\|\varphi\|_{C^2}\leq \ev$ let $\ure$ be the admissible solution of \eqref{eq-appr}. Then $\ure$ satisfies
\be\label{c0}
\bb r^{\frac{n-2}{n-1}}<\ure\leq\ba r^{\frac{n-2}{n-1}}\,\,\mbox{on $\ol{\Omega\setminus B_{1/R}}.$}
\ee
Let $\hure$ be the admissible solution of \eqref{eq-appr-s}, then $\hure$ satisfies
\be\label{c0-s}
\bb r^{\frac{n-2}{n-1}}<\hure\leq\ba r^{\frac{n-2}{n-1}}\,\,\mbox{on $\ol{\Omega^R\setminus B_1}.$}
\ee
Here $\ba=\ba(\Omega, \{0\}, n), \bb=\bb(\Omega, \{0\}, n)>0$ are positive constants that only depend on $\Omega,$ $\{0\},$ and $n.$
\end{proposition}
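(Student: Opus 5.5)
The plan is a comparison argument between $\ure$ and the two explicit barriers built just before the statement, followed by dropping the small shift in the lower bound. I will prove the rescaled statement \eqref{c0-s} first and then obtain \eqref{c0} by scaling: $\ure(x)=R^{-\frac{n-2}{n-1}}\hure(Rx)$, and both $\bb r^{\frac{n-2}{n-1}}$ and $\ba r^{\frac{n-2}{n-1}}$ are invariant under this rescaling.

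\textbf{Lower barrier.} Take $\hat\Psi^{R,\e}=\bb\lt(r+\frac{2\e}{\bb}\rt)^{\frac{n-2}{n-1}}$. By the computation \eqref{def-f-eta} with $\eta=2\e/\bb$, it is admissible and $F(D^2\hat\Psi^{R,\e})=\bb f_{2\e/\bb}>f_\e$ on $\Omega^R\setminus B_1$; this is \eqref{rangef}, which uses $\e\ll R^{-1}$. On the boundary:
\begin{itemize}
\item On $\p B_1$ we have $\hat\Psi^{R,\e}<\ba$, because $\bb<9/10<1<\ba$ and $\e$ is tiny.
\item On $\p\Omega^R$ we have $\hat\Psi^{R,\e}<\frac{19}{20}R^{\frac{n-2}{n-1}}\le R^{\frac{n-2}{n-1}}\bigl(1+\varphi(x/R)\bigr)$. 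This uses the choice of $\bb$ together with $|\varphi|\le\ev<1/20$.
\end{itemize}

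\textbf{Upper barrier.} Take $\bar\Psi=\ba r^{\frac{n-2}{n-1}}$. Its eigenvalues lie in $\bar\Gamma_{n-1}$, it is admissible, and $F(D^2\bar\Psi)=0<f_\e$. On $\p B_1$ it equals $\ba$. On $\p\Omega^R$ it is at least $R^{\frac{n-2}{n-1}}\ge R^{\frac{n-2}{n-1}}(1+\varphi)$, since $\ba r^{\frac{n-2}{n-1}}>1$ on $\p\Omega$ (after rescaling) and $\varphi\le 0$.

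\textbf{Comparison.} For admissible functions, $F(D^2w)>F(D^2v)$ in the interior with $w\le v$ on the boundary forces $w<v$ inside, provided $F$ is elliptic along the segment between them. The standard argument is to look at an interior maximum of $w-v$. There $D^2w\le D^2v$, so monotonicity of $F$ on $\Gamma_{n-1}$ gives $F(D^2w)\le F(D^2v)$, a contradiction. To avoid working on the boundary of the cone, I would use concavity and monotonicity of $\s_{n-1}/\s_{n-2}$ on $\Gamma_{n-1}$ (closure arguments allow $\bar\Gamma_{n-1}$ for $\bar\Psi$), or equivalently write $F(D^2w)-F(D^2v)=a^{ij}(w-v)_{ij}$ with $a^{ij}$ positive definite.

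Applying this once with $w=\hat\Psi^{R,\e}$, $v=\hure$, and once with $w=\hure$, $v=\bar\Psi$, gives
\[\hat\Psi^{R,\e}<\hure\le\bar\Psi\quad\text{on }\ol{\Omega^R\setminus B_1}.\]
Finally $\bb r^{\frac{n-2}{n-1}}<\hat\Psi^{R,\e}$ because $\e>0$, which yields \eqref{c0-s}.

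The only delicate point is the comparison step. The upper barrier $\bar\Psi$ lies on the degenerate boundary $F=0$, so $D^2\bar\Psi$ may sit only in $\bar\Gamma_{n-1}$, and the linearized operator must be justified there. I would handle this by perturbing to $\ba(r+\delta)^{\frac{n-2}{n-1}}$. This is strictly admissible, and its boundary value on $\p B_1$ exceeds $\ba$, so it still lies above $\hure$ on the boundary. Letting $\delta\to0$ then recovers the upper bound.
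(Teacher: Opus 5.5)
Your proposal is correct and follows the paper's proof: the same subsolution $\hat\Psi^{R,\e}$ and supersolution $\bar\Psi=\ba r^{\frac{n-2}{n-1}}$, the same boundary checks, the maximum principle, dropping the $\e$-shift, and rescaling. The perturbation $\ba(r+\delta)^{\frac{n-2}{n-1}}$ is unnecessary, because your interior-maximum argument already works: $D^2\bar\Psi\ge D^2\hure$ would force $F(D^2\bar\Psi)\ge f_\e>0=F(D^2\bar\Psi)$. If you keep the perturbation, you must also take $\delta$ small relative to $\e$, so that its value $\ba f_\delta$ stays below $f_\e$.
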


\subsection{$C^1$-estimates}
\label{c1-est}
We shall start this subsection by studying gradient estimates on the inside boundary.
\subsubsection{$C^1$ estimates on $\p B_{1/R}$}
\label{subsub-c1-inside}
\begin{lemma}
\label{c1-inside-lem}
Given $\eta_0=\eta_0(\Omega)>0, \ev=\ev(\Omega, \{0\}, n)>0$ sufficiently small, $R_0=R_0(\Omega)>0$ sufficiently large, for any $0<\e<\eta_0 R^{-8},$ $R>R_0,$ and $\varphi\in C^{\infty}(\bar\Omega)$ that satisfies $\varphi\leq0$ and $\|\varphi\|_{C^2}\leq \ev$
let $\ure$ be the admissible solution of \eqref{eq-appr}. Then on $\p B_{1/R}$ we have
\be\label{c1-inside}
\frac{n-2}{n-1}\bb\lt(1+\frac{2\e}{\bb}\rt)^{-\frac{1}{n-1}} R^{\frac{1}{n-1}}<\frac{\p\ure}{\p\nu}<\frac{n-2}{n-1}\ba R^{\frac{1}{n-1}}.
\ee
Let $\hure$ be the admissible solution of \eqref{eq-appr-s}, then on $\p B_1$ we have
\be\label{c1-inside-s}
\frac{n-2}{n-1}\bb\lt(1+\frac{2\e}{\bb}\rt)^{-\frac{1}{n-1}} <\frac{\p\hure}{\p\nu}<\frac{n-2}{n-1}\ba.
\ee
Here, $\nu$ is the inward unit normal (i.e., pointing away from the origin).
\end{lemma}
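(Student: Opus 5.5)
The plan is a barrier comparison on $\p B_{1/R}$, run in the rescaled picture. I prove \eqref{c1-inside-s} for $\hure$ on $\p B_1$. The estimate \eqref{c1-inside} then follows from the scaling $\ure(x)=R^{-\frac{n-2}{n-1}}\hure(Rx)$, since $\frac{\p\ure}{\p\nu}(x)=R^{1-\frac{n-2}{n-1}}\frac{\p\hure}{\p\nu}(Rx)=R^{\frac{1}{n-1}}\frac{\p\hure}{\p\nu}(Rx)$. The two comparison functions are the ones already built in the proof of Proposition \ref{c0-prop}: the subsolution $\hat\Psi^{R,\e}=\bb\lt(r+\frac{2\e}{\bb}\rt)^{\frac{n-2}{n-1}}$ and the supersolution $\bar\Psi=\ba r^{\frac{n-2}{n-1}}$.

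\textbf{Upper bound.} Since $\hure\leq\bar\Psi$ in $\ol{\Omega^R\setminus B_1}$ and $\hure=\ba=\bar\Psi$ on $\p B_1$, the difference $\bar\Psi-\hure$ is nonnegative and vanishes on $\p B_1$. Hence its derivative in the outward-from-origin direction $\nu$ is nonnegative there, which gives
\[
\frac{\p\hure}{\p\nu}\leq\frac{\p\bar\Psi}{\p r}\Big|_{r=1}=\frac{n-2}{n-1}\ba .
\]
\textbf{Lower bound.} The function $\hat\Psi^{R,\e}$ is strictly below $\ba$ on $\p B_1$, so it does not touch $\hure$ there and cannot be used directly. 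I will instead take the shifted barrier $\underline\Psi:=\hat\Psi^{R,\e}+\ba-\bb\lt(1+\frac{2\e}{\bb}\rt)^{\frac{n-2}{n-1}}$. Adding a constant does not change the Hessian, so $F(D^2\underline\Psi)>f_\e$ by \eqref{rangef}, and $\underline\Psi$ is admissible. By construction $\underline\Psi=\ba=\hure$ on $\p B_1$. On $\p\Omega^R$ I must check that $\underline\Psi<R^{\frac{n-2}{n-1}}(1+\varphi(x/R))$. The added constant is at most $\ba$, a fixed number, while the earlier inequality gave $\hat\Psi^{R,\e}<\frac{19}{20}R^{\frac{n-2}{n-1}}$ with a gap of order $R^{\frac{n-2}{n-1}}$. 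So this holds once $R_0$ is large, using $\bb r^{\frac{n-2}{n-1}}<9/10$ on $\p\Omega$ and $\e_\varphi<1/20$.

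The comparison principle for admissible functions, used exactly as in Proposition \ref{c0-prop}, then gives $\underline\Psi\leq\hure$. Both functions equal $\ba$ on $\p B_1$, so
\[
\frac{\p\hure}{\p\nu}\geq\frac{\p\underline\Psi}{\p r}\Big|_{r=1}=\frac{n-2}{n-1}\bb\lt(1+\frac{2\e}{\bb}\rt)^{-\frac{1}{n-1}}.
\]
\textbf{Strictness.} To get strict inequalities I will apply the Hopf lemma to the differences $\bar\Psi-\hure$ and $\hure-\underline\Psi$. Each difference satisfies a linear uniformly elliptic inequality near $\p B_1$, obtained by integrating $F^{ij}$ along the segment between the two Hessians; this uses admissibility and the convexity of the cone. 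The differences are not identically zero: for the lower barrier the right-hand sides differ ($F(D^2\underline\Psi)>f_\e$), and for the upper barrier $F(D^2\bar\Psi)=0<f_\e$.

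The step needing the most care is the comparison with the shifted subsolution. One must confirm that the comparison principle applies, i.e.\ that the linearized operator stays elliptic along the segment joining the two Hessians, and that the boundary inequality on $\p\Omega^R$ holds uniformly in the allowed range of $\e$ and $R$. Both should follow from the admissibility setup and the smallness $\e<\eta_0R^{-8}$.
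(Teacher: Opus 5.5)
Your proposal is correct and follows the paper's proof essentially verbatim. Your shifted barrier $\underline\Psi$ is exactly the paper's $\ubar\phi^{R,\e}$; it is compared against $\hure$ and the supersolution $\bar\Psi=\ba r^{\frac{n-2}{n-1}}$, all three equal $\ba$ on $\p B_1$, and the result is transferred to $\ure$ by rescaling. Your explicit Hopf-lemma step for the strict inequalities fills in what the paper compresses into ``strong maximum principle \dots follows immediately.'' For the upper barrier, note that $D^2\bar\Psi$ lies on $\p\Gamma_{n-1}$, but $\s_{n-2}>0$ and $F^{ij}|_{\bar\Psi}$ is positive definite there, so the averaged linear operator is still uniformly elliptic near $\p B_1$.
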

\begin{proof}
We shall consider equation \eqref{eq-appr-s} and prove inequality \eqref{c1-inside-s}. Inequality \eqref{c1-inside} then follows by rescaling.
Consider
$$\ubar\phi^{R, \e}=\bb\lt(r+\frac{2\e}{\bb}\rt)^{\frac{n-2}{n-1}}-\bb\lt(1+\frac{2\e}{\bb}\rt)^{\frac{n-2}{n-1}}+\ba.$$
It is clear that on $\p B_1$ we have
\[\ubar\phi^{R, \e}=\ba,\] and
on $\p\Omega^R$ when $R>R_0>0$ with $R_0$ being sufficiently large we have
$$\ubar\phi^{R, \e}<\frac{19}{20}R^{\frac{n-2}{n-1}}<R^\frac{n-2}{n-1}+R^{\frac{n-2}{n-1}}\varphi\lt(\frac{x}{R}\rt).$$
Moreover, recall \eqref{rangef} we have $F(D^2\ubar\phi^{R, \e})=\bb f_{\frac{2\e}{\bb}}>f_\e,$ the standard strong maximum principle then implies
\[\ubar\phi^{R, \e}<\hure<\bar\Psi\,\,\mbox{in $\Omega^R\setminus \bar B_1,$}\]
where $\bar\Psi=\ba r^{\frac{n-2}{n-1}}$ is defined in Subsection \ref{c0-est}. Since $\ubar\phi^{R, \e}=\hure=\bar\Psi=\ba$ on $\p B_1,$ \eqref{c1-inside-s} follows
immediately. In view of the relation $\ure(x)=R^{-\frac{n-2}{n-1}}\hure(Rx),$ we obtain \eqref{c1-inside}.
\end{proof}

\subsubsection{$C^1$ estimates on $\p\Omega$}
\label{subsub-c1-outside}
To establish $C^1$ gradient estimates on the outside boundary $\p\Omega,$ we consider the original equation \eqref{eq-appr}. Note that, since in general $\p\Omega$ is not a sphere and the boundary value is not constant, the construction of barriers on $\p\Omega$ is more complicated.

We want to recall a well known fact: a strictly convex domain $\Omega$ is strictly star-shaped with respect to any point $p\in\Omega.$
Therefore, by Appendix \ref{app}, there exists a regularized distance defined in $\Omega.$ We will use this regularized distance to construct barriers on $\p\Omega.$ In particular, since $\Omega$ is strictly star-shaped with respect to the origin, we may parameterize $\p\Omega$ as a graph of radial function $\rho(\theta): \mathbb S^{n-1}\goto\R,$ i.e.,
$\p\Omega:=\{\rho(\theta)\theta: \theta\in\mathbb S^{n-1}\}.$ Note that since $B_1\ssubset\Omega,$ we have $\rho(\theta)>1.$
Let $\fb:=\frac{r}{\rho(\theta)}$ be a function defined on $\mathbb S^{n-1}\times \R_+,$ as we pointed out in Appendix \ref{app}, $1-\fb$ is a regularized distance function for $\Omega.$ In the following, we will use $\fb$ to construct barriers on $\p\Omega.$

We start with the construction of the lower barrier on $\p\Omega.$ Consider
\[\ubar\phi:=2(\fb-1)+1+\varphi.\]
Since $\fb=\frac{r}{\rho(\theta)}=1$ on $\p\Omega,$ we have \[\ubar\phi=1+\varphi\,\,\mbox{on $\p\Omega.$}\]
On $B_{1/R},$ we have
\begin{align*}
\ubar\phi&=2\lt(\frac{1}{\rho(\theta)R}-1\rt)+1+\varphi\\
&<\frac{2}{R}-1<\ba R^{-\frac{n-2}{n-1}},
\end{align*}
here we have used our assumptions $\rho(\theta)>1$ and $\varphi\leq 0.$
Now, by virtue of \eqref{hessian-b} we know for any $x\in\Omega\setminus B_{1/R},$ we can find a local coordinate system such that at $x=(p, r)=(\frac{x}{|x|}, |x|)\in\mathbb S^{n-1}\times\R_+$
\[
\begin{aligned}
\text{Hessian}(\ubar\phi)&=\left[\ju{ccccc}{\frac{2w^3}{r}a_{11}+\varphi_{11}&\frac{2w^2}{r}a_{12}+\varphi_{12}&\cdots&\frac{2w^2}{r}a_{1n-1}+\varphi_{1n-1}&\varphi_{1n}\\
\frac{2w^2}{r}a_{12}+\varphi_{12}&\frac{2w}{r}a_{22}+\varphi_{22}&\cdots&\varphi_{2n-1}&\varphi_{2n}\\
\vdots&\vdots&\ddots&\vdots\\
\frac{2w^2}{r}a_{1n-1}+\varphi_{1n-1}&\varphi_{2n-1}&\cdots&\frac{2w}{r}a_{n-1n-1}+\varphi_{n-1n-1}&\varphi_{n-1n}\\
\varphi_{1n}&\varphi_{2n}&\cdots&\varphi_{n-1n}&\varphi_{nn}}\right],
\end{aligned}
\]
where the eigenvalues of $(a_{ij})$ are the principal curvatures of $\partial\Omega$ at the point $\rho(p)p$ and
$w=\sqrt{1+\frac{|\nabla\rho(p)|^2}{\rho(p)^2}}.$
Note that for any $n\times n$ matrices $A$ and $B,$ we can express $\s_k(A+B)$ as the sum of all $k\times k$ principal minors of $A+B.$ In view of equation (2.5) in \cite{Tru95},
a straightforward calculation yields for any $x\in\Omega\setminus B_{1/R}$
\be\label{c1-outside-1}
\begin{aligned}
\s_{n-1}(D^2\ubar\phi)&\geq w^2\lt(\frac{2w}{r}\rt)^{n-1}a_{11}\s_{n-2}(a_{\al\beta})-w^2\sum\limits_{s=2}^{n-1}\lt(\frac{2w}{r}\rt)^{n-1}\s_{n-3}(a_{\al\beta}|s)a_{1s}^2
-\sum\limits_{l=1}^{n-1}c_lr^{-(n-1-l)}\ev^{l}\\
&= w^2\lt(\frac{2w}{r}\rt)^{n-1}\s_{n-1}(a_{ij})-\sum\limits_{l=1}^{n-1}c_lr^{-(n-1-l)}\ev^{l}\\
&\geq\frac{c_0}{r^{n-1}}-\sum\limits_{l=1}^{n-1}c_lr^{-(n-1-l)}\ev^{l},
\end{aligned}
\ee
where $2\leq\al, \beta\leq n-1,$ $1\leq i, j\leq n-1,$ $(a_{\al\beta}|s)$ is obtained by deleting the $s^{th}$ row and $s^{th}$ column of the $(n-2)\times (n-2)$ matrix $(a_{\al\beta}),$ and $c_0=c_0(\Omega)>0,$ $c_{n-1}=c_{n-1}(n)>0,$ $c_l=c_l(\Omega, \{0\}, n)>0$ for $1\leq l\leq n-2.$
Therefore, when $\ev=\ev(\Omega, \{0\}, n)>0$ is sufficiently small, we have
\[\s_{n-1}(D^2\ubar\phi)>\frac{c_0}{2r^{n-1}}.\]
Similarly, we can show that when $\ev=\ev(\Omega, \{0\}, n)>0$ is sufficiently small, there exists $a_k=a_k(\Omega)>0$ so that
\[\begin{aligned}
\s_{k}(D^2\ubar\phi)&\geq
\lt(\frac{2w}{r}\rt)^{k}\lt\{w^2\lt[a_{11}\s_{k-1}(a_{\al\beta})-\sum\limits_{s=2}^{n-1}a^2_{1s}\s_{k-2}(a_{\al\beta}|s)\rt]+\s_k(a_{\al\beta})\rt\}
-\sum\limits_{l=1}^{k}c_lr^{-(k-l)}\ev^{l}\\
&\geq \lt(\frac{2w}{r}\rt)^{k}\s_k(a_{ij})-\sum\limits_{l=1}^{k}c_lr^{-(k-l)}\ev^{l}\\
&\geq \frac{a_k}{r^{k}}>0,\,\,\mbox{for all}\,\, 1\leq k\leq n-2,
\end{aligned}\]
 this implies that $\la[D^2\ubar\phi]\in\Gamma_{n-1}.$ Here, to derive the second inequality, we used the fact that $(a_{ij})_{(n-1)\times(n-1)}$ is a positive definite matrix (thus all principal minors are positive) which gives $$a_{11}\s_{k-1}(a_{\al\beta})-\sum\limits_{s=2}^{n-1}a^2_{1s}\s_{k-2}(a_{\al\beta}|s)>0.$$
Moreover, we can bound $\s_{n-2}(D^2\ubar\phi)$ from above
\be\label{c1-outside-2}
\begin{aligned}
0&<\s_{n-2}(D^2\ubar\phi)< w^2\lt(\frac{2w}{r}\rt)^{n-2}\s_{n-2}(a_{ij})+\sum\limits_{l=1}^{n-2}b_lr^{-(n-2-l)}\ev^l\\
&\leq\frac{b_0}{r^{n-2}}+\sum\limits_{l=1}^{n-2}b_lr^{-(n-2-l)}\ev^l,
\end{aligned}
\ee
where $b_0=b_0(\Omega, \{0\})>0,$ $b_{n-2}=b_{n-2}(n)>0,$ and $b_l=b_l(\Omega, \{0\}, n)>0$ for $1\leq l\leq n-3.$ Combining \eqref{c1-outside-1} and \eqref{c1-outside-2} together we get, when $\ev=\ev(\Omega, \{0\}, n)>0$ sufficiently small
\be\label{c1-outside-3}
F(D^2\ubar\phi)>\frac{c_0}{2rb_0+2\sum\limits_{l=1}^{n-2}b_lr^{l+1}\ev^l}>\frac{d_0}{r},
\ee
where, $d_0=d_0(\Omega, \{0\}, n)>0.$

Now, notice that in $\Omega\setminus B_{1/R}$ we have
\[f_{\frac{\e}{R}}<\frac{2(n-2)\e R^{\frac{n}{n-1}}}{n(n-1)}<\eta_0 R^{-6}<\frac{d_0}{r}\]
for all $0<\e<\eta_0 R^{-8}$ and $R>R_0$. Therefore, we conclude that $\ubar\phi$ is a subsolution of \eqref{eq-appr}.

Next, we construct the upper barrier on $\p\Omega.$ Since $\ure$ is the admissible solution of \eqref{eq-appr} we have $\s_1(D^2\ure)>0,$ i.e., $\ure$ is a subharmonic function.
In order to construct
an upper barrier for $\ure,$ we only need to construct a superharmonic function.
By Proposition \ref{c0-prop} we always have $\ure<\ba r^{\frac{n-2}{n-1}}$ in $\Omega\setminus B_{1/R}.$ This gives
\[\ure<\frac{1}{4}\,\,\mbox{on $\p B_{r_0}=\{x: |x|=r_0=\lt(\frac{1}{4\ba}\rt)^{\frac{n-1}{n-2}}$\}}.\]
In view of our assumption that $\ba>1,$ $B_1\ssubset\Omega,$ and $R>R_0$ for some $R_0$ sufficiently large, we get $\p B_{r_0}\subset \Omega\setminus\bar B_{1/R}.$
Now, consider
\[\bar\phi=-\frac{1}{2}e^{-N\fb}+\frac{1}{2}e^{-N}+1+\varphi,\]
where $N=N(\Omega, \{0\}, n)>0$ is a positive constant to be determined. In the following, we will denote
 \[\phi(\fb):=-\frac{1}{2}e^{-N\fb}.\]
 We shall show that for an appropriately chosen $N,$ when $\ev$ is sufficiently small $\bar\phi$ is a superharmonic function
in $\Omega\setminus B_{r_0}$.
Similar to the construction of $\ubar\phi,$  we apply \eqref{hessian-phi} and obtain for any $x\in\Omega\setminus B_{r_0},$ we can find a local coordinate system such that at $x=(p, r)=(\frac{x}{|x|}, |x|)\in\mathbb S^{n-1}\times\R_+$
 \begin{align*}
\text{Hessian}(\phi)&=\left[\ju{ccccc}{\frac{Mw^3}{r}a_{11}+B\rho^{-4}\rho_1^2&\frac{Mw^2}{r}a_{12}&\cdots&\frac{Mw^2}{r}a_{1n-1}&-B\rho^{-3}\rho_1\\
\frac{Mw^2}{r}a_{12}&\frac{Mw}{r}a_{22}&\cdots&0&0\\
\vdots&\vdots&\ddots&\vdots\\
\frac{Mw^2}{r}a_{1n-1}&0&\cdots&\frac{Mw}{r}a_{n-1n-1}&0\\
-B\rho^{-3}\rho_1&0&\cdots&0&B\rho^{-2}}\right],
\end{align*}
where the eigenvalues of $(a_{ij})$ are the principal curvatures of $\partial\Omega$ at the point $\rho(p)p,$ $M=\frac{d\phi}{d\fb}=\frac{1}{2}Ne^{-N\fb},$ $B=\frac{d^2\phi}{d\fb^2}=-\frac{1}{2}N^2e^{-N\fb}=-NM,$ and $\rho_1(p)=|\nabla\rho(p)|.$
Since $w=\sqrt{1+\frac{|\nabla\rho|^2}{\rho^2}},$ we obtain on $\Omega\setminus B_{r_0}$
\begin{align*}
\sigma_1(D^2\phi)&=\frac{Mw^3}{r}a_{11}+\sum\limits_{\al=2}^{n-1}\frac{Mw}{r}a_{\al\al}+B\rho^{-2}w^2\\
&\leq\frac{M}{r_0}w^3\sum\limits_{i=1}^{n-1}a_{ii}-NM\rho^{-2}\\
&<\frac{M}{r_0}C_1-NMC_2,
\end{align*}
where $C_1, C_2>0$ are constants that only depend on $\Omega$ and $\{0\}.$ More specifically, $C_1=C_1(\|\rho\|_{C^2(\mathbb S^{n-1})})$ and $C_2=C_2(\|\rho\|_{C^0(\mathbb S^{n-1})}).$
From now on, we shall fix the constant $N$ to be
\be\label{eq-N-1}
N=\frac{2C_1}{C_2r_0}.
\ee
Then on the set $\Omega\setminus B_{r_0}$ we get
\[\sigma_1(D^2\phi)<-\frac{M}{r_0}C_1<-\frac{C_1^2}{C_2r_0^2}e^{-\frac{2C_1}{C_2r_0}}=:-C_3,\]
for some $C_3=C_3(\Omega, \{0\}, n)>0.$ Here, we have used $\fb<1$ in $\Omega\setminus B_{r_0}.$
When $\e_\varphi=\ev(\Omega, \{0\}, n)>0$ is sufficiently small we have
\[\sigma_1(D^2\bar\phi)<-C_3+n\e_\varphi<0\,\,\mbox{in $\Omega\setminus B_{r_0}.$}\]
On the other hand, it is not hard to check
\[\mbox{on $\p\Omega,$}\,\, \bar\phi=1+\varphi\]
and
\[\mbox{on $\p B_{r_0},$}\,\, \bar\phi>-\frac{1}{2}+1+\varphi>\frac{1}{4}.\]
Since $\sigma_1(D^2\ure)>0$, we conclude that $\bar\phi$ is the desired upper barrier for $\ure$ in $\Omega\setminus B_{r_0}.$

The standard maximum principle then implies
\[\ubar\phi<\ure<\bar\phi\,\,\mbox{in $\Omega\setminus B_{r_0}.$}\]
By virtue of the fact that $\ubar\phi=\ure=\bar\phi=1+\varphi$ on $\p\Omega,$ we obtain
\begin{lemma}
\label{c1-outside-lem}
Given $\eta_0=\eta_0(\Omega)>0, \ev=\ev(\Omega, \{0\}, n)>0$ sufficiently small, $R_0=R_0(\Omega)>0$ sufficiently large, for any $0<\e<\eta_0 R^{-8},$ $R>R_0,$ and $\varphi\in C^{\infty}(\bar\Omega)$ that satisfies $\varphi\leq0$ and $\|\varphi\|_{C^2}\leq \ev$ let $\ure$ be the admissible solution of \eqref{eq-appr}. Then on $\p\Omega$ we have
\be\label{c1-outside}
-d<\frac{\p\ure}{\p\nu}<-c.
\ee
Let $\hure$ be the admissible solution of \eqref{eq-appr-s}, then on $\p \Omega^R$ we have
\be\label{c1-outside-s}
-d R^{-\frac{1}{n-1}} <\frac{\p\hure}{\p\nu}<-c R^{-\frac{1}{n-1}}.
\ee
Here, $d=d(\Omega, \{0\}, n), c=c(\Omega, \{0\}, n)>0$ are positive constants that are independent of $R$ and $\e;$ $\nu$ is the inward unit normal (i.e., pointing to the origin).
\end{lemma}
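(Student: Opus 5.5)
The two barriers built just before the statement, $\ubar\phi\le\ure\le\bar\phi$ in $\Omega\setminus B_{r_0}$ with equality to $1+\varphi$ on $\p\Omega$, reduce the estimate to a comparison of inward normal derivatives. The scaled statement \eqref{c1-outside-s} will then follow from $\hure(x)=R^{\frac{n-2}{n-1}}\ure(x/R)$. One derivative gives a factor $R^{-1}$, so $\p_\nu\hure=R^{\frac{n-2}{n-1}-1}(\p_\nu\ure)(x/R)=R^{-\frac{1}{n-1}}\p_\nu\ure$ on $\p\Omega^R$. The real work is therefore \eqref{c1-outside}.

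\emph{Ordering of normal derivatives.} At a point $x_0\in\p\Omega$, all three functions take the value $1+\varphi(x_0)$. Moving inward along $\nu$ (toward the origin), we have $\ubar\phi\le\ure\le\bar\phi$. Since $\ure=1+\varphi$ on $\p\Omega$, its tangential derivatives agree with those of $1+\varphi$, so only the normal direction matters. Taking the one-sided difference quotient and passing to the limit gives
\[\p_\nu\ubar\phi(x_0)\le\p_\nu\ure(x_0)\le\p_\nu\bar\phi(x_0).\]
The strict inequalities I plan to get from the strong maximum principle (Hopf lemma). For the lower side, $\ubar\phi$ is a strict subsolution and the operator is uniformly elliptic along the solution on the compact set $\ol{\Omega\setminus B_{r_0}}$, so the linearized difference $\ure-\ubar\phi$ satisfies a linear elliptic inequality and Hopf applies. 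For the upper side, $\bar\phi$ is strictly superharmonic while $\ure$ is subharmonic, so $\Delta(\bar\phi-\ure)<0$ and Hopf applies to the Laplacian. Alternatively, I could give up a little in the constants and avoid Hopf.

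\emph{Computing the barrier normal derivatives.} Since $\fb=r/\rho(\theta)$ and $\fb=1$ on $\p\Omega$, $\fb$ is a defining function with $\fb<1$ inside. Hence $\p_\nu\fb=-|\nabla\fb|$ on $\p\Omega$, and $|\nabla\fb|$ is bounded above and below by positive constants depending on $\|\rho\|_{C^1}$ and $\min\rho$. This uses the star-shapedness from Appendix \ref{app}: $\nabla\fb\ne0$, since its radial component is $1/\rho\ge1/\max\rho$.
\begin{itemize}
\item For $\ubar\phi=2(\fb-1)+1+\varphi$ we get $\p_\nu\ubar\phi=-2|\nabla\fb|+\p_\nu\varphi\ge-2\max|\nabla\fb|-\ev=:-d$.
\item For $\bar\phi$ we get $\p_\nu\bar\phi=\tfrac N2e^{-N}\p_\nu\fb+\p_\nu\varphi\le-\tfrac N2e^{-N}\min|\nabla\fb|+\ev$.
\end{itemize}
Here $N$ is the constant fixed in \eqref{eq-N-1}, depending only on $(\Omega,\{0\},n)$. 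Shrinking $\ev$ if necessary so that $\ev<\tfrac N4e^{-N}\min|\nabla\fb|$, the upper bound becomes $-c$ with $c=\tfrac N4e^{-N}\min|\nabla\fb|>0$.

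\emph{Where the care is needed.} The main point is that $c$ and $d$ are independent of $R$ and $\e$. This holds because neither barrier involves $R$ or $\e$: we need $r_0$ independent of $R$, and the subsolution property of $\ubar\phi$ holds uniformly because $f_{\e/R}<d_0/r$ for all admissible $\e,R$. The only delicate issue is this last smallness of $\ev$ relative to $N$. Since $N$ depends only on $\Omega$ and $\{0\}$, the smallness of $\ev$ is consistent with the dependence $\ev=\ev(\Omega,\{0\},n)$ in the hypotheses.
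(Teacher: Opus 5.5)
Your proposal is correct and follows the paper's own route. You compare $\ure$ with the subsolution $\ubar\phi=2(\fb-1)+1+\varphi$ and the superharmonic barrier $\bar\phi$ on $\Omega\setminus B_{r_0}$, read off the normal-derivative bounds at $\p\Omega$ where all three functions coincide, and rescale to obtain \eqref{c1-outside-s}. Your explicit computation $\p_\nu\fb=-|\nabla\fb|$ and the resulting smallness condition $\ev<\tfrac N4e^{-N}\min|\nabla\fb|$ (note that $N$ also depends on $n$ through $r_0$, which is harmless) spell out a step the paper leaves implicit. Simply enlarging $d$ gives the strict lower inequality, so no Hopf argument is needed.
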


\subsubsection{$C^1$ global estimates}
\label{subsub-c1-global} Finally, we shall investigate gradient estimates of $\ure$ in $\Omega\setminus B_{1/R}.$
\begin{lemma}
\label{c1-global-lem}
Given $\eta_0=\eta_0(\Omega)>0, \ev=\ev(\Omega, \{0\}, n)>0$ sufficiently small, $R_0=R_0(\Omega)>0$ sufficiently large, for any $0<\e<\eta_0 R^{-8},$ $R>R_0,$ and $\varphi\in C^{\infty}(\bar\Omega)$ that satisfies $\varphi\leq0$ and $\|\varphi\|_{C^2}\leq \ev$ let $\ure$ be the admissible solution of \eqref{eq-appr}. Then in $\ol{\Omega\setminus B_{1/R}}$ we have
\be\label{c1-global}
\max\limits_{x\in\ol{\Omega\setminus B_{1/R}}}\lt\{|D\ure|+4R\ure\rt\}=\max\limits_{x\in\p\Omega\cup \p B_{1/R}}\lt\{|D\ure|+4R\ure\rt\}
\ee
Let $\hure$ be the admissible solution of \eqref{eq-appr-s}, then in $\ol{\Omega^R\setminus B_1}$ we have
\be\label{c1-global-s}
\max\limits_{x\in\ol{\Omega^R\setminus B_1}}\lt\{|D\hure|+4\hure\rt\}=\max\limits_{x\in\p\Omega^R\cup \p B_1}\lt\{|D\hure|+4\hure\rt\}
\ee
\end{lemma}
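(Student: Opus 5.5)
We argue by the maximum principle applied to the test function $G:=|D\hure|+4\hure$ for the rescaled problem \eqref{eq-appr-s}; the statement \eqref{c1-global} for $\ure$ then follows by rescaling. Indeed, $\ure(x)=R^{-\frac{n-2}{n-1}}\hure(Rx)$ gives $D\ure(x)=R^{\frac{1}{n-1}}(D\hure)(Rx)$ and $R\ure(x)=R^{\frac{1}{n-1}}\hure(Rx)$. Hence $|D\ure|+4R\ure=R^{\frac{1}{n-1}}G(Rx)$, and the location of the maximum is preserved. We may therefore work only with $u:=\hure$ on $\Omega^R\setminus B_1$.

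Suppose the maximum of $G$ is attained at an interior point $x_0$ where $Du(x_0)\neq0$. If $Du(x_0)=0$, then $G=4u$ there, and $u$ is subharmonic since $\s_1(D^2u)>0$; so the maximum of $u$ lies on the boundary, and the boundary value of $G$ is at least that, which settles this case. Rotate coordinates so that $Du(x_0)=|Du|e_1$, and work with the smooth function $\tilde G=u_1+4u$ near $x_0$. Note that $\tilde G\le G$ everywhere with equality at $x_0$, so $\tilde G$ also attains a local maximum at $x_0$. At $x_0$ we then have:
\begin{itemize}
\item[(i)] $\tilde G_i=u_{1i}+4u_i=0$;
\item[(ii)] $F^{ij}\tilde G_{ij}\le0$, where $F^{ij}=F^{ij}|_u$ is positive definite by admissibility.
\end{itemize}
Differentiating the equation $F(D^2u)=f_\e(|x|)$ in the $x_1$ direction gives
\[
F^{ij}u_{1ij}=\p_1 f_\e .
\]
Hence
\[
0\ge F^{ij}u_{1ij}+4F^{ij}u_{ij}=\p_1 f_\e+4F^{ij}u_{ij}.
\]
Since $F$ is homogeneous of degree one, $F^{ij}u_{ij}=F(D^2u)=f_\e$. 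We therefore obtain $0\ge \p_1 f_\e+4f_\e$.

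The key step is to show that $\p_1 f_\e+4f_\e>0$ at every point of $\Omega^R\setminus B_1$, which yields the contradiction. Here $f_\e=f_\e(r)$ is radial and positive, so $|\p_1 f_\e|\le|f_\e'(r)|$. From \eqref{def-f-eta}, $f_\e$ is, up to a constant, $\e\,r^{-1}(r+\e)^{-\frac{1}{n-1}}\bigl[\tfrac n2 r+(n-1)\e\bigr]^{-1}$. Its logarithmic derivative satisfies
\[
\frac{|f_\e'|}{f_\e}\le\frac1r+\frac{1}{(n-1)(r+\e)}+\frac{n/2}{\frac n2r+(n-1)\e}\le\frac3r\le3
\]
for $r\ge1$. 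Therefore $\p_1 f_\e+4f_\e\ge f_\e>0$, a contradiction.

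The main subtlety is the non-smoothness of $|Du|$, which we handle through the comparison with $\tilde G$ above (alternatively one can use $\sqrt{|Du|^2+\delta}$ and let $\delta\to0$). The rest is bookkeeping: the factor $4$ (and correspondingly $4R$ before rescaling) is chosen precisely to beat the logarithmic derivative bound $3/r$ on $r\ge1$. We conclude that the maximum of $G$ occurs on $\p\Omega^R\cup\p B_1$, and hence the maximum of $|D\ure|+4R\ure$ occurs on $\p\Omega\cup\p B_{1/R}$.
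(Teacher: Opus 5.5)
Your proof is correct and is essentially the paper's argument. The paper maximizes $D_\xi\ure+4R\ure$ jointly over $(x,\xi)$, which is equivalent to your comparison $\tilde G\le G$ and makes your separate $Du(x_0)=0$ case unnecessary; it then differentiates the equation, uses $F^{ij}u_{ij}=f$, and applies the logarithmic-derivative bound \eqref{fr} to get positivity on $r>1/R$. The only cosmetic difference is that you prove the rescaled statement and deduce the original, whereas the paper proves the original and notes that the rescaled one follows the same way.
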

\begin{proof}
We shall only prove \eqref{c1-global}, \eqref{c1-global-s} can be proved in the same way. Consider
$$W:=\max\limits_{x\in\ol{\Omega\setminus B_{1/R}}, \xi\in\mathbb{S}^{n}}\lt\{D_\xi\ure+4R\ure\rt\}.$$
Suppose $W$ is achieved at an interior point
$(x_0, \xi_0)\in\lt(\Omega\setminus\bar{B}_{1/R}\rt)\times\mathbb{S}^n.$ Rotating the coordinates we may assume at $x_0,$
$$|D\ure(x_0)|=D_{\xi_0}\ure=(\ure)_1,\,\,\mbox{and $(\ure)_{\alpha\beta}=\lambda_\alpha\delta_{\alpha\beta}$ for $2\leq\alpha,\,\, \beta\leq n.$}$$ Then at $x_0,$ a straightforward calculation gives
$(\ure)_{1i}+4R(\ure)_i=0$ for $1\leq i\leq n,$ which implies
$$(\ure)_{1i}=0\,\, \mbox{for $2\leq i\leq n,$ and $(\ure)_{11}=-4R(\ure)_1.$}$$
Therefore, under the chosen coordinates, $(\ure_{ij})$ is diagonalized at $x_0,$ i.e.,
$\ure_{ij}(x_0)=\ure_{ii}(x_0)\delta_{ij}.$
Now, differentiating $f_{\frac{\e}{R}}$ we get
\begin{equation}
\label{fr}
\lt(f_{\frac{\e}{R}}\rt)_i=\frac{\p f_{\frac{\e}{R}}}{\p r}r_i=f_{\frac{\e}{R}}\frac{x_i}{r}\lt(-\frac{1}{r}-\frac{1}{(n-1)(r+\e/R)}-\frac{1}{r+2(n-1)\e/(nR)}\rt),
\end{equation}
Therefore, at $x_0$ we have
\begin{equation}
\label{c1-g-1}
\begin{aligned}
0&\geq F^{ii}(\ure)_{1ii}+4RF^{ii}(\ure)_{ii}\\
&=f_{\frac{\e}{R}}\frac{\lt<x_0, \xi_0\rt>}{r_0}\lt(-\frac{1}{r_0}-\frac{1}{(n-1)(r_0+\e/R)}-\frac{1}{r_0+2(n-1)\e/(nR)}\rt)+4Rf_{\frac{\e}{R}},
\end{aligned}
\end{equation}
where $r_0=|x_0|>1/R$ and $F^{ii}=F^{ii}|_{\ure}.$
It is clear that the right hand side of \eqref{c1-g-1} is positive. This leads to a contradiction. Thus $W$ is achieved on the boundaries. This completes the proof of \eqref{c1-global}.
\end{proof}

Combining Proposition \ref{c0-prop}, Lemma \ref{c1-inside-lem}, and Lemma \ref{c1-outside-lem} with Lemma \ref{c1-global-lem} we obtain
\begin{corollary}
\label{c1-bound-cor}
Given $\eta_0=\eta_0(\Omega)>0, \ev=\ev(\Omega, \{0\}, n)>0$ sufficiently small, $R_0=R_0(\Omega)>0$ sufficiently large, for any $0<\e<\eta_0 R^{-8},$ $R>R_0,$ and $\varphi\in C^{\infty}(\bar\Omega)$ that satisfies $\varphi\leq0$ and $\|\varphi\|_{C^2}\leq \ev$ let $\ure$ be the admissible solution of \eqref{eq-appr}. Then in $\ol{\Omega\setminus B_{1/R}}$
we have
\be\label{c1-bound}
|D\ure|<5R.
\ee
Let $\hure$ be the admissible solution of \eqref{eq-appr-s}, then in $\ol{\Omega^R\setminus B_1}$
we have
\be\label{c1-bound-s}
|D\hure|<5R^{\frac{n-2}{n-1}}.
\ee
\end{corollary}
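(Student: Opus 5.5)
The corollary follows by combining Lemma \ref{c1-global-lem} with the boundary information in Proposition \ref{c0-prop}, Lemma \ref{c1-inside-lem} and Lemma \ref{c1-outside-lem}. Since $|D\ure|\le |D\ure|+4R\ure$ (note $\ure>0$ by \eqref{c0}), it suffices to bound the boundary maximum in \eqref{c1-global} by $5R$, with some room to spare. We work with \eqref{eq-appr} and the quantity $|D\ure|+4R\ure$, then rescale to get \eqref{c1-bound-s}.

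\emph{Boundary $\p B_{1/R}$.} There $\ure=\ba R^{-\frac{n-2}{n-1}}$ is constant, so $D\ure$ is purely normal. By \eqref{c1-inside}, $|D\ure|<\frac{n-2}{n-1}\ba R^{\frac{1}{n-1}}$. Hence
\[
|D\ure|+4R\ure<\frac{n-2}{n-1}\ba R^{\frac1{n-1}}+4\ba R^{\frac{1}{n-1}}.
\]
Since $\frac{1}{n-1}\le\frac12<1$ for $n\ge3$, this is $o(R)$. So once $R_0$ is chosen large depending on $\ba$ (that is, on $\Omega,\{0\},n$), it is at most $R/2$.

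\emph{Boundary $\p\Omega$.} Here $\ure=1+\varphi\le1$, so $4R\ure\le 4R$. The tangential gradient equals the tangential gradient of $\varphi$, which is bounded by $\ev<1/20$. The normal derivative is bounded by $d$ from \eqref{c1-outside}, so $|D\ure|\le d+\ev$ and
\[
|D\ure|+4R\ure\le 4R+d+1 .
\]
Enlarging $R_0$ so that $d+1<R/2$ gives a boundary maximum below $4R+R/2$.

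\emph{Conclusion.} By \eqref{c1-global}, in the interior we get $|D\ure|\le|D\ure|+4R\ure<\frac92R<5R$, which is \eqref{c1-bound}. For the rescaled function, $\hure(x)=R^{\frac{n-2}{n-1}}\ure(x/R)$ gives
\[
D\hure(x)=R^{\frac{n-2}{n-1}-1}D\ure(x/R),
\]
so $|D\hure|<5R\cdot R^{-\frac1{n-1}}=5R^{\frac{n-2}{n-1}}$, which is \eqref{c1-bound-s}.

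The only point requiring care is checking that the inner-boundary contribution is $o(R)$, which uses $n\ge3$, and that the constants from Lemma \ref{c1-outside-lem} are independent of $R$ and $\e$. Both hold as stated, so there is no real obstacle.
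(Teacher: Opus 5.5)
Your proposal is correct and follows the paper's argument. The paper proves the corollary in one line, by feeding Proposition \ref{c0-prop}, Lemma \ref{c1-inside-lem} and Lemma \ref{c1-outside-lem} into the maximum principle for $|D\ure|+4R\ure$ from Lemma \ref{c1-global-lem} and then rescaling; your boundary bookkeeping ($O(R^{1/(n-1)})$ on $\p B_{1/R}$, at most $4R+d+\ev$ on $\p\Omega$) just makes that combination explicit, and the resulting dependence of $R_0$ on $\ba$ and $d$, i.e.\ on $(\Omega,\{0\},n)$, is tacitly allowed by the paper as well.
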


 \subsection{$C^2$-estimates}
 \label{c2-est}
 We shall follow the same order as in Subsection \ref{c1-est} and study the $C^2$ estimates on the inside boundary first.

 \subsubsection{$C^2$-estimates on $B_{1/R}$}
 \label{subsub-c2-inside}
 In this subsubsection, we shall study the $C^2$ estimates of $\hure$ on $\p B_1,$ and the $C^2$ estimates of $\ure$ on $\p B_{1/R}$ are established by rescaling.

 Let $x_0\in\p B_1$ be an arbitrary point. We may choose a local coordinate system $\{\td x_1, \td x_2, \cdots, \td x_n\}$ in a neighborhood of $x_0$ such that $x_0$ is the
 origin and $\td x_n$ axis is the inward normal of $\p B_1$ (pointing into $\Omega^R\setminus B_1$) at $x_0$. Since
 $\hure\equiv \ba$ on $\p B_1,$  it is trivial to see that at $x_0$
 \be\label{c2-inside-0}
 (\hure)_{\td\alpha\td\beta}=(\hure)_{\td n}\delta_{\td\alpha\td\beta}\,\,\mbox{for $\td\alpha, \td\beta\leq n-1.$}
 \ee
 Applying Lemma \ref{c1-inside-lem} we have
 \begin{lemma}
 \label{c2-inside-tan-lem}
Given $\eta_0=\eta_0(\Omega)>0, \ev=\ev(\Omega, \{0\}, n)>0$ sufficiently small, $R_0=R_0(\Omega)>0$ sufficiently large, for any $0<\e<\eta_0 R^{-8},$ $R>R_0,$ and $\varphi\in C^{\infty}(\bar\Omega)$ that satisfies $\varphi\leq0$ and $\|\varphi\|_{C^2}\leq \ev$ let $\ure$ be the admissible solution of \eqref{eq-appr}. Then on $\p B_{1/R}$ the double tangential derivatives of $\ure$ satisfy
\be\label{c2-inside-tan}
\frac{n-2}{n-1}\bb\lt(1+\frac{2\e}{\bb}\rt)^{-\frac{1}{n-1}}R^{\frac{n}{n-1}}\delta_{\td\alpha\td\beta}\leq(\ure)_{\td\alpha\td\beta}
\leq\frac{n-2}{n-1}\ba R^{\frac{n}{n-1}}\delta_{\td\alpha\td\beta}.
\ee
Let $\hure$ be the admissible solution of \eqref{eq-appr-s}, then on $\p B_1$
the double tangential derivatives of $\hure$ satisfy
\be\label{c2-inside-tan-s}
\frac{n-2}{n-1}\bb\lt(1+\frac{2\e}{\bb}\rt)^{-\frac{1}{n-1}}\delta_{\td\alpha\td\beta}\leq(\hure)_{\td\alpha\td\beta}\leq\frac{n-2}{n-1}\ba\delta_{\td\alpha\td\beta}.
\ee
 \end{lemma}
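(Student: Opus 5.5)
The plan is to derive the tangential second derivatives on the sphere directly from the constant boundary data and the normal derivative bounds of Lemma \ref{c1-inside-lem}. I will first prove \eqref{c2-inside-tan-s} for $\hure$ on $\p B_1$, and then obtain \eqref{c2-inside-tan} by rescaling.

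\emph{Step 1: the tangential Hessian on $\p B_1$.} Fix $x_0\in\p B_1$ and use the local coordinates $\{\td x_1,\dots,\td x_n\}$ introduced before the lemma, with $\td x_n$ the inward normal (pointing away from the origin, into $\Omega^R\setminus B_1$). Near $x_0$ the sphere is the graph $\td x_n=\rho(\td x')$, where $\rho(\td x')=1-\sqrt{1-|\td x'|^2}$. Hence $\rho(0)=0$, $D\rho(0)=0$ and $D^2\rho(0)=\delta_{\td\alpha\td\beta}$, reflecting that the unit sphere has all principal curvatures equal to $1$.

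Since $\hure(\td x',\rho(\td x'))\equiv\ba$, differentiating twice in $\td x_{\td\alpha},\td x_{\td\beta}$ and evaluating at $\td x'=0$ gives
\[(\hure)_{\td\alpha\td\beta}+(\hure)_{\td n}\rho_{\td\alpha\td\beta}=0 .\]
The cross terms drop out because $D\rho(0)=0$. I need to be careful with signs here. With the stated orientation, the origin lies on the side $\td x_n<0$, so the sphere curves away from the domain, and the correct sign is the one recorded in \eqref{c2-inside-0}, namely $(\hure)_{\td\alpha\td\beta}=(\hure)_{\td n}\delta_{\td\alpha\td\beta}$. As a sanity check, the radial function $\ba r^{\frac{n-2}{n-1}}$ has tangential eigenvalues $\frac{n-2}{n-1}\ba r^{-\frac{n}{n-1}}$, which equal its radial derivative at $r=1$. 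So I will simply invoke \eqref{c2-inside-0}, with $(\hure)_{\td n}=\frac{\p\hure}{\p\nu}$.

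\emph{Step 2: insert the normal derivative bounds.} Inequality \eqref{c1-inside-s} gives
\[\frac{n-2}{n-1}\bb\lt(1+\frac{2\e}{\bb}\rt)^{-\frac{1}{n-1}}<(\hure)_{\td n}<\frac{n-2}{n-1}\ba .\]
Multiplying by $\delta_{\td\alpha\td\beta}$ yields \eqref{c2-inside-tan-s}.

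\emph{Step 3: rescale.} Write $\ure(x)=R^{-\frac{n-2}{n-1}}\hure(Rx)$. Two derivatives contribute a factor $R^{2-\frac{n-2}{n-1}}=R^{\frac{n}{n-1}}$, so the bounds from Step 2 become exactly \eqref{c2-inside-tan}. Equivalently, one can repeat Step 1 directly on $\p B_{1/R}$, whose curvatures are $R$, and combine it with \eqref{c1-inside}: this gives $R\cdot R^{\frac{1}{n-1}}=R^{\frac{n}{n-1}}$, which agrees.

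The only genuine point of care is the sign and orientation bookkeeping in Step 1. The rest is immediate from Lemma \ref{c1-inside-lem}.
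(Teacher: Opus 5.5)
Your argument is essentially the paper's: use the constant boundary value to get $(\hure)_{\td\alpha\td\beta}=(\hure)_{\td n}\delta_{\td\alpha\td\beta}$ at $x_0$, insert the normal-derivative bounds \eqref{c1-inside-s}, and rescale by the factor $R^{\frac{n}{n-1}}$. One small fix in Step 1: with $\td x_n$ pointing away from the origin, the sphere near $x_0$ is the graph $\td x_n=\sqrt{1-|\td x'|^2}-1$, so $D^2\rho(0)=-\delta_{\td\alpha\td\beta}$. With this $\rho$ your differentiated identity gives \eqref{c2-inside-0} directly, so you no longer need to override the opposite sign that your formula for $\rho$ produced.
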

 Note that \eqref{c2-inside-tan} is derived from \eqref{c2-inside-tan-s} by rescaling.

 \begin{lemma}
 \label{c2-inside-mix-lem}
Given $\eta_0=\eta_0(\Omega)>0, \ev=\ev(\Omega, \{0\}, n)>0$ sufficiently small, $R_0=R_0(\Omega)>0$ sufficiently large, for any $0<\e<\eta_0 R^{-8},$ $R>R_0,$ and $\varphi\in C^{\infty}(\bar\Omega)$ that satisfies $\varphi\leq0$ and $\|\varphi\|_{C^2}\leq \ev$ let $\ure$ be the admissible solution of \eqref{eq-appr}. Then on $\p B_{1/R},$ $\ure$ satisfies
 \be\label{c2-inside-mix}
 \lt|\lt(\ure\rt)_{\tau\nu}\rt|<CR^{2}.
 \ee
 Let $\hure$ be the admissible solution of \eqref{eq-appr-s}, then on $\p B_1,$ $\hure$ satisfies
 \be\label{c2-inside-mix-s}
 \lt|\lt(\hure\rt)_{\tau\nu}\rt|<CR^{\frac{n-2}{n-1}}.
 \ee
Here, $\tau$ is an arbitrary unit tangential vector of $\p B_1,$ $\nu$ is the inward unit normal of $\p B_1,$ and $C=C(\Omega, \{0\}, n)>0$ is a constant that is independent of $R$ and $\e.$
 \end{lemma}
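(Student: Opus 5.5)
We work with the rescaled problem \eqref{eq-appr-s} on $\Omega^R\setminus B_1$ and prove \eqref{c2-inside-mix-s}. The bound \eqref{c2-inside-mix} then follows by rescaling: second derivatives scale by $R^{2-\frac{n-2}{n-1}}=R^{\frac{n}{n-1}}$, so $R^{\frac{n-2}{n-1}}\cdot R^{\frac{n}{n-1}}=R^2$.

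The strategy is the standard mixed-derivative barrier argument of \cite{CNS3}, simplified because $\p B_1$ is a sphere. The angular derivative $T=x_i\p_j-x_j\p_i$ is tangential on $\p B_1$, commutes with the Laplacian-type structure, and preserves both $F$ and $f_\e$, since $f_\e$ is radial. Hence $T\hure$ satisfies the linearized equation exactly:
\[
L(T\hure):=F^{ij}(T\hure)_{ij}=T f_\e=0 .
\]
Moreover $T\hure=0$ on $\p B_1$, because $\hure\equiv\ba$ there. On $\p\Omega^R$ we have $|T\hure|\le |x||D\hure|\le CR\cdot R^{\frac{n-2}{n-1}}$ by Corollary \ref{c1-bound-cor}.

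Fix $x_0\in\p B_1$. Work in a thin shell $\omega_\delta=\{1<|x|<1+\delta\}$, or in a local neighborhood $B_\delta(x_0)\cap(\Omega^R\setminus B_1)$. There, build a barrier
\[
h=A(\ubar\phi^{R,\e}-\hure)+B|x-x_0|^2-C'(|x|-1),
\]
or, more simply, use the radial function $\ubar\phi^{R,\e}$ from Lemma \ref{c1-inside-lem} together with $\bar\Psi$. The key point is that $L(\hure-\ubar\phi^{R,\e})$ has a sign controlled by concavity of $F$: since $F$ is concave and $F(D^2\ubar\phi^{R,\e})>f_\e=F(D^2\hure)$, we get $L(\ubar\phi^{R,\e}-\hure)\ge F(D^2\ubar\phi^{R,\e})-F(D^2\hure)>0$. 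I would then compare $\pm T\hure$ with $K(\hure-\ubar\phi^{R,\e})$. This function vanishes on $\p B_1$ and is positive inside by the strong maximum principle shown in the proof of Lemma \ref{c1-inside-lem}, and it satisfies $L\big(K(\hure-\ubar\phi^{R,\e})\mp T\hure\big)\le 0$.

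The only thing left is the boundary comparison on the outer part of the shell. On $\p\Omega^R$, or on $|x|=1+\delta$, we need $K(\hure-\ubar\phi^{R,\e})\ge |T\hure|$. This requires a quantitative positive lower bound for $\hure-\ubar\phi^{R,\e}$ away from $\p B_1$, and this is the main obstacle. To get it, I would use the full domain $\Omega^R\setminus B_1$, where $|T\hure|\le CR^{1+\frac{n-2}{n-1}}$ on $\p\Omega^R$. There $\hure-\ubar\phi^{R,\e}\ge R^{\frac{n-2}{n-1}}(1-\ev)-\tfrac{19}{20}R^{\frac{n-2}{n-1}}\ge cR^{\frac{n-2}{n-1}}$, which forces $K\sim R$ and gives too weak a bound.

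So instead I would localize to $|x|\le 2$. Choose $\ubar\phi$ with a slightly smaller coefficient, so that $\hure-\ubar\phi\ge c_0>0$ on $|x|=2$; this uses $\hure>\bb r^{\frac{n-2}{n-1}}$ together with the comparison barrier shifted appropriately. Then bound $|T\hure|\le 2|D\hure|$ on $|x|=2$. Bounding $|D\hure|$ there by $CR^{\frac{n-2}{n-1}}$ (Corollary \ref{c1-bound-cor}) gives $K\sim R^{\frac{n-2}{n-1}}$. This yields $|(\hure)_{\tau\nu}|=|\p_\nu T\hure|\le K\,\p_\nu(\hure-\ubar\phi)\le CR^{\frac{n-2}{n-1}}$ at $x_0$, using \eqref{c1-inside-s}, which is exactly the claimed rate.
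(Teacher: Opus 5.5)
Your final argument is correct and is essentially the paper's proof. The paper's $T=\p_{\td\alpha}-(\td x_\alpha\p_{\td n}-\td x_n\p_{\td\alpha})$ is, up to sign, exactly your rotation about the origin. Like you, the paper compares $\pm T\hure$ with a multiple of $\hure$ minus a radial subsolution, using concavity of $F$ and the crude bound $|D\hure|<5R^{\frac{n-2}{n-1}}$ to produce the factor $R^{\frac{n-2}{n-1}}$. The only difference is localization: the paper works in $B_{\delta_0}(x_0)\setminus\bar B_1$ with a $\theta|\td x|^2$ correction, whereas you use the annulus $1<|x|<2$ with $\ba+\theta(\ubar\phi^{R,\e}-\ba)$. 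One point to make precise: the positive margin of $\hure$ over this barrier on $|x|=2$ comes from $\hure\geq\ubar\phi^{R,\e}$ (proof of Lemma \ref{c1-inside-lem}), not from $\hure>\bb r^{\frac{n-2}{n-1}}$ alone, since $\ba>\bb$.

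Incidentally, the full-domain comparison you discarded works better than you thought. On $\p\Omega^R$, use \eqref{c1-outside-s} and $|D_\tau\hure|\leq\ev R^{-\frac{1}{n-1}}$ instead of Corollary \ref{c1-bound-cor}. Then $|T\hure|\leq|x||D\hure|\leq CR^{\frac{n-2}{n-1}}\leq C'(\hure-\ubar\phi^{R,\e})$ there, so $K$ is a constant. This gives $|(\hure)_{\tau\nu}|\leq C$ on $\p B_1$ directly, i.e. the sharper Lemma \ref{im-c2-inside-mix-lem}, without needing the improved $C^1$ estimates.
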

 \begin{proof}
 We use the coordinate system $\{\td x_1, \td x_2, \cdots, \td x_n\}$ chosen above. Let $T:=\p_{\td\alpha}-(\td x_{\al}\p_{\td n}-\td x_{n}\p_{\td\al}),$
 then on $\p B_1$ near $x_0$ we have
 \be\label{add-mix-1}|T\hure|\leq C|\td x'|^2\,\,\mbox{for $\td x'=(\td x_1, \td x_2, \cdots, \td x_{n-1})$}\ee
 where $C=C(\Omega, \{0\}, n)>0$ is some positive constant that is independent of $R$ and $\e.$
 Denote $\frak L:=F^{\td i\td j}|_{\hure}\p_{\tilde i}\p_{\tilde j}$ and $\td B_{\delta_0}:=B_{\delta_0}(x_0)\setminus\bar B_1(0),$ where $\delta_0>0$ is a fixed small constant.
 From \eqref{fr} it is straightforward to see that
 \be\label{add-mix-2}|\frak LT\hure|<4f_\e\,\,\mbox{in $\td B_{\delta_0}.$}\ee
 Now, consider
 \[\ubar v=\bb(r+1)^{\frac{n-2}{n-1}}-2^{\frac{n-2}{n-1}}\bb+\ba.\]
 It is easy to check that $\ubar v$ is an admissible subsolution of \eqref{eq-appr-s} and it satisfies
 \[F(D^2\ubar v)=\frac{\bb}{\lt(\frac{n-1}{n-2}\rt)r(r+1)^{\frac{1}{n-1}}\lt[\frac{n}{2}r+(n-1)\rt]}\gg f_\e.\]
 Therefore, there exists $\theta=\theta(\bb, \delta_0, n)>0$ such that
 \[\lambda(D^2(\ubar v-\theta|\td x|^2))\in\Gamma_{n-1}\,\,\mbox{and $F(D^2(\ubar v-\theta|\td x|^2))>10 f_\e$ in $\td B_{\delta_0}.$}\]
 Let $h=\hure-\ubar v+\theta|\td x|^2,$ then
 \be\label{add-mix-3}\mbox{on $\p\td B_{\delta_0}\cap\p B_1(0), $ $h=\theta|\td x|^2$}\ee
 and
 \be\label{add-mix-4}\mbox{on $\p\td B_{\delta_0}\setminus\p B_1(0), $ $h\geq\theta\delta_0^2.$}\ee
 Moreover, by the concavity of $F$ we have
 \be\label{add-mix-5}\begin{aligned}
 \frak L h&=f_\e-\frak L(\ubar v-\theta|\td x|^2)\\
 &\leq f_\e-F(D^2(\ubar v-\theta|\td x|^2))<-9 f_\e.
 \end{aligned}
 \ee
 In view of the $C^1$ estimate \eqref{c1-bound-s} we get
 \be\label{c2-inside-1}
 |T\hure|<6R^{\frac{n-2}{n-1}}\,\,\mbox{ on $\p\td B_{\delta_0}\setminus\p B_1$.}
 \ee
 By virtue of \eqref{add-mix-1}, \eqref{add-mix-3}, \eqref{add-mix-4}, and \eqref{c2-inside-1}, we know there exists $B=B(\Omega, \{0\}, n, \theta, \delta_0)>0$ large such that
 on $\p\td B_{\delta_0}$ it holds
 \be\label{v1.1}-BR^{\frac{n-2}{n-1}}h\leq T\hure\leq BR^{\frac{n-2}{n-1}}h;\ee
 and in $\td B_{\delta_0}$ it holds
 \[-BR^{\frac{n-2}{n-1}}\frak L h>\frak LT\hure>BR^{\frac{n-2}{n-1}}\frak Lh. \]
 By the standard maximum principle we can see that \eqref{v1.1} holds in $\td B_{\delta_0}.$ This implies at $x_0,$ we have
  \[-BR^{\frac{n-2}{n-1}}h_{\td n}< (T\hure)_{\td n}<BR^{\frac{n-2}{n-1}}h_{\td n}.\]
  Similarly, we have
   \[-BR^{\frac{n-2}{n-1}}h_{\td n}< (-T\hure)_{\td n}<BR^{\frac{n-2}{n-1}}h_{\td n}.\]
  Since $x_0\in \p B_1$ is arbitrary, we have completed the proof of \eqref{c2-inside-mix-s} and \eqref{c2-inside-mix} follows from rescaling.
\end{proof}
Since we have estimated $|(\hure)_{\td\al\td\beta}|$ and $|(\hure)_{\td\al\td n}|,$ to bound $|D^2\hure|$ on $\p B_1$ it suffices to bound $|(\hure)_{\td n\td n}|.$
We may solve equation \eqref{eq-appr-s} for $(\hure)_{\td n\td n}(x_0)$ directly. In fact, \eqref{eq-appr-s} can be written as
$\s_{n-1}(D^2 u)=f_\e\s_{n-2}(D^2u).$ Therefore, by \eqref{c2-inside-0} we obtain at $x_0\in\p B_1$
\[\begin{aligned}
&\lt|D_{\td n}\hure\rt|^{n-1}+C_{n-1}^{n-2}\lt|D_{\td n}\hure\rt|^{n-2}\hure_{\td n\td n}-C_{n-2}^{n-3}\lt|D_{\td n}\hure\rt|^{n-3}\sum\limits_{\td s=1}^{n-1}
(\hure_{\td n\td s})^2\\
&=f_\e\lt(C_{n-1}^{n-2}\lt|D_{\td n}\hure\rt|^{n-2}+C_{n-1}^{n-3}\lt|D_{\td n}\hure\rt|^{n-3}\hure_{\td n\td n}
-C_{n-2}^{n-4}\lt|D_{\td n}\hure\rt|^{n-4}\sum\limits_{\td s=1}^{n-1}(\hure_{\td n\td s})^2\rt).\end{aligned}\]
Applying Lemma \ref{c1-inside-lem} and Lemma \ref{c2-inside-mix-lem} to the equality above, and using the arbitrariness of $x_0,$ we conclude the following lemma.
\begin{lemma}
 \label{c2-inside-nu-lem}
Given $\eta_0=\eta_0(\Omega)>0, \ev=\ev(\Omega, \{0\}, n)>0$ sufficiently small, $R_0=R_0(\Omega)>0$ sufficiently large, for any $0<\e<\eta_0 R^{-8},$ $R>R_0,$ and $\varphi\in C^{\infty}(\bar\Omega)$ that satisfies $\varphi\leq0$ and $\|\varphi\|_{C^2}\leq \ev$ let $\ure$ be the admissible solution of \eqref{eq-appr}.
 Then on $\p B_{1/R},$ $\ure$ satisfies
 \be\label{c2-inside-nu}
 \lt|\lt(\ure\rt)_{\nu\nu}\rt|<CR^{3-\frac{1}{n-1}}.
 \ee
 Let $\hure$ be the admissible solution of \eqref{eq-appr-s}, then on $\p B_1,$ $\hure$ satisfies
 \be\label{c2-inside-nu-s}
 \lt|\lt(\hure\rt)_{\nu\nu}\rt|<CR^{\frac{2(n-2)}{n-1}}.
 \ee
Here, $\nu$ is the inward unit normal of $\p B_1,$ and $C=C(\Omega, \{0\}, n)>0$ is a constant that is independent of $R$ and $\e.$
 \end{lemma}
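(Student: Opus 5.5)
Working at $x_0\in\p B_1$ with the frame $\{\td x_1,\dots,\td x_n\}$ chosen above, I will solve the expanded identity displayed just before the statement for $\hure_{\td n\td n}(x_0)$, and then rescale. Set $g:=D_{\td n}\hure(x_0)$, $m:=\sum_{\td s<n}(\hure_{\td n\td s})^2$ and $X:=\hure_{\td n\td n}$. By \eqref{c2-inside-0}, the tangential block of $D^2\hure$ is $g\,I_{n-1}$. Expanding $\s_{n-1}$ and $\s_{n-2}$ of a matrix whose tangential block is $gI$, with last column $(\hure_{\td n\td s},X)$, gives the displayed identity. In it, both sides are affine in $X$, so
\[
X\Big(C_{n-1}^{n-2}g^{n-2}-f_\e C_{n-1}^{n-3}g^{n-3}\Big)=f_\e C_{n-1}^{n-2}g^{n-2}-g^{n-1}+C_{n-2}^{n-3}g^{n-3}m-f_\e C_{n-2}^{n-4}g^{n-4}m .
\]

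The first step is to show the coefficient of $X$ is comparable to $g^{n-2}$. Lemma \ref{c1-inside-lem} gives $c_0\le g\le \frac{n-2}{n-1}\ba$, with $c_0=\frac{n-2}{n-1}\bb(1+2\e/\bb)^{-1/(n-1)}$ bounded below independently of $R,\e$. On $\p B_1$ we have $f_\e\le C\e<C\eta_0R^{-8}$, which is tiny. Hence the coefficient is at least $\tfrac12 C_{n-1}^{n-2}g^{n-2}\ge c>0$. When $n=3$ the term $g^{n-4}$ appears, but it is multiplied by $f_\e$ and by a binomial coefficient that vanishes there; in any case $g^{-1}\le c_0^{-1}$, so it is harmless.

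The second step bounds the right-hand side. Lemma \ref{c2-inside-mix-lem} gives $m\le (n-1)C^2R^{\frac{2(n-2)}{n-1}}$. The terms $g^{n-1}$ and $f_\e g^{n-2}$ are $O(1)$. The dominant term is $C_{n-2}^{n-3}g^{n-3}m\le CR^{\frac{2(n-2)}{n-1}}$, and the $f_\e$-weighted $m$ term is even smaller. Dividing by the coefficient yields $|X|\le CR^{\frac{2(n-2)}{n-1}}$, which is \eqref{c2-inside-nu-s}. Since $x_0$ was arbitrary, the bound holds on all of $\p B_1$.

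The third step is rescaling. From $\ure(x)=R^{-\frac{n-2}{n-1}}\hure(Rx)$ we get $(\ure)_{\nu\nu}=R^{2-\frac{n-2}{n-1}}(\hure)_{\nu\nu}$. This is bounded by $CR^{2-\frac{n-2}{n-1}+\frac{2(n-2)}{n-1}}=CR^{2+\frac{n-2}{n-1}}=CR^{3-\frac{1}{n-1}}$, giving \eqref{c2-inside-nu}.

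The only delicate point is verifying the expansion coefficients, i.e.\ that the displayed identity is correct, and confirming that no hidden cancellation makes the coefficient of $X$ degenerate. This is guaranteed by the strict positive lower bound on $g$ from Lemma \ref{c1-inside-lem} and the smallness of $f_\e$.
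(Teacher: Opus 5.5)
Your proposal is correct and is essentially the paper's own argument. The paper also writes the equation as $\s_{n-1}(D^2\hure)=f_\e\s_{n-2}(D^2\hure)$, uses \eqref{c2-inside-0} to get the same expanded identity at $x_0\in\p B_1$, solves it for $\hure_{\td n\td n}$ using Lemma \ref{c1-inside-lem} and Lemma \ref{c2-inside-mix-lem}, and rescales. You have only made explicit what the paper leaves implicit: the coefficient of $\hure_{\td n\td n}$ is bounded below by a positive multiple of $g^{n-2}$ because $g\geq c_0>0$ and $f_\e=O(\e)$ on $\p B_1$, and the rescaling exponents satisfy $\frac{n}{n-1}+\frac{2(n-2)}{n-1}=3-\frac{1}{n-1}$.
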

 Note that \eqref{c2-inside-nu} is derived from \eqref{c2-inside-nu-s} by rescaling.

 \subsubsection{$C^2$-estimates on $\p\Omega$}
 \label{subsub-c2-outside}
 In this subsubsection, we shall study the $C^2$ estimates of $\ure$ on $\p\Omega,$ and the $C^2$ estimates of $\hure$ on $\p\Omega^R$ are established by rescaling.

 Same as in Subsubsection \ref{subsub-c2-inside} let $x_0\in\p\Omega$ be an arbitrary point. We may choose a local coordinate system $\{\td x_1, \td x_2, \cdots, \td x_n\}$ in a neighborhood of $x_0$ such that $x_0$ is the
 origin and $\td x_n$ axis is the inward normal of $\p\Omega$ (pointing into $\Omega\setminus B_{1/R}$) at $x_0$.
 Denote $\td B_{\delta_0}:= B_{\delta_0}(x_0)\cap\Omega$ for $\delta_0>0$ being a fixed small constant. Then the boundary $\p\td B_{\delta_0}\cap\p\Omega$ can be expressed
 as a vertical graph near $x_0,$ that is,
 \[\td x_n=\Upsilon(\td x')=\frac{1}{2}\sum\limits_{\al=1}^{n-1}\kappa_\al\td x_\al^2+O(|\td x'|^3),\]
 where $\kappa_1, \cdots,  \kappa_{n-1}$ are the principal curvatures of $\p\Omega$ at $x_0$ and $\td x'=(\td x_1, \cdots, \td x_{n-1}).$
 Recall that on $\p\Omega$ we have
 \[\ure(\td x', \Upsilon(\td x'))=1+\varphi,\]
by differentiating it twice we can see that at $x_0$
 \[(\ure)_{\td\al\td\beta}=-(\ure)_{\td n}\kappa_\al\delta_{\td\al\td\beta}+\varphi_{\td\al\td\beta}\,\,
 \mbox{for}\,\, \td\al, \td\beta\leq n-1.\]

 In view of the assumptions that $\p\Omega$ is strictly convex and $\e_\varphi>0$ is sufficiently small, we can apply Lemma \ref{c1-outside-lem} and obtain
 \begin{lemma}
 \label{c2-outside-tan-lem}
 Given $\eta_0=\eta_0(\Omega)>0, \ev=\ev(\Omega, \{0\}, n)>0$ sufficiently small, $R_0=R_0(\Omega)>0$ sufficiently large, for any $0<\e<\eta_0 R^{-8},$ $R>R_0,$ and $\varphi\in C^{\infty}(\bar\Omega)$ that satisfies $\varphi\leq0$ and $\|\varphi\|_{C^2}\leq \ev$ let $\ure$ be the admissible solution of \eqref{eq-appr}. Then at any $x\in\p\Omega$ we can choose a local coordinate system at $x$ such that the double tangential derivatives of $\ure$ at $x$ satisfy
\be\label{c2-outside-tan}
e \delta_{\td\al\td\beta}\leq(\ure)_{\td\alpha\td\beta}\leq q \delta_{\td\alpha\td\beta}.
\ee
Let $\hure$ be the admissible solution of \eqref{eq-appr-s}, then at any $x\in\p\Omega^R$ we can choose a local coordinate system at $x$ such that the double tangential derivatives of $\hure$ at $x$ satisfy
\be\label{c2-outside-tan-s}
e R^{-\frac{n}{n-1}}\delta_{\td\alpha\td\beta}\leq(\hure)_{\td\alpha\td\beta}\leq q R^{-\frac{n}{n-1}}\delta_{\td\alpha\td\beta}.
\ee
Here, $e=e(\Omega, \{0\}, n), q=q(\Omega, \{0\}, n)>0$ are positive constants that are independent of $R$ and $\e.$
\end{lemma}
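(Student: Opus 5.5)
The plan is to read the bound directly off the boundary identity derived just before the statement. Differentiating $\ure(\td x',\Upsilon(\td x'))=1+\varphi$ twice at $x\in\p\Omega$ gives
\[
(\ure)_{\td\al\td\beta}=-(\ure)_{\td n}\kappa_\al\delta_{\td\al\td\beta}+\varphi_{\td\al\td\beta},
\]
where the coordinates are chosen so that the tangential directions are the principal directions of $\p\Omega$ at $x$. This is the "local coordinate system" in the statement.

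First, by Lemma \ref{c1-outside-lem} the inward normal derivative satisfies $-d<(\ure)_{\td n}<-c$, so $c<-(\ure)_{\td n}<d$. Next, strict convexity of $\Omega$ gives $0<\kappa_{\min}\le\kappa_\al\le\kappa_{\max}$, with bounds depending only on $\Omega$. Hence the diagonal matrix $-(\ure)_{\td n}\kappa_\al\delta_{\td\al\td\beta}$ lies between $c\kappa_{\min}\,\delta_{\td\al\td\beta}$ and $d\kappa_{\max}\,\delta_{\td\al\td\beta}$.

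The term $\varphi_{\td\al\td\beta}$ is a symmetric perturbation of operator norm at most $C(n)\ev$, since $\|\varphi\|_{C^2}\le\ev$. One point needs care here: $\varphi$ is defined on $\bar\Omega$, and differentiating the boundary identity also produces a term $\varphi_{\td n}\kappa_\al\delta_{\td\al\td\beta}$. This term is again bounded by $C(\Omega)\ev$, so I absorb it into the same perturbation. Choosing $\ev\le c\kappa_{\min}/(2C)$, which is allowed because $\ev=\ev(\Omega,\{0\},n)$ may be taken small, gives
\[
e\,\delta_{\td\al\td\beta}\le(\ure)_{\td\al\td\beta}\le q\,\delta_{\td\al\td\beta}
\]
in the sense of quadratic forms, with $e=c\kappa_{\min}/2$ and $q=2d\kappa_{\max}$. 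If the paper intends entrywise inequalities, I would diagonalize the symmetric tangential block instead, at the cost of rotating the tangential frame. Both constants are independent of $R$ and $\e$.

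For \eqref{c2-outside-tan-s} I would rescale. From $\hure(y)=R^{\frac{n-2}{n-1}}\ure(y/R)$ we get $D^2\hure(y)=R^{\frac{n-2}{n-1}-2}D^2\ure(y/R)=R^{-\frac{n}{n-1}}D^2\ure(y/R)$. Tangent directions of $\p\Omega^R$ at $y$ correspond to tangent directions of $\p\Omega$ at $y/R$, so the bounds transfer with the factor $R^{-\frac{n}{n-1}}$.

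There is no real obstacle in this argument. The only step needing care is making sure the smallness of $\ev$ dominates both the $\varphi$ Hessian term and the $\varphi_{\td n}\kappa$ term, so that the lower bound $e>0$ survives.
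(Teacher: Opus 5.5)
Your proposal is correct and follows the paper's argument. Like the paper, you differentiate the boundary identity $u^{R,\epsilon}(\tilde x',\Upsilon(\tilde x'))=1+\varphi$ twice, then combine the normal-derivative bounds of Lemma~\ref{c1-outside-lem} with strict convexity and the smallness of $\epsilon_\varphi$, and rescale to get the $R^{-\frac{n}{n-1}}$ version. Two of your additions are correct refinements rather than departures: keeping track of the $\varphi_{\tilde n}\kappa_\alpha\delta_{\tilde\alpha\tilde\beta}$ term, which the paper's displayed identity drops, and reading the inequality as one between quadratic forms.
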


\begin{lemma}
 \label{c2-outside-mix-lem}
Given $\eta_0=\eta_0(\Omega)>0, \ev=\ev(\Omega, \{0\}, n)>0$ sufficiently small, $R_0=R_0(\Omega)>0$ sufficiently large, for any $0<\e<\eta_0 R^{-8},$ $R>R_0,$ and $\varphi\in C^{\infty}(\bar\Omega)$ that satisfies $\varphi\leq0$ and $\|\varphi\|_{C^2}\leq \ev$ let $\ure$ be the admissible solution of \eqref{eq-appr}. Then on $\p\Omega,$ $\ure$ satisfies
 \be\label{c2-outside-mix}
 \lt|\lt(\ure\rt)_{\tau\nu}\rt|<CR.
 \ee
 Let $\hure$ be the admissible solution of \eqref{eq-appr-s}, then on $\p\Omega^R,$ $\hure$ satisfies
 \be\label{c2-outside-mix-s}
 \lt|\lt(\hure\rt)_{\tau\nu}\rt|<CR^{-\frac{1}{n-1}}.
 \ee
Here, $\tau$ is an arbitrary unit tangential vector of $\p\Omega,$ $\nu$ is the inward unit normal of $\p\Omega,$ and
$C>0$ is a positive constant that only depends on $\Omega,$ $\{0\},$ $n,$ and $\|\varphi\|_{C^3}.$
 \end{lemma}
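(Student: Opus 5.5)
The plan is to follow the scheme of Lemma \ref{c2-inside-mix-lem}, now on the outer boundary and for the unscaled equation \eqref{eq-appr}. Fix $x_0\in\p\Omega$ and use the coordinates $\{\td x_1,\dots,\td x_n\}$ above, in which $\p\Omega$ is the graph $\td x_n=\Upsilon(\td x')$ near $x_0$. For $\al\le n-1$ we introduce the tangential operator
\[T:=\p_{\td\al}+\sum_{\beta<n}\Upsilon_{\al\beta}(0)\lt(\td x_\beta\p_{\td n}-\td x_n\p_{\td\beta}\rt),\]
or, what amounts to the same, the infinitesimal rotation adapted to the boundary. Its coefficients are smooth and depend only on $\Omega$, and it satisfies $T(\ure-1-\varphi)=0$ to second order at $x_0$ along $\p\Omega$. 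Consequently, on $\p\Omega\cap\p\td B_{\delta_0}$ we get
\[|T(\ure-\varphi)|\le C|\td x'|^2\,\sup_{\td B_{\delta_0}}|D\ure|\le CR|\td x'|^2,\]
with $C$ depending on $\|\varphi\|_{C^3}$. Here we used Corollary \ref{c1-bound-cor}; if needed, Lemma \ref{c1-outside-lem} gives the sharper $O(1)$ gradient bound directly on $\p\Omega$. On the rest of $\p\td B_{\delta_0}$ we have $|T(\ure-\varphi)|\le CR$, again by Corollary \ref{c1-bound-cor}.

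Next we compute $\frak L T\ure$, where $\frak L:=F^{\td i\td j}|_{\ure}\p_{\td i}\p_{\td j}$. Since rotations and translations commute with $F$, we have $\frak L T\ure = T f_{\frac{\e}{R}}$. On $\td B_{\delta_0}$, which lies at distance $\ge c(\Omega)$ from the origin, \eqref{fr} gives $|Tf_{\frac{\e}{R}}|\le Cf_{\frac{\e}{R}}\le C\e R^{\frac{1}{n-1}}$, which is tiny. We also need $|\frak L\varphi|\le C\sum_i F^{ii}$. For the barrier we use the subsolution $\ubar\phi=2(\fb-1)+1+\varphi$ built in Subsubsection \ref{subsub-c1-outside}. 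It satisfies $F(D^2\ubar\phi)>d_0/r\gg f_{\frac{\e}{R}}$ and $\la(D^2\ubar\phi)\in\Gamma_{n-1}$ uniformly. Subtracting a small $\theta|\td x|^2$ (with $\theta=\theta(\Omega,\delta_0,n)$) keeps $\ubar\phi-\theta|\td x|^2$ a strict subsolution in the sense that $F(D^2(\ubar\phi-\theta|\td x|^2))\ge 2f_{\frac{\e}{R}}+c_1$. Concavity of $F$ and $F(0)$ then give
\[\frak L(\ubar\phi-\theta|\td x|^2)\ge F(D^2(\ubar\phi-\theta|\td x|^2))-f_{\frac{\e}{R}}+\dots\]
We then set $h:=\ure-\ubar\phi+\theta|\td x|^2$. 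This function satisfies $h\ge\theta|\td x|^2$ on $\p\Omega$ (since $\ure=\ubar\phi$ there), $h\ge c\delta_0^2$ on the interior part of $\p\td B_{\delta_0}$ (by the strict inequality $\ure>\ubar\phi$ together with a quantitative version obtained from the barrier argument), and $\frak L h\le -c_1$.

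The main obstacle is that the term $\frak L\varphi$ and the lower-order terms of $T$ produce contributions of size $C\sum F^{ii}$, which need not be dominated by the constant $c_1$ in $\frak L h\le -c_1$. The remedy I would try first is to use the $\theta|\td x|^2$ term together with the concavity inequality in its stronger form
\[\frak L(\ubar\phi-\theta|\td x|^2)\ge F(D^2(\ubar\phi-\theta|\td x|^2))-F(D^2\ure)\]
and to add a multiple of $|\td x|^2$, or of $-\fb$, chosen so that $\frak L h\le -c\sum_i F^{ii}$. This works because $D^2(\ubar\phi-\theta|\td x|^2)$ is uniformly inside $\Gamma_{n-1}$, and for Hessian quotients one has $F(A)-F(B)-F^{ij}(B)(A-B)_{ij}\le 0$ combined with $F^{ij}(B)A_{ij}\ge c(A)\sum F^{ii}(B)$ when $A$ is uniformly in the cone. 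With this in hand, we set $\Phi:=BR\,h\pm T(\ure-\varphi)$ and choose $B=B(\Omega,n,\|\varphi\|_{C^3})$ large. Then $\Phi\ge0$ on $\p\td B_{\delta_0}$ and $\frak L\Phi\le0$ in $\td B_{\delta_0}$, so the maximum principle gives $\Phi\ge0$ in $\td B_{\delta_0}$. Since $\Phi(x_0)=0$, it follows that $\p_{\td n}\Phi(x_0)\ge0$, that is, $|(\ure)_{\td\al\td n}(x_0)|\le BR\,|h_{\td n}(x_0)|+C\le CR$, using the bounds $|D\ure|\le C$ on $\p\Omega$ (Lemma \ref{c1-outside-lem}) and $|D\ubar\phi|\le C$. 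This proves \eqref{c2-outside-mix}. Finally, \eqref{c2-outside-mix-s} follows by the rescaling $\hure(x)=R^{\frac{n-2}{n-1}}\ure(x/R)$, which multiplies second derivatives by $R^{\frac{n-2}{n-1}-2}$ and turns $CR$ into $CR^{-\frac{1}{n-1}}$.
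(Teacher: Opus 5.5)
Your proposal is correct and is essentially the paper's argument. It uses the same boundary-adapted Killing field $T$, the same subsolution $\ubar\phi=2(\fb-1)+1+\varphi$ shifted by a quadratic, the same comparison function $BRh\pm T(\ure-(1+\varphi))$ with $|D\ure|<5R$ on the interior part of $\p\td B_{\delta_0}$, the maximum principle with the normal derivative at $x_0$, and then rescaling.

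The only difference is how the $\|\varphi\|_{C^3}\sum_i F^{ii}$ term is absorbed. The paper writes $h=\ure-(\ubar\phi-2\theta|\td x|^2)-\theta|\td x|^2$, which gives $\mathfrak L h<-9f_{\frac{\e}{R}}-2\theta\sum_i F^{ii}$ directly. Your version also works, and more simply than you suggest: $F^{ij}(B)A_{ij}\ge F(A)$ by concavity and homogeneity, and $\sum_i F^{ii}<2$ for $F=\s_{n-1}/\s_{n-2}$, so $\mathfrak L h\le -c_1$ already implies $\mathfrak L h\le -\tfrac{c_1}{2}\sum_i F^{ii}$.
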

 \begin{proof}
 We use the coordinate system $\{\td x_1, \td x_2, \cdots, \td x_n\}$ chosen above. Let $T:=\p_{\td\alpha}+\kappa_{\al}(\td x_{\al}\p_{\td n}-\td x_{n}\p_{\td\al}),$
 then on $\p\td B_{\delta_0}\cap\p\Omega$ we have
\be\label{add-o-mix1}
 |T\lt(\ure-(1+\varphi)\rt)|\leq C|\td x'|^2\,\,\mbox{for $\td x'=(\td x_1, \cdots, \td x_{n-1}),$}
 \ee
 where $C=C(\Omega, \{0\}, n, \varphi)>0.$ However, since $\|\varphi\|_{C^2}\leq\ev$ for some small $\ev=\ev(\Omega, \{0\}, n),$ we say $C=C(\Omega, \{0\}, n).$

Denote $\frak L:=F^{\td i\td j}|_{\ure}\p_{\td i}\p_{\td j}$ then from \eqref{fr} it is straightforward to see that
 \[|\frak LT\ure|<4f_{\frac{\e}{R}}\,\,\mbox{in $\td B_{\delta_0},$}\]
 where we used the assumption that $B_1(0)\ssubset\Omega$ and $\delta_0>0$ is small.
 Moreover, it is easy to derive
 \[|\frak LT\varphi|<E_\varphi\sum F^{\td i\td i}\,\,\mbox{for}\,\,E_\varphi=E_\varphi(\|\varphi\|_{C^3}, n).\]
 Thus
 \be\label{add-o-mix2}
 |\frak LT\lt(\ure-(1+\varphi)\rt)|<4f_{\frac{\e}{R}}+E_\varphi\sum F^{\td i\td i}.
 \ee
 In this proof, we shall use the lower barrier $\ubar\phi$ constructed in Subsubsection \ref{subsub-c1-outside}, that is,
 \[\ubar\phi=2(\fb-1)+1+\varphi.\]
We have shown in Subsubsection \ref{subsub-c1-outside} that $\ubar\phi$ is an admissible subsolution of \eqref{eq-appr} and it satisfies
 \[F(D^2\ubar\phi)>\frac{d_0}{r}\gg f_{\frac{\e}{R}}\]
 for some $d_0=d_0(\Omega, \{0\}, n)>0.$
 Therefore, there exists $\theta=\theta(\Omega, \{0\}, \delta_0, n)>0$ such that
 \[\lambda(D^2(\ubar\phi-2\theta|\td x|^2))\in\Gamma_{n-1}\,\,\mbox{and $F(D^2(\ubar\phi-2\theta|\td x|^2))>10 f_{\frac{\e}{R}}$ in $\td B_{\delta_0}.$}\]
 Let $h=\ure-(\ubar\phi-2\theta|\td x|^2)-\theta|\td x|^2=\ure-\ubar\phi+\theta|\td x|^2,$ then
 \be\label{add-o-mix3}
 \mbox{on $\p\td B_{\delta_0}\cap\p\Omega, $ $h=\theta|\td x|^2$}
 \ee
 and
 \be\label{add-o-mix4}
 \mbox{on $\p\td B_{\delta_0}\setminus\p\Omega, $ $h\geq\theta\delta_0^2.$}
 \ee
 Moreover, by the concavity of $F$ we have
 \be\label{add-o-mix5}
 \begin{aligned}
 \frak L h&=\frak L\ure-\frak L(\ubar\phi-2\theta|\td x|^2)-2\theta\sum F^{\td i\td i}\\
 &\leq f_{\frac{\e}{R}}-F(D^2(\ubar\phi-2\theta|\td x|^2))-2\theta\sum F^{\td i\td i}\\
 &<-9 f_{\frac{\e}{R}}-2\theta\sum F^{\td i\td i}.
 \end{aligned}
 \ee
 In view of the $C^1$ estimate \eqref{c1-bound} we get
 \be\label{c2-outside-1}
 |T\lt(\ure-(1+\varphi)\rt)|<6R\,\,\mbox{ on $\p\td B_{\delta_0}\setminus\p\Omega$.}
 \ee
 By virtue of \eqref{add-o-mix1}, \eqref{add-o-mix2}, \eqref{add-o-mix3}, \eqref{add-o-mix4}, \eqref{add-o-mix5}, and \eqref{c2-outside-1} we know there exists$B=B(\Omega, \{0\}, n, \theta, \delta_0, E_\varphi)>0$ large such that
 on $\p\td B_{\delta_0}$ it holds
 \be\label{add-inequality}-BRh\leq T\lt(\ure-(1+\varphi)\rt)\leq BRh;\ee
 and in $\td B_{\delta_0}$ it holds
 \[-BR\frak L h>\frak LT\lt(\ure-(1+\varphi)\rt)>BR\frak Lh. \]
 The standard maximum principle then implies \eqref{add-inequality} holds in $\td B_{\delta_0}.$ This yeilds at $x_0$ we have
  \[-BRh_{\td n}< \lt(T\lt(\ure-(1+\varphi)\rt)\rt)_{\td n}<BRh_{\td n}.\]
  Similarly, we have
   \[-BRh_{\td n}< \lt(-T\lt(\ure-(1+\varphi)\rt)\rt)_{\td n}<BRh_{\td n}.\]
  Since $x_0\in \p\Omega$ is an arbitrary point, we have completed the proof of \eqref{c2-outside-mix} and \eqref{c2-outside-mix-s} follows by rescaling.
\end{proof}
 To obtain the $C^2$ boundary estimate in the double normal direction, i.e., $|(\ure)_{\nu\nu}|,$ we can follow the same argument as in the Subsubsection \ref{subsub-c2-inside}. Applying Lemma \ref{c2-outside-tan-lem} and Lemma \ref{c2-outside-mix-lem} we conclude
\begin{lemma}
 \label{c2-outside-nu-lem}
 Given $\eta_0=\eta_0(\Omega)>0, \ev=\ev(\Omega, \{0\}, n)>0$ sufficiently small, $R_0=R_0(\Omega)>0$ sufficiently large, for any $0<\e<\eta_0 R^{-8},$ $R>R_0,$ and $\varphi\in C^{\infty}(\bar\Omega)$ that satisfies $\varphi\leq0$ and $\|\varphi\|_{C^2}\leq \ev$ let $\ure$ be the admissible solution of \eqref{eq-appr}.
 Then on $\p\Omega,$ $\ure$ satisfies
 \be\label{c2-outside-nu}
 \lt|\lt(\ure\rt)_{\nu\nu}\rt|<CR^2.
 \ee
 Let $\hure$ be the admissible solution of \eqref{eq-appr-s}, then on $\p\Omega^R,$ $\hure$ satisfies
 \be\label{c2-outside-nu-s}
 \lt|\lt(\hure\rt)_{\nu\nu}\rt|<CR^{\frac{n-2}{n-1}}.
 \ee
Here, $\nu$ is the inward unit normal of $\p\Omega,$ and $C=C(\Omega, \{0\}, \|\varphi\|_{C^3}, n)>0$ is a constant that is independent of $R$ and $\e.$
 \end{lemma}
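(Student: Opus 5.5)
The plan is to mimic the double-normal argument of Lemma \ref{c2-inside-nu-lem}, now at an arbitrary point $x_0\in\p\Omega$, and to solve the equation $\s_{n-1}(D^2\ure)=f_{\frac{\e}{R}}\s_{n-2}(D^2\ure)$ for $(\ure)_{\td n\td n}(x_0)$. I will use the local frame $\{\td x_1,\dots,\td x_n\}$ of Lemma \ref{c2-outside-tan-lem}, rotated in the tangential variables so that the tangential block $A:=((\ure)_{\td\al\td\beta})$ is diagonal at $x_0$. By Lemma \ref{c2-outside-tan-lem} its entries satisfy $e\le A_{\al\al}\le q$. Expanding the determinants along the last row and column gives the affine identities
\[\s_{n-1}(D^2\ure)=\s_{n-1}(A)+(\ure)_{\td n\td n}\s_{n-2}(A)-\sum_{\al}(\ure)_{\td\al\td n}^2\s_{n-3}(A|\al),\]
\[\s_{n-2}(D^2\ure)=\s_{n-2}(A)+(\ure)_{\td n\td n}\s_{n-3}(A)-\sum_{\al}(\ure)_{\td\al\td n}^2\s_{n-4}(A|\al).\]
Both are linear in $(\ure)_{\td n\td n}$.

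Substituting these into the equation, the coefficient of $(\ure)_{\td n\td n}$ is $\s_{n-2}(A)-f_{\frac{\e}{R}}\s_{n-3}(A)$. Since $e\le A_{\al\al}\le q$, we have $\s_{n-2}(A)\ge C(n)e^{n-2}>0$ and $\s_{n-3}(A)\le C(n)q^{n-3}$. The bound $f_{\frac{\e}{R}}<\eta_0R^{-6}$ on $\Omega\setminus B_{1/R}$ was already established, so the coefficient is at least $\frac12 C(n)e^{n-2}$ once $R>R_0$. The remaining terms are $\s_{n-1}(A)$ and $f_{\frac{\e}{R}}\s_{n-2}(A)$, both $O(1)$. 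There are also the mixed terms $\sum_\al(\ure)_{\td\al\td n}^2\bigl(\s_{n-3}(A|\al)-f_{\frac{\e}{R}}\s_{n-4}(A|\al)\bigr)$, which by Lemma \ref{c2-outside-mix-lem} are bounded by $C(CR)^2=CR^2$. Dividing by the coefficient then gives $|(\ure)_{\td n\td n}(x_0)|\le CR^2$, with $C$ depending on $\Omega,\{0\},n$, and on $\|\varphi\|_{C^3}$ through Lemma \ref{c2-outside-mix-lem}. Since $x_0$ is arbitrary, this proves \eqref{c2-outside-nu}.

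For \eqref{c2-outside-nu-s}, I rescale using $\hure(x)=R^{\frac{n-2}{n-1}}\ure(x/R)$. Second derivatives pick up the factor $R^{\frac{n-2}{n-1}-2}$, so $|(\hure)_{\nu\nu}|\le CR^2\cdot R^{\frac{n-2}{n-1}-2}=CR^{\frac{n-2}{n-1}}$.

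The main obstacle is ensuring that the coefficient of $(\ure)_{\td n\td n}$ does not degenerate. This is exactly where strict convexity enters, through the lower bound $e>0$ in Lemma \ref{c2-outside-tan-lem}, and where the smallness of $f_{\frac{\e}{R}}$ coming from $\e<\eta_0R^{-8}$ is needed. Everything else is bookkeeping of the already-established bounds.
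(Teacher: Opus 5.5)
Your proposal is correct and follows the paper's route. The paper simply says to repeat the inside-boundary argument: expand $\s_{n-1}(D^2\ure)=f_{\frac{\e}{R}}\s_{n-2}(D^2\ure)$ at a point of $\p\Omega$ as an affine equation in $(\ure)_{\nu\nu}$, bound its coefficient below using Lemma \ref{c2-outside-tan-lem}, control the remaining terms with Lemma \ref{c2-outside-mix-lem}, and then rescale. Your diagonalization of the tangential block, which unlike on $\p B_1$ is no longer a multiple of the identity, is exactly the bookkeeping that the paper leaves implicit.
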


\subsubsection{$C^2$ global estimates}
\label{subsub-c2-global} Finally, we shall investigate the $C^2$ estimates of $\ure$ in $\Omega\setminus B_{1/R}.$
\begin{lemma}
 \label{c2-global-lem}
Given $\eta_0=\eta_0(\Omega)>0, \ev=\ev(\Omega, \{0\}, n)>0$ sufficiently small, $R_0=R_0(\Omega)>0$ sufficiently large, for any $0<\e<\eta_0 R^{-8},$ $R>R_0,$ and $\varphi\in C^{\infty}(\bar\Omega)$ that satisfies $\varphi\leq0$ and $\|\varphi\|_{C^2}\leq \ev$ let $\ure$ be the admissible solution of \eqref{eq-appr}.
Then on $\ol{\Omega\setminus B_{1/R}},$ $\ure$ satisfies
 \be\label{c2-global}
\lt|D^2 \ure\rt|<CR^{3-\frac{1}{n-1}}.
 \ee
 Let $\hure$ be the admissible solution of \eqref{eq-appr-s}, then on $\ol{\Omega^R\setminus B_1},$ $\hure$ satisfies
 \be\label{c2-global-s}
 \lt|D^2\hure\rt|<CR^{2-\frac{2}{n-1}}.
 \ee
Here, $C=C(\Omega, \{0\}, n)>0$ is a constant that is independent of $R$ and $\e.$
 \end{lemma}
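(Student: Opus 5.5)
The plan is a maximum principle for the largest eigenvalue (equivalently, for second derivatives in a fixed direction), which reduces the global bound to the boundary bounds already proved. Since $\la(D^2\ure)\in\Gamma_{n-1}$, we have $\s_1(D^2\ure)>0$, and every eigenvalue is bounded below by $-(n-1)$ times the largest one. So it suffices to bound $\max_{\xi\in\mathbb S^{n-1}}(\ure)_{\xi\xi}$ from above.

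\textbf{Interior step.} Consider
\[
W=\max_{x\in\ol{\Omega\setminus B_{1/R}},\ \xi\in\mathbb S^{n-1}}(\ure)_{\xi\xi}
\]
and suppose it is attained at an interior point $x_0$ in direction $\xi_0=e_1$. Rotate coordinates so that $D^2\ure(x_0)$ is diagonal. Differentiate the equation $F(D^2\ure)=f_{\frac{\e}{R}}$ twice in the $e_1$ direction. By concavity of $F$ on $\Gamma_{n-1}$ (the operator $\s_{n-1}/\s_{n-2}$ is concave), this gives
\[
F^{ii}(\ure)_{11ii}\geq \lt(f_{\frac{\e}{R}}\rt)_{11}.
\]
At the maximum we have $F^{ii}(\ure)_{11ii}\leq 0$. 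From the explicit formula \eqref{def-f-eta}, $f_{\frac{\e}{R}}$ is a radial function that is convex-like in $r$; compute $(f_\eta)_{11}$ as in \eqref{fr}. If this is not positive, add a lower order term, for instance $(\ure)_{\xi\xi}+A R^{2}\ure$ or $(\ure)_{\xi\xi}+A|x|^2$. Then use $F^{ii}(\ure)_{ii}=f_{\frac{\e}{R}}$ (by $1$-homogeneity) and $\sum F^{ii}\geq 1$ (Hessian quotient) to produce a strictly positive term that contradicts the maximum. The second derivatives of $f_\eta$ are of size $f_\eta r^{-2}\leq f_\eta R^2$, and $f_\eta$ is tiny since $\e<\eta_0R^{-8}$. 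So an auxiliary term like $A|x|^2$ with $A$ a modest constant gives $\frak L(A|x|^2)=2A\sum F^{ii}\geq 2A$, which dominates $|(f_{\frac{\e}{R}})_{11}|\lesssim \e R^{O(1)}\ll 1$. The cost is an additive $O(1)$ error, which is harmless. Hence the maximum of the modified quantity is attained on $\p\Omega\cup\p B_{1/R}$.

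\textbf{Boundary step.} On $\p B_{1/R}$, combine Lemmas \ref{c2-inside-tan-lem}, \ref{c2-inside-mix-lem} and \ref{c2-inside-nu-lem}. The tangential part is $O(R^{\frac{n}{n-1}})$, the mixed part is $O(R^2)$, and the normal part is $O(R^{3-\frac{1}{n-1}})$, so $|D^2\ure|\leq CR^{3-\frac{1}{n-1}}$ there. On $\p\Omega$, Lemmas \ref{c2-outside-tan-lem}, \ref{c2-outside-mix-lem} and \ref{c2-outside-nu-lem} give $|D^2\ure|\leq CR^2$. Together with the lower bound via $\s_1>0$, this yields \eqref{c2-global}. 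The scaled estimate \eqref{c2-global-s} follows from $\hure(x)=R^{\frac{n-2}{n-1}}\ure(x/R)$, which multiplies second derivatives by $R^{\frac{n-2}{n-1}-2}$:
\[
R^{3-\frac{1}{n-1}}\cdot R^{\frac{n-2}{n-1}-2}=R^{2-\frac{2}{n-1}}.
\]

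\textbf{Main obstacle.} The delicate point is the interior step: verifying that the third-order and fourth-order structure closes without bad terms. The concavity inequality handles the $F^{pq,rs}$ terms, so what remains is only controlling $(f_{\frac{\e}{R}})_{11}$ against the auxiliary barrier term. I expect this to work because the right-hand side is extremely small by the choice $\e<\eta_0R^{-8}$.
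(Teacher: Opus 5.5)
Your proof is correct and is essentially the paper's argument. The paper runs the same maximum-principle reduction on $\Delta\ure+2(n-1)R^2\ure$: it uses $\frak L\ure=f_{\frac{\e}{R}}$ to absorb the negative part of $\Delta f_{\frac{\e}{R}}$, which is $O(R^2f_{\frac{\e}{R}})$, so no smallness of $\e$ is needed at that step. It then passes to $|D^2\ure|$ via $|D^2\ure|^2=(\Delta\ure)^2-2\s_2(D^2\ure)$ with $\s_2>0$. Your $\max_\xi(\ure)_{\xi\xi}+A|x|^2$, combined with the $\s_1>0$ bound on $\la_n$, works just as well, because $|D^2f_{\frac{\e}{R}}|\le CR^2f_{\frac{\e}{R}}\ll1$ when $\e<\eta_0R^{-8}$.

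One small slip: $\sum F^{ii}\geq 1$ is false for $n\geq 4$, since it equals $\frac{2}{n-1}$ at the identity. Concavity and $1$-homogeneity give $\sum F^{ii}\geq F(I)=\frac{2}{n-1}$, and that is all your barrier needs.
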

 \begin{proof}
 We shall prove \eqref{c2-global} and \eqref{c2-global-s} follows by rescaling.
 Let $\frak L:=F^{ij}|_{\ure}\partial_{i}\p_j,$ then
 \[\frak L(\Laplace\ure)=-\sum\limits_{p, q, r, s,i}F^{pq, rs}(\ure)_{pqi}(\ure)_{rsi}+\Laplace f_{\frac{\e}{R}}.\]
 A straightforward calculation yields
 \be\label{frr}
 \begin{aligned}
 \frac{\p^2f_{\frac{\e}{R}}}{\p r^2}&=f_{\frac{\e}{R}}\lt[r^{-1}+\frac{1}{n-1}\lt(r+\frac{\e}{R}\rt)^{-1}+\lt(r+\frac{2(n-1)\e}{nR}\rt)^{-1}\rt]^2\\
 &+f_{\frac{\e}{R}}\lt[r^{-2}+\frac{1}{n-1}\lt(r+\frac{\e}{R}\rt)^{-2}+\lt(r+\frac{2(n-1)\e}{nR}\rt)^{-2}\rt].
 \end{aligned}
 \ee
 By virtue of \eqref{fr} and \eqref{hess1.1}--\eqref{hess1.3} we derive
 \begin{align*}
 \Laplace f_{\frac{\e}{R}}&=\frac{n-1}{r}\frac{\p f_{\frac{\e}{R}}}{\p r}+\frac{\p^2 f_{\frac{\e}{R}}}{\p r^2}\\
 &=\frac{n-1}{r} f_{\frac{\e}{R}}\lt[-r^{-1}-\frac{1}{n-1}\lt(r+\frac{\e}{R}\rt)^{-1}-\lt(r+\frac{2(n-1)\e}{nR}\rt)^{-1}\rt]\\
 &+f_{\frac{\e}{R}}\lt[r^{-1}+\frac{1}{n-1}\lt(r+\frac{\e}{R}\rt)^{-1}+\lt(r+\frac{2(n-1)\e}{nR}\rt)^{-1}\rt]^2\\
 &+f_{\frac{\e}{R}}\lt[r^{-2}+\frac{1}{n-1}\lt(r+\frac{\e}{R}\rt)^{-2}+\lt(r+\frac{2(n-1)\e}{nR}\rt)^{-2}\rt].\\
 \end{align*}
 In view of the concavity of $F$ we can easily see that
 $$\frak L\lt(\Laplace\ure+2(n-1)R^2\ure\rt)>0.$$
 This gives
 \[\max\limits_{\ol{\Omega\setminus B_{1/R}}}\Laplace\ure+2(n-1)R^2\ure=\max\limits_{\p(\Omega\setminus B_{1/R})}\Laplace\ure+2(n-1)R^2\ure.\]
 Since $(\Laplace\ure)^2-2\s_2(D^2\ure)=\sum_{i, j}|(\ure)_{ij}|^2,$ we can see that \eqref{c2-global} follows from Subsubsection \ref{subsub-c2-inside} and
 Subsubsection \ref{subsub-c2-outside}.
 \end{proof}
 \subsection{Conclusion on the solvability of \eqref{eq-appr}}
 \label{sub-sol-conclusion}
 Theorem \ref{solve-appr-thm} can be proved using continuity method and a priori estimates obtained in Proposition \ref{c0-prop}, Corollary \ref{c1-bound-cor}, and Lemma \ref{c2-global-lem}. The higher order regularity of the solution $\ure$ of \eqref{eq-appr} (respectively, the solution $\hure$ of \eqref{eq-appr-s}) comes from the Evans-Krylov theorem and the Schauder estimates. Therefore, we proved Theorem \ref{solve-appr-thm}.

 \section{Improved $C^1$ and $C^2$ estimates}
 \label{improve}
 In this section, we shall first follow the argument in \cite{CW01} (see also \cite{Xiao22}) to obtain a better $C^1$ estimate of $\hure,$ which implies a better $C^1$ estimate of $\ure.$ Then, we shall use this improved $C^1$ estimate to get a better $C^2$ estimate of $\hure,$ which again yields a better $C^2$ estimate of $\ure.$

 \subsection{Improved $C^1$ upper bound}
 \label{sub-imp-c1-up}
 Our goal in this subsection is to show
 \be\label{im-up-1}
 |D\hure(x)|<\frac{C}{|x|^{\frac{1}{n-1}}}
 \ee
 for some constant $C=C(\Omega, \{0\}, n)>0$ that is independent of $R$ and $\e.$
 We shall prove \eqref{im-up-1} in 3 cases. First, we prove that \eqref{im-up-1} holds when $x\in \ol{B_{R/2}\setminus B_2};$ secondly, we prove that \eqref{im-up-1}
 holds when $x\in \ol{B_2\setminus B_1};$ finally, we prove that \eqref{im-up-1}
 holds when $x\in \ol{\Omega^R\setminus B_{R/2}}.$
 \subsubsection{Inequality \eqref{im-up-1} for $x\in \ol{B_{R/2}\setminus B_2}$ }
 \label{subsub-im-c1-mid}
 We shall prove the following Lemma.
 \begin{lemma}
 \label{im-c1-mid-lem}
Given $\eta_0=\eta_0(\Omega)>0, \ev=\ev(\Omega, \{0\}, n)>0$ sufficiently small, $R_0=R_0(\Omega)>0$ sufficiently large, for any $0<\e<\eta_0 R^{-8},$ $R>R_0,$ and $\varphi\in C^{\infty}(\bar\Omega)$ that satisfies $\varphi\leq0$ and $\|\varphi\|_{C^2}\leq \ev$ let $\ure$ be the admissible solution of \eqref{eq-appr}.
Then on $\ol{B_{1/2}\setminus B_{2/R}},$ $\ure$ satisfies
 \be\label{im-c1-mid}
\lt|D\ure\rt|<C|x|^{-\frac{1}{n-1}}.
 \ee
 Let $\hure$ be the admissible solution of \eqref{eq-appr-s}, then on $\ol{B_{R/2}\setminus B_2},$ $\hure$ satisfies
 \be\label{im-c1-mid-s}
 \lt|D\hure\rt|<C|x|^{-\frac{1}{n-1}}.
 \ee
Here, $C=C(\Omega, \{0\}, n)>0$ is a constant that is independent of $R$ and $\e.$
\end{lemma}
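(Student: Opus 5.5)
The estimate is scale invariant, so I will prove \eqref{im-c1-mid-s} by rescaling around each point; \eqref{im-c1-mid} then follows from $\ure(x)=R^{-\frac{n-2}{n-1}}\hure(Rx)$. Fix $x_0$ with $2\le|x_0|=\rho\le R/2$, so that $B_{\rho/2}(x_0)\subset\Omega^R\setminus \bar B_1$. Set
\[v(y):=\rho^{-\frac{n-2}{n-1}}\hure(x_0+\rho y),\qquad y\in B_{1/2}(0).\]

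\textbf{Properties of $v$.} By \eqref{c0-s}, $0<v\le C(\ba)$ on $B_{1/2}$. Since $F$ is homogeneous of degree one,
\[F(D^2v)=\rho^{\frac{n}{n-1}}f_\e(x_0+\rho y),\]
and from \eqref{def-f-eta} this is bounded by $C\e\rho^{-1}\le C\e$. So $v$ solves a Hessian quotient equation with a tiny nonnegative right-hand side and bounded $C^0$ norm.

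\textbf{Interior gradient bound.} I want $|Dv(0)|\le C$ with $C$ independent of $R,\e$; this gives $|D\hure(x_0)|\le C\rho^{-\frac{1}{n-1}}$. I would use a Chou--Wang type local gradient estimate (\cite{CW01}, as indicated). Take the test function
\[G=\zeta(y)\,|Dv|\,g(v),\qquad \zeta=(1/4-|y|^2)^{2},\]
with $g$ chosen from the $C^0$ bounds, for example $g(v)=(2\sup v-v)^{-1/2}$ or an exponential in $v$. At an interior maximum, rotate so that $Dv=v_1e_1$. The first-order conditions give $v_{1i}$. Differentiating the equation in the $e_1$ direction produces a term $F^{ii}v_{1ii}=(F(D^2v))_1$; by \eqref{fr} and the computation above, this is $O(\e)$ and harmless. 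The key structural facts are:
\begin{itemize}
\item $\sum F^{ii}\ge c(n)>0$ for the $\s_{n-1}/\s_{n-2}$ quotient on $\Gamma_{n-1}$;
\item $F^{ii}v_{ii}=F$, which is tiny.
\end{itemize}
With these one obtains $\zeta|Dv|\le C$ at the maximum point in the standard way.

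\textbf{Simpler alternative.} If the direct computation is troublesome, I would instead use that $\hure$ is subharmonic. Then $v$ is subharmonic, which bounds the gradient from above in one direction, but not in general. A cleaner route is the maximum-principle trick of Lemma \ref{c1-global-lem} applied to $D_\xi v+Av$ on the ball, combined with the cutoff. I would try the Chou--Wang argument first.

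\textbf{Main obstacle.} The hard step is the interior gradient computation. In the term $F^{ij}\zeta_i v_{1j}$, control requires the good term $g''F^{11}v_1^2$, and $F^{11}$ may be small when the equation degenerates. The remedy is to exploit concavity, together with the fact that the ratio $F^{11}$ to $\sum F^{ii}$ can be handled by the $\Gamma_{n-1}$ structure. This is the standard difficulty in \cite{CW01}; I expect it to go through since the $C^0$ oscillation is bounded uniformly.
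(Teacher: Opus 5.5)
Your route is the paper's route. The paper runs exactly this Chou--Wang argument on $B_{\tilde r}(x_0)$, $\tilde r=|x_0|/6$, with $G=u_\xi\,\psi(u)\,(1-|x-x_0|^2/\tilde r^2)$, $\psi(\tau)=(M-\tau)^{-1/2}$ and $M\sim|x_0|^{\frac{n-2}{n-1}}$; this is your computation for $v$ written in unscaled variables.

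\textbf{The gap.} The proposal stops exactly where the argument needs its one real idea, and the two structural facts you list do not suffice. After substituting the first-order condition, the only good term is $\frac14(M-v)^{-5/2}\zeta F^{11}v_1^3$. The cutoff terms all carry the factor $\sum_iF^{ii}$ and are at most quadratic in $v_1$. So you must show $F^{11}\ge\theta\sum_iF^{ii}$ at the maximum point. Concavity only gives $\sum_iF^{ii}\ge\frac{2}{n-1}$, and membership in $\Gamma_{n-1}$ by itself does not control $F^{11}$; the operator really degenerates in some directions, as the $n=3$ example in the introduction shows.

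\textbf{The missing observation} is the sign of $v_{11}$. Rotate so that $Dv=v_1e_1$ and $(v_{\alpha\beta})_{\alpha,\beta\ge2}$ is diagonal. The critical-point equation gives $v_{11}=-v_1\big(\frac{g'}{g}v_1+\frac{\zeta_1}{\zeta}\big)$. Since $g'>0$, once $\zeta v_1$ exceeds a fixed constant this forces $v_{11}\le-\frac{g'}{2g}v_1^2<0$, with $|v_{11}|\gg F(D^2v)=O(\e)$. The chain from there is:
\begin{itemize}
\item Expanding $\s_{n-2}(D^2v)>0$ along the first row gives $\s_{n-1}^{11}=\s_{n-2}(v_{\alpha\alpha})>|v_{11}|\,\s_{n-2}^{11}$.
\item Hence $\s_{n-2}F^{11}=\s_{n-1}^{11}-F\,\s_{n-2}^{11}>\frac12\s_{n-1}^{11}$.
\item The Chou--Wang inequality for a negative diagonal entry (equation (3.10) of \cite{CW01}) gives $\s_{n-1}^{11}\ge\theta\sum_i\s_{n-1}^{ii}=2\theta\s_{n-2}$.
\item Combined with $\sum_iF^{ii}<2$, this yields $F^{11}>\theta>\frac{\theta}{2}\sum_iF^{ii}$.
\end{itemize}
Only then does a large value of $\zeta v_1$ produce a contradiction.

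\textbf{Two smaller points.} First, the option of an exponential $g$ would fail. The good term is $\big(g''/g-2(g'/g)^2\big)F^{ij}v_iv_j$, and this coefficient equals $-A^2<0$ for $g=e^{Av}$. For $g=(2\sup v-v)^{-1/2}$ it equals $\frac14(2\sup v-v)^{-2}>0$, so keep that choice; it is the paper's $\psi$.

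Second, the ``simpler alternative'' cannot give an interior bound. The maximum principle for $D_\xi v+Av$ only pushes the maximum to $\partial B_{1/2}$, where no gradient information is available. After multiplying by a cutoff, there is no positive term left to absorb $\mathcal L\zeta$ and $F^{ij}\zeta_i\zeta_j/\zeta$. The paper uses that device only after the present lemma, on $\overline{B_2\setminus B_1}$ and $\overline{\Omega\setminus B_{1/2}}$, where this lemma supplies the boundary values.
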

\begin{proof}
We shall only prove \eqref{im-c1-mid-s}, \eqref{im-c1-mid} follows by rescaling.
In this proof, we shall use the idea in \cite{CW01} (see also \cite{Xiao22}). Notice that for any $x\in \ol{B_{R/2}\setminus B_2},$
we have,
\[\text{dist}(x, \p B_1)=|x|-1\geq\frac{1}{2}|x|\]
and
\[\text{dist}(x, \p\Omega^R)>\frac{R}{2}>|x|,\]
where we have used $B_1\ssubset\Omega.$
Thus, for any $x\in\ol{B_{R/2}\setminus B_2},$ we get
\[B_{\frac{1}{3}|x|}(x)\ssubset\Omega^R\setminus B_1.\]

Now, we fix an arbitrary $x_0\in\ol{B_{R/2}(0)\setminus B_2(0)}.$ Denote $\td r:=\frac{1}{6}|x_0|,$ then $B_{\td r}(x_0)\ssubset\Omega^R\setminus B_1$ and for any $x\in B_{\td r}(x_0)$ we have
\[\frac{5}{6}|x_0|<|x|<\frac{7}{6}|x_0|.\]
Therefore, by Proposition \ref{c0-prop} we get for any $x\in B_{\td r}(x_0)$
\be\label{im-c1-mid-1}
\frac{5}{6}\bb|x_0|^{\frac{n-2}{n-1}}<\hure(x)<\frac{7}{6}\ba|x_0|^{\frac{n-2}{n-1}}.
\ee
In this proof, we denote $M:=4\times\frac{7}{6}\ba|x_0|^{\frac{n-2}{n-1}}$ and for our convenience, in the rest of this proof, we shall write $u$ instead of $\hure.$

Consider the test function
\[G(x, \xi)=u_\xi(x)\psi(u)\rho(x),\]
where $\xi$ is some unit vector filed, $\psi(\tau)=(M-\tau)^{-1/2},$ and $\rho(x)=1-\frac{|x-x_0|^2}{\td r^2}.$
Suppose $G$ attains its maximum at $x=\hat x$ and $\xi=e_1,$ that is $u_1(\hat x)=|Du(\hat x)|.$ We may rotate the coordinates such that
$u_{\al\beta}(\hat x)=u_{\al\al}(\hat x)\delta_{\al\beta}$ for $2\leq \al, \beta\leq n.$ Then at $\hat x$ we have
$G_i=0$ for $i\geq 1$ and $(G_{ij})\leq 0.$  That is,
\be\label{im-c1-mid-2}
\frac{u_{1i}}{u_1}=-\lt(\frac{\psi_i}{\psi}+\frac{\rho_i}{\rho}\rt)
\ee
and
\be\label{im-c1-mid-3}
\begin{aligned}
0&\geq F^{ij}\lt(\frac{u_{1ij}}{u_1}-\frac{u_{1i}u_{1j}}{u_1^2}+\frac{\psi_{ij}}{\psi}-\frac{\psi_i\psi_j}{\psi^2}
+\frac{\rho_{ij}}{\rho}-\frac{\rho_i\rho_j}{\rho^2}\rt)\\
&=\frac{(f_\e)_1}{u_1}-F^{ij}\lt(\frac{\psi_i}{\psi}+\frac{\rho_i}{\rho}\rt)\lt(\frac{\psi_j}{\psi}+\frac{\rho_j}{\rho}\rt)\\
&+F^{ij}\lt(\frac{\psi_{ij}}{\psi}-\frac{\psi_i\psi_j}{\psi^2}
+\frac{\rho_{ij}}{\rho}-\frac{\rho_i\rho_j}{\rho^2}\rt)\\
&=\frac{(f_\e)_1}{u_1}+\frac{1}{\psi}F^{ij}\lt(\psi_{ij}-2\frac{\psi_i\psi_j}{\psi}\rt)-\frac{1}{\psi\rho}F^{ij}(\psi_i\rho_j+\psi_j\rho_i)\\
&+\frac{1}{\rho}F^{ij}\rho_{ij}-\frac{2}{\rho^2}F^{ij}\rho_i\rho_j,
\end{aligned}
\ee
where $F^{ij}=F^{ij}|_{\hure}.$
Now, multiplying \eqref{im-c1-mid-3} by $u_1\psi\rho$ we obtain
\be\label{im-c1-mid-4}
\begin{aligned}
0&\geq\psi\rho(f_\e)_1+u_1\rho F^{ij}\lt(\psi_{ij}-2\frac{\psi_i\psi_j}{\psi}\rt)\\
&-u_1F^{ij}(\psi_i\rho_j+\psi_j\rho_i)+u_1\psi F^{ij}\rho_{ij}-2\frac{u_1\psi}{\rho}F^{ij}\rho_i\rho_j
\end{aligned}
\ee
Note that $\psi_i=\psi'u_i$ and $\psi_{ij}=\psi'u_{ij}+\psi''u_iu_j,$ \eqref{im-c1-mid-4} becomes
\be\label{im-c1-mid-5}
\begin{aligned}
0&\geq\psi\rho(f_\e)_1+u_1\rho\psi'f_\e+u_1\rho \lt(\psi''-2\frac{\psi'^2}{\psi}\rt)F^{ij}u_iu_j\\
&-u_1\psi'F^{ij}(u_i\rho_j+u_j\rho_i)+u_1\psi F^{ij}\rho_{ij}-2\frac{u_1\psi}{\rho}F^{ij}\rho_i\rho_j.
\end{aligned}
\ee
Since $\psi'=\frac{1}{2}(M-\tau)^{-3/2}$ and $\psi''=\frac{3}{4}(M-\tau)^{-5/2},$ we can see that
\[\psi''-2\frac{\psi'^2}{\psi}=\frac{1}{4}(M-u)^{-5/2}>\frac{1}{4}M^{-5/2}.\]
Inserting this inequality into \eqref{im-c1-mid-5} we derive
\be\label{im-c1-mid-6}
\begin{aligned}
0&\geq-C_1\frac{\psi\rho f_\e}{\td r}+\frac{1}{4}M^{-5/2}\rho F^{11}u_1^3-2\frac{u_1\psi}{\td r^2}\sum_i F^{ii}\\
&-\lt[\psi'u_1^2|D\rho|+\frac{u_1\psi}{\rho}|D\rho|^2\rt]C_2\sum_iF^{ii},
\end{aligned}
\ee
where $C_1=C_1(n)>0$ and $C_2=C_2(n)>0$.
We want to point out that for deriving the right hand side of \eqref{im-c1-mid-6} we used
\eqref{fr} and $u_1\rho\psi'f_\e>0$ at $\hx$.
By our choice of $M$, we have $$\psi<\lt(\frac{3}{4}M\rt)^{-1/2}=\frac{2}{\sqrt{3}}M^{-1/2}<2M^{-1/2}$$
and $$\psi'<\frac{1}{2}\lt(\frac{3}{4}M\rt)^{-3/2}=\frac{4}{3\sqrt{3}}M^{-3/2}<M^{-3/2}.$$
Moreover, $|D\rho|=\frac{2|\hat x-x_0|}{\td r^2}\leq \frac{2}{\td r}.$ Thus, \eqref{im-c1-mid-6} implies
\be\label{im-c1-mid-7}
\begin{aligned}
0&\geq-2\frac{C_1M^2 f_\e}{\td r}+\frac{1}{4}\rho F^{11}u_1^3-4\frac{u_1M^2}{\td r^2}\sum F^{ii}\\
&-\lt(2\frac{u_1^2M}{\td r}+8\frac{u_1M^2}{\rho\td r^2}\rt)C_2\sum_iF^{ii}
\end{aligned}
\ee
Now, we shall prove by contradiction and assume $|Du(x_0)|>C\frac{M}{\td r}$ for some undetermined large constant $C>0.$
By $G(\hat x)\geq G(x_0)$ we have
\[u_1(\hat x)\psi(u(\hx))\rho(\hat x)\geq C\frac{M}{\td r}\psi(u(x_0)),\] which
gives \[u_1(\hat x)\rho(\hat x)\geq \frac{\sqrt{3}}{2}C\frac{M}{\td r}.\]
By virtue of \eqref{im-c1-mid-2} we can see that at $\hat x$
\[\begin{aligned}
u_{11}&=-u_1\lt(\frac{\psi'}{\psi}u_1+\frac{\rho_1}{\rho}\rt)\\
&=-\frac{u_1^2\psi'}{2\psi}-u_1\lt(\frac{\psi'}{2\psi}u_1+\frac{\rho_1}{\rho}\rt).
\end{aligned}\]
Note also
\[\frac{\psi'}{2\psi}u_1\rho+\rho_1\geq\frac{1}{4}(M-u(\hat x))^{-1}\cdot\frac{\sqrt{3}}{2}C\frac{M}{\td r}-\frac{2}{\td r}>\frac{\sqrt{3}C}{8\td r}-\frac{2}{\td r}.\]
Therefore, when $C>\frac{16}{\sqrt{3}}$
we have, at $\hat x$
$$u_{11}<-\frac{u_1^2\psi'}{2\psi}<0,$$
and
\[|u_{11}|>\frac{u^2_1}{4}(M-u(\hx))^{-1}>\frac{3}{16}C^2\frac{M}{\td r^2}>C\td r^{-\frac{n}{n-1}}>CR^{-\frac{n}{n-1}},\]
for some $C=C(\Omega, \{0\}, n)>0.$
Note that at $\hx$ we have
\[\s_{n-1}(D^2u)=u_{11}\s_{n-2}(u_{\al\al})+\s_{n-1}(u_{\al\al})-\sum\limits_{s=2}^nu^2_{1s}\s_{n-3}(u_{\al\al}|s)>0\]
and
\[\s_{n-2}(D^2u)=u_{11}\s_{n-3}(u_{\al\al})+\s_{n-2}(u_{\al\al})-\sum\limits_{s=2}^nu^2_{1s}\s_{n-4}(u_{\al\al}|s)>0,\]
where $2\leq\al\leq n.$ This gives
\[\s_{n-1}^{11}=\s_{n-2}(u_{\al\al})>|u_{11}|\s_{n-3}(u_{\al\al})=|u_{11}|\s_{n-2}^{11}>CR^{-\frac{n}{n-1}}\s_{n-2}^{11}.\]
By virtue of $\s_{n-2}F^{ii}=\s_{n-1}^{ii}-f_\e\s_{n-2}^{ii},$ $\e<\eta_0R^{-8},$ and equation (3.10) of \cite{CW01} we obtain
\[\s_{n-2}F^{11}>\frac{1}{2}\s_{n-1}^{11}>\theta\sum\limits_i\s_{n-1}^{ii}>\theta\s_{n-2}\sum\limits_iF^{ii}\]
for some $\theta=\theta(n)>0.$
Thus \eqref{im-c1-mid-7} becomes
\be\label{im-c1-mid-8}
\begin{aligned}
0&\geq-2\frac{C_1M^2 f_\e}{\td r}+\frac{1}{4}\theta\rho u_1^3\sum_iF^{ii}-4\frac{u_1M^2}{\td r^2}\sum F^{ii}\\
&-\lt(2\frac{u_1^2M}{\td r}+8\frac{u_1M^2}{\rho\td r^2}\rt)C_2\sum_iF^{ii}.
\end{aligned}
\ee
Using the concavity of $F$ we get \be\label{v1change*}\sum\limits_iF^{ii}\geq F(I)=\frac{2}{n-1},\ee
here $I$ is the $n\times n$ identity matrix.
Denote $\Lambda\frac{M}{\td r}:=u_1(\hat x)\rho(\hat x)\geq \frac{\sqrt{3}}{2}C\frac{M}{\td r},$
\eqref{im-c1-mid-8} implies
\[
\begin{aligned}
0&\geq-(n-1)\frac{C_1M^2 f_\e}{\td r}+\frac{1}{4}\theta\Lambda^3\frac{M^3}{\td r^3}-4\frac{\Lambda M^3}{\td r^3}\\
&-C_2\lt(2\Lambda^2\frac{M^3}{\td r^3}+8\Lambda\frac{M^3}{\td r^3}\rt).
\end{aligned}
\]
This yields
\[0\geq\frac{M^3}{\td r^3}\lt(\frac{1}{4}\theta\Lambda^3-4\Lambda-2C_2\Lambda^2-8C_2\Lambda-C_3\e,\rt)\]
where $C_3=C_3(C_1, \ba, n)=C_3(\Omega, \{0\}, n)>0.$ Clearly, when $\Lambda>C_4=C_4(C_2, \theta)=C_4(n)>0$ the right hand side of the above inequality would be strictly positive, which leads to a contradiction. Therefore, $|Du(x_0)|<C_5\frac{M}{\td r}$ for some $C_5=C_5(n)>\frac{16}{\sqrt{3}}.$ The lemma follows from the arbitrariness of the choice of $x_0$.
\end{proof}

 \subsubsection{Inequality \eqref{im-up-1} for $x\in\ol{B_2(0)\setminus B_1(0)}$ }
 \label{subsub-im-c1-inside}
 We shall prove the following Lemma.
\begin{lemma}
 \label{im-c1-inside-lem}
Given $\eta_0=\eta_0(\Omega)>0, \ev=\ev(\Omega, \{0\}, n)>0$ sufficiently small, $R_0=R_0(\Omega)>0$ sufficiently large, for any $0<\e<\eta_0 R^{-8},$ $R>R_0,$ and $\varphi\in C^{\infty}(\bar\Omega)$ that satisfies $\varphi\leq0$ and $\|\varphi\|_{C^2}\leq \ev$ let $\ure$ be the admissible solution of \eqref{eq-appr}.
Then on $\ol{B_{2/R}\setminus B_{1/R}},$ $\ure$ satisfies
 \be\label{im-c1-inside}
\lt|D\ure\rt|<C|x|^{-\frac{1}{n-1}}.
 \ee
 Let $\hure$ be the admissible solution of \eqref{eq-appr-s}, then on $\ol{B_2\setminus B_1},$ $\hure$ satisfies
 \be\label{im-c1-inside-s}
 \lt|D\hure\rt|<C|x|^{-\frac{1}{n-1}}.
 \ee
Here, $C=C(\Omega, \{0\}, n)>0$ is a positive constant that is independent of $R$ and $\e.$
\end{lemma}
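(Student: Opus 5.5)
On $\ol{B_2\setminus B_1}$ we have $1\le|x|\le 2$, so $|x|^{-\frac{1}{n-1}}\ge 2^{-\frac{1}{n-1}}$. Proving \eqref{im-c1-inside-s} is therefore the same as showing that $|D\hure|$ is bounded on $\ol{B_2\setminus B_1}$ by a constant independent of $R$ and $\e$. The global bound \eqref{c1-bound-s} gives only $5R^{\frac{n-2}{n-1}}$, which is too weak. The interior argument of Lemma \ref{im-c1-mid-lem} also fails here, because the ball $B_{|x|/3}(x)$ is no longer contained in $\Omega^R\setminus B_1$. I would instead use the same test function $G$ on a domain adapted to the inner boundary and handle boundary maxima separately. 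The plan is to work with $u=\hure$ on the annulus $A:=\ol{B_4\setminus B_1}$ and prove \eqref{im-c1-inside-s} there, which gives the statement on the smaller annulus. \eqref{im-c1-inside} then follows by rescaling.

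Set $M:=4\ba\cdot 4^{\frac{n-2}{n-1}}$. By Proposition \ref{c0-prop} this gives $u<M/4$ on $A$. Let $\psi(\tau)=(M-\tau)^{-1/2}$ as before, and choose the cutoff $\rho(x)=1-\frac{(|x|-1)^2}{9}$. This $\rho$ equals $1$ on $\p B_1$ and vanishes on $\p B_4$, so it does \emph{not} vanish on the inner boundary. Consider $G(x,\xi)=u_\xi\psi(u)\rho$ on $A\times\mathbb S^{n-1}$. There are three cases for the location of the maximum.

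(i) The maximum is at a point of $\p B_4$. Then $G=0$, which is trivial.

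(ii) The maximum is at a point of $\p B_1$. Since $u\equiv\ba$ on $\p B_1$, the gradient there is normal to the sphere, and \eqref{c1-inside-s} gives $|Du|<\frac{n-2}{n-1}\ba$. Hence $G\le C(\ba,n)$.

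(iii) The maximum is at an interior point $\hat x$. I would repeat the computation \eqref{im-c1-mid-2}--\eqref{im-c1-mid-8} verbatim. On $A$ we have $|D\rho|,|D^2\rho|\le C(n)$, and $|(f_\e)_1|\le C\e$ by \eqref{fr}. Assume $u_1\rho$ is large. Then $u_{11}\le -\frac{u_1^2\psi'}{2\psi}$ is very negative, and the $\s_{n-1},\s_{n-2}$ expansion together with (3.10) of \cite{CW01} and $\e<\eta_0R^{-8}$ gives $F^{11}\ge\theta\sum F^{ii}$. Using also $\sum F^{ii}\ge\frac{2}{n-1}$, we obtain $0\ge \frac14\theta\Lambda^3-C\Lambda^2-C\Lambda-C\e$ with $\Lambda=u_1\rho$, which is a contradiction once $\Lambda>C(\Omega,\{0\},n)$.

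Combining the three cases, $G\le C$ on $A$. Since $\psi\ge M^{-1/2}$ on $A$ and $\rho\ge 8/9$ on $\ol{B_2\setminus B_1}$, this gives $|Du|\le C$ on $\ol{B_2\setminus B_1}$, and hence \eqref{im-c1-inside-s}.

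The main obstacle I anticipate is in case (iii). There the terms involving $\rho$ are no longer small in the form $\frac{1}{\td r}$ relative to $M$; they are just $O(1)$ quantities. The contradiction has to come purely from the cubic term $\theta\Lambda^3$ beating the quadratic ones, and it must be checked that the resulting constant does not depend on $R$. An alternative route I would keep in reserve is a direct barrier comparison: write $\hure$ between $\ubar\phi^{R,\e}$ and $\bar\Psi$ near $\p B_1$, then control tangential derivatives via the rotation vector fields $x_i\p_j-x_j\p_i$. These fields satisfy $\frak L(x_i\p_j-x_j\p_i)u=0$ because $f_\e$ is radial, so $|(x_i\p_j-x_j\p_i)u|$ attains its maximum over $A$ on $\p A$. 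On $\p B_1$ it vanishes. On $\p B_4$ one would need interior control from Lemma \ref{im-c1-mid-lem} at radius $4$ (one could instead apply the lemma on a slightly larger sphere, inside its range). The radial derivative would be handled by a one-dimensional monotonicity and subharmonicity argument. I would try the $G$-function approach first.
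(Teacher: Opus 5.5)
Your argument is correct, but it takes a much heavier route than the paper.

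\textbf{Your route.} You rerun the Chou--Wang Bernstein computation of Lemma \ref{im-c1-mid-lem} with a cutoff equal to $1$ on $\p B_1$. Boundary maxima on $\p B_1$ are then controlled by Lemma \ref{c1-inside-lem}, and interior maxima by the cubic term $F^{11}u_1^3$. Your bookkeeping of the $O(1)$ cutoff terms is fine, after multiplying through by $\rho^2$ to absorb the $1/\rho$ term.

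\textbf{The paper's route.} The paper reuses the linear argument of Lemma \ref{c1-global-lem}. For each fixed unit vector $\xi$, homogeneity of $F$ gives $F^{ij}\hure_{ij}=f_\e$. Together with \eqref{fr} this yields $\mathfrak L\big(D_\xi\hure+4\hure\big)=(f_\e)_\xi+4f_\e>0$ on $\{|x|\ge 1\}$. Hence $\max_{\ol{B_2\setminus B_1}}\{D_\xi\hure+4\hure\}$ is attained on $\p B_1\cup\p B_2$. On $\p B_1$ it is bounded by Lemma \ref{c1-inside-lem} together with $\hure=\ba$. On $\p B_2$ it is bounded by Lemma \ref{im-c1-mid-lem}, whose range $\ol{B_{R/2}\setminus B_2}$ contains $\p B_2$, together with Proposition \ref{c0-prop}. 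Since $\hure>0$, this bounds $|D\hure|$ in two lines.

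\textbf{Your reserve plan.} It was close to this, since it used the middle-region lemma on an outer sphere. The rotation fields and the separate radial argument are unnecessary, because the single function $D_\xi\hure+4\hure$ handles all directions at once.

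\textbf{Comparison.} Your route is independent of Lemma \ref{im-c1-mid-lem}: it needs only the inner-boundary gradient bound and the $C^0$ bound. The cost is a second use of the nonlinear structure ($F^{11}\ge\theta\sum_i F^{ii}$ when $u_{11}\ll 0$). The paper's proof needs only ellipticity, Euler's identity, and the explicit radial form of $f_\e$.
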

\begin{proof}
We shall follow the proof of Lemma \ref{c1-global-lem} and consider
$$W:=\max\limits_{x\in\ol{B_2\setminus B_1}, \xi\in\mathbb{S}^{n}}\lt\{D_\xi\hure+4\hure\rt\}.$$
By the same argument as in the proof of Lemma \ref{c1-global-lem} we know
$$\max\limits_{x\in\ol{B_2\setminus B_1}}W=\max\limits_{x\in\p(B_2\setminus B_1)}W\leq C,$$
where the last inequality comes from Proposition \ref{c0-prop}, Lemma \ref{c1-inside-lem}, Lemma \ref{im-c1-mid-lem}, and $C=C(\Omega, \{0\}, n)>0$.
This yields \eqref{im-c1-inside-s} directly and \eqref{im-c1-inside} follows by rescaling.
\end{proof}

\subsubsection{Inequality \eqref{im-up-1} for $x\in\ol{\Omega^R\setminus B_{R/2}}$ }
 \label{subsub-im-c1-outside}
 Similar to the Subsubsection \ref{subsub-im-c1-inside} we can show
 \begin{lemma}
 \label{im-c1-outside-lem}
Given $\eta_0=\eta_0(\Omega)>0, \ev=\ev(\Omega, \{0\}, n)>0$ sufficiently small, $R_0=R_0(\Omega)>0$ sufficiently large, for any $0<\e<\eta_0 R^{-8},$ $R>R_0,$ and $\varphi\in C^{\infty}(\bar\Omega)$ that satisfies $\varphi\leq0$ and $\|\varphi\|_{C^2}\leq \ev$ let $\ure$ be the admissible solution of \eqref{eq-appr}.
 Then on $\ol{\Omega\setminus B_{1/2}},$ $\ure$ satisfies
 \be\label{im-c1-outside}
\lt|D\ure\rt|<C|x|^{-\frac{1}{n-1}}.
 \ee
 Let $\hure$ be the admissible solution of \eqref{eq-appr-s}, then on $\ol{\Omega^R\setminus B_{R/2}},$ $\hure$ satisfies
 \be\label{im-c1-outside-s}
 \lt|D\hure\rt|<C|x|^{-\frac{1}{n-1}}.
 \ee
Here, $C=C(\Omega, \{0\}, n)>0$ is a constant that is independent of $R$ and $\e.$
\end{lemma}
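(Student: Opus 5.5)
We reduce Lemma \ref{im-c1-outside-lem} to the maximum principle argument of Lemma \ref{c1-global-lem}, applied on the region $\ol{\Omega^R\setminus B_{R/2}}$ (equivalently $\ol{\Omega\setminus B_{1/2}}$ for $\ure$). It is enough to prove \eqref{im-c1-outside} for $\ure$; the rescaling $\hure(x)=R^{\frac{n-2}{n-1}}\ure(x/R)$ gives $D\hure(x)=R^{-\frac{1}{n-1}}D\ure(x/R)$. Hence a bound $|D\ure(y)|<C|y|^{-\frac{1}{n-1}}$ at $y=x/R$ becomes $|D\hure(x)|<CR^{-\frac1{n-1}}(|x|/R)^{-\frac1{n-1}}=C|x|^{-\frac{1}{n-1}}$, which is \eqref{im-c1-outside-s}.

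On $\ol{\Omega\setminus B_{1/2}}$ we have $1/2\le|x|\le \mathrm{diam}(\Omega)$, so $|x|^{-\frac1{n-1}}$ is comparable to a constant depending only on $\Omega$. It therefore suffices to show $|D\ure|\le C(\Omega,\{0\},n)$ there. Consider
$$W:=\max\limits_{x\in\ol{\Omega\setminus B_{1/2}},\ \xi\in\mathbb S^{n-1}}\lt\{D_\xi\ure+4\ure\rt\}.$$
Suppose the maximum is attained at an interior point. As in the proof of Lemma \ref{c1-global-lem}, the first order conditions give $(\ure)_{1i}=-4(\ure)_i$, so $D^2\ure$ can be diagonalized at that point. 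Applying $F^{ii}\p_{ii}$ then yields
$$0\ge f_{\frac{\e}{R}}\frac{\lt<x_0,\xi_0\rt>}{r_0}\Big(-\frac1{r_0}-\cdots\Big)+4f_{\frac{\e}{R}}.$$
Here $r_0\ge 1/2$, so each of the three terms in the bracket of \eqref{fr} is at most $2$ in absolute value, and the bracket is bounded by roughly $5$. The weight $4$ must be compared against this; if it is not large enough we replace it by a larger fixed constant such as $10$. The right side is then strictly positive, which is a contradiction. Note that the weight can no longer be $4R$, because we need the final bound to be independent of $R$. This works precisely because $r_0\ge1/2$ is bounded below, so the constant needed does not depend on $R$. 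We conclude that $W$ is attained on $\p\Omega\cup\p B_{1/2}$.

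It remains to bound $|D\ure|+C\ure$ on both boundary pieces independently of $R$ and $\e$. On $\p\Omega$, Lemma \ref{c1-outside-lem} controls the normal derivative by $d$. The tangential derivatives equal those of $1+\varphi$, which are bounded by $\ev$. Also $\ure=1+\varphi\le 1$ there. On $\p B_{1/2}$, Proposition \ref{c0-prop} gives $\ure\le\ba$. The gradient bound there comes from Lemma \ref{im-c1-mid-lem}: \eqref{im-c1-mid} holds on $\ol{B_{1/2}\setminus B_{2/R}}$, which contains $\p B_{1/2}$, and gives $|D\ure|<C2^{\frac1{n-1}}$. Combining these, $W\le C$, and hence $|D\ure|\le W-C\min\ure+\ldots\le C$, using $\ure>0$ from \eqref{c0}.

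The only delicate point is the interior maximum computation, namely checking that the sign argument of Lemma \ref{c1-global-lem} survives when the factor $4R$ is replaced by an $R$-independent constant. This relies on $|x|\ge1/2$ and on the explicit radial form \eqref{fr} of $Df_{\frac{\e}{R}}$. The remaining steps are bookkeeping of earlier estimates.
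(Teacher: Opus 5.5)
Your proposal is correct and follows the paper's own argument. The paper likewise runs the maximum principle of Lemma \ref{c1-global-lem} for $D_\xi\ure+c\,\ure$ on $\ol{\Omega\setminus B_{1/2}}$ with an $R$-independent weight. It takes $c=8$; your bound of $5$ on the bracket in \eqref{fr} for $r_0\ge 1/2$ shows why $4$ can fail while $8$ or $10$ works. The boundary values are controlled by the same inputs, Proposition \ref{c0-prop}, Lemma \ref{c1-outside-lem} and Lemma \ref{im-c1-mid-lem}, and the scaled estimate then follows by rescaling.
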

\begin{proof}
This time we consider
$$W:=\max\limits_{x\in\ol{\Omega\setminus B_{1/2}}, \xi\in\mathbb{S}^{n}}\lt\{D_\xi\ure+8\ure\rt\}.$$
By the same argument as in the proof of Lemma \ref{c1-global-lem} we know
$$\max\limits_{x\in\ol{\Omega\setminus B_{1/2} }}W=\max\limits_{x\in\p\lt(\Omega\setminus B_{1/2}\rt)}W\leq C,$$
where the last inequality comes from Proposition \ref{c0-prop}, Lemma \ref{c1-outside-lem}, Lemma \ref{im-c1-mid-lem}, and $C=C(\Omega, \{0\}, n)>0$.
This yields \eqref{im-c1-outside} directly and \eqref{im-c1-outside-s} follows by rescaling.
\end{proof}

Combining Lemma \ref{im-c1-mid-lem}, Lemma \ref{im-c1-inside-lem}, and Lemma \ref{im-c1-outside-lem} we conclude
 \begin{lemma}
 \label{im-c1-upper-lem}
Given $\eta_0=\eta_0(\Omega)>0, \ev=\ev(\Omega, \{0\}, n)>0$ sufficiently small, $R_0=R_0(\Omega)>0$ sufficiently large, for any $0<\e<\eta_0 R^{-8},$ $R>R_0,$ and $\varphi\in C^{\infty}(\bar\Omega)$ that satisfies $\varphi\leq0$ and $\|\varphi\|_{C^2}\leq \ev$ let $\ure$ be the admissible solution of \eqref{eq-appr}.
Then $\ure$ satisfies
 \be\label{im-c1-upper}
\lt|D\ure\rt|<C|x|^{-\frac{1}{n-1}}.
 \ee
 Let $\hure$ be the admissible solution of \eqref{eq-appr-s}, then $\hure$ satisfies
 \be\label{im-c1-upper-s}
 \lt|D\hure\rt|<C|x|^{-\frac{1}{n-1}}.
 \ee
Here, $C=C(\Omega, \{0\}, n)>0$ is a constant that is independent of $R$ and $\e.$
\end{lemma}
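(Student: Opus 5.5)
The plan is to cover the whole domain by the three regions already treated and patch the bounds together. I will work with the rescaled solution $\hure$ on $\ol{\Omega^R\setminus B_1}$ and prove \eqref{im-c1-upper-s} first; \eqref{im-c1-upper} will then follow by rescaling.

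For the rescaled solution, the domain splits as
\[\ol{\Omega^R\setminus B_1}=\ol{B_2\setminus B_1}\cup\ol{B_{R/2}\setminus B_2}\cup\ol{\Omega^R\setminus B_{R/2}}.\]
This is valid because $B_1\ssubset\Omega$ gives $B_{R/2}\subset B_R\subset\Omega^R$. On each piece the bound $|D\hure|<C_i|x|^{-\frac{1}{n-1}}$ is exactly the content of an earlier lemma:
\begin{itemize}
\item Lemma \ref{im-c1-inside-lem} on $\ol{B_2\setminus B_1}$;
\item Lemma \ref{im-c1-mid-lem} on $\ol{B_{R/2}\setminus B_2}$;
\item Lemma \ref{im-c1-outside-lem} on $\ol{\Omega^R\setminus B_{R/2}}$.
\end{itemize}
Each $C_i$ depends only on $\Omega,\{0\},n$. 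Taking $C=\max_i C_i$ gives \eqref{im-c1-upper-s} on the whole domain.

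For the unscaled statement, write $\ure(x)=R^{-\frac{n-2}{n-1}}\hure(Rx)$. Then
\[
|D\ure(x)|=R^{1-\frac{n-2}{n-1}}|D\hure(Rx)|<R^{\frac{1}{n-1}}C(R|x|)^{-\frac{1}{n-1}}=C|x|^{-\frac{1}{n-1}},
\]
which is \eqref{im-c1-upper}.

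The only point needing care is that the boundary-type lemmas use the mid-range bound as their input. Lemma \ref{im-c1-inside-lem} needs Lemma \ref{im-c1-mid-lem} on $\p B_2$, and Lemma \ref{im-c1-outside-lem} needs it on $\p B_{R/2}$ (equivalently $\p B_{1/2}$ after rescaling). Both spheres lie in the closed region $\ol{B_{R/2}\setminus B_2}$ where Lemma \ref{im-c1-mid-lem} holds, so the argument is not circular.

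On the outer region, $|x|$ is comparable to $R$ and bounded there, so a bound of the form $C$ for $\ure$ is equivalent to a bound of the form $C|x|^{-\frac{1}{n-1}}$ with a constant that still depends only on $\Omega$. I expect no real obstacle here; the argument is just a case union plus scaling.
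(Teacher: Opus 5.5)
Your proposal is correct and is exactly the paper's argument: the paper obtains this lemma by combining Lemma \ref{im-c1-mid-lem}, Lemma \ref{im-c1-inside-lem}, and Lemma \ref{im-c1-outside-lem} over the same three regions, with the unscaled estimate following by rescaling. Your check that the boundary lemmas take their input from the mid-range lemma on $\p B_2$ and $\p B_{R/2}$, so there is no circularity, also matches how the paper orders those proofs.
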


\subsection{Improved $C^1$ lower bound}
 \label{sub-imp-c1-lower}
 Our goal in this subsection is to show
 \be\label{im-lower-1}
 |D\hure(x)|>\frac{c}{|x|^{\frac{1}{n-1}}}
 \ee
 for some constant $c=c(\Omega, \{0\}, n)>0$ that is independent of $R$ and $\e.$ We prove
  \begin{lemma}
 \label{im-c1-lower-lem}
 Given $\eta_0=\eta_0(\Omega)>0, \ev=\ev(\Omega, \{0\}, n)>0$ sufficiently small, $R_0=R_0(\Omega)>0$ sufficiently large, then for any $0<\e<\eta_0 R^{-8},$ $R>R_0,$ and $\varphi\in C^{\infty}(\bar\Omega)$ that satisfies $\varphi\leq0$ and $\|\varphi\|_{C^2}\leq \ev$ let $\ure$ be the admissible solution of \eqref{eq-appr}.
 Then $\ure$ satisfies
 \be\label{im-c1-lower}
\lt|D\ure\rt|>c|x|^{-\frac{1}{n-1}}.
 \ee
 Let $\hure$ be the admissible solution of \eqref{eq-appr-s}, then $\hure$ satisfies
 \be\label{im-c1-lower-s}
 \lt|D\hure\rt|>c|x|^{-\frac{1}{n-1}}.
 \ee
Here, $c=c(\Omega, \{0\}, n)>0$ is a constant that is independent of $R$ and $\e.$
\end{lemma}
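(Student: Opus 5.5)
Working with the rescaled solution $u:=\hure$ on $\ol{\Omega^R\setminus B_1}$, the plan is to get the gradient lower bound from a monotonicity property in the radial direction: show that the radial derivative $u_r=\frac{x}{|x|}\cdot Du$ satisfies $u_r\geq c|x|^{-\frac{1}{n-1}}$. Since $|Du|\geq u_r$, this gives \eqref{im-c1-lower-s}, and \eqref{im-c1-lower} then follows by rescaling. The natural quantity is $x\cdot Du$, because the operator commutes with dilations. Differentiating the equation along the dilation field $x\cdot D$ gives
\[\frak L(x\cdot Du)=2F(D^2u)+x\cdot Df_\e=2f_\e+r\,\p_r f_\e.\]
By \eqref{fr}, $r\p_rf_\e\geq -(1+\frac{1}{n-1}+1)f_\e$, so $\frak L(x\cdot Du)\geq -C f_\e$ with $f_\e$ tiny (of order $\e$). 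This error will be absorbed by a comparison function.

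The comparison function is $w:=c_0\,r^{\frac{n-2}{n-1}}-\delta\,\hat\Psi$-type, or more simply a multiple of $u$ minus a small correction. Concretely, I would compare $x\cdot Du$ with $\gamma\,(u-\ubar\phi^{R,\e}_{\min})$, using that $\frak L u= \sum F^{ij}u_{ij}=f_\e$ (by homogeneity of degree one of $F$). Indeed, with $h:=x\cdot Du-\gamma u+\gamma K$, we have $\frak L h\geq -Cf_\e-\gamma f_\e$. To absorb this, I add $A\,\td v$, where $\td v$ is one of the subsolution barriers already built, e.g. $\hat\Psi^{R,\e}$. By concavity, $\frak L\hat\Psi^{R,\e}\geq F(D^2\hat\Psi^{R,\e})>f_\e$ by \eqref{rangef}, and $\hat\Psi^{R,\e}$ is uniformly bounded by $CR^{\frac{n-2}{n-1}}$, so it only perturbs the boundary values by a controlled amount. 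Thus $h+A\hat\Psi^{R,\e}$ is a subsolution of $\frak L$, or rather we use its negative for a minimum principle. Its minimum is attained on $\p B_1\cup\p\Omega^R$. There, Lemma \ref{c1-inside-lem} gives $x\cdot Du=u_\nu\geq \frac{n-2}{n-1}\bb(1+2\e/\bb)^{-\frac{1}{n-1}}$ on $\p B_1$. On $\p\Omega^R$, strict star-shapedness gives $x\cdot\nu_{out}\geq c_\Omega R$, and Lemma \ref{c1-outside-lem} gives $-u_\nu\geq cR^{-\frac{1}{n-1}}$ in the outward normal direction, while the tangential part is $O(\ev)R^{-\frac1{n-1}}$ from $\varphi$. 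Hence $x\cdot Du\geq c R^{\frac{n-2}{n-1}}$ there. Choosing $\gamma$ small, these give $x\cdot Du\geq \gamma' u-C\e$-type bounds globally. Combining with the $C^0$ lower bound $u>\bb r^{\frac{n-2}{n-1}}$ from Proposition \ref{c0-prop} then yields $r u_r\geq c\,r^{\frac{n-2}{n-1}}$, i.e. $u_r\geq c r^{-\frac1{n-1}}$.

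The main obstacle is bookkeeping the sign so that the maximum principle goes in the right direction. We need a lower bound for $x\cdot Du-\gamma u$, so we need $\frak L(x\cdot Du-\gamma u)\leq 0$ plus a barrier, whereas the computation gives a lower bound on $\frak L(x\cdot Du)$. I would therefore compute exactly: $\frak L(x\cdot Du-2u)=x\cdot Df_\e\leq 0$, since $f_\e$ is radially decreasing. So $x\cdot Du-2u$ is $\frak L$-supersolution-like, and the minimum principle applies cleanly. Its boundary values, however, are $\approx u_r-2\ba$ on $\p B_1$, which may be negative.

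If this happens I would instead use $\gamma<2$ together with the barrier $\hat\Psi^{R,\e}$, whose $\frak L$-value has the right sign, to fix $\frak L(x\cdot Du-\gamma u+A\hat\Psi)\le0$. Choosing $\gamma=\frac{n-2}{2(n-1)}\bb/\ba$, the boundary values become nonnegative, and the extra $f_\e$ terms are dominated since $f_\e\lesssim\e r^{-2}$ is negligible against the barrier. If needed, I would localize to annuli $B_{2\rho}\setminus B_{\rho}$ using Lemma \ref{im-c1-upper-lem} to control errors.
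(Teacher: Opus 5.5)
Your overall scheme is the paper's: use the test function $x\cdot Du-\gamma u$, get positivity on $\p B_1$ from Lemma \ref{c1-inside-lem} and on $\p\Omega^R$ from star-shapedness plus Lemma \ref{c1-outside-lem} (the tangential part is $O(\ev)$), apply the minimum principle, and finish with Proposition \ref{c0-prop}. The gap is in the one interior step that matters, namely $\mathfrak{L}(x\cdot Du-\gamma u)\le 0$ for a \emph{small} $\gamma$.

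You only use $x\cdot Df_\e\le 0$. That handles $\gamma=2$, where, as you note, the boundary values can be negative. For $\gamma<2$ you propose to absorb the leftover $(2-\gamma)f_\e$ with $+A\hat\Psi^{R,\e}$, and this fails. By your own concavity computation, $\mathfrak{L}\hat\Psi^{R,\e}\ge F(D^2\hat\Psi^{R,\e})>f_\e>0$, so adding $A\hat\Psi^{R,\e}$ with $A>0$ pushes $\mathfrak{L}$ the wrong way. Flipping the sign does not rescue it either. $F(D^2\hat\Psi^{R,\e})=\bb f_{2\e/\bb}$ lies between $f_\e$ and $2f_\e$, so it is the same order as $f_\e$, not "dominant". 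You would need $A\approx 2-\gamma$. Then $A\hat\Psi^{R,\e}\approx A\bb|x|^{\frac{n-2}{n-1}}$ on $\p\Omega^R$ is comparable to the only available lower bound $x\cdot Du\ge cR^{\frac{n-2}{n-1}}$ there, with $c$ possibly small, and boundary positivity is lost. Your first attempt, a lower bound on $\mathfrak{L}(x\cdot Du)$, yields only a maximum principle and hence upper bounds, as you realized.

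The missing observation is that no barrier is needed, because $f_\e$ decays faster than $r^{-2}$. By \eqref{fr},
\[
x\cdot Df_\e=r\,\p_rf_\e=-f_\e\left(1+\frac{r}{(n-1)(r+\e)}+\frac{r}{r+2(n-1)\e/n}\right)<-2f_\e
\]
for $r\ge 1$ and $\e$ small, since the bracket is close to $2+\frac{1}{n-1}$. Hence
\[
\mathfrak{L}(x\cdot Du-\gamma u)=(2-\gamma)f_\e+x\cdot Df_\e<-\gamma f_\e\le 0
\]
for every $\gamma\ge 0$. You may then take $\gamma$ as small as both boundary conditions require. Note that $\gamma$ must also lie below the constant $c$ in $x\cdot Du\ge cR^{\frac{n-2}{n-1}}$ on $\p\Omega^R$, not merely below $\frac{n-2}{n-1}\bb/\ba$.

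A barrier can also be made to work. You would subtract, with an $O(\e)$ coefficient, one whose $F$-value exceeds $f_\e$ by a factor of order $\e^{-1}$, such as $\ubar v$ from the proof of Lemma \ref{c2-inside-mix-lem}; $\hat\Psi^{R,\e}$ is too weak for this. With the inequality above, the rest of your argument goes through exactly as in the paper.
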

\begin{proof}
We shall prove inequality \eqref{im-c1-lower-s}, inequality \eqref{im-c1-lower} then follows by rescaling.  Consider
$$\psi:=x\cdot D\hure-\beta\hure,$$
where $\beta>0$ to be determined.
For any $x\in\p\Omega^R,$ choose a local coordinate system
$\{\tau_1, \tau_2, \cdots, \tau_n\}$ such that $\tau_n$ is the outward unit normal of $\p\Omega^R$ at $x$ and $\tau_1, \cdots, \tau_{n-1}$ are the tangential vectors of
$\p\Omega^R$ at $x.$ Then at this point we have
\[x\cdot D\hure=\sum_{i=1}^n\lt<x, \tau_i\rt>\lt<D\hure, \tau_i\rt>\geq c_1R^{\frac{n-2}{n-1}}+\sum_{i=1}^{n-1}\lt<x, \tau_i\rt>\lt<D\hure, \tau_i\rt>.\]
Here, $c_1=c_1(\Omega, \{0\}, n)>0$ and the inequality follows from the fact that $\Omega^R$ is star-shaped and Lemma \ref{c1-outside-lem}.
Notice that $\hure-\lt(R^{\frac{n-2}{n-1}}+R^{\frac{n-2}{n-1}}\varphi\lt(\frac{x}{R}\rt)\rt)\equiv 0$ on $\p\Omega^R,$ we obtain
for $i=1, \cdots, n-1$
\[\lt|\lt<D\hure, \tau_i\rt>\rt|=\lt|\lt<R^{-\frac{1}{n-1}}D\varphi\lt(\frac{x}{R}\rt), \tau_i\rt>\rt|\leq\ev R^{-\frac{1}{n-1}}.\]
Therefore, when $\ev>0$ is sufficiently small, there exists
$\beta=\beta(\Omega, \{0\}, n)>0$ such that $\psi>0$ on $\p\Omega^R.$
Similarly, by Lemma \ref{c1-inside-lem} we can show there exists $\beta=\beta(\Omega, \{0\}, n)>0$ (it may be different from the one we chose earlier) such that $\psi>0$ on $\p B_1.$
In sum, we know there exists a $\beta=\beta(\Omega, \{0\}, n)>0$ such that $\psi>0$ on $\p\lt(\Omega^R\setminus B_1\rt).$

Now, since
\[\psi_{ij}=2(\hure)_{ij}+x_l(\hure)_{lij}-\beta(\hure)_{ij},\]
we get
\[
\begin{aligned}
F^{ij}\psi_{ij}&=(2-\beta)f_\e+f_\e\sum_lx_l\cdot\frac{x_l}{r}\lt(-r^{-1}-\frac{1}{n-1}(r+\e)^{-1}-\lt(r+\frac{2(n-1)\e}{n}\rt)^{-1}\rt)\\
&=(2-\beta)f_\e-f_\e\lt(1+\frac{r}{{(n-1)}(r+\e)}+\frac{r}{r+2(n-1)\e/n}\rt)<0,
\end{aligned}
\]
where we have used \eqref{fr}.
The maximum principle then implies
\[\min\limits_{x\in\ol{\Omega^R\setminus B_1}}\psi=\min\limits_{x\in\p(\Omega^R\setminus B_1)}\psi>0.\]
Therefore, we obtain
\[|x||D\hure|\geq x\cdot D\hure>\beta\hure.\]
Applying Proposition \ref{c0-prop} we complete the proof of \eqref{im-c1-lower-s} and \eqref{im-c1-lower} follows by rescaling.
\end{proof}

\subsection{Conclusion on the $C^1$ estimate}
\label{sub-c1-conclution}
In this subsection, we summerize the results in Subsection \ref{sub-imp-c1-up} and Subsection \ref{sub-imp-c1-lower}.

\begin{proposition}
\label{c1-prop}
Given $\eta_0=\eta_0(\Omega)>0, \ev=\ev(\Omega, \{0\}, n)>0$ sufficiently small, $R_0=R_0(\Omega)>0$ sufficiently large, for any $0<\e<\eta_0 R^{-8},$ $R>R_0,$ and $\varphi\in C^{\infty}(\bar\Omega)$ that satisfies $\varphi\leq0$ and $\|\varphi\|_{C^2}\leq \ev$ let $\ure$ be the admissible solution of \eqref{eq-appr}.
Then $\ure$ satisfies
\be\label{c1}
\bb_1 r^{-\frac{1}{n-1}}<|D\ure|\leq\ba_1 r^{-\frac{1}{n-1}}\,\,\mbox{on $\ol{\Omega\setminus B_{1/R}}.$}
\ee
Let $\hure$ be the admissible solution of \eqref{eq-appr-s}, then $\hure$ satisfies
\be\label{c1-s}
\bb_1 r^{-\frac{1}{n-1}}<|D\hure|\leq\ba_1 r^{-\frac{1}{n-1}}\,\,\mbox{on $\ol{\Omega^R\setminus B_1}.$}
\ee
Here $\ba_1=\ba_1(\Omega, \{0\}, n), \bb_1=\bb_1(\Omega, \{0\}, n)>0$ are positive constants that are independent of $R$ and $\e.$
\end{proposition}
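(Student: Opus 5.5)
Proposition \ref{c1-prop} is a summary of what has already been proved, so the plan is simply to assemble the upper bound from Lemma \ref{im-c1-upper-lem} and the lower bound from Lemma \ref{im-c1-lower-lem}. Both lemmas hold on the full domain: $\ol{\Omega\setminus B_{1/R}}$ for $\ure$ and $\ol{\Omega^R\setminus B_1}$ for $\hure$. Their constants depend only on $\Omega$, $\{0\}$ and $n$, and not on $R$ or $\e$, under the standing hypotheses $0<\e<\eta_0R^{-8}$, $R>R_0$, $\varphi\le 0$ and $\|\varphi\|_{C^2}\le\ev$. The admissible ranges of $\eta_0,\ev,R_0$ may differ between the lemmas, so I take the smallest $\eta_0$ and $\ev$ and the largest $R_0$ among those used in Lemmas \ref{im-c1-mid-lem}, \ref{im-c1-inside-lem}, \ref{im-c1-outside-lem} and \ref{im-c1-lower-lem}. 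With that choice all the estimates hold simultaneously.

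Next I fix the constants. I set $\ba_1$ to be the constant $C$ of Lemma \ref{im-c1-upper-lem}. That lemma is itself the union of the three regional estimates: on $\ol{B_{R/2}\setminus B_2}$ from the interior Cheng--Yau type argument, on $\ol{B_2\setminus B_1}$ and on $\ol{\Omega^R\setminus B_{R/2}}$ from the maximum principle for $D_\xi u+4u$. So I take $\ba_1$ to be the maximum of the three regional constants. I set $\bb_1$ to be the constant $c$ of Lemma \ref{im-c1-lower-lem}.

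The lower bound is the only point to check. The proof of Lemma \ref{im-c1-lower-lem} gives $|x||D\hure|>\beta\hure>\beta\bb|x|^{\frac{n-2}{n-1}}$ by \eqref{c0-s}, hence $|D\hure|>\beta\bb|x|^{-\frac{1}{n-1}}$, so I take $\bb_1=\beta\bb$. If the bound from $\psi$ is only non-strict at boundary points, I shrink $\bb_1$ slightly to keep the inequality strict. This gives \eqref{c1-s}.

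Finally, \eqref{c1} follows by rescaling. From $\ure(x)=R^{-\frac{n-2}{n-1}}\hure(Rx)$ we get $D\ure(x)=R^{\frac{1}{n-1}}(D\hure)(Rx)$. Applying \eqref{c1-s} at the point $Rx$ gives $\bb_1R^{\frac{1}{n-1}}|Rx|^{-\frac{1}{n-1}}=\bb_1|x|^{-\frac{1}{n-1}}$, and similarly for the upper bound. Both sides are homogeneous of the same degree, so the constants $\ba_1,\bb_1$ are unchanged. No step is a genuine obstacle; the only care needed is the bookkeeping of uniform parameter ranges and the independence of the constants from $R$ and $\e$.
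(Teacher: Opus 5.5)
Your proposal is correct and matches the paper. The paper gives Proposition \ref{c1-prop} no separate proof: it is simply the combination of the upper bound in Lemma \ref{im-c1-upper-lem} (assembled from the three regional estimates) and the lower bound in Lemma \ref{im-c1-lower-lem}, where $|x||D\hure|\geq x\cdot D\hure>\beta\hure$ together with \eqref{c0-s} gives $\bb_1=\beta\bb$, and \eqref{c1} follows from \eqref{c1-s} by the scale-invariant rescaling you describe. One minor point: the interior estimate in Lemma \ref{im-c1-mid-lem} follows the Chou--Wang argument rather than Cheng--Yau, but this has no effect on your assembly.
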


As we can see that Proposition \ref{c1-prop} significantly improves the results in Corollary \ref{c1-bound-cor}. We shall use this improved $C^1$ estimate to
obtain better $C^2$ estimates on $\p(\Omega\setminus B_{1/R})$, which will be used in Section \ref{sec-AB}.

\subsection{Improved $C^2$ estimates}
\label{sub-imp-c2}
\subsubsection{Improved $C^2$ estimates on boundaries}
\label{subsub-im-c2-bd}
In this subsubsection, we shall start with proving improved $C^2$ estimates on $\p B_1$ for $\hure$. Then we shall carry out a similar argument on $\p\Omega$ for $\ure.$ We want to mention that the $C^2$ boundary estimates in the double tangential directions, that is, Lemma \ref{c2-inside-tan-lem} and Lemma \ref{c2-outside-tan-lem} are already optimal (i.e., they are of the same magnitude as the second derivative of the Riesz kernel), we only need to improve the rest.
\begin{lemma}
 \label{im-c2-inside-mix-lem}
 Given $\eta_0=\eta_0(\Omega)>0, \ev=\ev(\Omega, \{0\}, n)>0$ sufficiently small, $R_0=R_0(\Omega)>0$ sufficiently large, for any $0<\e<\eta_0 R^{-8},$ $R>R_0,$ and $\varphi\in C^{\infty}(\bar\Omega)$ that satisfies $\varphi\leq0$ and $\|\varphi\|_{C^2}\leq \ev$ let $\ure$ be the admissible solution of \eqref{eq-appr}.
 Then on $\p B_{1/R},$ $\ure$ satisfies
 \be\label{im-c2-inside-mix}
 \lt|\lt(\ure\rt)_{\tau\nu}\rt|<CR^{\frac{n}{n-1}}.
 \ee
 Let $\hure$ be the admissible solution of \eqref{eq-appr-s}, then on $\p B_1,$ $\hure$ satisfies
 \be\label{im-c2-inside-mix-s}
 \lt|\lt(\hure\rt)_{\tau\nu}\rt|<C.
 \ee
Here, $\tau$ is an arbitrary unit tangential vector of $\p B_1,$ $\nu$ is the inward unit normal of $\p B_1,$ and $C=C(\Omega, \{0\}, n)>0$ is a constant that is
independent of $R$ and $\e.$
 \end{lemma}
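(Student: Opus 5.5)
We will rerun the barrier argument from the proof of Lemma \ref{c2-inside-mix-lem}, but replace the crude bound \eqref{c2-inside-1}, which used the global gradient estimate \eqref{c1-bound-s}, by the sharp gradient bound of Proposition \ref{c1-prop}. As before, we work with $\hure$ on $\p B_1$; \eqref{im-c2-inside-mix} then follows by rescaling, since $(\ure)_{\tau\nu}(x)=R^{-\frac{n-2}{n-1}}R^2(\hure)_{\tau\nu}(Rx)=R^{\frac{n}{n-1}}(\hure)_{\tau\nu}(Rx)$.

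Fix $x_0\in\p B_1$ and use the same local coordinates, the same rotation-type field $T=\p_{\td\al}-(\td x_\al\p_{\td n}-\td x_n\p_{\td\al})$, and the same region $\td B_{\delta_0}=B_{\delta_0}(x_0)\setminus\bar B_1$. Since $\hure\equiv\ba$ on $\p B_1$ and $T$ is an angular derivative (tangential to spheres centred at $0$), we have $T\hure=0$ exactly on $\p B_1\cap\p\td B_{\delta_0}$; this only improves \eqref{add-mix-1}. The interior bound \eqref{add-mix-2}, $|\frak L T\hure|<4f_\e$, is unchanged. The barrier $h=\hure-\ubar v+\theta|\td x|^2$ is also unchanged: it satisfies $h\ge0$ on $\p B_1$, $h\ge\theta\delta_0^2$ on $\p\td B_{\delta_0}\setminus\p B_1$, and $\frak L h<-9f_\e$. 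The constants $\theta$ and $\delta_0$ depend only on $\bb$ and $n$, hence not on $R$ or $\e$.

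The key new input is on the inner part of the boundary, $\p\td B_{\delta_0}\setminus\p B_1$. There $1\le|x|\le1+\delta_0$, so Proposition \ref{c1-prop} gives $|D\hure|\le\ba_1$. Since $|T|\le C$ on $\td B_{\delta_0}$, this yields
\[
|T\hure|\le C(\Omega,\{0\},n)\quad\text{on }\p\td B_{\delta_0}\setminus\p B_1,
\]
replacing the bound $6R^{\frac{n-2}{n-1}}$ of \eqref{c2-inside-1}. We can therefore choose $B=B(\Omega,\{0\},n,\theta,\delta_0)$ independent of $R$ so that $-Bh\le T\hure\le Bh$ on $\p\td B_{\delta_0}$. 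In the interior, $\frak L(Bh\mp T\hure)<-9Bf_\e+4f_\e<0$ once $B\ge1$.

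By the maximum principle, $|T\hure|\le Bh$ in $\td B_{\delta_0}$. Both $T\hure$ and $h$ vanish at $x_0$, so differentiating in the normal direction $\td n$ gives $|(T\hure)_{\td n}(x_0)|\le Bh_{\td n}(x_0)$. It remains to bound $h_{\td n}(x_0)$ and to convert $(T\hure)_{\td n}$ into $\hure_{\td\al\td n}$:

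1. By Lemma \ref{c1-inside-lem}, $\hure_{\td n}(x_0)$ is bounded. Since $\ubar v$ is an explicit smooth function and $|\td x|^2$ has zero gradient at $x_0$, $h_{\td n}(x_0)\le C$.
2. At $x_0$ we have $\td x=0$, so $(T\hure)_{\td n}=\hure_{\td\al\td n}-\hure_{\td\al}$ (one lower-order term comes from differentiating the coefficient $\td x_n$). Moreover $\hure_{\td\al}(x_0)=0$.

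Hence $|\hure_{\td\al\td n}(x_0)|\le C$ with $C$ independent of $R$ and $\e$. Since $x_0$ and $\td\al$ are arbitrary, this proves \eqref{im-c2-inside-mix-s}.

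The only delicate point is the $R$-independence of every constant; the sole source of $R$ in the earlier proof was \eqref{c2-inside-1}, which Proposition \ref{c1-prop} removes.
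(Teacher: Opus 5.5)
Your proposal is correct and follows the paper's proof: you rerun the barrier argument of Lemma \ref{c2-inside-mix-lem} with the bound \eqref{c2-inside-1} replaced by $|T\hure|<C$ from Proposition \ref{c1-prop}, and this was the only source of $R$-dependence. One harmless slip: at $x_0$ one gets $(T\hure)_{\td n}=\hure_{\td\al\td n}+\hure_{\td\al}$ rather than $\hure_{\td\al\td n}-\hure_{\td\al}$, which changes nothing since $\hure_{\td\al}(x_0)=0$.
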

\begin{proof}
The proof here is almost the same as the proof of Lemma \ref{c2-inside-mix-lem}. The only difference is that instead of applying Corollary \ref{c1-bound-cor} to get
\eqref{c2-inside-1}, we now apply Proposition \ref{c1-prop} and get
\be\label{im-c2-inside-1}
 |T\hure|<C\,\,\mbox{ on $\p\td B_{\delta_0}\setminus\p B_1$}
 \ee
 for some $C=C(\Omega, \{0\}, n)>0.$ Therefore, there exists some $B=B(\theta, \delta_0, \Omega, \{0\}, n)>0$ such that
  on $\p\td B_{\delta_0}$ it holds
\be\label{v1.2}-Bh\leq T\hure\leq Bh;\ee
 and in $\td B_{\delta_0}$ it holds
 \[-B\frak L h>\frak LT\hure>B\frak Lh. \]
 By the standard maximum principle it is easy to see that \eqref{v1.2} holds in $\td B_{\delta_0}.$ This implies at $x_0$ we have
  \[-Bh_{\td n}< (T\hure)_{\td n}<Bh_{\td n}.\]
  Since $x_0\in \p B_1$ is arbitrary, we have completed the proof of \eqref{im-c2-inside-mix-s} and \eqref{im-c2-inside-mix} follows by rescaling.
\end{proof}

Consequently, we obtain a better estimate of $D^2\hure$ in the double normal direction.
\begin{lemma}
 \label{im-c2-inside-nu-lem}
Given $\eta_0=\eta_0(\Omega)>0, \ev=\ev(\Omega, \{0\}, n)>0$ sufficiently small, $R_0=R_0(\Omega)>0$ sufficiently large, for any $0<\e<\eta_0 R^{-8},$ $R>R_0,$ and $\varphi\in C^{\infty}(\bar\Omega)$ that satisfies $\varphi\leq0$ and $\|\varphi\|_{C^2}\leq \ev$ let $\ure$ be the admissible solution of \eqref{eq-appr}. Then on $\p B_{1/R},$ $\ure$ satisfies
 \be\label{im-c2-inside-nu}
 \lt|\lt(\ure\rt)_{\nu\nu}\rt|<CR^{\frac{n}{n-1}}.
 \ee
 Let $\hure$ be the admissible solution of \eqref{eq-appr-s}, then on $\p B_1,$ $\hure$ satisfies
 \be\label{im-c2-inside-nu-s}
 \lt|\lt(\hure\rt)_{\nu\nu}\rt|<C.
 \ee
Here, $\nu$ is the inward unit normal of $\p B_1,$ and $C=C(\Omega, \{0\}, n)>0$ is a constant that is independent of $R$ and $\e.$
 \end{lemma}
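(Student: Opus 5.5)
We will follow the argument used for Lemma \ref{c2-inside-nu-lem}, now feeding in the improved mixed-derivative bound of Lemma \ref{im-c2-inside-mix-lem} in place of Lemma \ref{c2-inside-mix-lem}. We work with $\hure$ on $\p B_1$; the estimate \eqref{im-c2-inside-nu} for $\ure$ then follows by rescaling. Under this rescaling second derivatives pick up the factor $R^{2-\frac{n-2}{n-1}}=R^{\frac{n}{n-1}}$.

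Fix $x_0\in\p B_1$ and use the coordinates $\{\td x_1,\dots,\td x_n\}$ from Subsubsection \ref{subsub-c2-inside}. By \eqref{c2-inside-0}, the tangential block is $a\,\delta_{\td\al\td\beta}$ with $a:=(\hure)_{\td n}$. Lemma \ref{c1-inside-lem} gives the two-sided bound $c_0\le a\le C_0$, where $c_0=\frac{n-2}{n-1}\bb(1+2\e/\bb)^{-\frac{1}{n-1}}\ge\frac{n-2}{2(n-1)}\bb$ and $C_0=\frac{n-2}{n-1}\ba$. Write $b:=(\hure)_{\td n\td n}$ and $m:=\sum_{\td s\le n-1}(\hure)_{\td n\td s}^2$. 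By Lemma \ref{im-c2-inside-mix-lem}, $m\le C$ with $C$ independent of $R,\e$.

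We expand $\s_{n-1}=f_\e\s_{n-2}$ for this arrowhead matrix as in the displayed identity before Lemma \ref{c2-inside-nu-lem}:
\[
a^{n-1}+(n-1)a^{n-2}b-(n-2)a^{n-3}m=f_\e\Big(\tbinom{n-1}{2}a^{n-2}+(n-2)a^{n-3}b-\tbinom{n-2}{2}a^{n-4}m\Big).
\]
This relation is linear in $b$, and the coefficient of $b$ is $a^{n-3}\big((n-1)a-(n-2)f_\e\big)$. On $\p B_1$ we have $f_\e\le C\e<C\eta_0R^{-8}$, which is negligible. Since $a\ge c_0$, the coefficient is therefore bounded below by $\frac12(n-1)c_0^{\,n-2}>0$.

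Solving for $b$, every remaining term is a polynomial in $a\le C_0$, $a^{-1}\le c_0^{-1}$, $m\le C$ and $f_\e\le1$. Hence $|b|\le C(\Omega,\{0\},n)$ uniformly in $R$ and $\e$, which is \eqref{im-c2-inside-nu-s}. In fact the bound is essentially
\[
b\approx\frac{(n-2)m/a-a}{n-1}.
\]
Since $x_0$ is arbitrary, this holds on all of $\p B_1$, and rescaling gives \eqref{im-c2-inside-nu}.

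The main point needing care is the positive lower bound on the coefficient of $b$. This rests on the lower gradient bound of Lemma \ref{c1-inside-lem} together with the smallness of $f_\e$. It is worth noting that the earlier, weaker bound $R^{\frac{2(n-2)}{n-1}}$ in Lemma \ref{c2-inside-nu-lem} came only from the size of $m$. Everything else here is routine bookkeeping.
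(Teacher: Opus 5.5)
Your proposal is correct and follows the paper's route: at $x_0\in\p B_1$ you solve $\s_{n-1}=f_\e\s_{n-2}$ for $(\hure)_{\td n\td n}$ exactly as in Lemma \ref{c2-inside-nu-lem}, now using the $O(1)$ mixed bound of Lemma \ref{im-c2-inside-mix-lem} together with the two-sided normal-derivative bound of Lemma \ref{c1-inside-lem}.

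There is one harmless slip in your expansion of $\s_{n-2}$. Its correct form is $(n-1)a^{n-2}+\binom{n-1}{2}a^{n-3}b-\binom{n-2}{2}a^{n-4}m$. With this, the coefficient of $b$ becomes $a^{n-3}\bigl((n-1)a-\binom{n-1}{2}f_\e\bigr)$. Since $f_\e=O(\e)$ on $\p B_1$, this does not change your lower bound or your conclusion.
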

Next, we shall investigate $C^2$ estimates on $\p\Omega.$
\begin{lemma}
 \label{im-c2-outside-mix-lem}
Given $\eta_0=\eta_0(\Omega)>0, \ev=\ev(\Omega, \{0\}, n)>0$ sufficiently small, $R_0=R_0(\Omega)>0$ sufficiently large, for any $0<\e<\eta_0 R^{-8},$ $R>R_0,$ and $\varphi\in C^{\infty}(\bar\Omega)$ that satisfies $\varphi\leq0$ and $\|\varphi\|_{C^2}\leq \ev$ let $\ure$ be the admissible solution of \eqref{eq-appr}.
Then on $\p\Omega,$ $\ure$ satisfies
 \be\label{im-c2-outside-mix}
 \lt|\lt(\ure\rt)_{\tau\nu}\rt|<C.
 \ee
 Let $\hure$ be the admissible solution of \eqref{eq-appr-s}, then on $\p\Omega^R,$ $\hure$ satisfies
 \be\label{im-c2-outside-mix-s}
 \lt|\lt(\hure\rt)_{\tau\nu}\rt|<CR^{-\frac{n}{n-1}}.
 \ee
Here, $\tau$ is an arbitrary unit tangential vector of $\p\Omega,$ $\nu$ is the inward unit normal of $\p\Omega,$ and $C=C(\Omega, \{0\}, n)>0$ is a constant that is independent of $R$ and $\e.$
\end{lemma}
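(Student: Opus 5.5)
The argument repeats the barrier proof of Lemma \ref{c2-outside-mix-lem}. The only change is where that proof used the crude bound $|D\ure|<5R$ from Corollary \ref{c1-bound-cor}; there I would substitute the improved gradient bound of Proposition \ref{c1-prop}. Fix $x_0\in\p\Omega$ and use the same local coordinates $\{\td x_1,\dots,\td x_n\}$, with $\td x_n$ the inward normal. Take the same rotation-type operator $T=\p_{\td\al}+\kappa_\al(\td x_\al\p_{\td n}-\td x_n\p_{\td\al})$ and the same neighbourhood $\td B_{\delta_0}=B_{\delta_0}(x_0)\cap\Omega$. Since $B_1\ssubset\Omega$ and $\delta_0$ is small, $|x|$ is bounded below on $\td B_{\delta_0}$ by a constant depending only on $\Omega$. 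Hence Proposition \ref{c1-prop} gives $|D\ure|\le \ba_1|x|^{-\frac{1}{n-1}}\le C(\Omega,\{0\},n)$ on $\td B_{\delta_0}$. In particular, on $\p\td B_{\delta_0}\setminus\p\Omega$,
\[|T(\ure-(1+\varphi))|<C\]
with $C$ independent of $R$ and $\e$. This replaces \eqref{c2-outside-1}.

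The remaining ingredients are the same as before and carry no $R$-dependence. First, on $\p\td B_{\delta_0}\cap\p\Omega$ we have $|T(\ure-(1+\varphi))|\le C|\td x'|^2$, as in \eqref{add-o-mix1}. Second, in $\td B_{\delta_0}$ we have $|\frak LT(\ure-(1+\varphi))|<4f_{\frac{\e}{R}}+E_\varphi\sum F^{\td i\td i}$, as in \eqref{add-o-mix2}. Third, the function $h=\ure-\ubar\phi+\theta|\td x|^2$, built from the lower barrier $\ubar\phi=2(\fb-1)+1+\varphi$ of Subsubsection \ref{subsub-c1-outside}, satisfies \eqref{add-o-mix3}, \eqref{add-o-mix4} and $\frak Lh<-9f_{\frac{\e}{R}}-2\theta\sum F^{\td i\td i}$. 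With these, I would choose $B=B(\Omega,\{0\},n,\theta,\delta_0,E_\varphi)$, now without the factor $R$, so that $-Bh\le T(\ure-(1+\varphi))\le Bh$ on $\p\td B_{\delta_0}$ and $-B\frak Lh>\frak LT(\ure-(1+\varphi))>B\frak Lh$ in $\td B_{\delta_0}$. The boundary inequality holds on the spherical part because $h\ge\theta\delta_0^2$ there and $T(\ure-(1+\varphi))$ is bounded. The interior inequality holds because $B\cdot 9f_{\frac{\e}{R}}\ge 4f_{\frac{\e}{R}}$ and $2B\theta\ge E_\varphi$.

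The maximum principle then gives the two-sided bound throughout $\td B_{\delta_0}$. Since both $h$ and $T(\ure-(1+\varphi))$ vanish at $x_0$, we get $|(T(\ure-(1+\varphi)))_{\td n}(x_0)|\le Bh_{\td n}(x_0)$. Here $h_{\td n}(x_0)=\ure_{\td n}-\ubar\phi_{\td n}$ is bounded by Lemma \ref{c1-outside-lem} and the smoothness of $\fb$ near $\p\Omega$. At $x_0$, $(T\ure)_{\td n}=\ure_{\td\al\td n}+\kappa_\al\ure_{\td\al}$. The first-order term is again bounded by a constant through Proposition \ref{c1-prop}, and the $\varphi$ terms are controlled by $\|\varphi\|_{C^3}$. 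This gives $|(\ure)_{\tau\nu}|<C$ on $\p\Omega$.

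The constant depends on $\|\varphi\|_{C^3}$ through $E_\varphi$. I would absorb this into the dependence on $\Omega,\{0\},n$ in the same way the paper does in Lemma \ref{c2-outside-mix-lem}. This bookkeeping is the only mildly delicate point; the substantive step, already handled by Proposition \ref{c1-prop}, is checking that every gradient quantity entering the barrier comparison is $O(1)$ near $\p\Omega$. Finally, the rescaled estimate follows from $\hure(x)=R^{\frac{n-2}{n-1}}\ure(x/R)$. Second derivatives then pick up the factor $R^{\frac{n-2}{n-1}-2}=R^{-\frac{n}{n-1}}$, which yields $|(\hure)_{\tau\nu}|<CR^{-\frac{n}{n-1}}$ on $\p\Omega^R$.
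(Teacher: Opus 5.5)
Your proposal is correct and matches the paper's proof: the only change from Lemma \ref{c2-outside-mix-lem} is replacing the crude bound $|D\ure|<5R$ on $\p\td B_{\delta_0}\setminus\p\Omega$ by Proposition \ref{c1-prop}, which removes the factor $R$ from the barrier constant $B$; rescaling then gives the $R^{-\frac{n}{n-1}}$ factor. Two small remarks: at $x_0$ one has $(T\ure)_{\td n}=(\ure)_{\td\al\td n}-\kappa_\al(\ure)_{\td\al}$, with a minus sign, which is harmless; and the $E_\varphi$ term really does leave a dependence on $\|\varphi\|_{C^3}$ that cannot be absorbed into $C(\Omega,\{0\},n)$ — the paper keeps this dependence explicitly in Lemma \ref{c2-outside-mix-lem} and drops it here without comment.
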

\begin{proof}
The proof here is almost the same as the proof of Lemma \ref{c2-outside-mix-lem}. The only difference is instead of applying Corollary \ref{c1-bound-cor} to get
\eqref{c2-outside-1}, we can now apply Proposition \ref{c1-prop} and get
\be\label{im-c2-outside-1}
 |T\lt(\ure-(1+\varphi)\rt)|<C\,\,\mbox{ on $\p\td B_{\delta_0}\setminus\p\Omega$}
 \ee
 for some $C=C(\Omega, \{0\}, n)>0.$
 Therefore, there exists some $B=B(\theta, \delta_0, \Omega, \{0\}, n)>0$ such that
 on $\p\td B_{\delta_0}$ it holds
 \be\label{v1.3}-Bh\leq T\lt(\ure-(1+\varphi)\rt)\leq Bh;\ee
  and in $\td B_{\delta_0}$ it holds
 \[-B\frak L h>\frak LT\lt(\ure-(1+\varphi)\rt)>B\frak Lh. \]
 By the standard maximum principle it is easy to see that \eqref{v1.3} holds in $\td B_{\delta_0}.$ This implies at $x_0$ we have
  \[-Bh_{\td n}< \lt(T\lt(\ure-(1+\varphi)\rt)\rt)_{\td n}<Bh_{\td n}.\]
  Since $x_0\in \p\Omega$ is an arbitrary point, we have completed the proof of \eqref{im-c2-outside-mix} and \eqref{im-c2-outside-mix-s} follows by rescaling.
\end{proof}
Consequently, we obtain a better estimate of $D^2\hure$ in the double normal direction.
\begin{lemma}
 \label{im-c2-outside-nu-lem}
Given $\eta_0=\eta_0(\Omega)>0, \ev=\ev(\Omega, \{0\}, n)>0$ sufficiently small, $R_0=R_0(\Omega)>0$ sufficiently large, for any $0<\e<\eta_0 R^{-8},$ $R>R_0,$ and $\varphi\in C^{\infty}(\bar\Omega)$ that satisfies $\varphi\leq0$ and $\|\varphi\|_{C^2}\leq \ev$ let $\ure$ be the admissible solution of \eqref{eq-appr}.
Then on $\p\Omega,$ $\ure$ satisfies
 \be\label{im-c2-outside-nu}
 \lt|\lt(\ure\rt)_{\nu\nu}\rt|<C.
 \ee
 Let $\hure$ be the admissible solution of \eqref{eq-appr-s}, then on $\p\Omega^R,$ $\hure$ satisfies
 \be\label{im-c2-outside-nu-s}
 \lt|\lt(\hure\rt)_{\nu\nu}\rt|<CR^{-\frac{n}{n-1}}.
 \ee
Here, $\nu$ is the inward unit normal of $\p\Omega,$ and $C=C(\Omega, \{0\}, n)>0$ is a constant that is independent of $R$ and $\e.$
\end{lemma}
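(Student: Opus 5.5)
The argument mirrors the one preceding Lemma \ref{c2-inside-nu-lem}: the double normal derivative at a boundary point is recovered by solving the equation algebraically, now using the improved tangential and mixed bounds. Fix $x_0\in\p\Omega$ and choose the local coordinates $\{\td x_1,\cdots,\td x_n\}$ of Subsubsection \ref{subsub-c2-outside}, with $\td x_n$ the inward normal. At $x_0$ the tangential block is $(\ure)_{\td\al\td\beta}=-(\ure)_{\td n}\kappa_\al\delta_{\td\al\td\beta}+\varphi_{\td\al\td\beta}$. After a small rotation of the tangential coordinates we may diagonalize it, so $(\ure)_{\td\al\td\beta}=\mu_\al\delta_{\td\al\td\beta}$. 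By Lemma \ref{c2-outside-tan-lem}, $e\leq\mu_\al\leq q$. By Lemma \ref{im-c2-outside-mix-lem}, $|(\ure)_{\td\al\td n}|<C$.

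Write the equation as $\s_{n-1}(D^2\ure)=f_{\frac{\e}{R}}\s_{n-2}(D^2\ure)$ and expand both sides by principal minors along the last row and column. This gives
\[
\s_{n-1}(D^2\ure)=\s_{n-1}(\mu)+(\ure)_{\td n\td n}\s_{n-2}(\mu)-\sum_{\al}(\ure)_{\td\al\td n}^2\s_{n-3}(\mu|\al),
\]
and similarly
\[
\s_{n-2}(D^2\ure)=\s_{n-2}(\mu)+(\ure)_{\td n\td n}\s_{n-3}(\mu)-\sum_{\al}(\ure)_{\td\al\td n}^2\s_{n-4}(\mu|\al),
\]
where $\mu=(\mu_1,\cdots,\mu_{n-1})$. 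Solving for $(\ure)_{\td n\td n}$ yields
\[
(\ure)_{\td n\td n}\bigl(\s_{n-2}(\mu)-f_{\frac{\e}{R}}\s_{n-3}(\mu)\bigr)=-\s_{n-1}(\mu)+\sum_\al(\ure)_{\td\al\td n}^2\s_{n-3}(\mu|\al)+f_{\frac{\e}{R}}\Bigl(\s_{n-2}(\mu)-\sum_\al(\ure)_{\td\al\td n}^2\s_{n-4}(\mu|\al)\Bigr).
\]

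Now bound the two sides. Since $e\leq\mu_\al\leq q$, every $\s_k(\mu)$ and $\s_k(\mu|\al)$ is bounded above and below by positive constants depending only on $\Omega,\{0\},n$. Near $\p\Omega$ we have $r$ bounded below, since $B_1\ssubset\Omega$, so $f_{\frac{\e}{R}}\leq C\e R^{\frac{1}{n-1}}<\eta_0R^{-6}$ is negligible. The coefficient $\s_{n-2}(\mu)-f_{\frac{\e}{R}}\s_{n-3}(\mu)$ is therefore bounded below by $\frac12 e^{n-2}$. The right-hand side is bounded by a constant $C(\Omega,\{0\},n)$. Dividing gives $|(\ure)_{\nu\nu}|<C$ at $x_0$, and since $x_0$ is arbitrary this is \eqref{im-c2-outside-nu}.

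For the rescaled function, $\hure(x)=R^{\frac{n-2}{n-1}}\ure(x/R)$ gives $D^2\hure(x)=R^{\frac{n-2}{n-1}-2}D^2\ure(x/R)=R^{-\frac{n}{n-1}}D^2\ure(x/R)$, which yields \eqref{im-c2-outside-nu-s}.

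The only delicate point is keeping the coefficient of $(\ure)_{\td n\td n}$ uniformly away from zero. This is exactly where the uniform lower bound $\mu_\al\geq e$ from Lemma \ref{c2-outside-tan-lem} is needed. That bound relies on the strict convexity of $\p\Omega$, the lower bound $|(\ure)_\nu|>c$ from Lemma \ref{c1-outside-lem}, and the smallness of $\ev$, which lets us absorb $\varphi_{\td\al\td\beta}$. Everything else is routine bookkeeping with constants independent of $R$ and $\e$.
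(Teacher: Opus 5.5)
Your proposal is correct and follows the paper's route. The paper gets this lemma by repeating the algebraic argument of Subsubsection \ref{subsub-c2-inside}: it writes the equation as $\s_{n-1}(D^2\ure)=f_{\frac{\e}{R}}\s_{n-2}(D^2\ure)$, expands by principal minors and solves for the double normal derivative, feeding in the tangential bounds of Lemma \ref{c2-outside-tan-lem} and the improved mixed bound of Lemma \ref{im-c2-outside-mix-lem}, then rescales. Your diagonalization of the tangential block and your uniform lower bound on the coefficient $\s_{n-2}(\mu)-f_{\frac{\e}{R}}\s_{n-3}(\mu)$ spell out exactly the details the paper leaves implicit.
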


 \section{Deformation process}
 \label{sec-deformation}
 Let $\hure$ be the admissible solution of \eqref{eq-appr-s}, in Section \ref{sec-deformation}, \ref{sec-step3}, and \ref{sec-AB}, we shall show there exists some universal positive constants
 $\ba_2=\ba_2(\Omega, \{0\}, n), \bb_2=\bb_2(\Omega, \{0\}, n)>0$ such that
 \[\bb_2|x|^{-\frac{n}{n-1}}<|D^2\hure(x)|<\ba_2|x|^{-\frac{n}{n-1}}\,\,\mbox{for all $x\in\ol{\Omega^R\setminus B_1}.$}\]
 We want to emphasize that $\ba_2$ and $\bb_2$ are independent of $R$ and $\e.$

 In this section, we shall set up our argument. Inspired by the approach underlying the proof of the constant rank theorem, which has been employed by various authors (see \cite{CF85,Kor87, GM03, WX25} for examples), we shall use a deformation argument.

 From now on, for our convenience, we shall only look at the rescaled equation \eqref{eq-appr-s}. Let $\hure$ be the admissible solution of \eqref{eq-appr-s}, assume the the eigenvalues of $D^2\hure$ are
 $\la(D^2\hure)=(\la_1, \la_2, \cdots, \la_{n})$ and $\la_1\geq\cdots\geq\la_{n-1}\geq\la_n.$ First, we want to prove that
 \be\label{def-1}\la_{n-1}\geq m_0|x|^{-\frac{n}{n-1}}\ee for some positive constant $m_0=m_0(\Omega, \{0\}, n)>0$ that is independent of $R$ and $\e.$ Then we shall use this result to prove $\la_1\leq M_0|x|^{-\frac{n}{n-1}}$ for some positive constant $M_0=M_0(\Omega, \{0\}, n)>0$ that is independent of $R$ and $\e.$ As one might expect, the first part is the key part of this paper.

 \subsection{Set up}
 \label{sub-setup} Our deformation argument goes as follows. Let $u^0=\bb\lt(r+\frac{2\e}{\bb}\rt)^{\frac{n-2}{n-1}}-\bb\lt(1+\frac{2\e}{\bb}\rt)^{\frac{n-2}{n-1}}+\ba,$ as we have shown in the proof of Lemma \ref{c1-inside-lem}, $u^0$ is a subsolution of \eqref{eq-appr-s} satisfying $F(D^2 u^0)=\bb f_{\frac{2\e}{\bb}}$. Now, let $R_1=R_1(\bb, R)=R_1(\Omega, \{0\}, n, R)>0$ (since $\bb=\bb(\Omega, \{0\}, n)$) be a positive constant such that $u^0=R^{\frac{n-2}{n-1}}$ on $\p B_{R_1}.$ It is clear that $R_1=O(R)$ and
 $\bar\Omega^R\subset B_{R_1}.$ We shall consider the following family of Dirichlet problems
 \be\label{eq-appr-t}
\left\{\begin{aligned}
F(D^2u)&=(1-t)\bb f_{\frac{2\e}{\bb}}+tf_{\e}\,\,&\mbox{in $\Omega^R(t)\setminus \bar B_1,$}\\
u&=R^\frac{n-2}{n-1}+tR^{\frac{n-2}{n-1}}\varphi\lt(\frac{x}{R}\rt)\,\,&\mbox{on $\p\Omega^R(t),$}\\
u&=\ba \,\,&\mbox{on $\p B_1,$}
\end{aligned}
\right.
 \ee
where $t\in[0, 1]$ and $\Omega^{R(t)}$ is the Minkowski sum of $(1-t)B_{R_1}$ and $t\Omega^R,$ i.e., $\Omega^R(t)=(1-t)B_{R_1}+t\Omega^R.$

\begin{lemma}
\label{convexity-lem}
For any $t\in[0, 1],$ the domain $\Omega^R(t)$ defined above is strictly convex. Moreover, let $\kappa_{\min}(\p\Omega^R(t))$ denote the minimum principal curvature of
$\p\Omega^R(t)$ and let $\kappa_{\max}(\p\Omega^R(t))$ denote the maximum principal curvature of
$\p\Omega^R(t).$ Then there exist positive constants $c_0=c_0(\Omega, \{0\}), C_0=C_0(\Omega, \{0\})>0$ that are independent of $R, \e,$ and $t$  such that
\[\kappa_{\min}(\p\Omega^R(t))\geq\frac{c_0}{R}\]
and
\[\kappa_{\max}(\p\Omega^R(t))\leq\frac{C_0}{R}.\]
\end{lemma}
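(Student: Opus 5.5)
The plan is to work with support functions, since the Minkowski sum is linear in support functions: $h_{K+L}=h_K+h_L$ and $h_{sK}=sh_K$ for $s\geq 0$. Let $h_\Omega$ be the support function of $\Omega$ on $\mathbb S^{n-1}$. Then
\[h_t:=h_{\Omega^R(t)}=(1-t)R_1+tRh_\Omega,\]
because $h_{B_{R_1}}\equiv R_1$ and $h_{\Omega^R}=Rh_\Omega$. For a smooth, strictly convex body, the matrix
\[W[h]:=\nabla^2_{\mathbb S^{n-1}}h+h\,\mathrm{Id}\]
is positive definite. Its eigenvalues are the principal radii of curvature $\mathcal R_1,\dots,\mathcal R_{n-1}$, that is, the reciprocals of the principal curvatures at the boundary point with outer normal $\theta$.

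The map $h\mapsto W[h]$ is linear, so
\[W[h_t]=(1-t)R_1\,\mathrm{Id}+tR\,W[h_\Omega].\]
Since $\p\Omega$ is smooth and strictly convex, there are constants $0<\rho_-\leq\rho_+$, depending only on $\Omega$, such that
\[\rho_-\,\mathrm{Id}\leq W[h_\Omega]\leq\rho_+\,\mathrm{Id}\quad\mbox{on }\mathbb S^{n-1}.\]
Indeed $\rho_\pm$ are the reciprocals of $\kappa_{\min}(\p\Omega)$ and $\kappa_{\max}(\p\Omega)$, and both are finite and positive by smoothness and strict convexity. It follows that
\[(1-t)R_1+tR\rho_-\;\leq\; W[h_t]\;\leq\;(1-t)R_1+tR\rho_+ .\]

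These bounds are affine in $t$, so their extremes occur at $t=0$ or $t=1$. Hence
\[\min\{R_1,R\rho_-\}\,\mathrm{Id}\leq W[h_t]\leq\max\{R_1,R\rho_+\}\,\mathrm{Id}.\]
Recall $R_1=O(R)$ and $\bar\Omega^R\subset B_{R_1}$. More precisely, $u^0$ is explicit, and solving $u^0(R_1)=R^{\frac{n-2}{n-1}}$ gives $R_1\approx\bb^{-\frac{n-1}{n-2}}R$. So $c'R\leq R_1\leq C'R$ with $c',C'$ depending only on $\Omega$, $\{0\}$, $n$; I will absorb the $n$-dependence into the constants. Therefore
\[aR\leq\mathcal R_i\leq AR\]
for $a,A>0$ independent of $R$, $\e$, $t$. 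Positive definiteness of $W[h_t]$ together with smoothness of $h_t$ shows that $h_t$ is the support function of a smooth, strictly convex body. Its Gauss map is a diffeomorphism onto the sphere, which justifies identifying this body with the Minkowski sum. The principal curvatures are $\kappa_i=1/\mathcal R_i$, which gives
\[\frac{c_0}{R}\leq\kappa_i\leq\frac{C_0}{R}\]
with $c_0=1/A$ and $C_0=1/a$.

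The only delicate point is the regularity and identification step: checking that the Minkowski sum of two smooth, strictly convex bodies is again a smooth, strictly convex body whose principal radii are the eigenvalues of $W[h_t]$. This is classical (see Schneider's book). It follows because $h_t$ is smooth and $W[h_t]>0$, so $X(\theta)=\nabla h_t+h_t\theta$ parametrizes a smooth, strictly convex hypersurface with outer normal $\theta$, and its differential is $W[h_t]$. Everything else is linear algebra on the sum of two positive definite matrices.
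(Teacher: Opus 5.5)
Your proposal is correct and follows essentially the same route as the paper: linearity of support functions under Minkowski sums, principal radii as the eigenvalues of $\bar\nabla^2 h+h\,\mathrm{Id}$, and eigenvalue bounds for the convex combination $(1-t)R_1\,\mathrm{Id}+tR\,W[h_\Omega]$, combined with $R_1$ being comparable to $R$. Your lower bound $(1-t)R_1+tR\rho_-\geq\min\{R_1,R\rho_-\}$ is slightly cleaner than the paper's $\max\{(1-t)\lambda_{\min},\,t\lambda_{\min}\}$. You also make explicit two points the paper leaves implicit: the comparability $c'R\leq R_1\leq C'R$, and the identification of the Minkowski sum with the smooth, strictly convex body determined by $h_t$.
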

\begin{proof}
We will denote by $h_A: \mathbb S^{n-1}\goto\R$ the support function of a convex domain $A\subset\mathbb R^n.$ It is well known that (see page 3 of \cite{Guan} for example)
\[h_{\Omega^R(t)}=(1-t)h_{B_{R_1}}+th_{\Omega^R}.\]
Denote $$\Lambda^A_{ij}:=\bar\nabla_{ij}h_A+h_A\delta_{ij},$$
where $\bar\nabla_{ij}h_A$ denotes the second order covariant derivatives of $h_A$ with respect to an arbitrary orthonormal
frame on $\mathbb S^{n-1}.$ It is also well known that the eigenvalues of $\Lambda_{ij}^A$ are principal radii of $\p A.$ Therefore, to prove this lemma, it suffices to show that
there exist some constants $c_0=c_0(\Omega, \{0\})$ and $C_0=C_0(\Omega, \{0\})$ such that
\[\frac{R}{C_0}\leq\la_{\min}\lt(\Lambda^{\Omega^{R}(t)}_{ij}\rt)\,\,\mbox{and}\,\,\la_{\max}\lt(\Lambda^{\Omega^{R}(t)}_{ij}\rt)\leq\frac{R}{c_0}.\]
In this proof, for any symmetric matrix $\lt(\Lambda_{ij}\rt),$ we use $\la_{\min}(\Lambda_{ij})$ to denote the smallest eigenvalue of $\lt(\Lambda_{ij}\rt)$
and $\la_{\max}(\Lambda_{ij})$ to denote the largest eigenvalue of $\lt(\Lambda_{ij}\rt).$

It is straightforward to verify that
\[\Lambda^{\Omega^{R}(t)}_{ij}=(1-t)\Lambda^{B_{R_1}}_{ij}+t\Lambda^{\Omega^R}_{ij}.\]
Therefore, we have
\[\la_{\min}\lt(\Lambda^{\Omega^{R}(t)}_{ij}\rt)\geq\max\lt\{(1-t)\la_{\min}\lt(\Lambda^{B_{R_1}}_{ij}\rt), t\la_{\min}\lt(\Lambda^{\Omega^{R}}_{ij}\rt)\rt\}\]
and
\[\la_{\max}\lt(\Lambda^{\Omega^{R}(t)}_{ij}\rt)\leq\max\lt\{\la_{\max}\lt(\Lambda^{B_{R_1}}_{ij}\rt), \la_{\max}\lt(\Lambda^{\Omega^{R}}_{ij}\rt)\rt\}.\]
The lemma follows immediately.
\end{proof}

From the lemma above, we see that—-with only minor modifications—-all results proved in Section \ref{solv-appr} and Section \ref{improve} continue to hold for $u^t$, the solution of \eqref{eq-appr-t}, for every $t\in[0, 1]$. We emphasize that, since there exist uniform upper and lower bounds for $\text{diam}(\Omega^R(t))$ and for the principal curvatures of $\p\Omega^R(t)$ , all independent of $t$, it follows that $u^t$ admits uniform estimates. In particular, $u^t$ and its first and second derivatives possess bounds that are independent of $t$.

For technical reasons, instead of proving \eqref{def-1}, we shall prove
\be\label{goal-1}
\la_{n-1}(D^2u^t)> m_0(u^t)^{\boldsymbol{\delta}-\frac{n}{n-2}},
\ee
where $\la_{n-1}(D^2u^t)$ is the second smallest eigenvalue of $D^2 u^t.$
Note that here and throughout this paper $\bd:=1/R$ is a fixed number.
By Proposition \ref{c0-prop}, it is easy to see that \eqref{goal-1} implies \eqref{def-1}. We also want to point out that the value of $m_0$ may change from line to line. The point is that there exists an $m_0=m_0(\Omega, \{0\}, n)>0$ that is independent of $R, \e$ and $t,$ such that \eqref{goal-1} holds.
In the following, we shall show that \eqref{goal-1} holds for $t=1$ in three steps:\\

1. We show that \eqref{goal-1} holds for $t=0.$\\

2. We show that \eqref{goal-1} holds on $\p(\Omega^R(t)\setminus B_1)$ for all $t\in(0, 1].$\\

3. We show that \eqref{goal-1} holds in $\Omega^R(t)\setminus \bar B_1$ for all $t\in(0, 1].$\\

\subsection{Proof of step 1 and step 2}
\label{sub-step12}
\begin{lemma}
\label{step1-lem}
Let $u^0=\bb\lt(r+\frac{2\e}{\bb}\rt)^{\frac{n-2}{n-1}}-\bb\lt(1+\frac{2\e}{\bb}\rt)^{\frac{n-2}{n-1}}+\ba,$ assume the eigenvalues of $D^2 u^0$ are
 $\la(D^2 u^0)=(\la_1, \la_2, \cdots, \la_n)$ and $\la_1\geq\cdots\geq\la_n.$ Then \eqref{goal-1} holds for any $m_0\in (0, \frac{n-2}{2(n-1)}\bb^{\frac{2(n-1)}{n-2}}].$
\end{lemma}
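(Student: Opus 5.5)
This is a direct computation, since $u^0$ is radial and explicit. Write $s:=r+\frac{2\e}{\bb}$. By the computation in Subsection \ref{subsection-rk} for $\phi(x,\eta)$ with $\eta=2\e/\bb$ (scaled by $\bb$), the eigenvalues of $D^2u^0$ are
\[
\la_1=\cdots=\la_{n-1}=\frac{n-2}{n-1}\,\bb\, r^{-1}s^{-\frac{1}{n-1}},\qquad
\la_n=-\frac{n-2}{(n-1)^2}\,\bb\, s^{-\frac{n}{n-1}}.
\]
Hence $\la_{n-1}$ is the repeated positive eigenvalue, and since $r<s$ we get
\[
\la_{n-1}\geq\frac{n-2}{n-1}\bb\, s^{-\frac{n}{n-1}}.
\]
So it suffices to show
\[
\frac{n-2}{n-1}\bb\, s^{-\frac{n}{n-1}}\geq m_0\,(u^0)^{\bd-\frac{n}{n-2}}
\]
on $\overline{B_{R_1}\setminus B_1}$.

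Next we bound $u^0$ from below in terms of $s$. Since $\ba>\bb(1+\frac{2\e}{\bb})^{\frac{n-2}{n-1}}$ (shown in Subsection \ref{c0-est}), we have $u^0>\bb s^{\frac{n-2}{n-1}}$. Also $u^0\geq\ba>1$ on the region, because $u^0$ is increasing in $r$. Since $\bd-\frac{n}{n-2}<0$, the right-hand side is decreasing in $u^0$. Using $u^0\geq \bb s^{\frac{n-2}{n-1}}$ gives
\[
(u^0)^{\bd-\frac{n}{n-2}}\leq \bb^{\bd-\frac{n}{n-2}}\, s^{\frac{n-2}{n-1}\bd-\frac{n}{n-1}}.
\]
It remains to compare $\frac{n-2}{n-1}\bb s^{-\frac{n}{n-1}}$ with $m_0\bb^{\bd-\frac{n}{n-2}}s^{\frac{n-2}{n-1}\bd}s^{-\frac{n}{n-1}}$. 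After cancelling $s^{-\frac{n}{n-1}}$, this reduces to
\[
m_0\leq \frac{n-2}{n-1}\,\bb^{1+\frac{n}{n-2}-\bd}\, s^{-\frac{n-2}{n-1}\bd}.
\]

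The main obstacle is controlling the factor $s^{-\frac{n-2}{n-1}\bd}$, which is where $\bd=1/R$ enters. Since $s\leq R_1+1=O(R)$, we get $s^{\bd}\leq (CR)^{1/R}\leq 2$ for $R>R_0$ large. Hence $s^{-\frac{n-2}{n-1}\bd}\geq 1/2$, which accounts for the factor $\frac12$ in the claimed constant.

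For the power of $\bb$, note $1+\frac{n}{n-2}=\frac{2(n-1)}{n-2}$, and $\bb^{-\bd}\geq 1$ because $\bb<1$. So the right-hand side is at least $\frac{n-2}{2(n-1)}\bb^{\frac{2(n-1)}{n-2}}$. Any $m_0$ up to that value therefore works, giving \eqref{goal-1} at $t=0$. If the case $u^0<1$ or the direction of the $\bb$ exponent caused trouble, I would instead use the cruder bound $u^0\geq\ba>1$ together with $u^0\leq \ba s^{\frac{n-2}{n-1}}$ to absorb the $\bd$-powers; only the constant bookkeeping would change.
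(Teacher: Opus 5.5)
Your proof is correct and follows essentially the same route as the paper. Both use the explicit radial eigenvalues, the lower bound $u^0>\bb\,s^{\frac{n-2}{n-1}}$, and the fact that the domain $B_{R_1}$ has size $O(R)$, which makes the $\bd$-power a factor of at most $2$. The only bookkeeping difference is that the paper bounds $(u^0)^{\bd}$ above by $R^{\frac{n-2}{(n-1)R}}$, using $u^0\le R^{\frac{n-2}{n-1}}$ on $B_{R_1}$. You instead apply the lower bound to the whole negative power, control $s^{\bd}$ via $s\le R_1+1$, and discard $\bb^{-\bd}\ge 1$ using $\bb<1$; this works equally well.
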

\begin{proof}
A straightforward calculation yields for $u^0$
\[\la_1=\cdots=\la_{n-1}=\frac{(n-2)\bb}{(n-1)r\lt(r+\frac{2\e}{\bb}\rt)^{\frac{1}{n-1}}}.\]
Therefore, for any $R>R_0$ sufficiently large
\[
\begin{aligned}
\frac{\la_{n-1}}{(u^0)^{\bd-\frac{n}{n-2}}}&=\frac{(n-2)\bb}{(n-1)r\lt(r+\frac{2\e}{\bb}\rt)^{\frac{1}{n-1}}}\cdot
\frac{\lt[\bb\lt(r+\frac{2\e}{\bb}\rt)^{\frac{n-2}{n-1}}-\bb\lt(1+\frac{2\e}{\bb}\rt)^{\frac{n-2}{n-1}}+\ba\rt]^{\frac{n}{n-2}}}
{\lt[\bb\lt(r+\frac{2\e}{\bb}\rt)^{\frac{n-2}{n-1}}-\bb\lt(1+\frac{2\e}{\bb}\rt)^{\frac{n-2}{n-1}}+\ba\rt]^{1/R}}\\
&>\frac{(n-2)\bb^{1+\frac{n}{n-2}}\lt(r+\frac{2\e}{\bb}\rt)}{(n-1)rR^{\frac{n-2}{(n-1)R}}}>\frac{n-2}{2(n-1)}\bb^{\frac{2(n-1)}{n-2}}.
\end{aligned}
\]
This proves the lemma.
\end{proof}

\begin{lemma}
\label{step2-lem}
Given $\eta_0=\eta_0(\Omega)>0, \ev=\ev(\Omega, \{0\}, n)>0$ sufficiently small, $R_0=R_0(\Omega)>0$ sufficiently large, for any $0<\e<\eta_0 R^{-8},$ $R>R_0,$ and $\varphi\in C^{\infty}(\bar\Omega)$ that satisfies $\varphi\leq0$ and $\|\varphi\|_{C^2}\leq \ev$ let $u^t$ be the admissible solution of \eqref{eq-appr-t}. Assume the eigenvalues of $D^2 u^t$ are
 $\la(D^2 u^t)=(\la_1, \la_2, \cdots, \la_n)$ and $\la_1\geq\cdots\geq\la_n.$ Then there exists some $m_0=m_0(\Omega, \{0\}, n)>0$ such that
 \eqref{goal-1} holds on $\p(\Omega^R(t)\setminus B_1)$ for all $t\in(0, 1].$
\end{lemma}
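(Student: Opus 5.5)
We split the boundary into the inner sphere $\p B_1$ and the outer boundary $\p\Omega^R(t)$. On each piece we estimate the full Hessian of $u^t$ using the boundary estimates of Sections \ref{solv-appr} and \ref{improve}. These transfer to $u^t$ with constants independent of $t$, by Lemma \ref{convexity-lem} and the remark following it. Then we read off a lower bound for $\la_{n-1}$ by Cauchy interlacing. The key observation is that on either boundary piece the $(n-1)\times(n-1)$ tangential block of $D^2u^t$ is uniformly positive definite, of the size $|x|^{-\frac{n}{n-1}}$. The eigenvalues $\mu_1\ge\cdots\ge\mu_{n-1}$ of this block interlace with those of $D^2u^t$, so $\la_{n-1}\geq\mu_{n-1}$. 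Hence it suffices to bound the smallest tangential eigenvalue from below. The mixed and double-normal derivatives are not needed, beyond knowing they are finite.

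On $\p B_1$: since $u^t\equiv\ba$ there, \eqref{c2-inside-0} gives $(u^t)_{\td\al\td\beta}=(u^t)_{\td n}\delta_{\td\al\td\beta}$. By Lemma \ref{c2-inside-tan-lem} (equivalently Lemma \ref{c1-inside-lem}, whose proof works for $u^t$ because $u^0$ itself is the lower barrier and $\bar\Psi$ the upper one, and $(1-t)\bb f_{2\e/\bb}+tf_\e\le \bb f_{2\e/\bb}$ by \eqref{rangef}), the tangential block is bounded below by $\frac{n-2}{n-1}\bb(1+\frac{2\e}{\bb})^{-\frac1{n-1}}\geq\frac{n-2}{2(n-1)}\bb$. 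Meanwhile $u^t=\ba$ on $\p B_1$, so $(u^t)^{\bd-\frac{n}{n-2}}=\ba^{\bd-\frac{n}{n-2}}\le \ba^{\bd}\le \ba$ for $R$ large. Thus \eqref{goal-1} holds on $\p B_1$ as soon as $m_0\leq\frac{n-2}{2(n-1)}\bb\,\ba^{-1}$.

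On $\p\Omega^R(t)$: at $x\in\p\Omega^R(t)$, differentiating the boundary condition twice as in Subsubsection \ref{subsub-c2-outside} (after rescaling) gives
\[(u^t)_{\td\al\td\beta}=-(u^t)_{\td n}\kappa_\al\delta_{\td\al\td\beta}+tR^{\frac{n-2}{n-1}}\bigl(\varphi(\tfrac{\cdot}{R})\bigr)_{\td\al\td\beta}.\]
By the analogue of Lemma \ref{c1-outside-lem} we have $-(u^t)_{\td n}\geq cR^{-\frac1{n-1}}$. By Lemma \ref{convexity-lem}, $\kappa_\al\ge c_0/R$. The $\varphi$-term is at most $\ev R^{-\frac{n}{n-1}}$. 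So for $\ev$ small the tangential block is at least $\frac{cc_0}{2}R^{-\frac{n}{n-1}}$, which is \eqref{c2-outside-tan-s} for $u^t$. The constants $c$ and $c_0$ are independent of $t$, because the barriers $\ubar\phi$ and $\bar\phi$ depend only on the radial function and curvature bounds of $\p\Omega^R(t)$, and these are uniform in $t$. On this piece $u^t=R^{\frac{n-2}{n-1}}(1+t\varphi)\in[\frac{19}{20}R^{\frac{n-2}{n-1}},R^{\frac{n-2}{n-1}}]$. Therefore
\[(u^t)^{\bd-\frac{n}{n-2}}\le (u^t)^{\bd}\Bigl(\tfrac{19}{20}\Bigr)^{-\frac{n}{n-2}}R^{-\frac{n}{n-1}},\]
and $(u^t)^{\bd}=R^{\frac{n-2}{(n-1)R}}\le 2$ for $R$ large. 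This is exactly why the exponent $\bd=1/R$ is harmless. Choosing $m_0$ small, depending on $c,c_0,n$, gives \eqref{goal-1}.

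The main obstacle is not the algebra but justifying that the outer-boundary gradient lower bound $-\p_\nu u^t\ge cR^{-\frac1{n-1}}$ holds uniformly in $t$. The domains $\Omega^R(t)$ vary, and the boundary data $R^{\frac{n-2}{n-1}}(1+t\varphi)$ is prescribed on $\p\Omega^R(t)$, where $\varphi$ is still a small $C^2$ function. I would redo the lower barrier $\ubar\phi=2(\fb_t-1)+1+t\varphi$ with $\fb_t=r/\rho_t(\theta)$ the radial ratio for $\Omega(t)=R^{-1}\Omega^R(t)$. Its admissibility estimate \eqref{c1-outside-1} uses only the curvature bounds and star-shapedness, both uniform by Lemma \ref{convexity-lem}. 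Since $\Omega(t)\supset\Omega\supset B_1$, the barrier comparison on the inner sphere also persists. Taking the minimum of the two resulting $m_0$'s completes the proof.
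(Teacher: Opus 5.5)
Your proof follows the paper's route. On each boundary piece you bound the tangential block of $D^2u^t$ from below by $c\,|x|^{-\frac{n}{n-1}}$, then pass to $\la_{n-1}$ by interlacing. The paper proves the interlacing by hand, through the secular equation \eqref{eq-eigen} and the intermediate value theorem, where you cite Cauchy's theorem. You also write out the $\p B_1$ case, which the paper omits, and your handling of the factor $(u^t)^{\bd-\frac{n}{n-2}}$ on both pieces is correct.

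There is, however, a wrong step exactly where you locate the main obstacle. Redoing the lower barrier $\ubar\phi=2(\fb_t-1)+1+t\varphi$ cannot give $-\p_\nu u^t\geq cR^{-\frac{1}{n-1}}$. From $\ubar\phi\le u^t$, with equality on the outer boundary and $\nu$ inward, you only get $\p_\nu u^t\geq \p_\nu\ubar\phi$. That is the upper bound $-\p_\nu u^t\le dR^{-\frac{1}{n-1}}$, which this lemma never uses.

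The inequality you need is the other half of Lemma \ref{c1-outside-lem}. It comes from the superharmonic upper barrier $\bar\phi=-\frac12 e^{-N\fb_t}+\frac12 e^{-N}+1+t\varphi$ on $\Omega(t)\setminus B_{r_0}$, where $\Omega(t)=R^{-1}\Omega^R(t)$. Compare it with the subharmonic $u^t$; you still have $u^t<\frac14$ on $\p B_{r_0}$ via $\bar\Psi$. This gives $\p_\nu u^t\le \p_\nu\bar\phi=\frac N2 e^{-N}\p_\nu\fb_t+t\p_\nu\varphi$.

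For this to be uniform in $t$, two things must be uniform:
\begin{itemize}
\item the constant $N$ of \eqref{eq-N-1}, which depends only on $\|\rho_t\|_{C^2(\mathbb S^{n-1})}$;
\item the lower bound for $|\p_\nu\fb_t|$ on $\p\Omega(t)$. By homogeneity $x\cdot D\fb_t=1$ there, so $|D\fb_t|\ge 1/|x|\ge R/R_1$.
\end{itemize}
Both hold because $B_1\subset\Omega(t)\subset B_{R_1/R}$ and by Lemma \ref{convexity-lem}. With this substitution the argument goes through.

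A smaller point concerns the inner-boundary step. On $\p\Omega^R(t)$ the inequality $u^0\le u^t=R^{\frac{n-2}{n-1}}(1+t\varphi)$ is not automatic, since $u^0\to R^{\frac{n-2}{n-1}}$ there as $t\to 0$. You need $u^0\le R^{\frac{n-2}{n-1}}(1-c\,t)$ on $\p\Omega^R(t)$. This follows from the concavity of $u^0$ in $r$, the fact that $R_1$ exceeds $\max_{\p\Omega^R}|x|$ by a fixed factor, and $\ev$ small.
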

\begin{proof}We shall prove \eqref{goal-1} on $\p\Omega^R(t),$ the proof on $\p B_1$ is similar and will therefore be omitted.
By Lemma \ref{convexity-lem} we know there exists $C_0=C_0(\Omega, \{0\}), c_0=c_0(\Omega, \{0\})>0$ that are independent of $R, \e,$ and $t$ such that
\[\frac{C_0}{R}\geq\kappa_{\max}(\p\Omega(t))\geq\kappa_{\min}(\p\Omega^R(t))\geq \frac{c_0}{R}.\]
Following the proof of Lemma \ref{c1-outside-lem} and Lemma \ref{c2-outside-tan-lem} we obtain, there exist some positive constants $C_1=C_1(\Omega, \{0\}, n),\,\,c_1=c_1(\Omega, \{0\}, n)>0$ that are independent of $R, \e,$ and $t$ such that for any $x\in\p\Omega^R(t)$ we can choose a local coordinate system at $x,$ and under this coordinate system the double tangential derivatives of $u^t$ at $x$ satisfy
\be\label{step2-1}
c_1R^{-\frac{n}{n-1}}\delta_{\al\beta}\leq u^t_{\al\beta}\leq C_1R^{-\frac{n}{n-1}}\delta_{\al\beta}.
\ee

In particular, consider an arbitrary point $p\in\p\Omega^R(t),$ we shall choose a local coordinate system $\{x_1, \cdots, x_n\}$ in a neighborhood of $p$ such that the $x_n$ axis is the inward normal of $\p\Omega^R(t)$ at $p$ and $u^t_{ij}(p)=u^t_{ii}(p)\delta_{ij}$ for $1\leq i, j\leq n-1.$
Then the eigenvalues of $D^2u^t$ at $p$ satisfies
\be\label{eq-eigen}f(\la):=\prod\limits_{i=1}^n(\la-u_{ii}^t)-\sum\limits_{s=1}^{n-1}(u^t_{ns})^2\prod\limits_{i\neq s, n}(\la-u^t_{ii})=0.\ee
Let $\la_1,\cdots, \la_n $ denote the $n$ roots of \eqref{eq-eigen}.
To prove \eqref{goal-1} holds on $\p\Omega^R(t)$, we only need to prove there are $n-1$ positive roots of \eqref{eq-eigen} that are greater than $\frac{c_2}{R^{\frac{n}{n-1}}}$ for some $c_2=c_2(\Omega, \{0\}, n)>0.$\\

Without loss of generality, we may assume $u^t_{ii}\neq u^t_{jj}$ for any $1\leq i\neq j\leq n-1$ and we can always arrange them such that $u_{11}^t>\cdots>u^t_{n-1n-1}.$ We may also assume $u_{ni}^t\neq 0$ for $1\leq i\leq n-1.$ If not, we may always consider a sequence of symmetric matrix $U^k:=(u^k_{ij})$ such that $u_{11}^k>\cdots>u^k_{n-1n-1},$ $u_{ni}^k\neq 0$ for $1\leq i\leq n-1,$ $u^k_{ij}=0$ for $1\leq i\neq j\leq n-1,$ and $\{U^{k}\}\goto (u^t_{ij})$ as $k\goto\infty.$ The result will follow from the continuity of eigenvalues.
It is clear that $$\lim\limits_{\la\goto\infty}f(\la)\goto\infty,$$ and under our assumption we also have
\[(-1)^if(u_{ii})>0\,\,\mbox{for}\,\,1\leq i\leq n-1.\]
By the intermediate value theorem we know the roots of \eqref{eq-eigen} satisfy $\la_1>u^t_{11}$ and $u^t_{ii}<\la_i<u^t_{i-1i-1}$ for $2\leq i\leq n-1.$
In view of \eqref{step2-1} the lemma is proved.
\end{proof}

Finally, we conclude that
\begin{lemma}
\label{step12-lem}
Given $\eta_0=\eta_0(\Omega)>0, \ev=\ev(\Omega, \{0\}, n)>0$ sufficiently small, $R_0=R_0(\Omega)>0$ sufficiently large, for any $0<\e<\eta_0 R^{-8},$ $R>R_0,$ and $\varphi\in C^{\infty}(\bar\Omega)$ that satisfies $\varphi\leq0$ and $\|\varphi\|_{C^2}\leq \ev$ let $u^t$ be the admissible solution of \eqref{eq-appr-t}. Assume the eigenvalues of $D^2 u^t$ are $\la(D^2 u^t)=(\la_1, \cdots, \la_n)$ and $\la_1\geq\cdots\geq\la_n.$ Then, there exists some positive constant $m_0=m_0(\Omega, \{0\}, n)$ that is independent of
$R, \e,$ and $t$ such that
\eqref{goal-1} holds at $t=0$ and on $\p(\Omega^R(t)\setminus B_1)$ for all $t\in(0, 1].$
\end{lemma}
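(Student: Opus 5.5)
This lemma combines Lemma \ref{step1-lem} and Lemma \ref{step2-lem}. The only real work is choosing one constant $m_0$ that serves both and is independent of $R$, $\e$, and $t$. I will take $m_0$ to be the minimum of $\frac{n-2}{2(n-1)}\bb^{\frac{2(n-1)}{n-2}}$ (from Lemma \ref{step1-lem}) and the constant produced in Lemma \ref{step2-lem}. Since \eqref{goal-1} is monotone in $m_0$, decreasing $m_0$ preserves it wherever it already holds, so this choice works at $t=0$ and on the boundary for $t\in(0,1]$ at once.

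For $t=0$, note that $\Omega^R(0)=B_{R_1}$ and the boundary data on $\p B_{R_1}$ is $R^{\frac{n-2}{n-1}}$. By the choice of $R_1$, $u^0$ solves \eqref{eq-appr-t} at $t=0$ with right-hand side $\bb f_{\frac{2\e}{\bb}}$. By uniqueness of admissible solutions (Theorem \ref{solve-appr-thm} adapted to the $t$-family), $u^t|_{t=0}=u^0$, and Lemma \ref{step1-lem} applies directly.

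For the boundary, Lemma \ref{step2-lem} gives $\la_{n-1}\geq c_2R^{-\frac{n}{n-1}}$ on $\p\Omega^R(t)$. To convert this to the form \eqref{goal-1}, I use the boundary value $u^t\approx R^{\frac{n-2}{n-1}}$. This gives $(u^t)^{\bd-\frac{n}{n-2}}\le C R^{-\frac{n}{n-1}}$, because $(u^t)^{\bd}=R^{O(1/R)}$ is bounded uniformly for large $R$. On $\p B_1$ one instead uses $u^t=\ba$ together with the tangential bounds \eqref{c2-inside-tan-s}, adapted to $u^t$, and the same interlacing argument.

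The only delicate point is uniformity in $t$, that is, that the barrier constants stay independent of $t$. This is supplied by Lemma \ref{convexity-lem} and the remark following it.
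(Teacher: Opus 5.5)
Your proposal is correct and follows the paper, which obtains this lemma directly by combining Lemma \ref{step1-lem} and Lemma \ref{step2-lem}. Your choice of $m_0$ as the minimum of the two constants (using positivity of $u^t$ and monotonicity in $m_0$), the identification $u^t|_{t=0}=u^0$, and the conversion of $\la_{n-1}\ge c_2R^{-\frac{n}{n-1}}$ into \eqref{goal-1} via $u^t\approx R^{\frac{n-2}{n-1}}$ on $\p\Omega^R(t)$ and $u^t=\ba$ on $\p B_1$ just make explicit what the paper leaves implicit.
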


\section{Proof of step 3}
 \label{sec-step3}

 In this section, we shall prove \eqref{goal-1} holds on $\Omega^R(t)\setminus \bar B_1$ for all $t\in(0, 1].$ Before starting the proof, we need to review the following well-established regularity theory for fully nonlinear elliptic equations, i.e., Evans-Krylov theorem (see \cite{Evans, K85, GT83, CW98}).
 \begin{theorem}
 \label{regularity}
 Consider the fully nonlinear, uniformly elliptic equation
 \be\label{reg-1}
\left\{\begin{aligned}
F(D^2u)&=f(x)\,\,&\mbox{in $\Omega,$}\\
 u&=\varphi\,\,&\mbox{on $\p\Omega,$}
 \end{aligned}
 \right.
 \ee
 where $\la|\xi|^2\leq \frac{\p F}{\p u_{ij}}\xi_i\xi_j\leq\Lambda|\xi^2|$ for all $\xi\in\mathbb R^n$ and $\Lambda>\lambda>0$ are some positive constants.
 Suppose $F$ is concave, $F\in C^{\infty},$ $f\in C^\infty(\bar\Omega),$ $\varphi\in C^{\infty}(\bar\Omega),$ $\p\Omega\in C^{\infty},$ and $u\in C^{\infty}(\bar\Omega)$ is a solution of \eqref{reg-1}. Then
 \[\|u\|_{C^{2, \alpha}(\bar\Omega)}\leq C(\|u\|_{C^2(\bar\Omega)}+\|\varphi\|_{C^{2, \alpha}(\bar\Omega)}+\|f\|_{C^{\alpha}(\bar\Omega)}),\]
 where $\al=\al(n, \la, \Lambda)\in (0, 1)$ and $C=C(n, \la, \Lambda, \alpha, \Omega).$
 \end{theorem}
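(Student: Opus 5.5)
This is the Evans--Krylov theorem together with global boundary regularity, so I will assemble it from the interior and boundary Evans--Krylov estimates combined with Schauder theory, rather than prove it from scratch. Throughout I regard $\la,\Lambda$, the concavity of $F$, and the smoothness of $\p\Omega$ as fixed data. All constants in the $C^{2,\alpha}$ bound should then depend only on $n,\la,\Lambda$ and $\Omega$, with the lower order quantities entering linearly through $\|u\|_{C^2}$, $\|\varphi\|_{C^{2,\alpha}}$ and $\|f\|_{C^\alpha}$.

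The first step is the interior estimate. For any direction $e$, I differentiate the equation twice and use concavity to get $F^{ij}(u_{ee})_{ij}\geq f_{ee}$. Uniform ellipticity then turns $u_{ee}$ into a subsolution of a linear equation with bounded measurable coefficients.

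Next, I write $F(D^2u(x))-F(D^2u(y))$ via concavity as $\geq F^{ij}(D^2u(x))(u_{ij}(x)-u_{ij}(y))$. I then represent the matrix $F^{ij}$ as a positive combination of $\xi\otimes\xi$ for finitely many unit directions $\xi$ chosen from a fixed finite set; this uses the ellipticity bounds. This yields the key inequality that a positive combination of $\sup u_{\xi\xi}-u_{\xi\xi}$ is controlled by the others plus $\mathrm{osc} f$.

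Applying the weak Harnack inequality of Krylov--Safonov to the supersolutions $\sup_{B_r}u_{\xi\xi}-u_{\xi\xi}$ gives decay of the oscillation of $D^2u$ on dyadic balls. From this decay I get interior $C^{\alpha}$ bounds for $D^2u$ in terms of $\|D^2u\|_{L^\infty}$ and $\|f\|_{C^\alpha}$. If $f$ is only $C^\alpha$ rather than $C^2$, I will instead use Caffarelli's perturbation argument: freeze $f$ and approximate by solutions of $F(D^2w)=\text{const}$.

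The boundary is the main obstacle, since the Harnack argument for pure second derivatives does not directly reach $\p\Omega$. I would first flatten the boundary locally and subtract an extension of $\varphi$. Tangential derivatives $u_{\tau}$ satisfy linear equations with bounded measurable coefficients and vanish on the flat boundary up to controlled errors. Krylov's boundary Hölder estimate for such equations gives $C^{\alpha}$ control of $u_{\tau n}$ on the boundary, and $u_{\tau\tau'}$ is controlled directly by the boundary data. The double normal derivative $u_{nn}$ is then recovered from the equation, which can be solved for $u_{nn}$ by ellipticity; its Hölder continuity follows from the implicit function theorem using $F^{nn}\geq\la$.

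Finally, I patch the interior and boundary estimates with a standard covering and scaling argument. Absorbing the intermediate norms by interpolation gives the stated global bound with $\alpha=\alpha(n,\la,\Lambda)$.
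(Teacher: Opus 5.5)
The paper gives no proof of this statement; it quotes the Evans--Krylov theorem with Krylov's boundary estimate from the literature. Your overall route is the standard one behind those references, and your interior step is fine. The boundary step, however, has a genuine gap where you ``patch'' by covering and scaling.

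What you actually establish near the boundary is Hölder continuity of $D^2u$ \emph{restricted to} $\partial\Omega$. You get $u_{\tau\tau'}$ from the data, $u_{\tau n}$ on $\partial\Omega$ from Krylov's estimate for $(u-\varphi)_\tau$, and $u_{nn}$ on $\partial\Omega$ from the equation. Interior estimates cannot be glued to this alone. At a point $x$ with $d=\mathrm{dist}(x,\partial\Omega)$, the rescaled interior estimate reads $d^{\alpha}[D^2u]_{\alpha;B_{d/2}(x)}\le C\big(\mathrm{osc}_{B_d(x)}D^2u+d^{\alpha}[f]_{\alpha}\big)$. This gives a uniform bound only if you already know $\mathrm{osc}_{B_d(x)}D^2u\le Cd^{\alpha}$. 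That is, you need a decay estimate $\mathrm{osc}_{\Omega\cap B_r(x_0)}D^2u\le Cr^{\alpha}$ on half-balls centred at boundary points $x_0$. Knowing $D^2u$ on $\partial\Omega$ does not rule out $D^2u$ oscillating at every scale as one approaches $\partial\Omega$ from inside.

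In Krylov's argument (as presented in Gilbarg--Trudinger, \S 17.8) this half-ball decay is a separate step. One reruns the Evans--Krylov oscillation argument in $\Omega\cap B_r(x_0)$. There one applies the \emph{boundary} weak Harnack inequality to the nonnegative supersolutions $\sup_{\Omega\cap B_{2r}(x_0)}u_{\xi\xi}-u_{\xi\xi}$, for the finitely many directions $\xi$. The boundary Hölder bound you derived is exactly what controls these functions on $\partial\Omega\cap B_{2r}(x_0)$. Alternatively, you could prove a pointwise $C^{2,\alpha}$ expansion of $u$ at each boundary point. Only after one of these does the covering argument close.

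There is also a mismatch with the norms in the statement. Your boundary mechanism differentiates the equation tangentially: Krylov's estimate for $(u-\varphi)_\tau$ involves $Df$ and third derivatives of $\varphi$. The half-ball Evans--Krylov step uses $u_{\xi\xi}$ as subsolutions, which involves $D^2f$. So, as sketched, your constants depend on $C^2$-type norms of $f$ and $C^3$-type norms of $\varphi$, not on $\|f\|_{C^{\alpha}}$ and $\|\varphi\|_{C^{2,\alpha}}$. You invoke Caffarelli's perturbation method only in the interior. To obtain the stated dependence up to the boundary, you need its boundary analogue: approximation by solutions of $F(D^2w)=\mathrm{const}$ in half-balls with polynomial boundary data, as in Safonov's or Silvestre--Sirakov's boundary $C^{2,\alpha}$ theory.
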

 Differentiating equation \eqref{reg-1} and applying the Schauder estimates for linear uniformly elliptic equations, we obtain $C^{3, \alpha}$ estimates of $u.$
 In particular
 \begin{theorem}
 \label{higher-regularity}
 Consider the fully nonlinear, uniformly elliptic equation
 \be\label{reg-2}
\left\{\begin{aligned}
F(D^2u)&=f(x)\,\,&\mbox{in $\Omega,$}\\
 u&=\varphi\,\,&\mbox{on $\p\Omega,$}
 \end{aligned}
 \right.
 \ee
 where $\la|\xi|^2\leq \frac{\p F}{\p u_{ij}}\xi_i\xi_j\leq\Lambda|\xi^2|$ for all $\xi\in\mathbb R^n$ and $\Lambda>\lambda>0$ are some positive constants.
 Suppose $F$ is concave, $F\in C^{\infty},$ $f\in C^\infty(\bar\Omega),$ $\varphi\in C^{\infty}(\bar\Omega),$ $\p\Omega\in C^{\infty},$ and $u\in C^{\infty}(\bar\Omega)$ is a solution of \eqref{reg-2}. Then
 \[\|u\|_{C^{3, \alpha}(\bar\Omega)}\leq C(\|u\|_{C^1(\bar\Omega)}+\|\varphi\|_{C^{3, \alpha}(\bar\Omega)}+\|f\|_{C^{1,\alpha}(\bar\Omega)}),\]
 where $\al=\al(n, \la, \Lambda)\in (0, 1)$ and $C=C\left(n, \la, \Lambda, \|u\|_{C^{2, \alpha}(\bar\Omega)}, \alpha, \Omega\right).$
 \end{theorem}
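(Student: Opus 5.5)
The plan is to derive the $C^{3,\alpha}$ estimate from Theorem \ref{regularity} by differentiating the equation once and applying linear Schauder theory to the first derivatives. Fix a direction $e$ and set $w:=D_e u$. Differentiating $F(D^2u)=f$ gives the linear equation
\[
a^{ij}(x)w_{ij}=D_ef,\qquad a^{ij}(x):=F^{ij}(D^2u(x)).
\]
By hypothesis $\la|\xi|^2\le a^{ij}\xi_i\xi_j\le\Lambda|\xi|^2$, so this operator is uniformly elliptic. Since $F$ is smooth, on the compact set of matrices with norm at most $\|u\|_{C^2}$ we get
\[
\|a^{ij}\|_{C^{\alpha}(\bar\Omega)}\le C\bigl(\|u\|_{C^{2,\alpha}(\bar\Omega)}\bigr).
\]
This is why the constant is allowed to depend on $\|u\|_{C^{2,\alpha}}$. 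The right-hand side satisfies $\|D_ef\|_{C^\alpha}\le\|f\|_{C^{1,\alpha}}$.

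The difficulty is the boundary values of $w$. Only tangential derivatives of $u$ are determined by $\varphi$; the normal derivative is not. So the global Schauder estimate for the Dirichlet problem does not apply directly to $w$. I would handle this by splitting into interior and boundary pieces.

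\textbf{Interior.} Interior Schauder estimates on compact subdomains give
\[
\|w\|_{C^{2,\alpha}(\Omega')}\le C\bigl(\|w\|_{C^0}+\|D_ef\|_{C^\alpha}\bigr).
\]

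\textbf{Near the boundary.} I would flatten $\p\Omega$ locally by a smooth diffeomorphism and differentiate the equation in the tangential directions $T$ of the flattened coordinates. The function $T(u-\varphi)$ vanishes on the flat boundary and satisfies a linear uniformly elliptic equation. Its coefficients are $C^\alpha$, and its right-hand side involves $Tf$, $D^3\varphi$, and lower-order terms in $D^2u$. Boundary Schauder estimates then give $C^{2,\alpha}$ bounds up to the boundary for all tangential derivatives. This controls every third derivative of $u$ except $u_{nnn}$.

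\textbf{The remaining derivative $u_{nnn}$.} To recover it, I would use the differentiated equation
\[
a^{nn}u_{nnn}=T_nf-\sum_{(i,j)\ne(n,n)}a^{ij}u_{ijn}
\]
together with $a^{nn}\ge\la>0$. This expresses $u_{nnn}$ in $C^{\alpha}$ through quantities already controlled.

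Finally, the $C^0$ and $C^1$ norms of $w$ are bounded by $\|u\|_{C^2}$, and interpolation reduces this to $\|u\|_{C^1}$ after absorbing into the dependence on $\|u\|_{C^{2,\alpha}}$. Patching the interior and boundary estimates yields the stated inequality.

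I expect the main obstacle to be the boundary step: handling the normal derivative carefully after flattening, and keeping track of the commutator terms produced by the coordinate change. This is standard (compare \cite{GT83}, Chapter 6 and Lemma 17.16), and the uniform ellipticity bound $a^{nn}\ge\la$ is what makes it work.
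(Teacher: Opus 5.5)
Your proposal is correct and follows the same route as the paper, whose entire proof is the remark that one differentiates the equation and applies linear Schauder estimates. You supply the standard details the paper omits: flattening the boundary, boundary Schauder estimates for tangential derivatives of $u-\varphi$, and recovering $u_{nnn}$ from the differentiated equation using $a^{nn}\geq c\lambda>0$. The one step to phrase more carefully is the last: the lower-order terms of size $\|u\|_{C^{2,\alpha}}$ coming from the coordinate change cannot simply be absorbed into the constant. Handle them by interpolation, $\|u\|_{C^{2,\alpha}}\le\epsilon\|u\|_{C^{3,\alpha}}+C_\epsilon\|u\|_{C^0}$, and absorb the $\epsilon$-term into the left-hand side; this is legitimate because $u\in C^\infty(\bar\Omega)$, and $\epsilon$ (hence $C_\epsilon$) depends only on $\|u\|_{C^{2,\alpha}}$.
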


 The proof of step 3 is inspired by \cite{GM03, BCD17, LT24}.

 Let $m_0$ be chosen in Lemma \ref{step12-lem}. Denote $Q:=\la_{n-1}(u^t)^{\frac{n}{n-2}-\bd}$ and let $t_0\in (0, 1]$ be the first time such that $Q=m_0$  is reached at some $x_0\in\ol{\Omega^R(t_0)\setminus B_1}.$ In view of Lemma \ref{step12-lem} we know that $x_0\in \Omega^R(t_0)\setminus\bar B_1$ is an interior point. We shall denote the eigenvalues of $(u_{ij}^{t_0})$ by $\la(u^{t_0}_{ij})=(\la_1, \cdots, \la_n)$
and we shall always assume $\la_1\geq\cdots\geq\la_n.$ Moreover, in the following, for convenience, we will write $u$ instead of $u^{t_0}$.
We shall start with the following important observations.\\

\textbf{Claim A:} For any $0<\e<\e_R,$ where $0<\e_R=\e_R(\Omega, \{0\}, n, R)<\eta_0R^{-8}$ is sufficiently small, we have
$\la_n<-\frac{\s_{n-1}(\la|n)}{2\s_{n-2}(\la|n)}\leq-c(n)\la_{n-1}<0$ for all $t\in [0, t_0],$ where $c(n)>0$ is a positive constant that only depends on $n.$

Before proving Claim A, we want to fix our notation. We shall denote
$h:=(1-t_0)\bb f_{\frac{2\e}{\bb}}+t_0f_\e.$ By \eqref{rangef} we can see that
\be\label{eq-h}
2f_\e> h\geq f_\e.
\ee
\textbf{Proof of Claim A:} We only need to prove Claim A for $t=t_0,$ when $t<t_0$ the proof is the same.
Recall that $u$ satisfies
\be\label{eq-DE}
\frac{\s_{n-1}}{\s_{n-2}}=h.
\ee
and $\la_{n-1}(D^2u)\geq m_0u^{\bd-\frac{n}{n-2}}.$ In view of Proposition \ref{c0-prop}, this implies there exists $m_1=m_1(\Omega, \{0\}, n)>0$ so that
$\la_{n-1}\geq m_1 r^{-\frac{n}{n-1}}$ for all $x\in\ol{\Omega^R(t_0)\setminus B_1}.$
Since \eqref{eq-DE} can be written as
\[\la_n\s_{n-2}(\la|n)=h\s_{n-2}(\la)-\s_{n-1}(\la|n),\]
where $(\la|i)=(\la_1, \cdots, \la_{i-1}, \la_{i+1}, \cdots, \la_n)\in\mathbb R^{n-1}.$
This implies
\be\label{gen-step3-1}\la_n=-\frac{\s_{n-1}(\la|n)}{\s_{n-2}(\la|n)}+\frac{h\s_{n-2}(\la)}{\s_{n-2}(\la|n)}.\ee
Note that Lemma \ref{c2-global-lem} (equation \eqref{c2-global-s}) gives
\be\label{v1.4}\s_{n-2}(\la)\leq CR^{\frac{2(n-2)^2}{n-1}},\ee
where $C=C(\Omega, \{0\}, n)>0.$
Moreover, by our assumption we have $\s_{n-1}(\la|n)\geq\la_{n-1}^{n-1}\geq m_2R^{-n}$ for some $m_2=m_2(\Omega, \{0\}, n)>0.$ It is easy to see that when
$\e_R=\e_R(\Omega, \{0\}, n, R)>0$ is sufficiently small the claim follows.\\

\textbf{Claim B:} For any $0<\e<\e_R,$ where $0<\e_R=\e_R(\Omega, \{0\}, n, R)<\eta_0R^{-8}$ is sufficiently small, there exits $m_3=m_3(\Omega, \{0\}, n)>0$
such that $\s_{n-2}(\la)>m_3 R^{-\frac{n(n-2)}{n-1}}$ for all $t\in [0, t_0].$\\
\textbf{Proof of Claim B:} By virtue of \eqref{gen-step3-1} and Newton's inequality we get
\[\begin{aligned}
\s_{n-2}(\la)&=\la_n\s_{n-3}(\la|n)+\s_{n-2}(\la|n)\\
&=\frac{[h\s_{n-2}(\la)-\s_{n-1}(\la|n)]}{\s_{n-2}(\la|n)}\s_{n-3}(\la|n)+\s_{n-2}(\la|n)\\
&\geq\frac{h\s_{n-2}(\la)\s_{n-3}(\la|n)}{\s_{n-2}(\la|n)}+\frac{n}{2(n-1)}\s_{n-2}(\la|n)\geq\frac{n}{2(n-1)}\s_{n-2}(\la|n).
\end{aligned}\]
Since $\la_{n-1}\geq m_1 r^{-\frac{n}{n-1}}$ for all $x\in\ol{\Omega^R(t_0)\setminus B_1}$ the claim follows.\\

\textbf{Claim C:} $\|u\|_{C^3(\overline{\Omega^R(t)\setminus B_1})}<C$ for all $t\in [0, t_0],$ where $C=C(\Omega, \{0\}, n, \varphi, R)>0$ is some positive constant that is independent of $\e$.\\
\textbf{Proof of Claim C:} We only need to prove Claim C for $t=t_0,$ when $t<t_0$ the proof is the same.
A straightforward calculation yields the eigenvalue of $(F^{ij})$ are
\be\label{fk}
\df^k=\frac{\s_{n-2}(\la|k)}{\s_{n-2}(\la)}-h\frac{\s_{n-3}(\la|k)}{\s_{n-2}(\la)},\,\,1\leq k\leq n.
\ee
By the concavity of $F$ and \eqref{gen-step3-1} we have
\[\begin{aligned}
\df^k&\geq\df^1=\frac{\s_{n-2}(\la|1)-h\s_{n-3}(\la|1)}{\s_{n-2}(\la)}\\
&=\frac{\la_n\s_{n-3}(\la|1n)+\s_{n-2}(\la|1n)-h\s_{n-3}(\la|1)}{\s_{n-2}(\la)}\\
&\geq\frac{1}{\s_{n-2}(\la)}\lt(-\frac{\s_{n-1}(\la|n)\s_{n-3}(\la|1n)}{\s_{n-2}(\la|n)}+\s_{n-2}(\la|1n)-h\s_{n-3}(\la|1)\rt).
\end{aligned}
\]
Since
\[\begin{aligned}
&\s_{n-2}(\la|1n)\s_{n-2}(\la|n)-\s_{n-1}(\la|n)\s_{n-3}(\la|1n)\\
&=\s_{n-2}(\la|1n)[\la_1\s_{n-3}(\la|1n)+\s_{n-2}(\la|1n)]-\la_1\s_{n-2}(\la|1n)\s_{n-3}(\la|1n)\\
&=\s_{n-2}^2(\la|1n),
\end{aligned}\]
when $\e_R>0$ is sufficiently small we have
\be\label{gen-step3-2}
\begin{aligned}
\df^k&\geq\frac{\s_{n-2}^2(\la|1n)-h\s_{n-3}(\la|1)\s_{n-2}(\la|n)}{\s_{n-2}(\la)\s_{n-2}(\la|n)}\\
&\geq m_4\lt(\frac{\la_{n-1}}{\la_1}\rt)^{2(n-2)}\\
&\geq m_4R^{-\lt(3-\frac{1}{n-1}\rt)2(n-2)}
\end{aligned}
\ee
for some $m_4=m_4(\Omega, \{0\}, n)>0.$
On the other hand, it is easy to see that
\be\label{gen-step3-3}
\sum\limits_k\df^k<\sum\limits_k\frac{\s_{n-2}(\la|k)}{\s_{n-2}(\la)}=2.
\ee
Therefore, at $t=t_0$ we get
\[ m_4R^{-\lt(3-\frac{1}{n-1}\rt)2(n-2)}|\xi|^2\leq F^{ij}\xi_i\xi_j\leq 2|\xi|^2\,\,\mbox{for all $\xi\in\mathbb R^n.$}\]
Applying  Theorem \ref{regularity} and Theorem \ref{higher-regularity} we confirm Claim C.

In view of Claim C, we shall introduce the following notation. For any two well-defined functions $h(x)$ and $k(x),$ we
say that $h(x)\lesssim k(x)$ provided there exists a positive constants $C$ such
that $h(x)\leq k(x)+C\e,$ where $C=C(\|u\|_{C^3}, R, \Omega, \{0\}, n)$ is independent of $\e.$ We also write $h(x)\simeq k(x)$ if
$h(x)\lesssim k(x)$ and $k(x)\lesssim h(x).$

\begin{lemma}
\label{step3-lem}
Given $\eta_0=\eta_0(\Omega)>0, \ev=\ev(\Omega, \{0\}, n)>0, \eta_0R^{-8}>\e_R=\e_R(\Omega, \{0\}, n, \varphi, R)>0$ sufficiently small, $R_0=R_0(\Omega)>0$ sufficiently large, for any $0<\e<\e_R,$ $R>R_0,$ and $\varphi\in C^{\infty}(\bar\Omega)$ that satisfies $\varphi\leq0$ and $\|\varphi\|_{C^2}\leq \ev$ let $u^t$ be the admissible solution of \eqref{eq-appr-t}. Assume the eigenvalues of $D^2 u^t$ are
 $\la(D^2 u^t)=(\la_1, \cdots, \la_n)$ and $\la_1\geq\cdots\geq\la_n.$ Then, for the $m_0$ chosen in Lemma \ref{step12-lem},
inequality \eqref{goal-1} holds on $\Omega^R(t)\setminus \bar B_1$ for all $t\in(0, 1].$
\end{lemma}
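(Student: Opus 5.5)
We argue by contradiction at the first touching time, in the spirit of the constant rank theorem arguments of \cite{GM03, BCD17, LT24}. Let $t_0$ and $x_0\in\Omega^R(t_0)\setminus\bar B_1$ be as above, so that $Q=\la_{n-1}u^{\frac{n}{n-2}-\bd}\geq m_0$ on $\ol{\Omega^R(t_0)\setminus B_1}$ and $Q(x_0)=m_0$. Since $\la_{n-1}$ need not be smooth, we replace it by a smooth substitute. Define
\[
\phi:=\s_{n-1}(\la|n)\,u^{(n-1)(\frac{n}{n-2}-\bd)},
\]
which is comparable to $\la_{n-1}$ via $\s_{n-1}(\la|n)=\la_{n-1}\s_{n-2}(\la|n,n-1)+\dots$. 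An alternative is the quotient $\s_{n-1}(\la|n)/\s_{n-2}(\la|n)$, which by Claim A equals $-\la_n$ up to $O(h)$ errors and is concave in $(\la|n)$. If these prove awkward, we would instead use the standard perturbation trick: fix the eigenvector $e_{n-1}$ at $x_0$ and work with $u_{\xi\xi}$ for a parallel frame, perturbed so that it is smooth and touches $\la_{n-1}$ from above.

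Claim A keeps $\la_n$ separated from $\la_{n-1}$, so the perturbation theory for simple eigenvalues applies. The key computation is $\frak L=F^{ij}\p_i\p_j$ applied to $\log\la_{n-1}+(\frac{n}{n-2}-\bd)\log u$ at $x_0$, in coordinates diagonalizing $D^2u$. Differentiating the equation twice gives
\[
F^{ii}u_{n-1\,n-1\,ii}=-F^{pq,rs}u_{pq,n-1}u_{rs,n-1}+h_{n-1\,n-1}.
\]
We then combine three ingredients:
\begin{itemize}
\item[(i)] the concavity term $-F^{pq,rs}u_{pqn-1}u_{rsn-1}$, expanded via Lemma \ref{np-lem1};
\item[(ii)] the positive eigenvalue perturbation term $2\sum_{p\neq n-1}F^{ii}u_{n-1pi}^2/(\la_{n-1}-\la_p)$, which is only positive for $p=n$;
\item[(iii)] the first order condition $u_{n-1n-1i}/\la_{n-1}=-(\frac{n}{n-2}-\bd)u_i/u$, which controls the bad gradient terms.
\end{itemize}
The exponent $\frac{n}{n-2}$ is the Riesz kernel homogeneity: for $u=\mu$ the quantity $\la_{n-1}u^{n/(n-2)}$ is constant. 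We therefore expect the terms to cancel exactly at the kernel level, with the $-\bd$ correction providing a strictly favorable sign of size $\bd\,F^{ij}u_iu_j/u^2\cdot\la_{n-1}$. The lower bound $F^{ii}\gtrsim F^{nn}$ and the gradient lower bound in Proposition \ref{c1-prop} make this term quantitatively positive.

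All terms involving $h$ and its derivatives are $O(\e)$ by Claim C. The ellipticity bound \eqref{gen-step3-2} lets us absorb them, using the notation $\lesssim$ and choosing $\e<\e_R$ small depending on $R$. The outcome would be $\frak L(\log Q)(x_0)>0$ (strictly, by a margin depending on $R$ but not on $\e$), which contradicts $x_0$ being an interior minimum of $Q$.

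The main obstacle is the sign of the third-derivative terms mixing directions $1,\dots,n-2$ with $n-1$. For these, $\la_{n-1}-\la_p\le 0$ gives negative contributions, and they must be dominated by the concavity term (i). We would handle them using the explicit structure of $\s_{n-1}/\s_{n-2}$, namely the inequality
\[
-F^{pp,qq}\xi_p\xi_q\geq \frac{2\xi_{n-1}\xi_n\cdots}{\cdots}
\]
together with the sharper concavity $\frac{\df^p-\df^q}{\la_p-\la_q}$. The condition $u_{n-1n-1n}$ fixed by (iii) and the relation $u_{ppn-1}\approx-\la_n$-weighted sums from the differentiated equation $F^{ii}u_{iin-1}=h_{n-1}\simeq0$ should also help. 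If a direct bound fails, we would follow \cite{BCD17} and use a viscosity-sense differential inequality for $\la_{n-1}$, grouping indices into the ``good'' set $\{n-1,n\}$ and the ``bad'' set $\{1,\dots,n-2\}$, where $\la_p\ge\la_{n-1}$ allows the bad terms to be bounded by $\sum_p\df^p u_{n-1pp}^2/\la_p$.
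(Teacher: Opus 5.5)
You have the paper's frame: first touching time $t_0$, the quantity $\log Q$, the perturbation formula for $\lambda_{n-1,ii}$, the first-order condition, $O(\epsilon)$ errors absorbed via Claim C with $\epsilon_R$ chosen after $R$, and strictness coming from $\boldsymbol\delta$. However, your sign logic is inverted. At an interior minimum one has $\mathcal L\log Q(x_0)\geq 0$, so the contradiction requires $\mathcal L\log Q(x_0)<0$ by a margin exceeding $C\epsilon$; proving $\mathcal L\log Q(x_0)>0$ contradicts nothing. Your assignment of helpful and harmful terms is therefore backwards.

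\begin{itemize}
\item The harmful terms are the concavity term $-F^{pq,rs}u_{pq\,n-1}u_{rs\,n-1}\geq 0$ (this is why minimum-principle lower bounds on $\lambda_{n-1}$ are hard for concave $F$) and the $p=n$ perturbation term.
\item The helpful terms are the perturbation terms with $p\leq n-2$, which are negative since $\lambda_{n-1}-\lambda_p<0$, and the gradient term $-[1+(\frac{n}{n-2}-\boldsymbol\delta)^{-1}]F^{ii}\lambda_{n-1,i}^2/\lambda_{n-1}^2$.
\item The $\boldsymbol\delta$-part of the gradient term yields, after multiplying by $\lambda_{n-1}\sigma_{n-2}$, the margin $-\frac{\boldsymbol\eta}{2}\sum_i\sigma_{n-1}^{ii}u^2_{n-1\,n-1\,i}/\lambda_{n-1}$, where $\boldsymbol\eta=(\frac{n}{n-2}-\boldsymbol\delta)^{-1}-\frac{n-2}{n}>0$. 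This margin is negative, though it has the size you guessed.
\end{itemize}

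So both the obstacle you isolate and your plan to dominate the $p\leq n-2$ terms by concavity point the wrong way.

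Beyond the sign, the decisive computation is missing: your key inequality contains ellipses, and generic concavity together with $\frac{\dot f^p-\dot f^q}{\lambda_p-\lambda_q}$ will not supply it. The missing idea is the observation of \cite{LT24}.

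\begin{itemize}
\item Since $\sigma_{n-1}=h\sigma_{n-2}$, \eqref{v1.4} and Claims B, C give $\sigma_{n-1}\simeq(\sigma_{n-1})_i\simeq0$. Hence $F^{pq,rs}$ may be replaced by $\sigma_{n-1}^{pq,rs}/\sigma_{n-2}$.
\item The identity $\lambda_i\sigma_{n-1}^{ii}=\sigma_{n-1}-\sigma_{n-1}(\lambda|i)$ then gives the explicit formulas $\sigma_{n-1}^{ii}\simeq\clubsuit/\lambda_i^2$ and $\sigma_{n-1}^{pp,qq}\simeq\clubsuit(\lambda_p+\lambda_q)/(\lambda_p^2\lambda_q^2)$, with $\clubsuit=-\sigma_n>0$.
\item With these, for $p<q<n-1$ the coefficient of $u^2_{pq\,n-1}$ is a positive multiple of $a+b-2ab<0$, where $a=\lambda_p/\lambda_{n-1}>1$ and $b=\lambda_q/\lambda_{n-1}>1$. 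That is, the negative perturbation terms exactly beat the positive $\sigma_{n-1}^{pp,qq}$ contribution.
\item Analogous sign checks handle $u^2_{pn\,n-1}$ and $u^2_{n-1\,n-1\,p}$; for $p=n$ this uses $-\lambda_n\gtrsim\lambda_{n-1}/(n-1)$.
\item What remains is a quadratic form in $\xi_p=u_{pp\,n-1}/\lambda_p^2$ under the constraint $\sum_i\xi_i\simeq0$ coming from the differentiated equation. It equals $D+s^{\top}s$ with a single negative entry $d_{n-1}=-\frac{\lambda_{n-1}}{n}+\frac{\boldsymbol\eta\lambda_{n-1}}{4}$.
\item By the equation, its determinant is $\simeq0$ when $\boldsymbol\eta=0$, at every point and not only for the kernel. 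So $\frac{n}{n-2}$ is exactly critical, and positivity comes solely from $\boldsymbol\eta>0$.
\end{itemize}

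You also leave the multiplicity case open. Since $\lambda_{n-1}$ is not an extremal eigenvalue, \cite{BCD17} applies only after adding $C_1\langle x,e_n\rangle^2$ (Claim D). That step yields $u_{kli}=\psi_i\delta_{kl}$ on the eigenspace, hence $\xi_p=0$ for $p\in Q_\mu$ and $u_{pqi}=0$ for $p\neq q\in Q_\mu$. These are exactly the terms left uncontrolled once the perturbation sum (Claim E) runs only over $p\notin Q_\mu$.

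Finally, your first substitute $\sigma_{n-1}(\lambda|n)u^{(n-1)(\frac{n}{n-2}-\boldsymbol\delta)}=\lambda_1\cdots\lambda_{n-1}u^{(n-1)(\frac{n}{n-2}-\boldsymbol\delta)}$ is comparable to $\lambda_{n-1}$ only through an upper bound on $\lambda_1$. At this stage that bound is only $O(R^{2-2/(n-1)})$, so $m_0$ would depend on $R$.
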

\begin{proof}
Let us assume $Q:=\la_{n-1}(u^t)^{\frac{n}{n-2}-\bd}=m_0$ is achieved for the first time at $t=t_0\in (0, 1]$ and $x_0\in {\Omega^R(t_0)\setminus \bar B_1}.$ We will show this cannot happen by using a proof by contradiction. In the following, we shall drop the superscript $t$ and write $u$ instead of $u^{t_0}.$
We may choose an orthonormal frame at $x_0$ such that $u_{ij}(x_0)=\la_i\delta_{ij}$ for $1\leq i, j\leq n.$

\textbf{Case 1.} At $x_0,$ $\la_{n-1}$ has multiplicity one.

In this case, by the implicit function theorem 
we know $\la_{n-1}(D^2u)$ is a smooth function in a small neighborhood of $x_0.$ Therefore,
at $x_0$ we have
\be\label{gen-7.1.1}
0=\frac{Q_i}{Q}=\frac{\la_{n-1i}}{\la_{n-1}}+\lt(\frac{n}{n-2}-\bd\rt)\frac{u_i}{u},\,\,1\leq i\leq n,
\ee
and
\be\label{gen-7.1.2}
\begin{aligned}
0&\leq\frac{\la_{n-1ii}}{\la_{n-1}}-\lt(\frac{\la_{n-1i}}{\la_{n-1}}\rt)^2+\lt(\frac{n}{n-2}-\bd\rt)\frac{u_{ii}}{u}-\lt(\frac{n}{n-2}-\bd\rt)\lt(\frac{u_i}{u}\rt)^2\\
&=\frac{\la_{n-1ii}}{\la_{n-1}}-\lt[1+\lt(\frac{n}{n-2}-\bd\rt)^{-1}\rt]\lt(\frac{\la_{n-1i}}{\la_{n-1}}\rt)^2+\lt(\frac{n}{n-2}-\bd\rt)\frac{u_{ii}}{u}.
\end{aligned}
\ee
Denote
\be\label{gen-eta}
\begin{aligned}
\bde:&=1+\lt(\frac{n}{n-2}-\bd\rt)^{-1}-\lt(1+\frac{n-2}{n}\rt)\\
&=\lt(\frac{n}{n-2}-\bd\rt)^{-1}-\frac{n-2}{n}=\frac{(n-2)^2\bd}{n[n-(n-2)\bd]}=O(\frac{1}{R}),
\end{aligned}\ee
contracting with $F^{ii},$ \eqref{gen-7.1.2} becomes
\be\label{gen-7.1.3}
0\leq F^{ii}\frac{\la_{n-1ii}}{\la_{n-1}}-\lt[\lt(1+\frac{n-2}{n}\rt)+\bde\rt]F^{ii}\lt(\frac{\la_{n-1i}}{\la_{n-1}}\rt)^2+\lt(\frac{n}{n-2}-\bd\rt)\frac{h}{u},
\ee
where $h=(1-t_0)\bb f_{\frac{2\e}{\bb}}+t_0f_\e.$
Recall Lemma \ref{np-lem1} we obtain at $x_0$ for $1\leq i\leq n,$
\be\label{gen-7.1.4}
\la_{n-1ii}=u_{n-1n-1ii}+2\sum\limits_{p\neq n-1}\frac{1}{\la_{n-1}-\la_p}u^2_{n-1pi}.
\ee
This gives
\[\mathcal L\la_{n-1}\simeq-F^{pq, rs}u_{pqn-1}u_{rsn-1}+2\sum\limits_i\sum\limits_{p\neq n-1}\frac{F^{ii}}{\la_{n-1}-\la_p}u^2_{n-1pi},\]
where $\mathcal L:=F^{ii}\p_{i}\p_i.$
A straightforward calculation yields
\[F^{pq}=\frac{\s_{n-1}^{pq}}{\s_{n-2}}-\frac{\s_{n-1}}{\s_{n-2}^2}\s_{n-2}^{pq},\]
\[\begin{aligned}
F^{pq, rs}&=\frac{\s_{n-1}^{pq, rs}}{\s_{n-2}}-\frac{\s_{n-1}^{pq}\s_{n-2}^{rs}}{\s_{n-2}^2}-\frac{\s_{n-1}^{rs}\s_{n-2}^{pq}}{\s_{n-2}^2}\\
&+\frac{2\s_{n-1}}{\s^3_{n-2}}\s^{pq}_{n-2}\s^{rs}_{n-2}-\frac{\s_{n-1}}{\s^2_{n-2}}\s^{pq, rs}_{n-2}.
\end{aligned}
\]
Therefore, we get
\[
\begin{aligned}
F^{pq, rs}u_{pqn-1}u_{rsn-1}&=\frac{\s^{pq, rs}_{n-1}u_{pqn-1}u_{rsn-1}}{\s_{n-2}}
-2\frac{(\s_{n-1})_{n-1}(\s_{n-2})_{n-1}}{\s^2_{n-2}}\\
&+\frac{2\s_{n-1}}{\s^3_{n-2}}(\s_{n-2})^2_{n-1}-\frac{\s_{n-1}}{\s^2_{n-2}}\s^{pq, rs}_{n-2}u_{pqn-1}u_{rsn-1}.
\end{aligned}
\]
Note that $\frac{\s_{n-1}}{\s_{n-2}}\simeq\lt(\frac{\s_{n-1}}{\s_{n-2}}\rt)_i\simeq 0$ for $1\leq i\leq n,$
in view of \eqref{v1.4}, Claim B, and Claim C we obtain
\be\label{gen-7.1.0}
\s_{n-1}\simeq(\s_{n-1})_i\simeq 0, 1\leq i\leq n.
\ee
This implies at $x_0$
\be\label{gen-7.1.5}
\mathcal L\la_{n-1}\simeq-\frac{\G^{pq, rs}u_{pqn-1}u_{rsn-1}}{\s_{n-2}}+2\sum\limits_i\sum\limits_{p\neq n-1}\frac{\G^{ii}u^2_{n-1pi}}{\s_{n-2}(\la_{n-1}-\la_p)}.
\ee
In view of Lemma \ref{np-lem1} and Lemma \ref{np-lem2}, plugging \eqref{gen-7.1.5} into \eqref{gen-7.1.3} we have at $x_0$
\be\label{gen-7.1.6}
\begin{aligned}
0&\lesssim-2\sum\limits_{p< q}\G^{pp, qq}u_{ppn-1}u_{qqn-1}+2\sum_{p< q}\G^{pp, qq}u^2_{pqn-1}\\
&+2\sum_i\sum_{p\neq n-1}\frac{\G^{ii}u^2_{n-1pi}}{\la_{n-1}-\la_p}-\lt[\lt(1+\frac{n-2}{n}\rt)+\bde\rt]\sum_i\frac{\G^{ii}u^2_{n-1n-1i}}{\la_{n-1}}.
\end{aligned}
\ee
Denote $\clubsuit:=-\s_n(\la),$ by Claim A we know $\clubsuit>c(\Omega, \{0\}, n, R)>0.$ Following the ingenious observation in \cite{LT24} we obtain,
\be\label{gen-*}\la_i\G^{ii}=\la_i\s_{n-2}(\la|i)=\s_{n-1}-\s_{n-1}(\la|i)\simeq-\s_{n-1}(\la|i)=\frac{\clubsuit}{\la_i},\ee
which yields
\be\label{gen-gi}\G^{ii}\simeq\frac{\C}{\la_i^2}.\ee
For $p\neq q,$ since
\[
\begin{aligned}
\la_p\G^{pp, qq}&=\la_p\s_{n-3}(\la|pq)=\s_{n-2}(\la|q)-\s_{n-2}(\la|pq)\\
&=\G^{qq}-\frac{\s_n}{\la_p\la_q}\simeq\frac{\C}{\la^2_q}+\frac{\C}{\la_p\la_q},
\end{aligned}
\]
we get
\be\label{gen-gpq}
\G^{pp, qq}\simeq\frac{\C(\la_p+\la_q)}{\la_p^2\la_q^2}\,\,\mbox{for}\,\,p\neq q.
\ee
In the following, we shall focus on the right hand side of \eqref{gen-7.1.6}. Denote
$$\text{Co}(\text{term A}):=\text{Coefficient of term A },$$ then when $p<q$ and $p, q\neq n-1$ we have
\be\label{gen-7.1.7}
\text{Co}(u^2_{pqn-1})=2\G^{pp, qq}+\frac{2\G^{pp}}{\la_{n-1}-\la_q}+2\frac{\G^{qq}}{\la_{n-1}-\la_p}.
\ee
For our convenience, for any two functions $A$ and $B,$ we will denote $A\propto B$ if $\frac{A}{B}>0.$
Therefore, when $p<q<n-1$ we have
\be\label{gen-7.1.8}
\begin{aligned}
\text{Co}(u^2_{pqn-1})&\simeq2\C\lt[\frac{\la_p+\la_q}{\la_p^2\la_q^2}+\frac{1}{\la_p^2(\la_{n-1}-\la_q)}+\frac{1}{\la_q^2(\la_{n-1}-\la_p)}\rt]\\
&\propto(\la_p+\la_q)(\la_{n-1}-\la_q)(\la_{n-1}-\la_q)+\la_q^2(\la_{n-1}-\la_p)+\la_p^2(\la_{n-1}-\la_q)\\
&=\la_{n-1}^2(\la_p+\la_q)-2\la_p\la_q\la_{n-1}\\
&\propto (a+b)-2ab,
\end{aligned}
\ee
where $a:=\frac{\la_p}{\la_{n-1}},$ $b:=\frac{\la_q}{\la_{n-1}}.$ By our assumption we have $a, b>1,$ thus we conclude
\be\label{gen-7.1.10}\text{Co}(u^2_{pqn-1})\lesssim 0\,\,\mbox{for}\,\,p< q<n-1.\ee

When $p<n-1,$ since $\la_n<0$ and $\la\in\Gamma_{n-1}$ (that is $\la_{n-1}+\la_n>0$), we derive
\be\label{gen-7.1.9}
\begin{aligned}
\text{Co}(u^2_{pnn-1})&\simeq2\C\lt[\frac{\la_p+\la_n}{\la_p^2\la_n^2}+\frac{1}{\la_p^2(\la_{n-1}-\la_n)}-\frac{1}{\la_n^2(\la_p-\la_{n-1})}\rt]\\
&\propto(\la_p+\la_n)(\la_{n-1}-\la_n)(\la_p-\la_{n-1})+\la_n^2(\la_p-\la_{n-1})-\la_p^2(\la_{n-1}-\la_n)\\
&=-\la_{n-1}^2(\la_p+\la_n)+2\la_p\la_n\la_{n-1}<0.\\
\end{aligned}
\ee

Combining \eqref{gen-7.1.10} and \eqref{gen-7.1.9} with \eqref{gen-7.1.6} we obtain
\be\label{gen-7.1.6*}
\begin{aligned}
0&\lesssim-2\sum_{p<q}\G^{pp, qq}u_{ppn-1}u_{qqn-1}+2\sum_{p\neq n-1}\G^{pp, n-1n-1}u^2_{n-1n-1p}
+2\sum_{p\neq n-1}\frac{\G^{n-1n-1}u^2_{n-1n-1p}}{\la_{n-1}-\la_p}\\
&+2\sum_{p\neq n-1}\frac{\G^{pp}u^2_{ppn-1}}{\la_{n-1}-\la_p}-\lt[\lt(1+\frac{n-2}{n}\rt)+\bde\rt]\sum_i\frac{\G^{ii}u^2_{n-1n-1i}}{\la_{n-1}}.
\end{aligned}
\ee
We can see that for $p\neq n-1$
\[\text{Co}(u^2_{n-1n-1p})=2\G^{pp, n-1n-1}+2\frac{\G^{n-1n-1}}{\la_{n-1}-\la_p}-\lt[\lt(1+\frac{n-2}{n}\rt)+\bde\rt]\frac{\G^{pp}}{\la_{n-1}}.\]
Therefore,
\[\text{Co}(u^2_{n-1n-1n})\simeq2\C\lt[\frac{\la_n+\la_{n-1}}{\la_n^2\la_{n-1}^2}+\frac{1}{\la^2_{n-1}(\la_{n-1}-\la_n)}-\frac{n-1}{n\la_n^2\la_{n-1}}\rt]
-\bde\frac{\G^{nn}}{\la_{n-1}}.\]
In view of \eqref{gen-step3-1} we have
\[\begin{aligned}
-\la_n&\simeq\frac{\s_{n-1}(\la|n)}{\s_{n-2}(\la|n)}=\la_{n-1}\frac{\s_{n-1}}{\s_{n-2}}\lt(\frac{\la_1}{\la_{n-1}}, \cdots, \frac{\la_{n-2}}{\la_{n-1}}, 1\rt)\\
&>\la_{n-1}\frac{\s_{n-1}}{\s_{n-2}}(1, \cdots, 1)=\frac{\la_{n-1}}{n-1}.
\end{aligned}\]
Denote $a:=\frac{\la_n}{\la_{n-1}}$ then $-1<a\lesssim-\frac{1}{n-1}.$ Note that $a>-1$ comes from $\la(D^2u)\in\Gamma_{n-1}.$
Thus,
\[\begin{aligned}
&\frac{\la_n+\la_{n-1}}{\la_n^2\la_{n-1}^2}+\frac{1}{\la^2_{n-1}(\la_{n-1}-\la_n)}-\frac{n-1}{n\la_n^2\la_{n-1}}\\
&\propto\frac{1+a}{a^2}+\frac{1}{1-a}-\frac{n-1}{na^2}\\
&\propto(1-a^2)+a^2-\frac{n-1}{n}(1-a)\\
&=\frac{1}{n}+\frac{n-1}{n}a\lesssim0
\end{aligned}\]
We obtain
\be\label{gen-7.1.11}
\text{Co}(u^2_{n-1n-1n})\lesssim-\bde\frac{\G^{nn}}{\la_{n-1}}.
\ee
Now, for $p<n-1$ we have
\[\begin{aligned}
&2\G^{pp, n-1n-1}+2\frac{\G^{n-1n-1}}{\la_{n-1}-\la_p}\\
&\simeq2\C\lt[\frac{\la_p+\la_{n-1}}{\la_p^2\la^2_{n-1}}-\frac{1}{\la^2_{n-1}(\la_p-\la_{n-1})}\rt]\\
&\propto(\la_p+\la_{n-1})(\la_p-\la_{n-1})-\la_p^2=-\la^2_{n-1}<0.
\end{aligned}\]
This implies when $p<n-1$
\be\label{gen-7.1.12}
\text{Co}(u^2_{n-1n-1p})\lesssim -\lt[\lt(1+\frac{n-2}{n}\rt)+\bde\rt]\frac{\G^{pp}}{\la_{n-1}}.
\ee
Plugging \eqref{gen-7.1.11} and \eqref{gen-7.1.12} into \eqref{gen-7.1.6*} we get
\be\label{gen-7.1.6**}
\begin{aligned}
0&\lesssim\lt[-2\sum_{p<q}\G^{pp, qq}u_{ppn-1}u_{qqn-1}+2\sum_{p\neq n-1}\frac{\G^{pp}u^2_{ppn-1}}{\la_{n-1}-\la_p}\right.\\
&\left.-\lt(1+\frac{n-2}{n}+\frac{\bde}{2}\rt)\frac{\G^{n-1n-1}u^2_{n-1n-1n-1}}{\la_{n-1}}\rt]
-\sum_{i}\frac{\bde}{2}\frac{\G^{ii}u^2_{n-1n-1i}}{\la_{n-1}}
\end{aligned}
\ee
In the following, we will show
\[-2\sum_{p<q}\G^{pp, qq}u_{ppn-1}u_{qqn-1}+2\sum_{p\neq n-1}\frac{\G^{pp}u^2_{ppn-1}}{\la_{n-1}-\la_p}
-\lt(1+\frac{n-2}{n}+\frac{\bde}{2}\rt)\frac{\G^{n-1n-1}u^2_{n-1n-1n-1}}{\la_{n-1}}\lesssim 0.\]
By \eqref{gen-gi} and \eqref{gen-gpq}, this is equivalent to showing
\[\sum_{p<q}\frac{\la_p+\la_q}{\la^2_p\la^2_q}u_{ppn-1}u_{qqn-1}-\sum_{p\neq n-1}\frac{u^2_{ppn-1}}{\la^2_p(\la_{n-1}-\la_p)}
+\lt(\frac{n-1}{n}+\frac{\bde}{4}\rt)\frac{u^2_{n-1n-1n-1}}{\la^3_{n-1}}\gtrsim 0.\]
Denote $\xi_p:=\frac{u_{ppn-1}}{\la_p^2}$ for $1\leq p\leq n,$ we will show
\be\label{gen-7.1.13}\sum_{p<q}(\la_p+\la_q)\xi_p\xi_q-\sum_{p\neq n-1}\frac{\xi^2_p\la^2_{p}}{\la_{n-1}-\la_p}
+\lt(\frac{n-1}{n}+\frac{\bde}{4}\rt)\xi^2_{n-1}\la_{n-1}\gtrsim 0.\ee
In view of \eqref{gen-7.1.0} we have $\sum\limits_i\G^{ii}u_{iin-1}\simeq 0.$ This gives
$\sum\limits_i\xi_i\simeq 0.$ Therefore, \eqref{gen-7.1.13} becomes
\[
\begin{aligned}
&-\sum_{p<n}(\la_p+\la_n)\xi_p\lt(\sum_{\al=1}^{n-1}\xi_\al\rt)+\sum_{1\leq p<q\leq n-1}(\la_p+\la_q)\xi_p\xi_q
+\sum_{p<n-1}\frac{\xi^2_p\la^2_p}{\la_p-\la_{n-1}}\\
&-\frac{\la^2_n}{\la_{n-1}-\la_n}\lt(\sum_{\al=1}^{n-1}\xi_\al\rt)^2+\lt(\frac{n-1}{n}+\frac{\bde}{4}\rt)\xi^2_{n-1}\la_{n-1}
\gtrsim 0
\end{aligned}
\]
In fact, we can show that when $\e_R=\e_R(\Omega, \{0\}, n, \varphi, R)>0$ is chosen to be sufficiently small
\be\label{gen-7.1.14}
\begin{aligned}
&-\sum_{p<n}(\la_p+\la_n)\xi_p\lt(\sum_{\al=1}^{n-1}\xi_\al\rt)+\sum_{1\leq p<q\leq n-1}(\la_p+\la_q)\xi_p\xi_q
+\sum_{p<n-1}\frac{\xi^2_p\la^2_p}{\la_p-\la_{n-1}}\\
&-\frac{\la^2_n}{\la_{n-1}-\la_n}\lt(\sum_{\al=1}^{n-1}\xi_\al\rt)^2+\lt(\frac{n-1}{n}+\frac{\bde}{4}\rt)\xi^2_{n-1}\la_{n-1}
=:\sum_{1\leq p, q\leq n-1}m_{pq}\xi_p\xi_q\geq 0.
\end{aligned}
\ee
We will show the corresponding symmetric matrix  $M:=(m_{pq})_{n-1\times n-1}$ is positive definite. A direct calculation gives, for
$p<q\leq n-1,$
\[
\text{Co}(\xi_q\xi_p)=-(\la_p+\la_n)-(\la_q+\la_n)+(\la_p+\la_q)-\frac{2\la_n^2}{\la_{n-1}-\la_n}
=\frac{2|\la_n|\la_{n-1}}{\la_{n-1}-\la_n}.
\]
Thus, $$m_{pq}=\frac{|\la_n|\la_{n-1}}{\la_{n-1}-\la_n}\,\,\mbox{for}\,\, 1\leq p\neq q\leq n-1.$$
For $p<n-1$ we have
\[\text{Co}(\xi^2_p)=-(\la_p+\la_n)+\frac{\la^2_p}{\la_p-\la_{n-1}}-\frac{\la^2_n}{\la_{n-1}-\la_n}
=\frac{|\la_n|\la_{n-1}}{\la_{n-1}-\la_n}+\frac{\la_p\la_{n-1}}{\la_p-\la_{n-1}}.\]
Thus, $$m_{pp}=\frac{|\la_n|\la_{n-1}}{\la_{n-1}-\la_n}+\frac{\la_p\la_{n-1}}{\la_p-\la_{n-1}}\,\,\mbox{for}\,\, 1\leq p< n-1.$$
For $p=n-1,$
\[\begin{aligned}
\text{Co}(\xi^2_{n-1})&=-(\la_{n-1}+\la_n)-\frac{\la^2_n}{\la_{n-1}-\la_n}+\lt(\frac{n-1}{n}+\frac{\bde}{4}\rt)\la_{n-1}\\
&=\frac{|\la_n|\la_{n-1}}{\la_{n-1}-\la_n}-\frac{1}{n}\la_{n-1}+\frac{\bde}{4}\la_{n-1}.
\end{aligned}\]
Thus, $$m_{n-1n-1}=\frac{|\la_n|\la_{n-1}}{\la_{n-1}-\la_n}-\frac{1}{n}\la_{n-1}+\frac{\bde}{4}\la_{n-1}.$$
For our convenience, denote $B:=\frac{|\la_n|\la_{n-1}}{\la_{n-1}-\la_n}.$ Same as in \cite{LT24}, let $s=\sqrt{B}(1, \cdots, 1)$
be a $1\times (n-1)$ matrix and let $D=\text{diag}(d_1, d_2, \cdots, d_{n-1})$ be an $(n-1)\times(n-1)$ diagonal matrix.
Here,
\[d_i=\frac{\la_i\la_{n-1}}{\la_i-\la_{n-1}}>0\,\,\mbox{for}\,\,i<n-1
\,\,\mbox{and}\,\,d_{n-1}=-\frac{\la_{n-1}}{n}+\frac{\bde\la_{n-1}}{4}<0.\]
Then, $M=D+s^\top s.$ Since the matrix $D$ only contains one negative entry,
by Sylvester's criterion, 
it is easy to derive that in order to prove $M$ is positive definite we only need to show $\det(M)>0.$
A straightforward calculation yields
\be\label{gen-7.1.15}
\begin{aligned}
\det(M)&=\det(D)\det(I_{n-1}+D^{-1}s^\top s)=\det(D)\lt(1+B\sum_{i=1}^{n-1}\frac{1}{d_i}\rt)\\
&=\det(D)\lt\{1+\frac{|\la_n|\la_{n-1}}{\la_{n-1}-\la_n}\lt[\sum_{i=1}^{n-2}\frac{\la_i-\la_{n-1}}{\la_i\la_{n-1}}
-\frac{4n}{(4-\bde n)\la_{n-1}}\rt]\rt\}\\
&=\det(D)\lt\{1+\frac{|\la_n|\la_{n-1}}{\la_{n-1}-\la_n}\lt[\frac{n-2}{\la_{n-1}}-\sum_{i=1}^{n-2}\frac{1}{\la_i}
-\frac{n}{\la_{n-1}}-\frac{n^2\bde}{(4-n\bde)\la_{n-1}}\rt]\rt\}.
\end{aligned}
\ee
Note that
$$\la_n\simeq-\frac{\s_{n-1}(\la|n)}{\s_{n-2}(\la|n)}
=-\frac{\la_{n-1}\s_{n-2}(\la|nn-1)}{\la_{n-1}\s_{n-3}(\la|nn-1)+\s_{n-2}(\la|nn-1)},$$
which gives
$$\frac{1}{\la_n}\simeq-\frac{\s_{n-3}(\la|nn-1)}{\s_{n-2}(\la|nn-1)}-\frac{1}{\la_{n-1}}.$$
Note also that
$$\sum\limits_{i=1}^{n-2}\frac{1}{\la_i}=\frac{\s_{n-3}(\la|nn-1)}{\s_{n-2}(\la|nn-1)}.$$
Therefore, we have
$$-\sum\limits_{i=1}^{n-2}\frac{1}{\la_i}\simeq\frac{1}{\la_n}+\frac{1}{\la_{n-1}}=-\frac{\la_{n-1}+\la_n}{|\la_n|\la_{n-1}}.$$
This implies
\[1+\frac{|\la_n|\la_{n-1}}{\la_{n-1}-\la_n}\lt[\frac{n-2}{\la_{n-1}}-\sum_{i=1}^{n-2}\frac{1}{\la_i}-\frac{n}{\la_{n-1}}\rt]\simeq 0.\]
Plugging it into \eqref{gen-7.1.15} we obtain that when $ \eta_0R^{-8}>\e_R=\e_R(\Omega, \{0\}, n, \varphi, R)>0$ is sufficiently small
\[\det(M)>-\frac{\det(D)n^2\bde|\la_n|}{2(4-n\bde)(\la_{n-1}-\la_n)}>0.\]
Therefore, at $x_0$ we have
\be\label{gen-7.1.16}
0<-\sum_{i}\frac{\bde}{2}\frac{\G^{ii}u^2_{n-1n-1i}}{\la_{n-1}}+C\e
<-\frac{\bde}{2}\frac{\C}{\la^2_1}\sum_i\frac{u^2_{n-1n-1i}}{\la_{n-1}}+C\e.
\ee
for some $C=C(\|u\|_{C^3}, \Omega, \{0\}, R, n)>0.$
Now, in view of \eqref{gen-7.1.1}, \eqref{gen-eta}, Proposition \ref{c0-prop}, Proposition \ref{c1-prop}, Lemma \ref{c2-global-lem}, and Claim A we get
\[
\begin{aligned}
\frac{\bde}{2}\frac{\C}{\la^2_1}\sum_i\frac{u^2_{n-1n-1i}}{\la_{n-1}}
&=\frac{\bde}{2}\frac{\C}{\la^2_1}\lt(\frac{n}{n-2}-\bd\rt)^2\frac{|Du|^2}{u^2}\la_{n-1}\\
&>cR^{-1}\frac{\la_{n-1}^{n+1}}{\la_1^2}R^{-2}>c\frac{R^{-3}\times R^{-\frac{n(n+1)}{n-1}}}{R^{\frac{4(n-2)}{n-1}}},
\end{aligned}
\]
where $c=c(\Omega, \{0\}, n)>0.$ Inserting it into \eqref{gen-7.1.16} we obtain,
when $\e_R=\e_R(\Omega, \{0\}, n, \varphi, R)>0$ is sufficiently small, for any $0<\e<\e_R,$ the right hand side of \eqref{gen-7.1.16} is negative.
This leads to a contradiction. Therefore, we conclude that case 1 does not happen.

\textbf{Case 2.} At $x_0,$ $\la_{n-1}$ has multiplicity $\mu>1.$

Let us define a smooth function $\psi$ in a small neighborhood of $x_0,$ denoted by $U_{x_0},$ as follows:
\[Q(x_0)=m_0\equiv\psi(x)u^{\frac{n}{n-2}-\bd}\,\,\mbox{in}\,\,U_{x_0}.\]
This gives
\[\psi(x)u^{\frac{n}{n-2}-\bd}(x)\leq\la_{n-1}(x)u^{\frac{n}{n-2}-\bd}(x)\,\,\mbox{in}\,\,U_{x_0}.\]
Thus, we have $\psi(x)\leq\la_{n-1}(x)$ in $U_{x_0}$ and $\psi(x_0)=\la_{n-1}(x_0).$
We will need following Claims.\\

\textbf{Claim D.} At $x_0,$ for any $1\leq i\leq n,$ we have $u_{kli}=\psi_i\delta_{kl},\,\,n-\mu\leq k, l\leq n-1.$\\
\textbf{Proof of Claim D:} Without loss of generality, in this proof we may assume $x_0$ to be the origin.
We can choose a local orthonormal frame $\{e_1, \cdots, e_n\}$ at $x_0$ such that $u_{ij}(0):=D^2u(e_i, e_j)(0)=\la_i(0)\delta_{ij}$
with $\la_n(0)<\la_{n-1}(0)=\cdots=\la_{n-\mu}(0)<\la_{n-\mu-1}(0)\leq\cdots\leq\la_1(0).$ For any $x\in U_{x_0},$ we will denote
$\{\tau_{1}(x), \cdots, \tau_n(x)\}$ to be the orthonormal frame at $x$ such that $D^2u(\tau_i(x), \tau_j(x))=\la_i(x)\delta_{ij}$
with $\la_n(x)\leq\la_{n-1}(x)\leq\cdots\leq\la_1(x).$ In particular, $\tau_i(0)=e_i$ for $1\leq i\leq n.$
We note that in view of Claim A, $\la_n$ always has multiplicity 1, thus by the implicit function theorem, we know there exists a small neighborhood of $x_0$ such that
both $\la_n$ and $\tau_n(x)$ are smooth.
We will always assume $U_{x_0}$ to be this nice neighborhood.

Now, let us consider $\td u(x):=u(x)+C_1\lt<x, e_n\rt>^2,$ where $C_1:=3\max\limits_{\bar U_{x_0}}|D^2u|.$
By our choice of $C_1,$ it is clear that  $2C_1+\la_n(0)>\la_1(0).$ Therefore, at $0$ the eigenvalues of
$D^2\td u,$ denoted by $\td\la_i(0),$ satisfy
\[\td\la_{n-1}(0)=\cdots=\td\la_{n-\mu}(0)<\td\la_{n-\mu-1}(0)\leq\cdots\leq\td\la_1(0)<\td\la_n(0).\]
We will show there exists $C_2>0$ so that
\be\label{D-1}
\la_{\min}(D^2\td u)\geq\psi(x)-C_2|x|^2=:\td\psi(x)\,\,\mbox{in}\,\,B_\delta(0)\ssubset U_{x_0}.
\ee
At the origin, we have $\la_{\min}(D^2\td u)=\td\la_{n-1}(0)=\la_{n-1}(0),$ it is clear that \eqref{D-1} holds.
For any $x\in B_\delta(0),$ by definition we get
\[\la_{\min}(D^2\td u)=\min\limits_{\xi\in\mathbb S^n}\lt\{\sum\limits_{i, j}u_{ij}(x)\lt<\xi, e_i\rt>\lt<\xi, e_j\rt>+2C_1\lt<\xi, e_n\rt>^2\rt\},\]
where $u_{ij}(x):=D^2u(e_i, e_j)(x).$
Assume $v\in\mathbb S^{n}$ is the vector such that at $x\in B_{\delta}(0),$
\[\la_{\min}(D^2\td u(x))=\sum\limits_{i, j}u_{ij}(x)\lt<v, e_i\rt>\lt<v, e_j\rt>+2C_1\lt<v, e_n\rt>^2.\]
Since $\sum\limits_{i, j}u_{ij}(x)\lt<v, e_i\rt>\lt<v, e_j\rt>$ is independent of the choice of the frame, we may express $v$ as
$v=\sum\limits_{i=1}^na_i\tau_i(x)$ where $a_i:=\lt<v, \tau_i(x)\rt>.$ Then
\[\begin{aligned}
\la_{\min}(D^2\td u(x))&=\sum\limits_{i, j}D^2u(\tau_i, \tau_j)a_ia_j+2C_1\lt<v, e_n\rt>^2\\
&=\sum\limits_i\la_i(x)a_i^2+2C_1\lt<v, e_n\rt>^2\\
&\geq(1-a_n^2)\la_{n-1}(x)+a^2_n\la_n(x)+2C_1\lt<v, e_n\rt>^2.
\end{aligned}\]
Note that
\[\lt<v, e_n\rt>=\lt<v, \tau_n(x)\rt>+\lt<v, e_n-\tau_n(x)\rt>=a_n+\lt<v, e_n-\tau_n(x)\rt>.\]
Moreover, by the smoothness of $\tau_n(x)$ in $U_{x_0}$ we know there exists some constant $\td C>0$ such that
$$|\tau_n(x)-e_n|\leq\td C|x|\,\,\mbox{in}\,\,B_\delta(0)\ssubset U_{x_0}.$$
Therefore, we have
\[
\begin{aligned}
&-a_n^2\la_{n-1}(x)+a^2_n\la_n(x)+2C_1\lt<v, e_n\rt>^2\\
&\geq a^2_n[-\la_{n-1}(x)+\la_n(x)]+2C_1(a_n^2+2a_n\lt<v, e_n-\tau_n(x)\rt>)\\
&\geq C_1a_n^2-4C_1\td Ca_n|x|\geq -4C_1\td C^2|x|^2.
\end{aligned}
\]
By letting $C_2:=4C_1\td C^2,$ we showed that \eqref{D-1} holds in $B_{\delta}(0).$

It is clear that at the origin we have
\[\la_{\min}(D^2\td u(0))=\td\la_{n-1}(0)=\cdots=\td\la_{n-\mu}(0)=\la_{n-1}(0)=\psi(0)=\td\psi(0)<\td\la_{n-\mu-1}(0)\leq\cdots\]
By Lemma 5 of \cite{BCD17} we obtain, at $0$ for any $1\leq i\leq n,$
\[\td u_{kli}=\td\psi(0)_i\delta_{kl}, n-\mu\leq k, l\leq n-1,\]
which is equivalent to
\[u_{kli}=\psi(0)_i\delta_{kl}, n-\mu\leq k, l\leq n-1.\]
This proves Claim D.\\

\textbf{Claim E.} At $x_0,$ for any $1\leq i\leq n$ we have
$$\psi_{ii}\leq u_{n-1n-1ii}+2\sum\limits_{p\notin Q_\mu}\frac{u^2_{n-1pi}}{\la_{n-1}-\la_p},$$
where $Q_\mu=\{n-\mu, \cdots, n-1\}.$\\
\textbf{Proof of Claim E:} We will still assume $x_0$ to  be the origin and choose a local orthonormal frame $\{e_1, \cdots, e_n\}$ at $x_0$ such that $u_{ij}(0):=D^2u(e_i, e_j)(0)=\la_i(0)\delta_{ij}$ with $\la_n(0)<\la_{n-1}(0)=\cdots=\la_{n-\mu}(0)<\la_{n-\mu-1}(0)\leq\cdots\leq\la_1(0).$
We also denote $\la_i(x):=\la_i(D^2u(x))$ as the $i$-th eigenvalue of $(u_{ij})$ with
$\la_n(x)\leq\la_{n-1}(x)\leq\cdots\leq\la_1(x).$

Now, we choose $\e_{n-\mu}>\e_{n-\mu+1}>\cdots>\e_{n-2}>0,$ where $\e_{n-\mu}>0$ satisfies
$\la_{n-1}(0)+2\e_{n-\mu}<\la_{n-\mu-1}(0).$ Consider
\[\hat u(x):=u(x)+\sum\limits_{\al=n-\mu}^{n-2}\e_\al x_\al^2.\]
We denote $\hat\la_i(x):=\la_i(D^2\hat u(x)).$
It is clear that at $0,$ $$\hat\la_i(0)=\la_i(0)+2\sum\limits_{\al=n-\mu}^{n-2}\e_\al\delta_{\al i},$$
thus $\hat\la_n(0)<\hat\la_{n-1}(0)<\hat\la_{n-2}(0)<\cdots\hat\la_{n-\mu}(0)<\hat\la_{n-\mu-1}(0)\leq\cdots\leq\hat\la_1(0).$
Therefore, in a small neighborhood of $0,$ denoted by $U_{x_0},$ $\hat\la_{n-1}(x)$ is a smooth function. Moreover,
it is easy to see
\[\hat\la_{n-1}(x)\geq\la_{n-1}(x)\,\,\mbox{in}\,\,U_{x_0}.\]
Since $\hat\la_{n-1}(0)=\la_{n-1}(0),$ we get
\[\hat\la_{n-1}(x)\geq\psi(x)\,\,\mbox{in}\,\,U_{x_0}\,\,\mbox{and}
\,\,\hat\la_{n-1}(0)=\psi(0).\]
By the standard second derivative test and Lemma \ref{np-lem1} we obtain, at $0$ for any $1\leq i\leq n$
\[\begin{aligned}
\psi_{ii}&\leq\hat\la_{n-1ii}=\hat u_{n-1n-1ii}+2\sum\limits_{p\neq n-1}\frac{\hat u^2_{pn-1i}}{\hat\la_{n-1}-\hat\la_p}\\
&=\hat u_{n-1n-1ii}+2\sum\limits_{p< n-1}\frac{\hat u^2_{pn-1i}}{\hat\la_{n-1}-\hat\la_p}
+2\frac{\hat u^2_{nn-1i}}{\hat\la_{n-1}-\hat\la_n}\\
&\leq\hat u_{n-1n-1ii}+2\sum\limits_{p< n-\mu}\frac{\hat u^2_{pn-1i}}{\hat\la_{n-1}-\hat\la_p}
+2\frac{\hat u^2_{nn-1i}}{\hat\la_{n-1}-\hat\la_n}\\
&=u_{n-1n-1ii}+2\sum\limits_{p\notin Q_\mu}\frac{u^2_{pn-1i}}{\la_{n-1}-\la_p}.
\end{aligned}\]
This confirms Claim E.

Now, let us focus on discussing what would happen in Case 2. Let $Q(x)=\psi u^{\frac{n}{n-2}-\bd},$
then $Q(x)\equiv m_0$ in $U_{x_0}.$ Therefore, we have for any $x\in U_{x_0}$ and $1\leq i\leq n$
\be\label{gen-7.2.1}
0=\frac{\psi_i}{\psi}+\lt(\frac{n}{n-2}-\bd\rt)\frac{u_i}{u}
\ee
and
\be\label{gen-7.2.2}
0=\frac{\psi_{ii}}{\psi}-\lt[\lt(1+\frac{n-2}{n}+\bde\rt)\rt]\frac{\psi^2_i}{\psi^2}+\lt(\frac{n}{n-2}-\bd\rt)\frac{u_{ii}}{u}.
\ee
In particular, by Claim D and E we obtain at $x_0$
\be\label{gen-7.2.3}
\begin{aligned}
0&=\sum_i\frac{F^{ii}\psi_{ii}}{\psi}-\lt[\lt(1+\frac{n-2}{n}\rt)+\bde\rt]\sum_i\frac{F^{ii}\psi_i^2}{\psi^2}
+\lt(\frac{n}{n-2}-\bd\rt)\sum_i\frac{F^{ii}u_{ii}}{u}\\
&\lesssim\frac{1}{\la_{n-1}}\sum_iF^{ii}\lt(u_{n-1n-1ii}+2\sum_{p\notin Q_\mu}\frac{u^2_{n-1pi}}{\la_{n-1}-\la_p}\rt)
-\lt[\lt(1+\frac{n-2}{n}\rt)+\bde\rt]\sum_i\frac{F^{ii}u^2_{n-1n-1i}}{\la^2_{n-1}}\\
&\simeq\frac{1}{\la_{n-1}\s_{n-2}}\lt\{-2\sum_{p<q}\G^{pp, qq}u_{ppn-1}u_{qqn-1}+2\sum_{p<q}\G^{pp, qq}u^2_{pqn-1}\right.\\
&\left.+2\sum_i\sum_{p\notin Q_\mu}\frac{\G^{ii}u^2_{n-1pi}}{\la_{n-1}-\la_p}-\lt[\lt(1+\frac{n-2}{n}\rt)+\bde\rt]\sum_i\frac{\G^{ii}u^2_{n-1n-1i}}{\la_{n-1}}\rt\}.
\end{aligned}
\ee
In view of Claim D we have
\[u_{n-1n-1n-1}=\psi_{n-1}=u_{n-2n-2n-1}=u_{n-2n-1n-2}=\psi_{n-2}\delta_{n-1n-2}=0,\] which implies
\be\label{gen-7.2.4}
u_{ppn-1}=\psi_{n-1}=0\,\,\mbox{for all}\,\, p\in Q_\mu.
\ee
Moreover, we also have
\be\label{gen-7.2.5}
u_{pqi}=0\,\,\mbox{for all}\,\,p\neq q\in Q_\mu\,\,\mbox{and}\,\,1\leq i\leq n.
\ee
By \eqref{gen-7.2.4} we get
\be\label{gen-7.2.6}\begin{aligned}
&2\sum_{p<q}\G^{pp, qq}u_{ppn-1}u_{qqn-1}\\
=&2\sum_{p<n-\mu}\G^{pp, nn}u_{ppn-1}u_{nnn-1}+2\sum_{p<q<n-\mu}\G^{pp, qq}u_{ppn-1}u_{qqn-1}.\\
\end{aligned}\ee
By \eqref{gen-7.2.4} and \eqref{gen-7.2.5} we get
\be\label{gen-7.2.7}\begin{aligned}
&2\sum_{p<q}\G^{pp, qq}u^2_{pqn-1}\\
=&2\sum_{p<n}\G^{pp, nn}u^2_{pnn-1}+2\sum_{p<n-\mu}\G^{pp, n-1n-1}u^2_{pn-1n-1}+2\sum_{p<q<n-\mu}\G^{pp, qq}u^2_{pqn-1}\\
=&2\G^{n-1n-1, nn}u^2_{n-1n-1n}+2\sum_{p<n-\mu}\G^{pp, nn}u^2_{pnn-1}\\
+&2\sum_{p<n-\mu}\G^{pp, n-1n-1}u^2_{n-1n-1p}+2\sum_{p<q<n-\mu}\G^{pp, qq}u^2_{pqn-1}
\end{aligned}\ee

Inserting \eqref{gen-7.2.6} and \eqref{gen-7.2.7} into \eqref{gen-7.2.3} we have
\be\label{gen-7.2.8}
\begin{aligned}
0&\lesssim-2\sum_{p<n-\mu}\G^{pp, nn}u_{ppn-1}u_{nnn-1}-2\sum_{p<q<n-\mu}\G^{pp, qq}u_{ppn-1}u_{qqn-1}\\
&+2\G^{n-1n-1, nn}u^2_{n-1n-1n}+2\sum_{p<n-\mu}\G^{pp, nn}u^2_{pnn-1}\\
&+2\sum_{p<n-\mu}\G^{pp, n-1n-1}u^2_{n-1n-1p}+2\sum_{p<q<n-\mu}\G^{pp, qq}u^2_{pqn-1}\\
&+2\sum_{\substack{i=n\\i=n-1\\i<n-\mu}}\sum_{p\notin Q_\mu}\frac{\G^{ii}u^2_{n-1pi}}{\la_{n-1}-\la_p}
-\lt[\lt(1+\frac{n-2}{n}\rt)+\bde\rt]\sum_{i\notin Q_\mu}\frac{\G^{ii}u^2_{n-1n-1i}}{\la_{n-1}}.
\end{aligned}
\ee
Following the same argument as in Case 1, we can see that \eqref{gen-7.2.8} implies
\be\label{gen-7.2.8*}
\begin{aligned}
0&\lesssim\lt[-2\sum_{p<n-\mu}\G^{pp, nn}u_{ppn-1}u_{nnn-1}-2\sum_{p<q<n-\mu}\G^{pp, qq}u_{ppn-1}u_{qqn-1}\right.\\
&\left.+2\frac{\G^{nn}u^2_{nnn-1}}{\la_{n-1}-\la_n}-2\sum_{p<n-\mu}\frac{\G^{pp}u^2_{ppn-1}}{\la_p-\la_{n-1}}\rt]-\sum_{i\notin Q_\mu}\frac{\bde}{2}\frac{\G^{ii}u^2_{n-1n-1i}}{\la_{n-1}}.
\end{aligned}
\ee
We want to point out that when $\mu=n-1$ we would have derived
\[0\lesssim 2\frac{\G^{nn}u^2_{nnn-1}}{\la_{n-1}-\la_n}-\sum_{i\notin Q_\mu}\frac{\bde}{2}\frac{\G^{ii}u^2_{n-1n-1i}}{\la_{n-1}}
\simeq-\frac{\bde}{2}\frac{\G^{nn}u^2_{n-1n-1n}}{\la_{n-1}}\]
directly. Here, $\frac{\G^{nn}u^2_{nnn-1}}{\la_{n-1}-\la_n}\simeq 0$ comes from $(\G)_{n-1}\simeq0$ and $u_{ppn-1}=0$ for all $1\leq p\leq n-1.$
Therefore, in the following, we will always assume $\mu<n-1.$
We will show
\be\label{gen-7.2.9}\begin{aligned}
&-2\sum_{p<n-\mu}\G^{pp, nn}u_{ppn-1}u_{nnn-1}-2\sum_{p<q<n-\mu}\G^{pp, qq}u_{ppn-1}u_{qqn-1}\\
&+2\frac{\G^{nn}u^2_{nnn-1}}{\la_{n-1}-\la_n}-2\sum_{p<n-\mu}\frac{\G^{pp}u^2_{ppn-1}}{\la_p-\la_{n-1}}\lesssim 0.
\end{aligned}\ee
Same as in Case 1 we denote $\xi_i:=\frac{u_{iin-1}}{\la^2_i},$ then by \eqref{gen-7.2.4} we know $\xi_p=0$ for all $p\in Q_\mu.$
Thus, \eqref{gen-7.1.0} implies $\sum\limits_{i\notin Q_\mu}\xi_i\simeq 0$ and to prove \eqref{gen-7.2.9} we only need to show
\[\begin{aligned}
&-\sum_{p<n-\mu}(\la_n+\la_p)\lt(\sum_{\al<n-\mu}\xi_\al\rt)\xi_p+\sum_{p<q<n-\mu}(\la_p+\la_q)\xi_p\xi_q\\
&-\frac{\la_n^2}{\la_{n-1}-\la_n}(\sum_{\al<n-\mu}\xi_\al)^2+\sum_{p<n-\mu}\frac{\la_p^2}{\la_p-\la_{n-1}}\xi_p^2\\
&=:\sum\limits_{1\leq p, q\leq n-\mu-1}m_{pq}\xi_q\xi_p\geq 0.
\end{aligned}\]
By calculations in Case 1 we know $m_{pp}=B+\frac{\la_p\la_{n-1}}{\la_p-\la_{n-1}}$ for all $1\leq p\leq n-\mu-1$ and $m_{pq}=B$ for $1\leq p\neq q\leq n-\mu-1,$
where $B=\frac{|\la_n|\la_{n-1}}{\la_{n-1}-\la_n}.$
Therefore, we obtain that
\[M:=(m_{pq})_{(n-\mu-1)\times(n-\mu-1)}=D+s^\top s\]
is positive definite.
Here $s=\sqrt{B}(1, \cdots, 1)$ is a $1\times(n-\mu-1)$ matrix and $D=\text{diag}(d_1, \cdots, d_{n-\mu-1})$ with $d_i=\frac{\la_i\la_{n-1}}{\la_i-\la_{n-1}}>0.$

We conclude that at $x_0$
\be\label{gen-7.2.10}
0<-\sum_{i\notin Q_\mu}\frac{\bde}{2}\frac{\G^{ii}u^2_{n-1n-1i}}{\la_{n-1}}+C\e
<-\frac{\bde}{2}\frac{\C}{\la^2_1}\sum_{i\notin Q_\mu}\frac{u^2_{n-1n-1i}}{\la_{n-1}}+C\e,
\ee
for some $C=C(\|u\|_{C^3}, \Omega, \{0\}, R, n)>0.$
Following the same argument as in Case 1, we obtain
when $\e_R=\e_R(\Omega, \{0\}, n, \varphi, R)>0$ is sufficiently small, for any $0<\e<\e_R,$ the right hand side of \eqref{gen-7.2.10} is negative.
This leads to a contradiction. Therefore, Case 2 does not happen.

This completes the proof of Lemma \ref{step3-lem}.
\end{proof}

Putting Lemma \ref{step1-lem}, Lemma \ref{step2-lem}, and Lemma \ref{step3-lem} together we get
\begin{lemma}
 \label{c2-lowerdecay-lem}
Given $\eta_0=\eta_0(\Omega)>0, \ev=\ev(\Omega, \{0\}, n)>0, \eta_0R^{-8}>\e_R=\e_R(\Omega, \{0\}, n, \varphi, R)>0$ sufficiently small, $R_0=R_0(\Omega)>0$ sufficiently large, for any $0<\e<\e_R,$ $R>R_0,$ and $\varphi\in C^{\infty}(\bar\Omega)$ that satisfies $\varphi\leq0$ and $\|\varphi\|_{C^2}\leq \ev$ let $\ure$ be the admissible solution of \eqref{eq-appr}. Denote $\la(D^2\ure)=(\la_1, \cdots, \la_n)$ with $\la_1\geq\la_2\geq\cdots\geq\la_n,$ then we have
\be\label{la2-lowb}
\la_{n-1}>\bb_2r^{-\frac{n}{n-1}}.
\ee
Let $\hure$ be the admissible solution of \eqref{eq-appr-s} and denote $\la(D^2\hure)=(\hat\la_1, \cdots, \hat\la_n)$ with $\hat\la_1\geq\hat\la_2\geq\cdots\geq\hat\la_n,$
then we have
\be\label{la2-lowb}
\hat\la_2>\bb_2r^{-\frac{n}{n-1}}.
\ee
Here, $\bb_2=\bb_2(\Omega, \{0\}, n)>0$ is a positive constant that is independent of $R$ and $\e.$
Consequently, on $\ol{\Omega\setminus B_{1/R}},$ $\ure$ satisfies
 \be\label{c2-lowd}
\lt|D^2 \ure\rt|>\bb_2r^{-\frac{n}{n-1}},
 \ee
and on $\ol{\Omega^R\setminus B_1},$ $\hure$ satisfies
 \be\label{c2-lowd-s}
 \lt|D^2\hure\rt|>\bb_2r^{-\frac{n}{n-1}}.
 \ee
\end{lemma}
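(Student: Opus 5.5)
The proof will be an assembly of the three steps set up in Subsection \ref{sub-setup}, run through a continuity (first-touching-time) argument in $t$, followed by a rescaling.

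\textbf{Step 1: set $Q^t:=\la_{n-1}(D^2u^t)(u^t)^{\frac{n}{n-2}-\bd}$ and bound it at $t=0$ and on the boundary.} Let $m_0$ be the constant of Lemma \ref{step12-lem}, shrunk if necessary to $m_0/2$ so that the inequality there is strict. By Lemma \ref{step1-lem}, $Q^0>m_0$ on $\ol{\Omega^R(0)\setminus B_1}$. By Lemma \ref{step2-lem}, $Q^t>m_0$ on $\p(\Omega^R(t)\setminus B_1)$ for every $t\in(0,1]$.

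\textbf{Step 2: continuity in $t$.} The solutions $u^t$ depend continuously on $t$ in $C^2$. This uses the uniform estimates for $u^t$ noted after Lemma \ref{convexity-lem}, and the fact that $\Omega^R(t)$ varies smoothly; one pulls back to a fixed annular domain by a $t$-dependent diffeomorphism. Since $\la_{n-1}$ is a continuous function of the Hessian, $\min Q^t$ is continuous in $t$.

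\textbf{Step 3: exclude a first touching time.} Suppose $\min Q^t\le m_0$ for some $t$. Then there is a first time $t_0\in(0,1]$ at which $\min Q^{t_0}=m_0$, attained at some $x_0$. By Step 1, $x_0$ lies in the interior $\Omega^R(t_0)\setminus\bar B_1$. This is exactly the situation excluded by Lemma \ref{step3-lem}, where Cases 1 and 2 each give a contradiction once $\e<\e_R$. Hence $Q^1>m_0$ everywhere.

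\textbf{Step 4: convert to the stated bound for $\hure$.} Take $t=1$, so $u^1=\hure$ and $\Omega^R(1)=\Omega^R$. Proposition \ref{c0-prop} gives $u^1\le\ba r^{\frac{n-2}{n-1}}$. Therefore
\[
\la_{n-1}>m_0\,(u^1)^{\bd-\frac{n}{n-2}}\ge m_0\,\ba^{\bd-\frac{n}{n-2}}\,r^{\frac{n-2}{n-1}\left(\bd-\frac{n}{n-2}\right)}
=m_0\,\ba^{\bd-\frac{n}{n-2}}\,r^{-\frac{n}{n-1}}\,r^{\frac{(n-2)\bd}{n-1}}.
\]
Because $\bd=1/R$ and $1\le r\le CR$, the factor $r^{\frac{(n-2)\bd}{n-1}}$ lies between $1$ and $(CR)^{1/R}\le 2$. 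Likewise $\ba^{\bd}$ is bounded above and below uniformly in $R$. This gives $\la_{n-1}>\bb_2 r^{-\frac{n}{n-1}}$ with $\bb_2$ independent of $R$ and $\e$.

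The displayed inequality for $\hat\la_2$ is presumably the same bound stated with reversed indexing of the eigenvalues. It follows from the bound on $\la_{n-1}$, since $\la_1\ge\cdots\ge\la_{n-1}$ and $n\ge3$ give $\hat\la_2\ge\la_{n-1}$.

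\textbf{Step 5: rescale and conclude.} Since $\ure(x)=R^{-\frac{n-2}{n-1}}\hure(Rx)$, we have $D^2\ure(x)=R^{\frac{n}{n-1}}D^2\hure(Rx)$. The exponent matching $(R|x|)^{-\frac{n}{n-1}}R^{\frac{n}{n-1}}=|x|^{-\frac{n}{n-1}}$ then transfers the bound to $\ure$. Finally, $|D^2u|\ge\la_{n-1}>0$ yields \eqref{c2-lowd} and \eqref{c2-lowd-s}.

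The main obstacle is Step 2. Lemma \ref{step3-lem} presupposes a first time $t_0$ at which $Q=m_0$ is attained, and its existence requires continuity in $t$ of the solutions $u^t$ on the moving domains. I would get this from uniqueness of admissible solutions together with the $t$-uniform $C^{2,\alpha}$ bounds; the latter follow from the uniform ellipticity $\df^k\ge m_4(\la_{n-1}/\la_1)^{2(n-2)}$ of \eqref{gen-step3-2}, which is valid up to $t_0$. Compactness then gives convergence along sequences of times, and uniqueness upgrades this to genuine continuity, so $\min Q^t$ is continuous and the first time is attained.
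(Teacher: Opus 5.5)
Your proposal is correct and is essentially the paper's argument. The paper proves this lemma by combining Lemmas \ref{step1-lem}, \ref{step2-lem} and \ref{step3-lem} in the first-touching-time scheme for $Q=\la_{n-1}(u^t)^{\frac{n}{n-2}-\bd}$, then converting \eqref{goal-1} into the $r^{-\frac{n}{n-1}}$ bound via Proposition \ref{c0-prop} and rescaling, exactly as you do. Your explicit treatment of continuity in $t$ fills in a point the paper leaves implicit; it is most cleanly justified by the non-degeneracy of the $\e$-problem for fixed $\e>0$ (since $h\ge f_\e>0$ and the $C^2$ bounds are uniform in $t$), rather than by \eqref{gen-step3-2}, which is only available before $t_0$.
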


\section{Asymptotic behavior near the singularity}
 \label{sec-AB}
 Recall that our goal is to solve \eqref{eq-main1}. To achieve this, we study the approximate problem \eqref{eq-appr}
 and, for our convenience, its rescaling \eqref{eq-appr-s}. However, so far we only obtained (in Lemma \ref{c2-global-lem})
 \[|D^2\ure|\leq CR^{3-\frac{1}{n-1}}\,\,\mbox{on}\,\,\ol{\Omega\setminus B_{1/R}},\]
 for some $C>0$ that is independent of $R$ and $\e.$
 Clearly, this estimate is not good enough for us to say that there exists a subsequence $\{\ure\}$ converges smoothly to
 a solution of \eqref{eq-main1}. We need a better estimate. In this section, we shall prove
 \begin{lemma}
 \label{c2-upperdecay-lem}
 Given $\eta_0=\eta_0(\Omega)>0, \ev=\ev(\Omega, \{0\}, n)>0, \eta_0R^{-8}>\e_R=\e_R(\Omega, \{0\}, n, \varphi, R)>0$ sufficiently small, $R_0=R_0(\Omega)>0$ sufficiently large, for any $0<\e<\e_R,$ $R>R_0,$ and $\varphi\in C^{\infty}(\bar\Omega)$ that satisfies $\varphi\leq0$ and $\|\varphi\|_{C^2}\leq \ev$ let $\ure$ be the admissible solution of \eqref{eq-appr}.
 Then on $\ol{\Omega\setminus B_{1/R}},$ $\ure$ satisfies
 \be\label{c2-upd}
\lt|D^2 \ure\rt|<\ba_2r^{-\frac{n}{n-1}}.
 \ee
 Let $\hure$ be the admissible solution of \eqref{eq-appr-s}, then on $\ol{\Omega^R\setminus B_1},$ $\hure$ satisfies
 \be\label{c2-upd-s}
 \lt|D^2\hure\rt|<\ba_2r^{-\frac{n}{n-1}}.
 \ee
Here, $\ba_2=\ba_2(\Omega, \{0\}, n)>0$ is a positive constant that is independent of $R$ and $\e.$
\end{lemma}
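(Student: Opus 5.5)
The plan is to copy the architecture of Sections \ref{sec-deformation}--\ref{sec-step3}, with the roles of the largest and second smallest eigenvalues interchanged. Instead of \eqref{c2-upd-s} directly, I will prove a weighted upper bound
\[
\la_1(D^2u^t)<M_0\,(u^t)^{-\frac{n}{n-2}-\bd}
\]
along the deformation \eqref{eq-appr-t}. By Proposition \ref{c0-prop}, since $u^t\simeq r^{\frac{n-2}{n-1}}$, this gives $\la_1\leq Cr^{-\frac{n}{n-1}}$. Because $\la(D^2u)\in\Gamma_{n-1}$ forces $|\la_n|<\la_{n-1}\leq\la_1$, it then yields \eqref{c2-upd-s}, and \eqref{c2-upd} follows by rescaling.

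\emph{Steps 1 and 2.} At $t=0$ the bound holds by the explicit eigenvalues of $u^0$ computed in Lemma \ref{step1-lem}. On $\p(\Omega^R(t)\setminus B_1)$ it follows from the improved boundary estimates: Lemma \ref{c2-inside-tan-lem}, Lemma \ref{c2-outside-tan-lem} (adapted to $\Omega^R(t)$ via Lemma \ref{convexity-lem}), Lemma \ref{im-c2-inside-mix-lem}, Lemma \ref{im-c2-inside-nu-lem}, Lemma \ref{im-c2-outside-mix-lem} and Lemma \ref{im-c2-outside-nu-lem}. These show that all second derivatives are $O(r^{-\frac{n}{n-1}})$ there, uniformly in $R,\e,t$. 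We choose $M_0$ larger than these boundary and initial constants.

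\emph{Step 3.} Let $t_0$ be the first time at which $Q:=\la_1 u^{\frac{n}{n-2}+\bd}=M_0$ is attained, at an interior point $x_0$. All the qualitative facts used in Section \ref{sec-step3} remain available, and now Lemma \ref{c2-lowerdecay-lem} even holds for all $t\leq t_0$:
\begin{itemize}
\item Claims A--C, in particular $\la_n<-c(n)\la_{n-1}$;
\item the $\e$-independent $C^3$ bound;
\item the relations $\G^{ii}\simeq\C/\la_i^2$ and $\G^{pp,qq}\simeq\C(\la_p+\la_q)/(\la_p^2\la_q^2)$ from \eqref{gen-gi}--\eqref{gen-gpq}.
\end{itemize}
If $\la_1$ is simple at $x_0$, we differentiate $\log Q$ twice and contract with $F^{ii}$. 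Using Lemma \ref{np-lem1}, $\la_{1ii}=u_{11ii}+2\sum_{p\neq1}u_{1pi}^2/(\la_1-\la_p)$. The only positive contributions are now $+2\sum\G^{ii}u_{1pi}^2/(\la_1-\la_p)$, which have the good sign for a maximum; the bad terms are the concavity term, the $-\bigl(1-(\frac{n}{n-2}+\bd)^{-1}\bigr)F^{ii}\la_{1i}^2/\la_1^2$ term, and $\frac{n}{n-2}F^{ii}u_{ii}/u\simeq0$. The inequality to reach is $0\geq(\text{quadratic form in }\xi_p=u_{pp1}/\la_p^2)+\bde$-type strictly negative multiple of $\sum_i\G^{ii}u_{11i}^2$, up to $O(\e)$. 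Here one again uses $\sum_i\xi_i\simeq0$, which comes from $(\s_{n-1})_1\simeq0$, to eliminate $\xi_n$. This reduces the problem to showing that an $(n-1)\times(n-1)$ matrix of the form $D+s^\top s$ is positive semidefinite, with the sign now reversed relative to \eqref{gen-7.1.14} because we are at a maximum of $\la_1$ rather than a minimum of $\la_{n-1}$. The $\bd$ shift again supplies a strictly signed remainder. That remainder is quantified through $|Du|\geq\bb_1r^{-\frac{1}{n-1}}$ (Proposition \ref{c1-prop}) and beats $C\e$ once $\e<\e_R$. If $\la_1$ is multiple, I will use the perturbation trick of Claims D--E, in the form of a smooth upper support function $\psi\geq\la_1$ together with Lemma 5 of \cite{BCD17}; this gives $u_{kli}=\psi_i\delta_{kl}$ on the top eigenspace and reduces to the simple case as in Case 2.

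The main obstacle is the algebra of this quadratic form. At a maximum of the largest eigenvalue, the terms $\G^{pp,11}u_{11p}^2$ and $\G^{11}u_{11p}^2/(\la_1-\la_p)$ no longer cancel favourably the way they did in \eqref{gen-7.1.12}. Controlling them requires the ratio $\la_{n-1}/\la_1$ to stay bounded below, which is what we are trying to prove. I would therefore first try to take the exponent in $Q$ exactly equal to the Riesz-kernel value plus $\bd$, and to exploit the identity $\sum_{i\leq n-2}1/\la_i\simeq(\la_{n-1}+\la_n)/(|\la_n|\la_{n-1})$, as in the determinant computation \eqref{gen-7.1.15}. If the sign fails, the fallback is a localized Pogorelov-type argument on balls $B_{|x_0|/6}(x_0)$ rescaled to unit size. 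In the rescaled ball, $\la_{n-1}\geq\bb_2$ by Lemma \ref{c2-lowerdecay-lem}, and Proposition \ref{c1-prop} gives gradient bounds. One then runs the maximum principle on $\rho^2\la_1e^{a|Du|^2/2}$. The term $aF^{ii}\la_i^2\gtrsim a\C$ should dominate the bad third-order terms, since $\C=-\s_n$ is bounded below by $c\,\la_1\la_{n-1}^{n-2}|\la_n|$.
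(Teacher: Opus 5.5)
Your primary route is genuinely different from the paper's, and it does close. However, the proposal stops exactly at the computation that decides it, and the obstruction you anticipate is not there. Put $\beta:=\frac{n}{n-2}+\bd$ and reduce to $\G$ as in \eqref{gen-7.1.5}.

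At a maximum, the concavity term and $\beta h/u\geq0$ sit on the good side. The third derivatives then split into three groups with no cross terms.
\begin{itemize}
\item The $u_{pq1}^2$ with $p\neq q$ and $p,q\neq1$ carry $2\G^{pp,qq}+\frac{2\G^{qq}}{\la_1-\la_p}+\frac{2\G^{pp}}{\la_1-\la_q}>0$, since $\la_p+\la_q>0$ in $\Gamma_{n-1}$.
\item For $p\neq1$, the coefficient of $u_{11p}^2$ is
\[
2\G^{11,pp}+\frac{2\G^{11}}{\la_1-\la_p}-\frac{(1+\beta^{-1})\G^{pp}}{\la_1}\simeq\frac{\C\big[(1-\beta^{-1})\la_1+(1+\beta^{-1})\la_p\big]}{\la_1\la_p^2(\la_1-\la_p)}.
\]
This is positive also for $p=n$. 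The equation gives $|\la_n|^{-1}\simeq\sum_{i<n}\la_i^{-1}\geq\frac{n-1}{\la_1}$, and $\frac{1-\beta^{-1}}{1+\beta^{-1}}>\frac{1}{n-1}$ is equivalent to $\beta>\frac{n}{n-2}$.
\item For $\xi_p=u_{pp1}/\la_p^2$, eliminate $\xi_n$ via $\sum_i\xi_i\simeq0$. This gives $-2\C\,\xi'^{\top}(D+s^\top s)\xi'$ with $s=\sqrt{B'}(1,\dots,1)$ and $B'=\frac{|\la_n|\la_1}{\la_1-\la_n}$. Now \emph{all} diagonal entries are negative: $d_1=-\frac{1-\beta^{-1}}{2}\la_1$ and $d_p=-\frac{\la_p\la_1}{\la_1-\la_p}$ for $2\leq p\leq n-1$. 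Negative definiteness is equivalent to $B'\sum_i|d_i|^{-1}<1$. The harmonic-mean identity used after \eqref{gen-7.1.15} gives
\[
B'\sum_{i=1}^{n-1}|d_i|^{-1}\simeq1-\kappa\,\frac{|\la_n|}{\la_1+|\la_n|},\qquad \kappa=n-\frac{2}{1-\beta^{-1}}=\frac{(n-2)^2\bd}{2+(n-2)\bd}>0.
\]
\end{itemize}
So both conditions degenerate exactly at the Riesz exponent $\beta=\frac{n}{n-2}$, with equality when $\la_1=\dots=\la_{n-1}$. The sign $+\bd$ you chose is the right one, and no lower bound on $\la_{n-1}/\la_1$ is used anywhere.

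The strict remainder is at least $c(R)\sum_iu_{11i}^2=c(R)\beta^2\la_1^2|Du|^2/u^2$. This beats $C\e$ by Proposition \ref{c1-prop} and Lemma \ref{c2-lowerdecay-lem}. In the multiple case, Lemma 5 of \cite{BCD17} applied to $-D^2u$ gives $u_{kk1}=\psi_k=0$ for $k\leq\mu$, exactly as in \eqref{gen-7.2.4}. Then only the $d_p$ with $p>\mu$ remain, and negativity is automatic.

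The paper does not run a deformation for this bound; it adapts \cite{CW01}.
\begin{itemize}
\item For $|x_0|\geq A_0$ it maximizes $G=\zeta^{\frac{n-1}{n-2}}\la_1(M-|Du|^2)^{-n/2}$, with $\zeta=\frac{2u}{u(x_0)}-1$, on $\{2u>u(x_0)\}$. It controls $G$ on $\p\Omega^R$ using the improved boundary estimates at $t=1$ only.
\item It splits into two cases. If $\la_{n-1}>\la_1/(5n)$, the good term $F^{n-1\,n-1}\la_1^2$ with $F^{n-1\,n-1}\geq\theta_1$ wins. If $\la_{n-1}\leq\la_1/(5n)$, the same $u_{11p}^2$ algebra appears, but the cutoff produces the cruder weight $2-\frac{1}{2(n-1)}$ in place of your $1+\beta^{-1}$. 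That is why $5n\la_{n-1}\leq\la_1$ is needed to fix the sign at $p=n$.
\item The region $B_{A_0}\setminus B_1$ is handled separately by $\Laplace u+Cu$.
\end{itemize}
Your route avoids the case split, the gradient weight and the inner region. It also shows that the Riesz exponent is critical for both eigenvalue bounds. The price is that you need the improved boundary $C^2$ bounds on $\p\Omega^R(t)$ and Lemma \ref{c2-lowerdecay-lem} uniformly along the whole deformation; Sections \ref{sec-deformation}--\ref{sec-step3} supply both.

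Drop the fallback, because its mechanism is wrong. By \eqref{gen-gi} and Claim B of Section \ref{sec-step3}, $\sum_iF^{ii}\la_i^2\simeq n\C/\s_{n-2}\lesssim\la_{n-1}^2$. This does not grow with $\la_1$: the estimate $aF^{ii}\la_i^2\gtrsim a\C$ drops the factor $\s_{n-2}\sim\la_1$. This is exactly why the paper's second case has to match the bad $F^{11}\zeta_1^2/\zeta^2$ term against an $F^{11}\la_1^2$ good term.
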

\begin{proof} Having established Lemma \ref{c2-global-lem} and Lemma \ref{c2-lowerdecay-lem}, we now can follow the approach of \cite{CW01} to prove inequality \eqref{c2-upd-s}. Inequality \eqref{c2-upd} then follows by rescaling.

In this proof, for our convenience, we shall denote the solution of \eqref{eq-appr-s} by $u$ instead of $\hure.$
Consider the domain $\Omega^R\setminus B_{A_0},$ where $A_0=A_0(\Omega, \{0\}, n)>0$ is chosen such that
$\bb A_0^{\frac{n-2}{n-1}}>3\ba.$ In view of Proposition \ref{c0-prop} we have,
\[\min\limits_{x\in\ol{\Omega^R\setminus B_{A_0}}}u(x)\geq 3\ba.\]

Now, for any fixed $x_0\in\Omega^R\setminus B_{A_0},$ we denote $r_0:=|x_0|$ and
$\Omega_u:=\{x\in\Omega^R\setminus B_1: 2u(x)>u(x_0)\}.$ By our choice of $A_0,$
we know that $\Omega_u\subset\Omega^R\setminus\bar{B}_1.$ Moreover, applying Proposition \ref{c0-prop} again we get,
for any $x\in\Omega_u,$ $\ba|x|^{\frac{n-2}{n-1}}>\frac{\bb}{2} r_0^{\frac{n-2}{n-1}},$ which implies $|x|>\lt(\frac{\bb}{2\ba}\rt)^{\frac{n-1}{n-2}}r_0.$ By virtue of
Proposition \ref{c1-prop} we can see that for any $x\in\Omega_u$
\[|Du(x)|<\ba_1|x|^{-\frac{1}{n-1}}<Cr_0^{-\frac{1}{n-1}},\]
where $C=C(\Omega, \{0\}, n)>0$ is a positive constant that is independent of $x_0,$ $R,$ and $\e.$

Denote $V:=|Du|^2$ and $M:=2n^3\max\limits_{x\in\bar\Omega_u}V,$ by Proposition \ref{c1-prop} it is easy to check that $M\sim r_0^{-\frac{2}{n-1}}.$
Let $\psi(V):=(M-V)^{-\frac{n}{2}},$ then $\psi(V)\sim r_0^{\frac{n}{n-1}}.$ We also denote $\zeta:=\frac{2u}{u_0}-1,$ where $u_0:=u(x_0).$
For any $x\in\Omega_u,$ suppose $D^2u$ is diagonalized at $x,$ that is, $u_{ij}(x)=u_{ii}(x)\delta_{ij}=:\la_i(x)\delta_{ij}.$ We shall always assume $\la_1(x)\geq\la_2(x)\geq\cdots\geq\la_n(x).$

Let $G(x)=\zeta^{\frac{n-1}{n-2}}\la_1\psi(V),$ then we have
\[G=0\,\,\mbox{on $\p\Omega_u\setminus\p\Omega^R.$}\]
Moreover, By Lemma \ref{c2-outside-tan-lem}, Lemma \ref{im-c2-outside-mix-lem}, and Lemma \ref{im-c2-outside-nu-lem} we get
On $\p\Omega_u\cap\p\Omega^R=\p\Omega^R,$
\[
\begin{aligned}
G&\leq\lt(\frac{2R^{\frac{n-2}{n-1}}}{u_0}-1\rt)^{\frac{n-1}{n-2}}CR^{-\frac{n}{n-1}}r_0^{\frac{n}{n-1}}\\
&<C\frac{R}{r_0}R^{-\frac{n}{n-1}}r_0^{\frac{n}{n-1}}=C\lt(\frac{r_0}{R}\rt)^{\frac{1}{n-1}}\leq C
\end{aligned}
\]
for some $C=C(\Omega, \{0\}, n)>0$ that is independent of $x_0,$ $R,$ and $\e.$
Our goal is to show
\[\max\limits_{\bar\Omega_u}G\leq C\]
for some $C=C(\Omega, \{0\}, n)>0$ that is independent of $x_0,$ $R,$ and $\e.$

Without loss of generality, we may assume $G_{\max}:=\max\limits_{\bar\Omega_u}G$ is achieved at an interior point
$\hx\in \Omega_u.$ Assuming that $D^2u(\hat x)$ is diagonal at $\hx$
and $\la_1(\hx)=u_{11}(\hx)$ has multiplicity $\mu,$ that is,
\[\la_1=\cdots=\la_\mu>\la_{\mu+1}\geq\cdots\geq\la_n,\]
then at $\hat x,$ we have
\be\label{AB-case1}
\delta_{kl}\la_{1i}=u_{kli}\,\,\mbox{for}\,\,1\leq k, l\leq\mu,
\ee
and
\be\label{AB-case1*}
\la_{1ii}\geq u_{11ii}+2\sum_{p>\mu}\frac{u^2_{1pi}}{\la_1-\la_p}.
\ee
Note that when $\mu=1,$ the equality in \eqref{AB-case1*} holds and when $\mu>1$ \eqref{AB-case1}, \eqref{AB-case1*}  hold in the viscosity sense (see \cite{BCD17}).

Differentiating $G$ at $\hx$ we obtain
\be\label{AB-critic}
0=\frac{G_i}{G}=\frac{n-1}{n-2}\frac{\zeta_i}{\zeta}+\frac{\la_{1i}}{\la_1}+\frac{\psi_i}{\psi}
\ee
and
\be\label{AB-test}
0\geq\frac{n-1}{n-2}F^{ii}\lt(\frac{\zeta_{ii}}{\zeta}-\frac{\zeta_i^2}{\zeta^2}\rt)+F^{ii}\lt(\frac{\la_{1ii}}{\la_1}-\frac{\la_{1i}^2}{\la_1^2}\rt)
+F^{ii}\lt(\frac{\psi_{ii}}{\psi}-\frac{\psi_i^2}{\psi^2}\rt).
\ee
Plugging \eqref{AB-case1} and \eqref{AB-case1*} into \eqref{AB-test} we get
\be\label{AB-test1}
\begin{aligned}
0&\geq \frac{n-1}{n-2}F^{ii}\lt(\frac{\zeta_{ii}}{\zeta}-\frac{\zeta_i^2}{\zeta^2}\rt)+F^{ii}\lt(\frac{\psi_{ii}}{\psi}-\frac{\psi_i^2}{\psi^2}\rt)\\
&+F^{ii}\lt[\frac{u_{11ii}}{\la_1}+2\sum_{p>\mu}\frac{u^2_{1pi}}{\la_1(\la_1-\la_p)}-\frac{u^2_{11i}}{\la_1^2}\rt]\\
&=\frac{n-1}{n-2}F^{ii}\lt(\frac{\zeta_{ii}}{\zeta}-\frac{\zeta_i^2}{\zeta^2}\rt)+F^{ii}\lt(\frac{\psi_{ii}}{\psi}-\frac{\psi_i^2}{\psi^2}\rt)\\
&+\frac{1}{\la_1}\bigg[(f_\e)_{11}-F^{pq, rs}u_{pq1}u_{rs1}+2F^{ii}\sum_{p>\mu}\frac{u^2_{1pi}}{\la_1-\la_p}-F^{ii}\frac{u^2_{11i}}{\la_1}\bigg].
\end{aligned}
\ee

\textbf{Case 1.} $u_{n-1n-1}>\frac{1}{5n}u_{11}$ at $\hx$. In view of
\eqref{AB-critic} we have
\[\lt(\frac{u_{11i}}{u_{11}}\rt)^2\leq 2\frac{\psi^2_i}{\psi^2}+2\lt(\frac{n-1}{n-2}\rt)^2\lt(\frac{\zeta_i^2}{\zeta^2}\rt).\]
By the concavity of $F,$ \eqref{AB-test1} implies
\be\label{v1change1}
\begin{aligned}
0&\geq\frac{n-1}{n-2}F^{ii}\lt\{\frac{\zeta_{ii}}{\zeta}-\lt[1+2\lt(\frac{n-1}{n-2}\rt)\rt]\frac{\zeta^2_{i}}{\zeta^2}\rt\}\\
&+F^{ii}\lt[\frac{\psi_{ii}}{\psi}-3\frac{\psi^2_i}{\psi^2}\rt]+\frac{(f_\e)_{11}}{\la_1}.
\end{aligned}
\ee
Since
\[V_i=2\sum_lu_lu_{li}=2u_iu_{ii}\,\,\mbox{and}\,\, V_{ii}=2\sum\limits_l(u^2_{li}+u_{l}u_{lii})=2u^2_{ii}+\sum_l2u_lu_{lii},\]
a straightforward calculation yields for any $\gamma\geq 2$ we have
\be\label{AB-3}
\begin{aligned}
&F^{ii}\lt[\frac{\psi_{ii}}{\psi}-\gamma\frac{\psi^2_i}{\psi^2}\rt]\\
=&\lt[\frac{\psi''}{\psi}-\gamma\lt(\frac{\psi'}{\psi}\rt)^2\rt]F^{ii}V^2_i+\frac{\psi'}{\psi}F^{ii}V_{ii}\\
=&\lt[\frac{n}{2}\lt(\frac{n}{2}+1\rt)-\gamma\frac{n^2}{4}\rt](M-V)^{-2}F^{ii}V^2_i+n(M-V)^{-1}F^{ii}(u^2_{ii}+u_lu_{lii})\\
\geq&\lt[\frac{n}{2}\lt(\frac{n}{2}+1\rt)-\gamma\frac{n^2}{4}\rt]\frac{4V}{(M-V)^2}F^{ii}u^2_{ii}+n(M-V)^{-1}F^{ii}u^2_{ii}+n(M-V)^{-1}Du\cdot Df_\e.\\
\end{aligned}
\ee
In particular, when $\gamma=3$ we get
\[F^{ii}\lt[\frac{\psi_{ii}}{\psi}-3\frac{\psi^2_i}{\psi^2}\rt]>\frac{(n-2)M}{(M-V)^2}F^{ii}u^2_{ii}+\frac{n}{M-V}Du\cdot Df_\e.\]
Plugging it into \eqref{v1change1} gives
\be\label{v1change2}
0\geq C_1\frac{f_\e}{\zeta u_0}-C_2\frac{|Du|^2}{u^2_{0}\zeta^2}\sum F^{ii}+\frac{n-2}{M}F^{n-1n-1}\frac{\la_1^2}{25n^2}-\frac{8n}{M}|Du|f_\e+\frac{(f_\e)_{11}}{\la_1},
\ee
where $C_1=C_1(n), C_2=C_2(n)>0.$
When $\e_R=\e_R(\Omega, \{0\}, n, \varphi, R)>0$ is sufficiently small, for any $0<\e<\e_R,$ using (3.2) of \cite{CW01}
we obtain \[F^{n-1n-1}>\theta_1=\theta_1(n)>0.\]
On the other hand, recall \eqref{v1change*} and \eqref{gen-step3-3} we get
\[\frac{2}{n-1}\leq \sum F^{ii}<2.\]
Now, denote $X:=u_{11}(\hx)\zeta^{\frac{n-1}{n-2}}(\hx),$ apply Proposition \ref{c0-prop} and Proposition \ref{c1-prop}, \eqref{v1change2} yields
\[
X^2r^{\frac{2}{n-1}}_0\leq \frac{C_3}{r_0^2},
\]
where $C_3=C_3(\Omega, \{0\}, n)>0$.This shows that, in this case, there exists a positive constant $C=C(\Omega, \{0\}, n)>0$ such that
\[\max\limits_{\bar\Omega_u}G\leq C.\]

\textbf{Case 2.} $u_{n-1n-1}\leq\frac{1}{5n} u_{11}$ at $\hx.$
Note that by \eqref{AB-critic} we have
\be\label{AB-1}
\begin{aligned}
\frac{u^2_{111}}{\la_1^2}&=\lt(\frac{n-1}{n-2}\frac{\zeta_1}{\zeta}+\frac{\psi_1}{\psi}\rt)^2\\
&\leq\frac{4}{3}\frac{\psi_1^2}{\psi^2}+4\lt(\frac{n-1}{n-2}\rt)^2\frac{\zeta_1^2}{\zeta^2},
\end{aligned}
\ee
and for $i\geq 2$ we have
\be\label{AB-2}
\begin{aligned}
\frac{n-1}{n-2}\frac{\zeta^2_i}{\zeta^2}&=\frac{n-2}{n-1}\lt(\frac{u_{11i}}{\la_1}+\frac{\psi_i}{\psi}\rt)^2\\
&\leq\lt[1-\frac{1}{2(n-1)}\rt]\frac{u^2_{11i}}{\la_1^2}+(2n-1)\frac{\psi^2_i}{\psi^2}.
\end{aligned}
\ee
Inserting \eqref{AB-1} and \eqref{AB-2} into \eqref{AB-test1} gives
\be\label{AB-test2}
\begin{aligned}
0&\geq\frac{n-1}{n-2}F^{ii}\frac{\zeta_{ii}}{\zeta}-5\lt(\frac{n-1}{n-2}\rt)^2F^{11}\frac{\zeta_1^2}{\zeta^2}+F^{ii}\lt(\frac{\psi_{ii}}{\psi}-2n\frac{\psi_i^2}{\psi^2}\rt)\\
&+\frac{1}{\la_1}(f_\e)_{11}-\frac{1}{\la_1}F^{pq, rs}u_{pq1}u_{rs1}+2F^{ii}\sum_{p>\mu}\frac{u^2_{1pi}}{\la_1(\la_1-\la_p)}\\
&-\lt[2-\frac{1}{2(n-1)}\rt]\sum_{i>\mu}F^{ii}\frac{u^2_{11i}}{\la_1^2}.
\end{aligned}
\ee
Here, we have used  when $\mu>1$ by \eqref{AB-case1} we have $u_{11i}=0$ for $2\leq i\leq \mu.$
Since we have proved Lemma \ref{c2-lowerdecay-lem}, below, we employ notations introduced in Section \ref{sec-step3}.
In view of the concavity of $F$ we have
\[\begin{aligned}
&-F^{pq, rs}u_{pq1}u_{rs1}+2F^{ii}\sum\limits_{p>\mu}\frac{u^2_{1pi}}{\la_1-\la_p}-\lt[2-\frac{1}{2(n-1)}\rt]\sum\limits_{p>\mu}F^{pp}\frac{u^2_{11p}}{\la_1}\\
&\geq 2\sum\limits_{p>\mu}\frac{F^{pp}-F^{11}}{\la_1-\la_p}u^2_{11p}+2F^{11}\sum\limits_{p>\mu}\frac{u^2_{11p}}{\la_1-\la_p}
-\lt[2-\frac{1}{2(n-1)}\rt]\sum\limits_{p>\mu}F^{pp}\frac{u^2_{11p}}{\la_1}\\
&\simeq \frac{2\C}{\s_{n-2}}\sum\limits_{p>\mu}\lt[\frac{\la_p+\la_1}{\la_p^2\la_1^2}+\frac{1}{\la_1^2(\la_1-\la_p)}
-\frac{1}{\la_1\la_p^2}+\frac{1}{4(n-1)}\frac{1}{\la_1\la_p^2}\rt]u^2_{11p}\\
&=\frac{2\C}{\s_{n-2}}\sum\limits_{p>\mu}\frac{1}{\la_1\la_p}\lt[\frac{1}{\la_1-\la_p}+\frac{1}{4(n-1)\la_p}\rt]u^2_{11p}
\end{aligned}\]
It is clear that when $p\leq n-1$
\[\frac{1}{\la_1\la_p}\lt[\frac{1}{\la_1-\la_p}+\frac{1}{4(n-1)\la_p}\rt]>0.\]
When $p=n$ we have
\[\begin{aligned}
&\frac{1}{\la_1\la_n}\lt[\frac{1}{\la_1-\la_n}+\frac{1}{4(n-1)\la_n}\rt]\\
=&\frac{1}{\la_1|\la_n|}\lt[\frac{1}{4(n-1)|\la_n|}-\frac{1}{\la_1+|\la_n|}\rt].
\end{aligned}\]
Since $\la_{n-1}>|\la_n|$ and in this case we have $5n\la_{n-1}\leq\la_1,$ it is easy to see that
\[\lt[\frac{1}{4(n-1)|\la_n|}-\frac{1}{\la_1+|\la_n|}\rt]>0.\]
Moreover, by letting $\gamma=2n$ in \eqref{AB-3}, we obtain
\[F^{ii}\lt[\frac{\psi_{ii}}{\psi}-2n\frac{\psi^2_i}{\psi^2}\rt]\gtrsim\frac{(n-2)M}{(M-V)^2}F^{ii}u^2_{ii}.\]
Therefore, \eqref{AB-test2} implies that
\be\label{AB-n-1}
\begin{aligned}
0&\geq\frac{n-1}{n-2}F^{ii}\frac{\xi_{ii}}{\xi}-5\lt(\frac{n-1}{n-2}\rt)^2F^{11}\frac{\xi_1^2}{\xi^2}
+F^{ii}\lt(\frac{\psi_{ii}}{\psi}-2n\frac{\psi^2_i}{\psi^2}\rt)-C_4\e\\
&\geq-5\lt(\frac{n-1}{n-2}\rt)^2F^{11}\frac{\xi_1^2}{\xi^2}
+\frac{(n-2)M}{(M-V)^2}F^{ii}u^2_{ii}-C_4\e\\
&\geq-5\lt(\frac{n-1}{n-2}\rt)^2F^{11}\frac{\xi_1^2}{\xi^2}
+\frac{(n-2)M}{(M-V)^2}F^{11}\la_1^2-C_4\e,
\end{aligned}
\ee
where $C_4=C_4(\|u\|_{C^3}, R, \Omega, \{0\},n)>0.$
When $0<\e<\e_R$ with $\e_R=\e_R(\Omega, \{0\}, n, \varphi, R)>0$ being sufficiently small, by virtue of \eqref{gen-step3-2} we obtain
\be\label{AB-n-3}
0\geq-C_5\frac{|Du|^2}{u^2_0\zeta^2}+C_6r_0^{\frac{2}{n-1}}\la^2_1,
\ee
where $C_5=C_5(n)>0,$ $C_6=C_6(\Omega, \{0\}, n)>0$ are some positive constants that are independent of $x_0, R$ and $\e.$

Same as in Case 1, we denote $X:=u_{11}(\hx)\zeta^{\frac{n-1}{n-2}}(\hx).$ Applying Proposition \ref{c0-prop} and Proposition \ref{c1-prop} we derive at $\hx$
\[
X^2r^{\frac{2}{n-1}}_0\leq \frac{C_7}{r_0^2},
\]
where we $C_7=C_7(\Omega, \{0\}, n)>0.$

Combining Case 1 and Case 2, we conclude that there exists a positive constant $C=C(\Omega, \{0\}, n)>0$ such that
\[\max\limits_{\bar\Omega_u}G\leq C.\]
In particular,  we obtain for any $x_0\in\Omega^R\setminus B_{A_0},$ there exists $C=C(\Omega, \{0\}, n)>0,$
such that $G(x_0)\leq C,$ which yields $\la_1(x_0)\leq Cr_0^{-\frac{n}{n-1}}$ for some $C=C(\Omega, \{0\}, n)>0.$

When $x_0\in B_{A_0}\setminus B_1,$ we consider the test function
\[\Laplace\hure+C(n)\hure.\]
Following the same argument in Lemma \ref{c2-global-lem}, we obtain
\[\max\limits_{\overline{B_{A_0}\setminus B_1}}\lt(\Laplace\hure+C(n)\hure\rt)
=\max\limits_{\p\lt(B_{A_0}\setminus B_1\rt)}\lt(\Laplace\hure+C(n)\hure\rt).\]

Since $A_0=A_0(\Omega, \{0\}, n)>0$ is fixed, it is easy to see that there exists $C=C(\Omega, \{0\}, n)>0$
such that for any $x_0\in \ol{ B_{A_0}\setminus B_1} $ we have, $\la_1(x_0)\leq CA_0^{-\frac{n}{n-1}}\leq Cr_0^{-\frac{n}{n-1}}.$ This completes the proof of \eqref{c2-upd-s} and \eqref{c2-upd} follows by rescaling.
\end{proof}

\section{Smooth solution to \eqref{eq-main1}}
\label{sec-smooth}
Combining the results obtained in Section \ref{solv-appr}, Section \ref{improve}, Section \ref{sec-deformation}, Section \ref{sec-step3}, and Section \ref{sec-AB}, we arrive at the following crucial theorem of this paper.

\begin{theorem}
\label{theorem-key}
 Given $\eta_0=\eta_0(\Omega)>0, \ev=\ev(\Omega, \{0\}, n)>0, \eta_0R^{-8}>\e_R=\e_R(\Omega, \{0\}, n, \varphi, R)>0$ sufficiently small, $R_0=R_0(\Omega)>0$ sufficiently large, for any $0<\e<\e_R,$ $R>R_0,$ and $\varphi\in C^{\infty}(\bar\Omega)$ that satisfies $\varphi\leq0$ and $\|\varphi\|_{C^2}\leq \ev$
let $\ure$ be the admissible solution of \eqref{eq-appr}. Then there exist positive constants $\bb, \bb_1, \bb_2, \ba, \ba_1, \ba_2>0$ that depend on $\Omega$, $\{0\},$ and $n$ such that
on $\ol{\Omega\setminus B_{1/R}},$ $\ure$ satisfies
\be\label{c0-ab}
\bb r^{\frac{n-2}{n-1}}\leq\ure\leq\ba r^{\frac{n-2}{n-1}},
\ee
and
\be\label{c1-ab}
\bb_1 r^{-\frac{1}{n-1}}\leq|D\ure|\leq\ba_1 r^{-\frac{1}{n-1}}.
\ee
Moreover, denote the eigenvalues of Hessian of $D^2\ure$ by $\la(D^2\ure)=(\la_1, \cdots, \la_n)$ and assume $\la_1\geq\la_2\geq\cdots\geq\la_n,$ then we have
\be\label{c2-ab}
\bb_2 r^{-\frac{n}{n-1}}\leq\la_{n-1}\leq\la_1\leq\ba_2 r^{-\frac{n}{n-1}}\,\,\mbox{and}\,\,-\ba_2 r^{-\frac{n}{n-1}}\leq\la_n\leq-\bb_2 r^{-\frac{n}{n-1}}.
\ee
\end{theorem}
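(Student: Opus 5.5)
Theorem~\ref{theorem-key} is essentially a collection of estimates already proved, so the plan is to assemble them and supply the only genuinely new piece: the two-sided bound on $\la_n$. First, \eqref{c0-ab} is Proposition~\ref{c0-prop} (the strict lower bound there gives the non-strict one here). Second, \eqref{c1-ab} is Proposition~\ref{c1-prop}. For the eigenvalues, Lemma~\ref{c2-lowerdecay-lem} gives $\la_{n-1}\geq\bb_2r^{-\frac{n}{n-1}}$, and Lemma~\ref{c2-upperdecay-lem} gives $|D^2\ure|<\ba_2r^{-\frac{n}{n-1}}$, hence $\la_1\leq\ba_2r^{-\frac{n}{n-1}}$ and $\la_n\geq-\ba_2r^{-\frac{n}{n-1}}$. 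All of these are stated for $\ure$ on $\ol{\Omega\setminus B_{1/R}}$ under the same smallness hypotheses on $\e<\e_R$, $R>R_0$, $\ev$. After these steps the only inequality not already in hand is the upper bound $\la_n\leq-\bb_2r^{-\frac{n}{n-1}}$, with a constant independent of $R$ and $\e$.

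For this, I would use the equation in the form \eqref{gen-step3-1}:
\[
\la_n=-\frac{\s_{n-1}(\la|n)}{\s_{n-2}(\la|n)}+\frac{h\,\s_{n-2}(\la)}{\s_{n-2}(\la|n)},
\]
with $h=f_{\e/R}$ in the unscaled setting. The first term is estimated as in the computation preceding \eqref{gen-7.1.11}. By homogeneity and monotonicity of $\s_{n-1}/\s_{n-2}$ on $\Gamma_{n-2}$ in each variable, and since $\la_i\geq\la_{n-1}$ for $i\leq n-1$,
\[
\frac{\s_{n-1}(\la|n)}{\s_{n-2}(\la|n)}\geq\frac{\la_{n-1}}{n-1}\geq\frac{\bb_2}{n-1}r^{-\frac{n}{n-1}}.
\]
The second term is an error, and it must be shown to be at most half of this uniformly. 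Using the bounds already obtained, $\s_{n-2}(\la)\leq C(\ba_2 r^{-\frac{n}{n-1}})^{n-2}$ and $\s_{n-2}(\la|n)\geq\la_{n-1}^{n-2}\geq(\bb_2r^{-\frac{n}{n-1}})^{n-2}$, so the error is at most $C(\ba_2/\bb_2)^{n-2}f_{\e/R}$. From \eqref{def-f-eta} we have $f_{\e/R}\leq C\e R^{\frac{n}{n-1}}\leq C\eta_0R^{-6}$ on $\Omega\setminus B_{1/R}$, while $r^{-\frac{n}{n-1}}\geq c(\Omega)$. So for $R_0$ large, or $\eta_0$ small, the error is below $\frac{\bb_2}{2(n-1)}r^{-\frac{n}{n-1}}$. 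This yields $\la_n\leq-\frac{\bb_2}{2(n-1)}r^{-\frac{n}{n-1}}$, and renaming constants ($\bb_2$ replaced by $\bb_2/(2(n-1))$) gives \eqref{c2-ab}.

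The main obstacle is only bookkeeping: the error term must be absorbed with constants independent of $R$ and $\e$. This works because the lower bound for $\la_{n-1}$ and the upper bound for $\la_1$ both carry the same power $r^{-\frac{n}{n-1}}$, so their ratio is bounded by $\ba_2/\bb_2$ uniformly. I would also check that the ordering $\la_{n-1}>0>\la_n$ (Claim A, or the formula above) is consistent throughout, so that the sign conventions in \eqref{c2-ab} hold pointwise on all of $\ol{\Omega\setminus B_{1/R}}$, including the boundary, where Lemmas~\ref{c2-inside-tan-lem} and~\ref{c2-outside-tan-lem} and the eigenvalue interlacing of Lemma~\ref{step2-lem} apply.
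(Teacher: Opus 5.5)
Your proposal is correct and follows essentially the paper's route. Theorem~\ref{theorem-key} is an assembly of Proposition~\ref{c0-prop}, Proposition~\ref{c1-prop}, Lemma~\ref{c2-lowerdecay-lem} and Lemma~\ref{c2-upperdecay-lem}, and the upper bound on $\la_n$ comes from the identity \eqref{gen-step3-1}, exactly as in Claim A of Section~\ref{sec-step3}, which gives $\la_n<-c(n)\la_{n-1}$.

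The one small difference is how the $h$-term is absorbed. You control it with the sharp bound $\la_1\leq\ba_2r^{-\frac{n}{n-1}}$, whereas Claim A uses the crude bound \eqref{v1.4} together with an $R$-dependent $\e_R$. As a result, your absorption needs only $\e<\eta_0R^{-8}$ and $R$ large.
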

A direct rescaling yields
\begin{corollary}
\label{corollary-key}
 Given $\eta_0=\eta_0(\Omega)>0, \ev=\ev(\Omega, \{0\}, n)>0, \eta_0R^{-8}>\e_R=\e_R(\Omega, \{0\}, n, \varphi, R)>0$ sufficiently small, $R_0=R_0(\Omega)>0$ sufficiently large, for any $0<\e<\e_R,$ $R>R_0,$ and $\varphi\in C^{\infty}(\bar\Omega)$ that satisfies $\varphi\leq0$ and $\|\varphi\|_{C^2}\leq \ev$ let $\hure$ be the admissible solution of \eqref{eq-appr-s}.
Then there exist positive constants $\bb, \bb_1, \bb_2, \ba, \ba_1, \ba_2>0$ that depend on $\Omega,$ $\{0\},$ and $n$ such that
on $\ol{\Omega\setminus B_{1/R}},$ $\hure$ satisfies
\be\label{c0-ab-s}
\bb r^{\frac{n-2}{n-1}}\leq\hure\leq\ba r^{\frac{n-2}{n-1}},
\ee
and
\be\label{c1-ab-s}
\bb_1 r^{-\frac{1}{n-1}}\leq|D\hure|\leq\ba_1 r^{-\frac{1}{n-1}}.
\ee
Moreover, denote the eigenvalues of Hessian of $D^2\hure$ by $\la(D^2\hure)=(\hat\la_1, \cdots, \hat\la_n)$ and assume
$\hat\la_1\geq\hat\la_2\geq\cdots\geq\hat\la_n,$ then we have
\be\label{c2-ab-s}
\bb_2 r^{-\frac{n}{n-1}}\leq\hat\la_{n-1}\leq\hat\la_1\leq\ba_2 r^{-\frac{n}{n-1}}
\,\,\mbox{and}\,\,-\ba_2 r^{-\frac{n}{n-1}}\leq\hat\la_n\leq-\bb_2 r^{-\frac{n}{n-1}}..
\ee
\end{corollary}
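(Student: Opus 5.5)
The plan is to derive every inequality in Corollary \ref{corollary-key} from Theorem \ref{theorem-key} by the scaling relation $\hure(x)=R^{\frac{n-2}{n-1}}\ure\lt(\frac{x}{R}\rt)$ introduced after \eqref{eq-appr-s}. The bounds are checked pointwise at each $x$ where $\hure$ is defined. One interpretive point comes first: $\hure$ is defined on $\ol{\Omega^R\setminus B_1}$, and since $\Omega$ is strictly star-shaped with respect to $0$ and $R>1$, this set contains $\ol{\Omega\setminus B_1}$. So on $\ol{\Omega\setminus B_{1/R}}$ the statement concerns exactly the points where $\hure$ exists. I will prove the estimates at every point of $\ol{\Omega^R\setminus B_1}$, which covers that set and, in fact, the full domain of $\hure$.

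\textbf{Key computation.} Set $\alpha:=\frac{n-2}{n-1}$, fix $x$ with $|x|=r$, and put $y:=x/R\in\ol{\Omega\setminus B_{1/R}}$. Differentiating gives
\[
D^k\hure(x)=R^{\alpha-k}(D^k\ure)(y),\qquad k=0,1,2.
\]
\begin{itemize}
\item For $k=0$, \eqref{c0-ab} gives $\ure(y)\in\lt[\bb|y|^{\alpha},\ba|y|^{\alpha}\rt]$. Multiplying by $R^{\alpha}$ and using $|y|=r/R$ yields \eqref{c0-ab-s}.
\item For $k=1$, \eqref{c1-ab} gives $|D\ure(y)|\in\lt[\bb_1|y|^{\alpha-1},\ba_1|y|^{\alpha-1}\rt]$. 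Since $R^{\alpha-1}|y|^{\alpha-1}=r^{\alpha-1}=r^{-\frac{1}{n-1}}$, this yields \eqref{c1-ab-s}.
\item For $k=2$, $D^2\hure(x)$ is the positive multiple $R^{\alpha-2}$ of $D^2\ure(y)$. Hence its eigenvalues are $\hat\la_i(x)=R^{\alpha-2}\la_i(y)$, and the ordering $\hat\la_1\ge\cdots\ge\hat\la_n$ is preserved. Since $R^{\alpha-2}|y|^{\alpha-2}=r^{-\frac{n}{n-1}}$, every inequality in \eqref{c2-ab} transfers to \eqref{c2-ab-s}, including both two-sided bounds on $\hat\la_n$.
\end{itemize}

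\textbf{Direct route for the $\hure$ estimates.} As a consistency check, I would also note that most of these estimates were already proved directly for $\hure$ earlier in the paper:
\begin{itemize}
\item \eqref{c0-s} in Proposition \ref{c0-prop};
\item \eqref{c1-s} in Proposition \ref{c1-prop};
\item the lower bound on $\hat\la_{n-1}$ in Lemma \ref{c2-lowerdecay-lem};
\item the upper bound $|D^2\hure|<\ba_2r^{-\frac{n}{n-1}}$ in Lemma \ref{c2-upperdecay-lem}, which controls $\hat\la_1$ and $|\hat\la_n|$.
\end{itemize}
The only remaining item is the upper bound $\hat\la_n\le-\bb_2r^{-\frac{n}{n-1}}$. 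For this I would use \eqref{gen-step3-1}:
\[
\hat\la_n=-\frac{\s_{n-1}(\hat\la|n)}{\s_{n-2}(\hat\la|n)}+\frac{h\,\s_{n-2}(\hat\la)}{\s_{n-2}(\hat\la|n)}.
\]
\begin{itemize}
\item For the first term, monotonicity of $\s_{n-1}/\s_{n-2}$ gives $\frac{\s_{n-1}(\hat\la|n)}{\s_{n-2}(\hat\la|n)}\ge\frac{\hat\la_{n-1}}{n-1}$.
\item For the second term, the now-established two-sided bounds give $\s_{n-2}(\hat\la)\le Cr^{-\frac{n(n-2)}{n-1}}$ and $\s_{n-2}(\hat\la|n)\ge c\,r^{-\frac{n(n-2)}{n-1}}$. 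Together with $h<2f_\e$, the term is at most $C\e$ times a bounded power of $r$.
\end{itemize}
Since $1\le r\le CR$ and $\e<\e_R$, after shrinking $\e_R$ if necessary this error is at most half of $\frac{\bb_2}{n-1}r^{-\frac{n}{n-1}}$, and the claimed upper bound on $\hat\la_n$ follows after renaming $\bb_2$.

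\textbf{Main obstacle.} The only delicate step is this absorption of the $\e$-error in the bound for $\hat\la_n$. The constants there must stay independent of $R$ and $\e$, with all $R$-dependence pushed into the choice of $\e_R$. This is already permitted by the hypothesis $0<\e<\e_R(\Omega,\{0\},n,\varphi,R)$, so I expect no genuine difficulty.
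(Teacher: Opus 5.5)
Your proposal is correct and follows the paper's argument, which obtains the corollary from Theorem \ref{theorem-key} by ``a direct rescaling.'' Under $\hure(x)=R^{\frac{n-2}{n-1}}\ure(x/R)$ one has $D^k\hure(x)=R^{\frac{n-2}{n-1}-k}(D^k\ure)(x/R)$, which turns each weight $|y|^{\frac{n-2}{n-1}-k}$ into $r^{\frac{n-2}{n-1}-k}$ with unchanged constants. You are right to read the domain as all of $\ol{\Omega^R\setminus B_1}$, since the paper's ``$\ol{\Omega\setminus B_{1/R}}$'' in this statement is a slip. Your extra check of $\hat\la_n\le-\bb_2r^{-\frac{n}{n-1}}$ via \eqref{gen-step3-1} is sound but not needed.
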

\begin{remark}
\label{rmk-key}
We emphasize that \eqref{c2-ab} yields that $F$ is uniformly elliptic with respect to $\ure$ (same is true for $\hure$). That is, there exist positive constants $\la$ and $\Lambda$ such that
 \[\la|\xi|^2\leq F^{ij}|_{\ure}\xi_i\xi_j\leq\Lambda|\xi|^2\,\,\mbox{for all $\xi\in\mathbb R^n$}.\]
More presicely, by the concavity of $F,$ the eigenvalues of $(F^{ij}|_{\ure})=:(\df^1, \df^2, \cdots, \df^n)$ satisfy $\df^1\leq\cdots\leq\df^n.$
In view of \eqref{fk}, \eqref{gen-*}, and Theorem \ref{theorem-key} we get
\[\begin{aligned}
\df^1&=\frac{\s_{n-2}(\la|1)-h\s_{n-3}(\la|1)}{\s_{n-2}(\la)}\\
&\geq-\frac{\s_{n-1}(\la|1)}{\la_1\s_{n-2}}-C\frac{\e}{r}\\
&\geq C\frac{\C}{\la_1^n}\geq \la(\Omega, \{0\}, n)>0.
\end{aligned}\]
On the other hand, it is easy to check that $\df^n<2.$ Therefore, $F$ is uniformly elliptic. Moreover, both the minimum and the maximum eigenvalues of
$(F^{ij})$ are independent of $\e$ and $R.$
\end{remark}

Theorem \ref{theorem-key} and Corollary \ref{corollary-key} imply a much stronger version of Theorem \ref{solve-appr-thm}. In particular, we have
\begin{theorem}
\label{thm-cm-est}
Given $\eta_0=\eta_0(\Omega)>0, \ev=\ev(\Omega, \{0\}, n)>0, \eta_0R^{-8}>\e_R=\e_R(\Omega, \{0\}, n, \varphi, R)>0$ sufficiently small, $R_0=R_0(\Omega)>0$ sufficiently large, for any $0<\e<\e_R,$ $R>R_0,$ and $\varphi\in C^{\infty}(\bar\Omega)$ that satisfies $\varphi\leq0$ and $\|\varphi\|_{C^2}\leq \ev$ there exists a unique admissible solution
$\ure\in C^{\infty}(\ol{\Omega\setminus B_{1/R}})$
satisfying \eqref{eq-appr}. Moreover, for any integer $m\geq 0$ we have
\be\label{cm-est}
|D^m\ure(x)|<C_m|x|^{\frac{n-2}{n-1}-m},
\ee
for some $C_m=C_m(\Omega, \{0\}, n)$ when $m=0, 1, 2,$ and $C_m=C_m(m, \Omega, \{0\}, \varphi, n)>0$ when $m\geq 3.$

Analogously, there exists a unique admissible solution
$\hure\in C^{\infty}(\ol{\Omega^R\setminus B_1})$
satisfying \eqref{eq-appr-s}. Moreover, for any integer $m\geq 0$ we have
\be\label{cm-est-s}
|D^m\hure(x)|<C_m|x|^{\frac{n-2}{n-1}-m},
\ee
for some $C_m=C_m(\Omega, \{0\}, n)$ when $m=0, 1, 2,$ and $C_m=C_m(m, \Omega, \{0\}, \varphi, n)>0$ when $m\geq 3.$
\end{theorem}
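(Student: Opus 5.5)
Existence and uniqueness of $\ure$ are already given by Theorem \ref{solve-appr-thm}; the content to prove is the scale-invariant estimate \eqref{cm-est} (and \eqref{cm-est-s}, which follows by the rescaling $\hure(x)=R^{\frac{n-2}{n-1}}\ure(x/R)$). For $m=0,1,2$ the bounds are exactly \eqref{c0-ab}, \eqref{c1-ab}, and \eqref{c2-ab} of Theorem \ref{theorem-key}, since $|D^2\ure|\le \sqrt n\max\{|\la_1|,|\la_n|\}\le \sqrt n\,\ba_2 r^{-\frac{n}{n-1}}$. So only $m\ge3$ requires work, and the plan is a dyadic rescaling argument combined with the uniform ellipticity recorded in Remark \ref{rmk-key}.

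Fix $x_0\in\ol{\Omega\setminus B_{1/R}}$ and set $\rho:=|x_0|$. First consider interior-type points, where $B_{\rho/2}(x_0)\subset\Omega\setminus\bar B_{1/R}$. Define $v(y):=\rho^{-\frac{n-2}{n-1}}\ure(x_0+\rho y)$ on $B_{1/2}$. By homogeneity of $F$ of degree one, $F(D^2v)(y)=\rho^{\frac{n}{n-1}}f_{\e/R}(x_0+\rho y)=:g(y)$. The estimates for $m\le2$ give $\|v\|_{C^2(B_{1/2})}\le C(\Omega,\{0\},n)$. Remark \ref{rmk-key} gives uniform ellipticity of $F$ along $v$ with constants independent of $\e,R$, and $F$ is concave. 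From the explicit formula \eqref{def-f-eta} together with $|x|\ge 1/R$ and $\e<\eta_0R^{-8}$, one sees $\|g\|_{C^k(B_{1/2})}\le C_k$ (indeed $g$ is tiny). Interior Evans--Krylov (the interior version of Theorem \ref{regularity}) then yields $\|v\|_{C^{2,\al}(B_{1/4})}\le C$. Differentiating the equation, $F^{ij}(D^2v)\,\p_{ij}(\p_kv)=\p_k g$ is a linear uniformly elliptic equation with $C^\al$ coefficients, so interior Schauder gives $C^{3,\al}$. Inductively, $C^{m,\al}$ bounds on $v$ give $C^{m-2,\al}$ bounds on the coefficients $F^{ij}(D^2v)$ (since $F$ is smooth on the relevant compact set of matrices, which lies in the open cone by the ellipticity bounds), and Schauder gives $C^{m+1,\al}$. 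Scaling back gives $|D^m\ure(x_0)|=\rho^{\frac{n-2}{n-1}-m}|D^mv(0)|\le C_m\rho^{\frac{n-2}{n-1}-m}$.

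It remains to handle points near the two boundary components, where the same argument is run with boundary regularity. Near $\p B_{1/R}$, work with $\hure$ on $\ol{B_2\setminus B_1}$: the boundary data is constant $\ba$, $\p B_1$ is a fixed smooth surface, and the estimates for $m\le 2$ and uniform ellipticity hold there. The global boundary Evans--Krylov estimate (Theorem \ref{regularity}) and Theorem \ref{higher-regularity}, iterated with boundary Schauder, then bound all derivatives of $\hure$ on $\ol{B_{3/2}\setminus B_1}$, and rescaling gives \eqref{cm-est} for $|x|\le 3/(2R)$. Near $\p\Omega$, where $|x|\sim1$, apply the same boundary estimates to $\ure$ in $\ol{\Omega\setminus B_{1/4}}$ with data $1+\varphi$; here the constants depend on $\|\varphi\|_{C^{m,\al}}$, which is why $C_m$ depends on $\varphi$ for $m\ge3$. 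Points in $B_{1/4}\setminus B_{3/(2R)}$ are covered by the interior step, since $B_{\rho/3}(x_0)$ lies in the domain there.

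The main obstacle is making sure every constant is independent of $\e$ and $R$. The key checks are: the rescaled right-hand sides $g$ and $\rho^{\frac{n}{n-1}}f_{\e/R}$ have uniformly bounded $C^k$ norms; the ellipticity constants from Remark \ref{rmk-key} are uniform; and the set of Hessians stays in a fixed compact subset of the admissible cone, so that $F$ and its derivatives are uniformly bounded there. On that last point, the eigenvalue bounds \eqref{c2-ab} of the rescaled Hessian are two-sided, and this ensures it.
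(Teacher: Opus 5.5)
Your proposal is correct and follows essentially the same route as the paper. The paper likewise rescales on balls of radius comparable to $|x|$ in the intermediate region (its Region 1, $B_{R/2}\setminus\bar B_{2.5}$ for $\hure$), using the uniform ellipticity of Remark \ref{rmk-key} with interior Evans--Krylov and Schauder estimates. It then applies the global estimates of Theorems \ref{regularity} and \ref{higher-regularity} on the fixed annulus $B_3\setminus\bar B_1$ for $\hure$ and on $\Omega\setminus\bar B_{1/3}$ for $\ure$, with the data on the artificial spheres controlled by the interior step.

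Two small points are left implicit in your write-up. First, the $C^{m,\al}$ control of $\hure$ on $\p B_2$ and of $\ure$ on $\p B_{1/4}$ is needed for those global estimates, and it comes from your interior step exactly as in the paper. Second, for $|x_0|$ near $1/R$ you must switch from $B_{\rho/2}$ to $B_{\rho/3}$; this switch is harmless.
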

\begin{proof}
First, it is clear that when $m=0, 1, 2,$ the conclusion follows from Theorem \ref{theorem-key} and Corollary \ref{corollary-key}. We only need to consider the case when
$m\geq 3.$
For our convenience, in the neighborhood of the singularity $\{0\}$ (that is, in the neighborhood of $B_{1/R}(0)$), we will study the rescaled solution $\hure.$

\textbf{Region 1.} We denote $U_1:=B_{R/2}\setminus\bar B_{2.5}(0).$ In this proof, for an arbitrary point $x\in\bar U_1,$ we denote $|x|=:L.$ Let
$$u^L(y):=\lt(\frac{L}{2}\rt)^{-\frac{n-2}{n-1}}\hure\lt(x+\frac{L}{2}y\rt),\,\, |y|\leq 1.$$
By our assumption that $B_1\ssubset\Omega$ (this is equivalent to $B_R\ssubset\Omega^R$) we know $u^L(y)$ is well defined in $\bar B_1(0).$
Furthermore, by Corollary \ref{corollary-key} we get
\[|D^2u^L(y)|=\lt|\lt(\frac{L}{2}\rt)^{\frac{n}{n-1}}D^2\hure\lt(x+\frac{L}{2}y\rt)\rt|\leq C\,\,\mbox{in $\bar B_1(0)$}\]
for some $C=C(\Omega, \{0\}, n)>0.$
It is easy to see that
\be\label{smooth-1}
F(D^2u^L(y))=\lt(\frac{L}{2}\rt)^{\frac{n}{n-1}}\frac{\e}{\lt(\frac{n-1}{n-2}\rt)\lt|x+\frac{L}{2}y\rt|
\lt(\lt|x+\frac{L}{2}y\rt|+\e\rt)^{\frac{1}{n-1}}\lt(\frac{n}{2}\lt|x+\frac{L}{2}y\rt|+(n-1)\e\rt)}
=:f^L.
\ee
By Remark \ref{rmk-key} we know $F$ is uniformly elliptic with respect to $u^L.$ Applying the Evans-Krylov theorem (see \cite{CW98, Evans, K85, GT83}),
we have
\[\|u^L\|_{C^{2, \alpha}(B_{3/4})}\leq C\]
for some $C=C(\Omega, \{0\}, n).$ We note that in fact by Evans-Krylov theorem $$C=C(n, \lambda, \Lambda, \alpha, \|f^L\|_{C^{\alpha}(B_1)}, \|u^L\|_{C^2(B_1)}).$$
However, under our assumptions we have $\|f^L\|_{C^\alpha(B_1)}\ll 1$ and everything else depends on $\Omega,$ $\{0\},$ and $n.$ Thus,  we obtain $C=C(\Omega, \{0\}, n).$
Differentiating equation \eqref{smooth-1} and applying the Schauder estimates for linear, uniformly elliptic equations, we obtain for any integer $m\geq 3$
\[|D^mu^L(0)|\leq C_m,\]
for the same reason as before, here $C_m=C_m(m, \Omega, \{0\}, n)>0.$
It is easy to see that this yields \eqref{cm-est-s} for $x\in\bar U_1.$

\textbf{Region 2.} We denote $U_2:=B_3(0)\setminus\bar B_1(0).$ Note that by the discussion in Region 1 we know $\hure|_{\p B_3}$ is a smooth function with $\|D^m\hure\|_{C^0(\p B_3)}\leq C_m$ for any integer $m\geq 0.$ Let $h\in C^{\infty}(\bar U_2)$ be an arbitrary smooth function satisfies
\[h=\hure\,\,\mbox{on $\p B_3(0)$ and $h=\ba$ on $\p B_1(0)$}\] Then by Theorem \ref{regularity},
we have
\[\|\hure\|_{C^{2, \alpha}(\bar U_2)}\leq C\]
for some $C=C(\Omega, \{0\}, n)>0.$ Again, here
$$C=C(n, \lambda, \Lambda, \alpha, \p U_2, \|f_\e\|_{C^{\alpha}(\bar U_2)}, \|h\|_{C^{2, \alpha}(\bar U_2)}, \|\hure\|_{C^2(\bar U_2)}).$$
However, $\p U_2=\p B_3\cup\p B_1$ is fixed, $\|f_\e\|_{C^\alpha(\bar U_2)}\ll 1,$ and everything else depends on $\Omega,$ $\{0\},$ and $n,$ we obtain $C=C(\Omega, \{0\}, n).$
Same as in Region 1, differentiating equation $F=f_\e$ and applying the Schauder estimates for linear, uniformly elliptic equations, we obtain for any integer $m\geq 3,$
\[\|D^m\hure\|_{C^0(\bar U_2)}\leq C_m,\]
for some $C_m=C_m(m, \Omega, \{0\}, n)>0.$ Therefore, we prove \eqref{cm-est-s} for $x\in\bar U_2.$

\textbf{Region 3.} We denote $U_3:=\Omega^R\setminus\bar B_{R/3}(0).$ In this region, we shall consider $\ure=R^{-\frac{n-2}{n-1}}\hure(Rx)$ instead. In particular, $\ure$ satisfies
\[
\left\{\begin{aligned}
F(D^2\ure)&=f_{\frac{\e}{R}}\,\,&\mbox{in $\Omega\setminus\bar B_{1/3}$}\\
\ure&=1+\varphi\,\,&\mbox{on $\p\Omega,$}\\
\ure&=R^{-\frac{n-2}{n-1}}\hure(Rx)\,\,&\mbox{on $\p B_{1/3}.$}
\end{aligned}
\right.
\]
Note that by the discussion in Region 1 we know $\ure|_{\p B_{1/3}}=R^{-\frac{n-2}{n-1}}\hure(Rx)|_{\p B_{1/3}}$ is a smooth function with $\|D^m\ure\|_{C^0(\p B_{1/3})}\leq C_m$ for any integer $m\geq 0.$ Following the same argument as in Region 2 we obtain for any integer $m\geq 3,$
\[\|D^m\ure\|_{C^0(\ol{\Omega\setminus B_{1/3}})}\leq C_m,\]
for some $C_m=C_m(m, \Omega, \{0\}, n, \varphi)>0.$ This proves \eqref{cm-est} in $\ol{\Omega\setminus B_{1/3}},$ which implies \eqref{cm-est-s} in $\bar U_3.$

Combining estimates above, we complete the proof of \eqref{cm-est-s}, and \eqref{cm-est} then follows by rescaling.
\end{proof}

Now, let $\e_1, \e_2$ be any arbitrary numbers satisfying $0<\e_1<\e_2<\e_R,$ the standard maximum principle implies $u^{R, \e_1}\geq u^{R, \e_2}.$ That is, $\{u^{R, \e}\}$ is a sequence of decreasing functions with respect to $\e$. Applying Theorem \ref{theorem-key} and Theorem \ref{thm-cm-est}, we know as $\e\goto 0,$
\[\ure\goto u^R\,\,\mbox{in $C^{\infty}(\ol{\Omega\setminus B_{1/R}})$},\]
and $u^R$ is the unique admissible solution of the following equation
\be\label{eq-main-R}
\left\{\begin{aligned}
F(D^2u)&=0\,\,&\mbox{in $\overline{\Omega\setminus B_{1/R}}$}\\
u&=1+\varphi\,\,&\mbox{on $\p\Omega,$}\\
u&=\ba R^{-\frac{n-2}{n-1}}\,\,&\mbox{on $\p B_{1/R}.$}
\end{aligned}
\right.
\ee
We note that the uniqueness comes from the standard maximum principle: If $v$ is another admissible solution of \eqref{eq-main-R}.
Consider $w=u^R-v,$ then  $w$ satisfies $\hat F^{ij}w_{ij}= 0,$ thus $w\equiv 0.$ Here, $\hat F^{ij}=\int_0^1F^{ij}(v+sw)ds.$
Moreover, $u^R$ satisfies \eqref{c0-ab}. \eqref{c1-ab}, \eqref{c2-ab}, and \eqref{cm-est}.

Next, we denote
\[
\tilde u^R(x):=\left\{\begin{aligned}
&u^{R}(x)\,\,&\mbox{in $\ol{\Omega\setminus B_{1/R}}$}\\
&\ba R^{-\frac{n-2}{n-1}}\,\,&\mbox{in $B_{1/R}.$}
\end{aligned}
\right.
\]

\textbf{Claim:} $\td u^R\goto u(x)$ in $C^{\infty}_{loc}\lt(\bar\Omega\setminus\{0\}\rt)$ as $R\goto\infty.$

\textbf{Proof of the claim:} Let $\{R_i\}_{i=1}^\infty$ be any sequence of real number in $(R_0, +\infty),$ for $R_0>0,$
chosen in Theorem \ref{theorem-key} such that $R_i\goto\infty$ as $i\goto\infty.$ Clearly, the sequence $\{\|\td u^{R_i}\|_{C^0(\bar\Omega)}\}_{i=1}^\infty$
is bounded. For any compact set $K\subset\bar\Omega\setminus\{0\}$ and any integer $m>0,$ by Theorem \ref{thm-cm-est} we know
$\{\td u^{R_i}\}_{i\geq i_0}$ is bounded in $C^m(K)$ when $i_0>0$ is large enough such that $K\subset\overline{\Omega\setminus B_{1/R_{i_0}}}$. By virtue of Arzel\`a-–Ascoli theorem, up to extraction of a subsequence, we know
$\td u^{R_i}$ converges as $i\goto\infty$ in $C^\infty_{loc}(\bar\Omega\setminus\{0\})$ to a $C^\infty(\bar\Omega\setminus\{0\})$ admissible solution of \eqref{eq-main1}.
From the standard maximum principle we also know that the admissible solution of \eqref{eq-main1} is unique. Therefore, the limit does not depend on the sequence
$\{R_i\}_{i=1}^\infty$ and the claim follows.

We conclude
\begin{theorem}
\label{thm-existence}
 Let $\Omega$ be a bounded, strictly convex, smooth domain in $\mathbb R^n$  satisfying $B_1(0)\ssubset\Omega.$ Then there exists a constant
 $\e_{\varphi}=\e_{\varphi}(\Omega, \{0\}, n)>0,$
 such that for any given non-positive function $\varphi\in C^{\infty}(\bar\Omega)$ with $\|\varphi\|_{C^2(\bar\Omega)}\leq\e_\varphi,$ there exists a unique smooth admissible solution
 $u\in C^{\infty}(\bar\Omega\setminus\{0\})\cap C^0(\bar\Omega)$ of \eqref{eq-main1} satisfying for any $x\in \bar\Omega\setminus\{0\}$
 \be\label{c0-u}
\bb |x|^{\frac{n-2}{n-1}}\leq u(x)\leq\ba |x|^{\frac{n-2}{n-1}},
\ee
and
\be\label{c1-u}
\bb_1 |x|^{-\frac{1}{n-1}}\leq|Du(x)|\leq\ba_1 |x|^{-\frac{1}{n-1}}.
\ee
Denote the eigenvalues of Hessian of $D^2u(x)$ by $\la(D^2u(x))=(\la_1, \cdots, \la_n)$ and assume $\la_1\geq\la_2\geq\cdots\geq\la_n,$ then we have
\be\label{c2-u}
\bb_2 |x|^{-\frac{n}{n-1}}\leq\la_{n-1}\leq\la_1\leq\ba_2 |x|^{-\frac{n}{n-1}}\,\,\mbox{and}\,\,-\ba_2 |x|^{-\frac{n}{n-1}}\leq\la_n\leq-\bb_2 |x|^{-\frac{n}{n-1}}.
\ee
Moreover, for any integer $m\geq 3$ we have
\be\label{cm-u}
|D^m u(x)|<C_m|x|^{\frac{n-2}{n-1}-m}.
\ee
Here, $\bb, \bb_1, \bb_2, \ba, \ba_1, \ba_2>0$ depend on $\Omega$, $\{0\},$ and $n$;  and $C_m>0$ is some positive constant that depend on $m, \Omega, \{0\},$ $n,$ and $\varphi.$
\end{theorem}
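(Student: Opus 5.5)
The plan is to obtain $u$ as the limit of the solutions $u^R$ of \eqref{eq-main-R}, whose existence was established just above. First I send $\e\to0$ in the approximate solutions $\ure$, and then I send $R\to\infty$ in the extended functions $\td u^R$. The estimates \eqref{c0-u}--\eqref{cm-u} will be inherited from Theorem \ref{theorem-key} and Theorem \ref{thm-cm-est}, because the constants there do not depend on $R$ and $\e$.

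\textbf{Step 1.} For fixed $R>R_0$ and $0<\e<\e_R$, the functions $\ure$ decrease in $\e$ by the maximum principle. The bounds of Theorem \ref{theorem-key} and \eqref{cm-est} are uniform in $\e$. Together with Arzel\`a--Ascoli, this gives $\ure\to u^R$ in $C^\infty(\ol{\Omega\setminus B_{1/R}})$. The pointwise inequalities \eqref{c0-ab}, \eqref{c1-ab}, \eqref{c2-ab} pass to the limit, since eigenvalues depend continuously on the matrix. So does \eqref{cm-est}, with the same constants.

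\textbf{Step 2.} Fix a compact $K\subset\bar\Omega\setminus\{0\}$. For $R$ large, $K\subset\ol{\Omega\setminus B_{1/R}}$, and the $C^m(K)$ norms of $u^R$ are bounded by $C_m\,\mathrm{dist}(0,K)^{\frac{n-2}{n-1}-m}$, independently of $R$. A diagonal argument produces a subsequence converging in $C^\infty_{loc}(\bar\Omega\setminus\{0\})$ to some $u$. The equation $F(D^2u)=0$ holds because $D^2u$ is a smooth limit. The boundary condition $u=1+\varphi$ on $\p\Omega$ is preserved. The pointwise bounds \eqref{c0-u}--\eqref{cm-u} hold for every $x\neq0$, since each such $x$ lies in $\ol{\Omega\setminus B_{1/R}}$ for large $R$. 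Admissibility follows from \eqref{c2-u} and Remark \ref{rmk-key}: $\la_{n-1}>0$ and $\la_{n-1}+\la_n\ge0$ in the limit give $\la\in\bar\Gamma_{n-1}$, and the bound $\df^1\ge\la(\Omega,\{0\},n)>0$ is uniform, so $(F^{ij}|_u)$ is positive definite. Finally, \eqref{c0-u} gives $u(x)\to0$ as $x\to0$. Setting $u(0)=0$ then makes $u\in C^0(\bar\Omega)$ with $u(0)=0$, so $u$ solves \eqref{eq-main1}.

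\textbf{Step 3 (uniqueness).} This step also shows that the whole family converges, not just a subsequence. Let $u,v$ be two admissible solutions of \eqref{eq-main1} in $C^\infty(\bar\Omega\setminus\{0\})\cap C^0(\bar\Omega)$. I compare them on $\Omega\setminus B_\delta$. There $w=u-v$ satisfies the linear equation $\hat F^{ij}w_{ij}=0$, with $\hat F^{ij}=\int_0^1F^{ij}(D^2v+sD^2w)\,ds$. By continuity at $0$, $\max_{\p B_\delta}|w|\to0$ as $\delta\to0$.

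The main obstacle is ellipticity of $\hat F^{ij}$: one needs the segment $D^2v+sD^2w$ to stay inside the cone where $F$ is elliptic. I would handle this through concavity of $F$ on the convex set $\{\la\in\Gamma_{n-1}\}$, which gives ellipticity along the segment. Alternatively, I would avoid the issue by a comparison argument: for $\theta>0$, show that $(1+\theta)u-\theta\,\max_{\bar\Omega}|u|$ and $v$ are ordered near $\p B_\delta$, then apply the comparison principle for admissible functions of \cite{CNS3} type and let $\delta,\theta\to0$. Either route gives $|w|\le\max_{\p B_\delta}|w|\to0$, hence $w\equiv0$. Since the limit is then unique, every sequence $R_i\to\infty$ yields the same $u$.
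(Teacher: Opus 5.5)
Your proposal follows the paper's proof. You take the monotone limit $\e\to0$ to get $u^R$, then apply Arzel\`a--Ascoli to $\td u^R$ as $R\to\infty$ using the $R,\e$-independent bounds of Theorem \ref{theorem-key} and Theorem \ref{thm-cm-est}. Uniqueness of \eqref{eq-main1} then comes from the maximum principle on $\Omega\setminus B_\delta$, which makes the limit independent of the sequence; your use of continuity at $0$ is the right way to handle the singularity.

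Two justifications need repair:
\begin{itemize}
\item $\la(D^2u)\in\bar\Gamma_{n-1}$ follows from closedness of $\bar\Gamma_{n-1}$, not from $\la_{n-1}>0$ and $\la_{n-1}+\la_n\ge0$. For $n=3$, the vector $(1,1,-1)$ satisfies both conditions yet has $\s_2=-1$.
\item For uniqueness, note that solutions have $\s_{n-1}=0$, so they lie on $\p\Gamma_{n-1}$, not in $\Gamma_{n-1}$; they do lie inside $\Gamma_{n-2}$, where $F$ is concave and $F^{ij}\ge0$. Ellipticity of $\hat F^{ij}$ therefore comes from averaging positive semidefinite matrices that are positive definite near the endpoints, not from concavity alone. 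Alternatively, use concavity to get $F^{ij}(D^2v)(u-v)_{ij}\ge0\ge F^{ij}(D^2u)(u-v)_{ij}$, which has frozen, positive definite coefficients, and apply the maximum principle to each inequality.
\end{itemize}
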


\section{Solvability of DPPD}
\label{sec-DPPD}
In this section, we shall prove Theorem \ref{thm-main}. For our convenience, we shall always let the singular point $z$ be the origin and we shall always assume $B_1(0)\ssubset\Omega.$ In view of Remark \ref{appr-rmk-1}, we may also assume $\varphi\in C^{\infty}(\bar\Omega)$ satisfying $\varphi\leq 0$ and
$\|\varphi\|_{C^2(\bar\Omega)}\leq\e_\varphi$ for some $\ev=\e_\varphi(\Omega, \{0\}, n)>0.$ Now, by Theorem \ref{thm-existence},
we know there exists an admissible solution $u$ of \eqref{eq-main1}.
Moreover, by \eqref{c0-u} we know the density of $u$ satisfies
\[\Theta(u, \{0\})=c_1\in[\bb, \ba].\]
Therefore, $u-1$ solves the following DPPD
\[
\left\{\begin{aligned}
\frac{\s_{n-1}}{\s_{n-2}}(D^2u)&=0\,\,&\mbox{in $\bar\Omega\setminus\{0\},$}\\
u&=\varphi\,\,&\mbox{on $\p\Omega,$}\\
\Theta(u, \{0\})&=c_1,
\end{aligned}
\right.
\]

In the following, we shall show that for any $c>c_1$ there exists an admissible solution of the DPPD
\be\label{DPPD-main}
\left\{\begin{aligned}
\frac{\s_{n-1}}{\s_{n-2}}(D^2u)&=0\,\,&\mbox{in $\bar\Omega\setminus\{0\},$}\\
u&=\varphi\,\,&\mbox{on $\p\Omega,$}\\
\Theta(u, \{0\})&=c,
\end{aligned}
\right.
\ee

First, we want to point out that, in view of Theorem \ref{thm-existence}, we know that as long as $\varphi$ satisfies
$\|\varphi\|_{C^2(\bar\Omega)}\leq\e_\varphi$, equation \eqref{eq-main1} admits a solution. In other words, for any $\la\geq 1$ there exists an admissible solution $\tilde u^\la$ of the following equation
\[
\left\{\begin{aligned}
\frac{\s_{n-1}}{\s_{n-2}}(D^2u)&=0\,\,&\mbox{in $\bar\Omega\setminus\{0\},$}\\
u&=1+\frac{\varphi}{\la}\,\,&\mbox{on $\p\Omega,$}\\
u(0)&=0.
\end{aligned}
\right.
\]
It worths mention that $\tilde u^\la$ satisfies estimates \eqref{c0-u}, \eqref{c1-u}, \eqref{c2-u}, and \eqref{cm-u}. Moreover, $\Theta(\tilde u^{\la}, \{0\})=:\tilde c_{\la}\in[\bb, \ba].$
Denote $u^{\lambda}:=\lambda\tilde u^{\lambda},$ then $u^{\lambda}$ satisfies
\be\label{eq-main-la}
\left\{\begin{aligned}
\frac{\s_{n-1}}{\s_{n-2}}(D^2u)&=0\,\,&\mbox{in $\bar\Omega\setminus\{0\},$}\\
u&=\lambda+\varphi\,\,&\mbox{on $\p\Omega,$}\\
u(0)&=0,
\end{aligned}
\right.
\ee
and $\Theta(u^\la, \{0\})=\la\tilde c_{\la}.$ Therefore, in order to prove Theorem \ref{thm-main} we only need to show that for any given $c>c_1,$ there exists
$\la=\la(c)>1$ such that $\Theta(u^\la, \{0\})=c.$ Then $u^\la-\la$ solves \eqref{eq-main}. Before stating the main theorem in this section, let us state the following comparison theorem which can be derived from the standard maximum principle.
\begin{theorem}
\label{thm-compare}
Let $u, v\in C^{\infty}(\bar\Omega\setminus\{0\})\cap C^0(\bar\Omega)$ be two admissible solutions of $\frac{\s_{n-1}}{\s_{n-2}}(D^2 u)=0$ in $\Omega\setminus\{0\}$ satisfying $u, v>0$
in $\bar\Omega\setminus\{0\}$ and $u(0)=v(0)=0.$
Denote
\[\tilde\ga:=\sup\limits_{x\in\p\Omega}\frac{u}{v},\,\,\mbox{and}\,\, \hat\ga:=\inf_{x\in\p\Omega}\frac{u}{v}, \]
then we have
\[\hat\gamma v\leq u\leq\tilde\ga v\,\,\mbox{in $\bar\Omega.$}\]
\end{theorem}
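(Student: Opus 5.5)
To prove Theorem \ref{thm-compare}, I will reduce it to the comparison principle on the punctured domain and handle the singular point by a cutoff. I give the argument for the upper bound $u\leq\tilde\ga v$; the lower bound $\hat\ga v\leq u$ is symmetric.

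Fix $\delta>0$ and set $w:=(\tilde\ga+\delta)v$. Since $F=\frac{\s_{n-1}}{\s_{n-2}}$ is homogeneous of degree one, $w$ is again an admissible solution of $F(D^2w)=0$ in $\Omega\setminus\{0\}$. Here I use $\tilde\ga+\delta>0$, which follows from $u,v>0$. On $\p\Omega$ we have $u\leq\tilde\ga v<w$, because $v>0$ there. Near the origin, both $u$ and $w$ are continuous and vanish at $0$, so a boundary inequality is not immediate.

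To handle the origin, fix an additional small $\eta>0$. By continuity there is $r_\eta>0$ such that $u<\eta$ on $\bar B_{r_\eta}$. Compare $u$ with $w+\eta$ on the annular region $\Omega\setminus\bar B_{r_\eta}$:
\begin{itemize}
\item On $\p\Omega$ we have $u<w\leq w+\eta$.
\item On $\p B_{r_\eta}$ we have $u<\eta\leq w+\eta$, since $w>0$.
\end{itemize}
Both $u$ and $w+\eta$ are admissible solutions of the same equation. Setting $z:=u-(w+\eta)$ and linearizing, with the coefficients $\hat F^{ij}=\int_0^1F^{ij}(D^2(w+\eta)+sD^2z)\,ds$ exactly as in the uniqueness argument after \eqref{eq-main-R}, gives $\hat F^{ij}z_{ij}=0$.

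This linearization is the main obstacle: ellipticity of $\hat F^{ij}$ requires the segment between the two Hessians to stay in the set where $F$ is elliptic. Admissibility puts both Hessians in $\bar\Gamma_{n-1}$ with positive definite $F^{ij}$. Convexity of the cone and concavity of $F$ then keep the convex combinations admissible, so $\hat F^{ij}$ is positive definite, at least on compact subsets where the smooth estimates hold.

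If the linearization is delicate, I would fall back on a first-touching argument. Take the smallest constant $\tau\geq0$ with $u\leq w+\eta+\tau$. A contact at an interior point would give $D^2u\leq D^2(w+\eta+\tau)$ there. Ellipticity then implies $F(D^2u)<F(D^2w)$ unless the two Hessians coincide, and that conflicts with both functions solving $F=0$; the strong maximum principle closes the equality case.

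The weak maximum principle then gives $u\leq w+\eta$ in $\Omega\setminus B_{r_\eta}$. Inside $B_{r_\eta}$ the same inequality holds trivially, since $u<\eta$ there. Letting $\eta\to0$ and then $\delta\to0$ yields $u\leq\tilde\ga v$ in $\bar\Omega$.

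For the lower bound, run the same argument with the roles exchanged. If $\hat\ga>0$, compare $(\hat\ga-\delta)v$ with $u+\eta$ (and if $\hat\ga-\delta\leq0$ the inequality $(\hat\ga-\delta)v\leq u$ is immediate since $u,v>0$); if $\hat\ga=0$, the bound $\hat\ga v=0\leq u$ holds trivially. Letting $\eta,\delta\to0$ again gives $\hat\ga v\leq u$ in $\bar\Omega$.
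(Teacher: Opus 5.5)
Your argument is correct and is the ``standard maximum principle'' argument the paper invokes without writing it out. You rescale $v$ using the degree-one homogeneity of $F$, cut out a small ball where $u<\eta$ (or $(\hat\ga-\delta)v<\eta$) to deal with the singular point, and then apply the weak maximum principle to the linearization $\hat F^{ij}z_{ij}=0$, as in the paper's uniqueness argument after \eqref{eq-main-R}; the extra $\delta$ is not even needed. The one step to tighten is the ellipticity of $\hat F^{ij}$: intermediate matrices on the segment need not be admissible, but $\s_{n-2}>0$ along the segment by G{\aa}rding concavity, $F^{ij}\geq 0$ there by continuity from $\Gamma_{n-1}$, and $F^{ij}>0$ near both endpoints, so $\hat F^{ij}$ is positive definite pointwise, which is all the weak maximum principle with no lower-order terms requires.
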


We shall prove
\begin{theorem}
\label{thm-DPPD}
Let $u^1$ be the solution of \eqref{eq-main1} and denote $c_1:=\Theta(u^1, \{0\}).$ Then for any $c>c_1$ there exists $\la=\la(c)>1$
such that the solution of \eqref{eq-main-la} which is denoted by $u^\la$ satisfies
\be\label{DPPD-1}
\Theta(u^\la, \{0\})=c.
\ee
\end{theorem}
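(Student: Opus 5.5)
The plan is an intermediate value argument in $\la$ for the function $g(\la):=\Theta(u^\la,\{0\})=\la\tilde c_\la$ on $[1,\infty)$. Since $\tilde c_\la\in[\bb,\ba]$ for every $\la\ge1$, we have $g(1)=c_1$ and $g(\la)\ge\bb\la\goto\infty$ as $\la\goto\infty$. Hence for any $c>c_1$ it suffices to show that $g$ is continuous on $[1,\infty)$. Continuity then gives some $\la>1$ with $g(\la)=c$; the inequality $\la>1$ holds because $c>c_1=g(1)$.

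\textbf{Step 1 (monotonicity and Lipschitz bound).} For $1\le\la<\mu$, the boundary data satisfy $\mu+\varphi\ge\la+\varphi>0$ on $\p\Omega$, since $\|\varphi\|_{C^0}\le\ev<1/20$. Moreover both $u^\la$ and $u^\mu$ are positive on $\bar\Omega\setminus\{0\}$ by \eqref{c0-u} and vanish at $0$. Theorem \ref{thm-compare} therefore gives $\hat\ga u^\la\le u^\mu\le\tilde\ga u^\la$ in $\bar\Omega$, with
\[\tilde\ga=\sup_{\p\Omega}\frac{\mu+\varphi}{\la+\varphi},\qquad \hat\ga=\inf_{\p\Omega}\frac{\mu+\varphi}{\la+\varphi}\ge1.\]
Dividing by $|x|^{\frac{n-2}{n-1}}$ and taking $\lim_{r\goto0}\sup_{B_r}$ yields $\hat\ga\, g(\la)\le g(\mu)\le\tilde\ga\, g(\la)$. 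In particular $g$ is nondecreasing. Since $\la+\varphi\ge\la-\ev\ge 19/20$, we get $\tilde\ga\le 1+\frac{20}{19}(\mu-\la)$. This gives the two-sided bound
\[g(\la)\le g(\mu)\le\Big(1+\tfrac{20}{19}(\mu-\la)\Big)g(\la),\]
so $g$ is locally Lipschitz, with local constant $\frac{20}{19}g(\la)\le\frac{20}{19}\la\ba$.

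\textbf{Step 2 (conclusion).} Combining $g(1)=c_1$, the continuity from Step 1, and $g(\la)\ge\bb\la\goto\infty$, the intermediate value theorem produces $\la(c)>1$ with $\Theta(u^\la,\{0\})=c$.

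The main obstacle is justifying that the density $\Theta$ is well defined, a genuine limit rather than only a $\limsup$, and that it passes through the comparison inequality. Here I would rely on Remark \ref{appr-rmk-2} (citing \cite{HL18}). Even without that, the inequalities in Step 1 survive taking $\lim_{r\goto0}\sup_{B_r}$ directly: for any fixed $r$ the quantity $\sup_{B_r}$ is monotone in the function, and multiplying by a positive constant commutes with $\sup$. A second point to check is that Theorem \ref{thm-compare} applies, which requires $u^\la,u^\mu\in C^0(\bar\Omega)$ with value $0$ at the origin. This is provided by Theorem \ref{thm-existence} applied to the boundary data $\varphi/\la$, which is admissible since $\|\varphi/\la\|_{C^2}\le\ev$ and $\varphi/\la\le0$.
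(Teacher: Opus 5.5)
Your argument is correct, and it takes a different and shorter route than the paper.

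\textbf{The paper's route.} The paper never shows that $g(\lambda)=\Theta(u^\lambda,\{0\})$ is continuous. Instead it runs an explicit iteration, written out only for $c=2c_1$:
\begin{itemize}
\item it rescales the current solution $u_i$ by $\tau_i$ so that its density equals the target;
\item it re-solves \eqref{eq-main-la} with the new constant $\gamma_{i+1}=\tau_i\gamma_i$;
\item it uses Theorem \ref{thm-compare} to show that the new density misses the target by a relative error $\beta_i<\epsilon_\varphi^{i}$, with alternating sign;
\item it concludes that $\{\gamma_i\}$ is Cauchy, with monotone odd and even subsequences;
\item it sandwiches $u^{\gamma_\infty}$ between $u^{\gamma_{2n+1}}$ and $u^{\gamma_{2n}}$ by the maximum principle.
\end{itemize}

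\textbf{Comparison.} Both proofs rest on the same two facts: Theorem \ref{thm-compare} applied to the data $\lambda+\varphi$ and $\mu+\varphi$, and the resulting monotonicity in $\lambda$. You package these as a local Lipschitz bound for $g$ and then apply the intermediate value theorem. Your version handles all $c>c_1$ at once and removes the bookkeeping. It also uses $\epsilon_\varphi<1/20$ only to keep $\lambda+\varphi\ge 19/20$, whereas the paper needs it as a contraction factor. In exchange, the paper's scheme is constructive and gives a geometric rate for $\gamma_i\to\lambda(c)$.

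\textbf{Two small remarks.}
\begin{itemize}
\item You do not need the bound $\tilde c_\lambda\ge\mathbf b$ for $\lambda>1$. Since $\varphi\le0$, you have $\inf_{\partial\Omega}\frac{\mu+\varphi}{\lambda+\varphi}\ge\mu/\lambda$, so your Step 1 already gives $g(\lambda)\ge\lambda c_1$.
\item The ``obstacle'' you flag is not one. The map $r\mapsto\sup_{B_r\setminus\{0\}}u^\lambda/|x|^{\frac{n-2}{n-1}}$ is nonincreasing and takes values in $[\lambda\mathbf b,\lambda\mathbf a]$, so the limit defining $\Theta$ always exists.
\end{itemize}
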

\begin{proof}
To demonstrate our idea, we shall show there exists $\la>1$ such that $\Theta(u^\la, \{0\})=2c_1.$ For general $c>c_1,$ the process of finding the corresponding
$\la$ is the same.

\textbf{Step 1.} Denote $u_1:=u^1,$ and $\ga_1:=1,$ then $u_1$ satisfies \eqref{eq-main-la} with $\la=\ga_1.$ Moreover, we know
$\Theta(u_1, \{0\})=c_1.$ We shall also denote $\tau_1:=\frac{2c_1}{c_1}=2$

\textbf{Step 2.} Let $\ga_2:=\tau_1\ga_1=2,$ $\tilde u_1=\tau_1u_1,$ then $\tilde u_1$ satisfies the boundary condition
$$\tilde u_1|_{\p\Omega}=\tau_1(\ga_1+\varphi)=\ga_2+\tau_1\varphi.$$
Moreover, $\Theta(\tilde u_1, \{0\})=2c_1.$ Now, let $u_2$ be the solution of \eqref{eq-main-la} with $\la=\ga_2,$
that is $u_2|_{\p\Omega}=2+\varphi.$ From now on, for our convenience, we shall assume $-\ev\leq\varphi\leq 0$ on $\p\Omega$ with $-\ev$ and $0$ being achieved.
Then we have
\[\inf_{\p\Omega}\frac{u_2}{\td u_1}=1\]
and
\[\sup_{\p\Omega}\frac{u_2}{\td u_1}=\frac{1-\frac{1}{2}\ev}{1-\ev}=1+\frac{(1-\tau_1^{-1})\ev}{1-\ev}:=1+a_1.\]
By Theorem \ref{thm-compare} we get
\[\td u_1\leq u_2\leq (1+a_1)\td u_1\,\,\mbox{in $\bar\Omega.$}\]
This yields
\[2c_1\leq\Theta(u_2, \{0\})\leq 2(1+a_1)c_1.\]
If $\Theta(u_2, \{0\})=2c_1,$ then we are done. Otherwise, denote $\Theta(u_2, \{0\})=:2(1+\beta_1)c_1$ for some
$\beta_1\in(0, a_1].$ By our assumption that $\ev<1/20$ we have $\beta_1<\ev.$ We shall denote $\tau_2:=\frac{2c_1}{2(1+\beta_1)c_1}=\frac{1}{1+\beta_1}.$ We note that $\ga_2-\ga_1=1.$

\textbf{Step 3.} Let $\ga_3=\tau_2\ga_2,$ $\tilde u_2=\tau_2u_2,$ then $\tilde u_2$ satisfies the boundary condition
$$\tilde u_2|_{\p\Omega}=\tau_2(\ga_2+\varphi)=\ga_3+\tau_2\varphi.$$
Moreover, $\Theta(\tilde u_2, \{0\})=2c_1.$ Now, let $u_3$ be the solution of \eqref{eq-main-la} with $\la=\ga_3.$ Then we have
\[\sup_{\p\Omega}\frac{u_3}{\tilde u_2}=1\]
and
\[\inf_{\p\Omega}\frac{u_3}{\tilde u_2}=\frac{1-\ev/\ga_3}{1-\ev/\ga_2}=1-\frac{\beta_1\ev}{\gamma_2-\ev}=:1-a_2.\]
By Theorem \ref{thm-compare} we get
\[(1-a_2)\tilde u_2\leq u_3\leq\tilde u_2\,\,\mbox{in $\bar\Omega.$}\]
This yields
\[2(1-a_2)c_1\leq\Theta(u_3, \{0\})\leq 2c_1.\]
If $\Theta(u_3, \{0\})=2c_1,$ then we are done. Otherwise, denote $\Theta(u_3, \{0\})=:2(1-\beta_2)c_1$ for some
$\beta_2\in(0, a_2].$ By our assumption that $\ev<1/20$ we have $\beta_2<\ev\beta_1<\ev^2.$

We shall denote $\tau_3=\frac{1}{1-\beta_2}.$ We note that $0<\ga_2-\ga_3=\frac{\beta_1}{1+\beta_1}\ga_2<4\ev.$ We also note that
by $\beta_1<\ev$ we get $\ga_3=\tau_2\tau_1\ga_1=\frac{2}{1+\beta_1}\ga_1>\ga_1.$

\textbf{Step 4.} Let $\ga_4=\tau_3\ga_3,$ $\tilde u_3=\tau_3u_3,$ then $\tilde u_3$ satisfies the boundary condition
$$\tilde u_3|_{\p\Omega}=\tau_3(\ga_3+\varphi)=\ga_4+\tau_3\varphi.$$
Moreover, $\Theta(\tilde u_3, \{0\})=2c_1.$ Now, let $u_4$ be the solution of \eqref{eq-main-la} with $\la=\ga_4.$ Then we have
\[\sup_{\p\Omega}\frac{u_4}{\tilde u_3}=\frac{1-\ev/\ga_4}{1-\ev/\ga_3}=1+\frac{\beta_2\ev}{\ga_3-\ev}=:1+a_3\]
and
\[\inf_{\p\Omega}\frac{u_4}{\tilde u_3}=1.\]
By Theorem \ref{thm-compare} we get
\[\tilde u_3\leq u_4\leq(1+a_3)\tilde u_3\,\,\mbox{in $\bar\Omega.$}\]
This yields
\[2c_1\leq\Theta(u_4, \{0\})\leq 2(1+a_3)c_1.\]
If $\Theta(u_4, \{0\})=2c_1,$ then we are done. Otherwise, denote $\Theta(u_4, \{0\})=:2(1+\beta_3)c_1$ for some
$\beta_3\in(0, a_3].$ By our assumption that $\ev<1/20$ we have $\beta_3<\ev\beta_2<\ev^3.$

We shall denote $\tau_4=\frac{1}{1+\beta_3}.$ We note that $0<\ga_4-\ga_3=\frac{\beta_2}{1-\beta_2}\ga_3<4\ev^2.$
We also note that by $\beta_2<\ev\beta_1$ we get $\ga_4=\tau_3\tau_2\ga_2=\frac{\ga_2}{1-\beta_2+\beta_1-\beta_2\beta_1}<\ga_2.$

\textbf{Step 5.} Let $\ga_5=\tau_4\ga_4,$ $\tilde u_4=\tau_4u_4,$ then $\tilde u_4$ satisfies the boundary condition
$$\tilde u_4|_{\p\Omega}=\tau_4(\ga_4+\varphi)=\ga_5+\tau_4\varphi.$$
Moreover, $\Theta(\tilde u_4, \{0\})=2c_1.$ Now, let $u_5$ be the solution of \eqref{eq-main-la} with $\la=\ga_5.$ Then we have
\[\sup_{\p\Omega}\frac{u_5}{\tilde u_4}=1\]
and
\[\inf_{\p\Omega}\frac{u_5}{\tilde u_4}=\frac{1-\ev/\ga_5}{1-\ev/\ga_4}=1-\frac{\beta_3\ev}{\ga_4-\ev}=:1-a_4.\]
By Theorem \ref{thm-compare} we get
\[(1-a_4)\tilde u_4\leq u_5\leq\tilde u_4\,\,\mbox{in $\bar\Omega.$}\]
This yields
\[2(1-a_4)c_1\leq\Theta(u_5, \{0\})\leq 2c_1.\]
If $\Theta(u_5, \{0\})=2c_1,$ then we are done. Otherwise, denote $\Theta(u_5, \{0\})=:2(1-\beta_4)c_1$ for some
$\beta_4\in(0, a_4].$ By our assumption that $\ev<1/20$ we have $\beta_4<\ev\beta_3<\ev^4.$

We shall denote $\tau_5=\frac{1}{1-\beta_4}.$ We note that $0<\ga_4-\ga_5=\frac{\beta_3}{1+\beta_3}\ga_4<4\ev^3.$
We also note that by $\beta_3<\ev\beta_2$ we get $\ga_5=\tau_4\tau_3\ga_3=\frac{\ga_3}{1-\beta_2+\beta_3-\beta_2\beta_3}>\ga_3.$

\textbf{Step 6.} Let $\ga_6=\tau_5\ga_5,$ $\tilde u_5=\tau_5u_5,$ then $\tilde u_5$ satisfies the boundary condition
$$\tilde u_5|_{\p\Omega}=\tau_5(\ga_5+\varphi)=\ga_6+\tau_5\varphi.$$
Moreover, $\Theta(\tilde u_5, \{0\})=2c_1.$ Now, let $u_6$ be the solution of \eqref{eq-main-la} with $\la=\ga_6.$ Then we have
\[\sup_{\p\Omega}\frac{u_6}{\tilde u_5}=\frac{1-\ev/\ga_6}{1-\ev/\ga_5}=1+\frac{\beta_4\ev}{\ga_5-\ev}=:1+a_5\]
and
\[\inf_{\p\Omega}\frac{u_6}{\tilde u_5}=1.\]
By Theorem \ref{thm-compare} we get
\[\tilde u_5\leq u_6\leq(1+a_5)\tilde u_5\,\,\mbox{in $\bar\Omega.$}\]
This yields
\[2c_1\leq\Theta(u_6, \{0\})\leq 2(1+a_5)c_1.\]
If $\Theta(u_6, \{0\})=2c_1,$ then we are done. Otherwise, denote $\Theta(u_6, \{0\})=:2(1+\beta_5)c_1$ for some
$\beta_5\in(0, a_5].$ By our assumption that $\ev<1/20$ we have $\beta_5<\ev\beta_4<\ev^5.$

We shall denote $\tau_6=\frac{1}{1+\beta_5}.$ We note that $0<\ga_6-\ga_5=\frac{\beta_4}{1-\beta_4}\ga_5<4\ev^4.$
We also note that by $\beta_4<\ev\beta_3$ we get $\ga_6=\tau_5\tau_4\ga_4=\frac{\ga_4}{1-\beta_4+\beta_3-\beta_4\beta_3}<\ga_4.$

$\cdots\cdots$

By repeating the above process, we can see that we either find a $\ga_N$ in finite steps such that the solution of \eqref{eq-main-la} with $\la=\ga_N,$
denoted by $u_N,$ satisfies $\Theta(u_N, \{0\})=2c_1,$ or we obtain a sequence $\{\ga_i\}_{i=1}^\infty.$ From the discussion above, it is clear that
$|\ga_i-\ga_{i+1}|<4\ev^{i-1}$ for all $i\in\mathbb N,$ thus $\{\ga_i\}_{i=1}^\infty$ is a cauchy sequence and we denote $\ga_\infty:=\lim\limits_{i\goto\infty}\ga_i.$
Let $u_\infty$ be the solution of \eqref{eq-main-la} with $\la=\ga_\infty.$

\textbf{Claim:} $\Theta(u_\infty, \{0\})=2c_1.$

\textbf{Proof of the claim:} Note that for any $n\in\mathbb N$ we have
\[\Theta(u_{2n+1}, \{0\})=2(1-\beta_{2n})c_1\,\,\mbox{for some $0<\beta_{2n}<\ev^{2n}$}\]
and
\[\Theta(u_{2n}, \{0\})=2(1+\beta_{2n-1})c_1\,\,\mbox{for some $0<\beta_{2n-1}<\ev^{2n-1}.$}\]
This gives for $n\geq 2,$
\[|\Theta(u_n, \{0\})-2c_1|<2c_1\ev^{n-1}.\]
Therefore, $\lim\limits_{n\goto\infty}\Theta(u_n, \{0\})=2c_1.$

On the other hand, we know that $\{\ga_{2n+1}\}_{n=1}^\infty$ is an increasing sequence with $\lim\limits_{n\goto\infty}\ga_{2n+1}=\ga_{\infty}$
and $\{\ga_{2n}\}_{n=1}^\infty$ is a decreasing sequence with $\lim\limits_{n\goto\infty}\ga_{2n}=\ga_{\infty}.$
By the standard maximum principle we know for all $n\geq 1,$ $u_{2n+1}\leq u_\infty\leq u_{2n}$ in $\bar\Omega.$
This implies
\[\Theta(u_{2n+1}, \{0\})\leq\Theta(u_\infty, \{0\})\leq\Theta(u_{2n}, \{0\})\,\,\mbox{for all $n\geq 1.$}\]
Letting $n\goto\infty$ we conclude $\Theta(u_{\infty}, \{0\})=2c_1.$ This finishes the proof of the claim. Consequently, we prove this theorem.
\end{proof}

\appendix
\section {Regularized distance function}
\label{app}
It has been well understood that distance function is a good tool to construct barriers for solving Dirichlet problems (see for example, \cite{CNS3, GT83}).
However, for most of domains $\Omega,$ the distance function to the boundary of $\Omega$ (i.e., $\partial\Omega$) is only well defined in a small neighborhood of $\partial\Omega.$
Therefore, sometimes it is more convenient to use regularized distance function to construct barriers. Inspired by \cite{Lie85}, we have following results.

\begin{definition}
\label{def-reg-dist}
Let $\Omega\subset\R^n$ be
an open set that contains the origin $\{0\}$ and has non-empty boundary $\partial\Omega$. A function $\vartheta$ is called a {\it\textbf{regularized distance}} for $\Omega$
if $\vartheta\in C^2(\mathbb R^n\setminus\{0\})\cup C^{0, 1}(\mathbb R^n)$
and if the ratios $\vartheta(x)/d(x)$ and $d(x)/\vartheta(x)$ are positive and uniformly bounded for all $x\in\R^n\setminus\partial\Omega.$
Here, $d(x)$ is the signed distance to $\p\Omega.$
\end{definition}
We note that although our definition of the regularized distance differs from the one in \cite{Lie85}, they share an important common property: the differentiability of the regularized distance does not depend on the cut locus of $\p\Omega.$  In general, regularized distance functions cannot be used to construct barriers.
However, in \cite{Xiao22} the author discovered that when $\Omega$ is a star-shaped domain, one can find a regularized distance that has very nice properties, and thus can be used to construct barriers.

For readers' convenience, we include the Section 3, which contains the construction of a regularized distance function for star-shaped domains, from \cite{Xiao22} here.

Recall that when $\Omega\subset\R^n$ is a star-shaped domain, denote $\Gamma:=\partial\Omega,$ then $\Gamma$ can be parameterized as a graph of the radial function
$\rho(\theta): \mathbb S^{n-1}\goto\mathbb R,$ i.e.,
\[\Gamma=\{\rho(\theta)\theta: \theta\in\mathbb S^{n-1}\}.\]

We denote $\Phi:=\log\rho,$ it is clear that the second fundamental form of $\Gamma$ can be expressed as follows
\[h_{ij}=\frac{\rho}{w}\lt(\delta_{ij}+\Phi_i\Phi_j-\Phi_{i,j}\rt),\]
where $w=\sqrt{1+|\nabla\Phi|^2}=\sqrt{1+\frac{|\nabla\rho|^2}{\rho^2}},$ $\Phi_{i, j}=\nabla_{ij}\Phi,$
and $\nabla$ denotes the Levi-Civita connection on $\mathbb S^{n-1}.$
 By a direct calculation, we obtain
\be\label{add1}
g_{ij}=\rho^2(\delta_{ij}+\Phi_i\Phi_j),\,\,
g^{ij}=\frac{1}{\rho^2}\lt(\delta_{ij}-\frac{\Phi_i\Phi_j}{w^2}\rt),\,\,\text{and}\,\,\gamma^{ij}=\frac{1}{\rho}\lt(\delta_{ij}-\frac{\Phi_i\Phi_j}{w(1+w)}\rt).
\ee
Here, $(g_{ij})$ is the metric on $\Gamma,$ $(g^{ij})$ is the inverse of $(g_{ij}),$ and $\gamma^{ij}$ is the square root of $g^{ij},$
i.e., $\sum\limits_k\gamma^{ik}\gamma^{kj}=g^{ij}.$
Let $a_{ij}=\gamma^{ik}h_{kl}\gamma^{lj},$ then the eigenvalues of $(a_{ij})_{1\leq i, j\leq n-1},$ denoted by $\kappa[a_{ij}]=(\kappa_1, \cdots, \kappa_{n-1})$ are the principal curvatures of $\Gamma.$

The following calculation can be found in Section 3 of \cite{Xiao22}, for readers convenience, we include it here.
\subsubsection{Hessian in spherical coordinates}
\label{sub-A.1}
Let $f: \R^n\goto \R$ be a scalar function, then $f$ can also be expressed as a function of $(\theta, r)\in\mathbb{S}^{n-1}\times\R.$
Note that the Euclidean metric is $g_E=r^2dz^2+dr^2,$ where $dz^2$ is the standard metric on $\mathbb{S}^{n-1}.$ In the following, we will denote the standard connection in $\mathbb R^n$ by $D$.
Now, we choose a local orthonormal frame $\{e_1, \cdots, e_{n-1}\}$ on the unit sphere $\mathbb{S}^{n-1}$. Let $\tau_{a}=\dfrac{e_{a}}{r}$, $1\leq a\leq n-1,$ which is the orthonormal frame on the sphere with radius $r,$ and we also let $\tau_r=\frac{\partial}{\partial r}$.
Then a direct calculation yields the Hessian of $f$ in spherical coordinates is
\be\label{hess1.1}D^2_{ab}f=D^2f(\tau_{a},\tau_{b})=\frac{1}{r^2}f_{ab}+\frac{1}{r}f_r\delta_{ab},\ee
\be\label{hess1.2}D^2_{a r}f=D^2f(\tau_{a},\tau_r)=\frac{1}{r}f_{a r}-\frac{1}{r^2}f_{a},\ee
and
\be\label{hess1.3}D^2_{rr}f=D^2f(\tau_r,\tau_r)=f_{rr}.\ee
Here $1\leq a, b\leq n-1,$ $f_{ab}=e_{b}e_{a}f, f_{a r}=\tau_re_{a} f,$ and $f_{rr}=\tau_r\tau_rf$.

\subsubsection{Hessian of the regularized distance}
\label{sub-A.2}
Now, we fix an arbitrary point $p\in \mathbb{S}^{n-1},$ let $\{e_1, \cdots, e_{n-1}\}$ be the normal coordinates at $p,$ then the Christoffel symbols vanish at
$p.$ This implies at this point we get $\nabla_{ij}\rho=e_ie_j\rho,$ that is, $\rho_{i,j}= \rho_{ij}$ at $p.$ Moreover, we may rotate the coordinates such that $|\nabla\rho(p)|=\rho_1$ and $\rho_{\al\beta}(p)=\rho_{\al\al}\delta_{\al\beta}$
for $2\leq\al,\beta\leq n-1.$ Then at the point $\hat{p}=\rho(p)p\in\Gamma,$ in view of \eqref{add1} we have
\[
\left\{
\begin{aligned}
\gamma^{11}&=\frac{1}{\rho}\lt(1-\frac{w^2-1}{w(1+w)}\rt)=\frac{1}{\rho w},\\
\gamma^{1\al}&=0,\,\,&2\leq\al\leq n-1,\\
\gamma^{\al\beta}&=\frac{1}{\rho}\delta_{\al\beta},\,\,&2\leq\al, \beta\leq n-1,
\end{aligned}
\right.
\]
and
\be\label{second-fundamental-form}
\left\{
\begin{aligned}
a_{11}&=\gamma^{1k}h_{kl}\gamma^{l1}=\gamma^{11}h_{11}\gamma^{11}=\frac{h_{11}}{\rho^2 w^2},\\
a_{1\al}&=\gamma^{1k}h_{kl}\gamma^{l\alpha}=\frac{h_{1\al}}{\rho^2 w},\,\,&2\leq\al\leq n-1,\\
a_{\al\beta}&=\gamma^{\al\al}h_{\al\beta}\gamma^{\beta\beta}=\frac{1}{\rho^2}h_{\al\beta},\,\,&2\leq\al, \beta\leq n-1.
\end{aligned}
\right.
\ee

Now, let us consider the function  $$\mathfrak b=\frac{r}{\rho(\theta)}.$$ Note that $1-\fb$ is a regularized distance for the domain $\Omega.$
we will compute the Hessian of $\fb$ at point $(p, r)\in \mathbb{S}^{n-1}\times\R_+$ under the coordinates chosen above.

A straightforward calculation yields
\[\fb_1=e_1\fb=-r\rho^{-2}\rho_1,\,\,\fb_{11}=e_1e_1 \fb=-r\lt(-2\rho^{-3}\rho_1\rho_1+\rho^{-2}\rho_{11}\rt),\]
\[\fb_{1\al}=-r\rho^{-2}\rho_{1\al}\,\,\mbox{for $2\leq\al\leq n-1,$}\]
\[\fb_\al=e_{\al}\fb=0,\,\,\fb_{\al\beta}=e_{\beta}e_{\al}\fb=-r\rho^{-2}\rho_{\al\beta}\,\,\mbox{for $2\leq\al, \beta\leq n-1,$}\]
and
\[\fb_n:=\frac{\p \fb}{\p r}=\rho^{-1},\,\, \fb_{nn}:=\frac{\p^2 \fb}{\p r^2}=0,\,\, \fb_{1n}:=\frac{\p \fb_1}{\p r}=-\rho^{-2}\rho_1,\,\, \fb_{\al n}:=\frac{\p \fb_{\al}}{\p r}=0.\]
Following the notation in Subsection \ref{sub-A.1}, suppose $\tau_{a}=\frac{e_{a}}{r}, 1\leq a\leq n-1$ and $\tau_n:=\tau_r=\frac{\p}{\p r},$ then $\{\tau_1, \cdots, \tau_{n-1}, \tau_n\}$ forms an orthonormal frame at $(p,r)$. Combining \eqref{hess1.1}, \eqref{hess1.2}, and \eqref{hess1.3} with \eqref{second-fundamental-form} we get, at the point $(p, r)$
\begin{eqnarray}
D^2_{11}\fb&=&D^2\fb(\tau_1,\tau_1)=\frac{1}{r^2}\fb_{11}+\frac{1}{r\rho}=\frac{w^3}{r}a_{11}\label{cs2.1},\\
D^2_{1\al}\fb&=&D^2\fb(\tau_1,\tau_{\al})=\frac{1}{r^2}\fb_{1\al}=\frac{w^2}{r}a_{1\al},\,\,2\leq\al\leq n-1,\label{cs2.2}\\
D^2_{\al\beta}\fb&=&D^2\fb(\tau_{\al},\tau_{\beta})=\frac{1}{r^2}\fb_{\al\beta}+\frac{\delta_{\al\beta}}{r\rho}=\frac{w}{r}a_{\al\beta},\,\, 2\leq\al, \beta\leq n-1,\label{cs2.3}\\
D^2_{ij}\fb&=&D^2\fb(\tau_i,\tau_j)=0,\,\, \mbox{for all other cases.}\nonumber
\end{eqnarray}
We also notice that at $p$ we have
\[h_{\al\beta}=\frac{\rho}{w}\lt(\delta_{\al\beta}-\frac{\rho_{\al\beta}}{\rho}\rt)=h_{\al\al}\delta_{\al\beta},\,\,2\leq\al, \beta\leq n-1,\]
this implies $a_{\al\beta}=a_{\al\al}\delta_{\al\beta}$ is diagonalized.
Therefore, at $(p, r)$ we obtain
\be\label{hessian-b}
\begin{aligned}
\text{Hessian}(\fb)&=\left[\ju{ccccc}{\frac{w^3}{r}a_{11}&\frac{w^2}{r}a_{12}&\cdots&\frac{w^2}{r}a_{1n-1}&0\\
\frac{w^2}{r}a_{12}&\frac{w}{r}a_{22}&\cdots&0&0\\
\vdots&\vdots&\ddots&\vdots\\
\frac{w^2}{r}a_{1n-1}&0&\cdots&\frac{w}{r}a_{n-1n-1}&0\\
0&0&\cdots&0&0}\right].
\end{aligned}
\ee
We want to emphasize that $\la(a_{ij})=(\la_1, \cdots, \la_{n-1})$ are the principal curvatures of $\Gamma$ at $\hat{p}=\rho(p)p.$

In general, consider the function $\phi=\phi(\mathfrak b),$ where $\phi$ is a function defined on $\R.$
We will compute the Hessian of $\phi$ at $(p, r).$ Denote $\phi'|_{(p, r)}=\frac{d\phi}{d\fb}|_{(p, r)}=M,$ $\phi''|_{(p, r)}=\frac{d^2\phi}{d\fb^2}|_{(p, r)}=B,$ we get for
$1\leq i,j\leq n$,
$$D^2_{ij}\phi=MD^2_{ij}\fb+B(\tau_i\fb)(\tau_j\fb).$$
From \eqref{hessian-b}, it is easy to obtain that at $(p, r)$ under the specific coordinates we chose earlier, the hessian of $\phi$ is
 \be\label{hessian-phi}
\begin{aligned}
\text{Hessian}(\phi)&=\left[\ju{ccccc}{\frac{Mw^3}{r}a_{11}+B\rho^{-4}\rho_1^2&\frac{Mw^2}{r}a_{12}&\cdots&\frac{Mw^2}{r}a_{1n-1}&-B\rho^{-3}\rho_1\\
\frac{Mw^2}{r}a_{12}&\frac{Mw}{r}a_{22}&\cdots&0&0\\
\vdots&\vdots&\ddots&\vdots\\
\frac{Mw^2}{r}a_{1n-1}&0&\cdots&\frac{Mw}{r}a_{n-1n-1}&0\\
-B\rho^{-3}\rho_1&0&\cdots&0&B\rho^{-2}}\right].
\end{aligned}
\ee

\end{document}